\DeclareMathOperator{\D}{d}
\DeclareMathOperator{\I}{Im}
\DeclareMathOperator{\R}{Re}
\def\qed{\hfill$ \blacksquare$}
\def\eor{\hfill$ \square$}
\def\dilog{\mathrm{Li}_2\,}
\def\diTi{\mathrm{Ti}_2\,}
\newtheorem{theorem}{Theorem}[section]
\newtheorem{lemma}[theorem]{Lemma}
\newtheorem{proposition}[theorem]{Proposition}
\newtheorem{corollary}[theorem]{Corollary}
\newenvironment{proof}[1][Proof]{\begin{trivlist}
\item[\hskip \labelsep {\bfseries #1}]}{\end{trivlist}}
\newenvironment{remark}[1][Remark]{\begin{trivlist}
\item[\hskip \labelsep {\bfseries #1}]}{\end{trivlist}}
\begin{document}

\selectlanguage{english}
\title{\textsf{\textbf{Spherical Couplings and Multiple Elliptic Integrals}}}\author{Yajun Zhou
\\\begin{small}\textsc{Program in Applied and Computational Mathematics, Princeton University, Princeton, NJ 08544}\end{small}
}
\date{}
\maketitle

\def\abstractname{}
\begin{abstract}
     \noindent Various integrals over elliptic integrals are evaluated as couplings on spheres, resulting in  some  integral and series representations  for the mathematical constants $ \pi$, $G$ and $\zeta(3)$. \end{abstract}

\tableofcontents
\setcounter{section}{-1}
\section{Introduction}

Multiple elliptic integrals are integrals over expressions containing complete elliptic integrals. There are various techniques for computing multiple elliptic integrals, including the methods of  Bessel moments \cite{Bailey2008,BNSW2011,BSWZ2012}, geometric transformations of Watson type \cite{Zucker2011}, hypergeometric summations \cite{Wan2012} and Legendre function relations \cite{Zhou2013Pnu}.

In this work, we showcase several geometric methods to evaluate multiple elliptic integrals in terms of the circle constant $ \pi:=4\sum_{\ell=0}^{\infty}(-1)^{\ell}(2\ell+1)^{-1}$, Catalan's constant $ G:=\sum_{\ell=0}^{\infty}(-1)^{\ell}(2\ell+1)^{-2}$ \cite{Catalan1865,Catalan1883}, Ap\'ery's constant $ \zeta(3):=\sum_{\ell=1}^\infty\ell^{-3}$ \cite{Apery1978}, among  others. We will make extensive use of  techniques for harmonic analysis on spheres and hyperspheres \cite{SteinWeiss}, in order to provide geometric interpretations for a large assortment of multiple elliptic integrals, new and old, famous or obscure.

To flesh out, we will establish a series of identities by computing certain integrals on   $ S^2\times S^2$ (\S\ref{subsec:S2}) or $ S^3\times S^3$ (\S\ref{subsec:S3}) in two ways:  (1) applying   (ultra)spherical harmonic expansions to the integrands; and (2) introducing  (hyper)spherical coordinates with trigonometric   parametrization.
In \S\ref{sec:Beltrami_revisited}, we follow up with some new perspectives on the Beltrami transformations introduced in our earlier work   \cite{Zhou2013Pnu}, in conjunction with applications of Beltrami transformations to selected multiple elliptic integrals derived in \S\S\ref{subsec:S2}-\ref{subsec:S3}. Some integral formulae in \S\S\ref{subsec:S2}-\ref{sec:Beltrami_revisited} will pave way for our subsequent works that frequently call for multiple elliptic integral representations of the constants  $ \pi$, $G$ and $\zeta(3)$. We wrap up the main text with a collection of   hypergeometric summation formulae (\S\ref{subsec:comb}) that descend from certain integral identities  in  \S\S\ref{subsec:S2}-\ref{sec:Beltrami_revisited}. In the appendix, we tabulate all the integrals and series proved in \S\S\ref{subsec:S2}-\ref{subsec:comb}.

\section{Integrations on $ S^2\times S^2$\label{subsec:S2}}\subsection{Spherical Harmonic Expansions and Geometric Parametrizations\label{subsubsec:Ylm}}We will compute some integrals on the Cartesian products of two unit spheres $ S^2\times S^2$, using  both the spherical harmonic expansions and geometric parametrizations. By evaluating the same integral using different methods, we are able to establish some interesting identities involving Catalan's constant $G$ and   the complete elliptic integral of the first kind $ \mathbf{K}(k):=\int_0^{\pi/2}(1-k^2\sin^2\theta)^{-1/2}\D\theta$.
 \begin{lemma}[Some Integrals on $ S^2\times S^2$]\label{lm:S2}
 \begin{enumerate}[label=\emph{(\alph*)}, ref=(\alph*), widest=a]\item

 Let  $ S^2\subset\mathbb R^3$ be the unit sphere equipped with the standard surface element $ \D\sigma$ such that the surface area is $ m(S^2)=\int_{S^2}\D \sigma=4\pi$. For any unit vector $ \bm n\in S^2$, the following identities hold:\begin{align}\frac{\pi^2}{8}=\sum_{\ell=0}^{\infty}\frac{1}{(2\ell+1)^2}={}&\int_{S^2}\frac{\D \sigma_1}{4\pi}\int_{S^2}\frac{\D \sigma_{2}}{4\pi}\frac{1}{|\bm n-\bm n_1|}\frac{1}{|\bm n_{1}-\bm n_2|}\frac{1}{|\bm n_{2}-\bm n|}\notag\\={}&\int_0^1\D u\int_0^1\D v\int_0^{1}\D w\frac{1}{8\pi\sqrt{\mathstrut u(1-u)(1-u+uw)}\sqrt{\vphantom(  vw(1-v)}}=\int_0^{1}\frac{\mathbf{K}(k)}{1+k}\D k;\label{eq:a}\\G=\sum_{\ell=0}^{\infty}\frac{(-1)^{\ell}}{(2\ell+1)^2}={}&\int_{S^2}\frac{\D \sigma_1}{4\pi}\int_{S^2}\frac{\D \sigma_{2}}{4\pi}\frac{1}{|\bm n-\bm n_1|}\frac{1}{|\bm n_{1}-\bm n_2|}\frac{1}{|\bm n_{2}+\bm n|}\notag\\={}&\int_0^1\D u\int_0^1\D v\int_0^{1}\D w\frac{1}{8\pi\sqrt{\mathstrut  u(1-u)(1-u+uw)}\sqrt{\vphantom(  1-vw}\sqrt{\mathstrut w(1-v)}}=\frac{1}{\pi}\int_0^{1}\frac{\mathbf{K}(k)}{1-k}\log\frac{1}{k}\D k.\label{eq:b}\end{align}\item
 Let $ \beta\in[0,\pi)$ be a given angle, and define the Heaviside theta function as $ \theta_H(x)=(1+\frac{x}{|x|})/2, x\in \mathbb R\smallsetminus\{0\}$, then we have the following  integral formulae:

{\allowdisplaybreaks
\begin{align}\frac{\pi(\pi-\beta)}{8}=\sum _{\ell=0}^{\infty } \frac{1}{ (2 \ell+1)^2}\cos\frac{(2 \ell+1)\beta}{2}
={}&\int_{S^2}\frac{\D \sigma_1}{4\pi}\int_{S^2}\frac{\D \sigma_{2}}{4\pi}\frac{1}{|\bm n-\bm n_1|}\frac{\theta_{H}(\cos\beta-\bm n_1\cdot\bm n_2)}{\sqrt{2(\cos\beta-\bm n_1\cdot\bm n_2)}}\frac{1}{|\bm n_{2}-\bm n|}\notag\\
={}&\int_0^1\D q\int_0^1\D r\int_{0}^1\D s\frac{1}{8\pi\sqrt{\smash[b]{ (1-r)(1-s)rs(1-q)\vphantom{s^2}}}\sqrt{\smash[b]{q +\tan^2(\beta/2)}}}\notag\\={}&\frac{\pi }{8}\int_{0}^{1}\frac{\D q}{\sqrt{\smash[b]{1-q\vphantom{s^2}}}\sqrt{\smash[b]{q +\tan^2(\beta/2)}}};\label{eq:beta}\\
 \mathfrak{L}(\beta):=\sum _{\ell=0}^{\infty } \frac{(-1)^{\ell}}{ (2 \ell+1)^2}\cos\frac{(2 \ell+1)\beta}{2}={}&\int_{S^2}\frac{\D \sigma_1}{4\pi}\int_{S^2}\frac{\D \sigma_{2}}{4\pi}\frac{1}{|\bm n-\bm n_1|}\frac{\theta_{H}(\cos\beta-\bm n_1\cdot\bm n_2)}{\sqrt{2(\cos\beta-\bm n_1\cdot\bm n_2)}}\frac{1}{|\bm n_{2}+\bm n|}\notag\\
={}&\int_0^1\D q\int_0^1\D r\int_{0}^1\D s\frac{1}{8\pi\sqrt{\smash[b]{(1-r)(1-s)rs(1-q)}\vphantom{s^2}}{\sqrt{\smash[b]{\sec^2(\beta/2)-s[qr+\tan^2(\beta/2)]}}}}\notag\\
={}&\frac{\cos(\beta/2)}{2}\int_0^1\mathbf K\left( \sqrt{k^2\cos^2\frac{\beta}{2}+\sin^2\frac{\beta}{2} }\right)\D k=-\left(\cos\frac{\beta}{2}\right)\int_0^1\frac{(1+t^2)\log t}{1+2t^{2}\cos\beta+t^4}\D t\notag\\={}&\frac{\cos(\beta/2)}{2}\int_0^{\pi/2}\frac{ \theta\D \theta}{\sin\theta\sqrt{\smash[b]{1-\cos^2\theta\sin^2(\beta/2)}}}.\label{eq:beta'}\end{align}}
\\Here,   the  function\begin{align*}\mathfrak{L}(\beta)=G+\frac{1}{4}\int_0^\beta\log\tan\frac{\pi-\alpha}{4}\D\alpha\end{align*}has  special values $ \mathfrak{L}(0)=G$, $ \mathfrak{L}(2\pi/3)=2G/3$, and $\mathfrak{L}(2\pi/5)-\mathfrak{L}(4\pi/5)=2G/5$, leading to the identities\begin{align*}G={}&\frac{1}{2}\int_0^1\mathbf K(k)\D k=\frac{3}{8}\int_0^1\mathbf K\left( \frac{\sqrt{k^2+3} }{2}\right)\D k\notag\\={}&\frac{5}{16}\left[ (\sqrt{5}+1)\int_0^1\mathbf K\left( \frac{\sqrt{5}+1}{4}\sqrt{k^2+5-2\sqrt{5}} \right)\D k-(\sqrt{5}-1)\int_0^1\mathbf K\left( \frac{\sqrt{5}-1}{4}\sqrt{k^2+5+2\sqrt{5}} \right)\D k\right].\end{align*}\end{enumerate}\end{lemma}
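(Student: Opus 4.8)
The plan is to evaluate each integral over $S^2\times S^2$ in two independent ways and to match the outputs. For the harmonic evaluation I would expand every Newtonian kernel through the boundary generating function $|\bm a-\bm b|^{-1}=\sum_{\ell\ge0}P_\ell(\bm a\cdot\bm b)$ for unit vectors, together with the addition theorem $P_\ell(\bm a\cdot\bm b)=\frac{4\pi}{2\ell+1}\sum_m Y_{\ell m}(\bm a)\overline{Y_{\ell m}(\bm b)}$. Orthonormality of the $Y_{\ell m}$ under $\D\sigma/(4\pi)$ forces a common index $\ell$ and collapses the two surface integrals into one sum; the three expansion coefficients $4\pi/(2\ell+1)$, the two normalizations $1/(4\pi)$ and the diagonal value $\sum_m|Y_{\ell m}(\bm n)|^2=(2\ell+1)/(4\pi)$ assemble into $\sum_\ell(2\ell+1)^{-2}$, while the parity relation $Y_{\ell m}(-\bm n)=(-1)^\ell Y_{\ell m}(\bm n)$ converts the $\bm n_2+\bm n$ variant into $\sum_\ell(-1)^\ell(2\ell+1)^{-2}$; these are $\pi^2/8$ and $G$.

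For part (b) the only new input is the spectrum of the truncated kernel $\theta_H(\cos\beta-\bm n_1\cdot\bm n_2)/\sqrt{2(\cos\beta-\bm n_1\cdot\bm n_2)}$. I would obtain its Legendre coefficients $\mu_\ell(\beta)=2\pi\int_{-1}^{\cos\beta}P_\ell(t)\,\D t/\sqrt{2(\cos\beta-t)}$ by multiplying the generating function $\sum_\ell P_\ell(t)z^\ell=(1-2tz+z^2)^{-1/2}$, substituting $t=\cos\beta-s^2/2$, carrying out the resulting elementary $s$-integral, and reading off the power series in $z$; this gives $\mu_\ell(\beta)=\frac{4\pi}{2\ell+1}\cos\frac{(2\ell+1)\beta}{2}$, which for $\beta=0$ recovers the previous case. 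The same collapse then produces the odd-cosine series $\sum_\ell(2\ell+1)^{-2}\cos\frac{(2\ell+1)\beta}{2}$ and its alternating companion $\mathfrak L(\beta)$. The closed form $\pi(\pi-\beta)/8$ is the standard sum of the former; for $\mathfrak L$ I would differentiate in $\beta$, evaluate $-\frac12\sum_\ell(-1)^\ell(2\ell+1)^{-1}\sin\frac{(2\ell+1)\beta}{2}=\frac14\log\tan\frac{\pi-\beta}{4}$, and integrate from $\mathfrak L(0)=G$.

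The geometric evaluation is where the real work lies and is, I expect, the main obstacle. Placing $\bm n$ at the pole and writing $\bm n_1,\bm n_2$ in spherical coordinates $(\theta_i,\phi_i)$, rotational invariance removes one azimuth and leaves a threefold integral in $\theta_1,\theta_2$ and $\psi=\phi_1-\phi_2$, with $\cos\gamma=\cos\theta_1\cos\theta_2+\sin\theta_1\sin\theta_2\cos\psi$ governing the middle chord. The crux is to locate the half-angle substitutions, beginning with $u=\sin^2(\theta_1/2)$, that simultaneously rationalise all three chord factors and turn the induced measure into the stated rational integrand over $[0,1]^3$; in part (b) the inequality $\bm n_1\cdot\bm n_2<\cos\beta$ from $\theta_H$ must be encoded in the range of the $q,r,s$ variables. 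Once the cube integrals are set up, two of the three integrations are elementary: the Beta integral $\int_0^1\D v/\sqrt{v(1-v)}=\pi$ (used twice in the reduction of \eqref{eq:beta}), and the elliptic integral $\int_0^1\D u/\sqrt{u(1-u)(1-(1-w)u)}=2\mathbf K(\sqrt{1-w})$, which manufactures the complete elliptic integral in the surviving variable.

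Finally I would reconcile the several single-variable forms. The equality of the $\mathbf K$-, $\log$- and $\theta/\sin\theta$-representations of $\mathfrak L(\beta)$ I would prove by expanding each integrand and integrating term by term back to the defining series; for example $\frac{1+t^2}{1+2t^2\cos\beta+t^4}=\frac{1}{\cos(\beta/2)}\sum_{n\ge0}(-1)^n\cos\frac{(2n+1)\beta}{2}\,t^{2n}$ with $\int_0^1 t^{2n}\log t\,\D t=-(2n+1)^{-2}$ returns $\mathfrak L(\beta)$ immediately. Matching the precise forms in \eqref{eq:a} may demand an intermediate Landen-type change of modulus, since the cube reduction first yields $\frac12\int_0^1 k\mathbf K(k)\,\D k/\sqrt{1-k^2}$ rather than $\int_0^1\mathbf K(k)\,\D k/(1+k)$ verbatim. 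The closing constants follow by specialisation: $\mathfrak L(2\pi/3)=2G/3$ and $\mathfrak L(2\pi/5)-\mathfrak L(4\pi/5)=2G/5$ come from dissecting the Dirichlet-type series by residues modulo $3$ and $5$, and inserting these $\beta$ into the $\mathbf K$-representation of $\mathfrak L$ delivers the displayed integral formulae for $G$.
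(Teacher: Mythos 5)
Your harmonic half is sound and in places improves on the paper: the collapse of the double surface integral via the addition theorem, orthonormality, and parity is exactly the paper's argument, and your derivation of the coefficients $\mu_\ell(\beta)=\frac{4\pi}{2\ell+1}\cos\frac{(2\ell+1)\beta}{2}$ from the generating function with $t=\cos\beta-s^2/2$ is a correct, self-contained proof of the Mehler--Dirichlet formula that the paper merely cites (item 8.927 of Gradshteyn--Ryzhik). Your term-by-term check of the $\log t$ representation via the partial-fraction expansion $\frac{1+t^2}{1+2t^2\cos\beta+t^4}=\frac{1}{\cos(\beta/2)}\sum_{n\geq0}(-1)^n\cos\frac{(2n+1)\beta}{2}t^{2n}$ is also correct and slicker than the paper's route (which reaches that form from the $\mathbf K$-representation by an $\arcsin$ evaluation and integration by parts), and dissecting the series by residues modulo $3$ and $5$ is a legitimate alternative to the paper's derivation of the special values from the Ramanujan--Bradley log-tangent integrals (factorizations of $\tan3\theta$, $\tan5\theta$). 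Your anticipated Landen step is right as well: the cube reduction gives $\frac12\int_0^1 k\,\mathbf K(k)(1-k^2)^{-1/2}\D k=\frac12\int_0^1\mathbf K(\sqrt{1-\xi^2})\D\xi$, and Landen plus the M\"obius map $\eta=(1-\xi)/(1+\xi)$ produces the stated forms in Eqs.~\ref{eq:a} and \ref{eq:b}.

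The genuine gap is that the geometric half --- the triple-integral representations over the unit cube, which are asserted in the lemma itself and are not mere scaffolding --- is never constructed: you correctly identify it as ``the crux'' and then leave it as a search problem, and your one concrete guess, starting from $u=\sin^2(\theta_1/2)$, is not the substitution that works. The paper's variables are $u=\cos^2(\varphi/2)$ in the \emph{azimuth}, together with $v=\sin^2\frac{\theta_1+\theta_2}{2}$ and $w=\sin^2\frac{\theta_1-\theta_2}{2}\big/\sin^2\frac{\theta_1+\theta_2}{2}$, and --- crucially --- before any substitution the integrand must be symmetrized: averaging over $(\theta_1,\theta_2)\mapsto(\pi-\theta_1,\pi-\theta_2)$ replaces the numerator $\cos\frac{\theta_1}{2}\cos\frac{\theta_2}{2}$ by $\frac12\cos\frac{\theta_1-\theta_2}{2}$ (and yields $\frac12\sin\frac{\theta_1+\theta_2}{2}$ for the antipodal kernel). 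Without this step the numerator is not a function of $(u,v,w)$ at all, and the four-fold covering argument --- which rests on invariance of $(u,v,w)$ under the swap $\theta_1\leftrightarrow\theta_2$ and the equatorial reflection --- has nothing to act on; a coordinate like $\sin^2(\theta_1/2)$, invariant under neither symmetry, cannot rationalize the integrand. For part (b) you would additionally need the specific map $q=\frac{\cos\beta-\cos(\theta_1+\theta_2)}{\cos\beta+1}$, $r$, $s$ of the paper, together with the observation that $\theta_H$ forces $\sin^2\frac{\theta_1+\theta_2}{2}>\sin^2\frac{\beta}{2}$ (which is what makes $q$ range over $(0,1)$); none of this is supplied. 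A secondary gap: your blanket ``expand and integrate term by term'' covers the $\log t$ form but not the last line of Eq.~\ref{eq:beta'}, which requires the identity of Eq.~\ref{eq:K_Pyth} (Mannari--Kawabata); the paper proves that by a separate moment computation with a box-to-box transformation, and your proposal contains no substitute for it.
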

\begin{proof}\begin{enumerate}[label=(\alph*),widest=a]\item
Using spherical coordinates for points on the unit sphere $ \bm n(\theta,\phi)=(\sin\theta\cos\phi,\sin\theta\sin\phi,\cos\theta)\in S^2$, and the familiar spherical harmonic expansion (with $P_\ell $ being the Legendre polynomials, $ Y_{\ell m}$  the spherical harmonics)\[\frac{1}{4\pi|\bm n-\bm n'|}=\sum_{\ell=0}^\infty \frac{P_\ell(\bm n\cdot\bm n')}{4\pi}=\sum_{\ell=0}^\infty\sum_{m=-\ell}^\ell\frac{\overline{Y_{\ell m}(\theta,\phi)}Y_{\ell m}(\theta',\phi')}{2\ell+1},\quad |\bm n|=|\bm n'|=1,\]we may compute\begin{align*}I_{\ref{eq:a}}:=\int_{S^2}\frac{\D \sigma_1}{4\pi}\int_{S^2}\frac{\D \sigma_{2}}{4\pi}\frac{1}{|\bm n-\bm n_1|}\frac{1}{|\bm n_{1}-\bm n_2|}\frac{1}{|\bm n_{2}-\bm n|}={}&4\pi\sum_{\ell=0}^\infty\sum_{m=-\ell}^\ell\frac{\overline{Y_{\ell m}(\theta,\phi)}Y_{\ell m}(\theta,\phi)}{(2\ell+1)^{3}}=\sum_{\ell=0}^{\infty}\frac{1}{(2\ell+1)^2};\\I_{\ref{eq:b}}:=\int_{S^2}\frac{\D \sigma_1}{4\pi}\int_{S^2}\frac{\D \sigma_{2}}{4\pi}\frac{1}{|\bm n-\bm n_1|}\frac{1}{|\bm n_{1}-\bm n_2|}\frac{1}{|\bm n_{2}+\bm n|}={}&4\pi\sum_{\ell=0}^\infty\sum_{m=-\ell}^\ell\frac{\overline{Y_{\ell m}(\theta,\phi)}Y_{\ell m}(\pi-\theta,\phi+\pi)}{(2\ell+1)^{3}}=\sum_{\ell=0}^{\infty}\frac{(-1)^{\ell}}{(2\ell+1)^2}.\end{align*}Here, we have exploited the orthonormal properties of the spherical harmonics \[ \int_0^\pi\sin\theta_{j}\D\theta_{j}\int_{-\pi}^{\pi}\D\phi_{j} \overline{Y_{\ell 'm'}(\theta_{j},\phi_{j})}Y_{\ell m}(\theta_{j},\phi_{j})=\delta_{\ell'\ell}\delta_{m'm},\quad j=1,2\] to complete the integration with respect to $ \bm n_1\in S^2$ and $ \bm n_2\in S^2$, and have noted that \[4\pi\sum_{m=-\ell}^\ell\frac{\overline{Y_{\ell m}(\theta,\phi)}Y_{\ell m}(\theta',\phi')}{2\ell+1}=P_\ell(\bm n\cdot\bm n'),\quad \forall\bm n,\bm n'\in S^2,\]which equals $1$ if we choose $ \bm n'=\bm n$ for Eq.~\ref{eq:a}, and equals $(-1)^{\ell}$ if we choose $ \bm n'=-\bm n$ for Eq.~\ref{eq:b}.

In lieu of the spherical harmonic expansion,  we may evaluate the surface integrals by variable transformations. We   take, without loss of generality, the north pole $\bm n=(0,0,1) $ on the sphere, and employ spherical coordinates for $ \bm n_1$ and $\bm n_2$, to compute\begin{align*}I_{\ref{eq:a}}={}&\int_{S^2}\frac{\D \sigma_1}{4\pi}\int_{S^2}\frac{\D \sigma_{2}}{4\pi}\frac{1}{|\bm n-\bm n_1|}\frac{1}{|\bm n_{1}-\bm n_2|}\frac{1}{|\bm n_{2}-\bm n|}\notag\\={}&\int_0^\pi\cos\frac{\theta_1}{2}\D\theta_1\int_{-\pi}^{\pi}\frac{\D\phi_1}{4\pi}\int_0^\pi\cos\frac{\theta_2}{2}\D\theta_2\int_{-\pi}^{\pi}\frac{\D\phi_2}{4\pi}\frac{1}{\sqrt{\smash[b]{2[1-\cos\theta_1\cos\theta_2-\sin\theta_1\sin\theta_2\cos(\phi_1-\phi_2)]}}}\notag\\={}&\frac{1}{4}\int_0^{\pi}\frac{\D\varphi}{\pi}\int_0^\pi\D\theta_1\int_0^\pi\D\theta_2\frac{\cos\frac{\theta_1}{2}\cos\frac{\theta_2}{2}}{\sqrt{\smash[b]{2(1-\cos\theta_1\cos\theta_2-\sin\theta_1\sin\theta_2\cos\varphi)}}}.\end{align*}If we make the substitutions $ \theta_1\mapsto\pi-\theta_1$ and $ \theta_2\mapsto\pi-\theta_2$, the denominator of the integrand will remain
unchanged, while the numerator becomes $ \sin\frac{\theta_1}{2}\sin\frac{\theta_2}{2}$ instead. By the addition theorem of cosines, we are led  to the expression\[I_{\ref{eq:a}}=\int_0^{\pi}\frac{\D\varphi}{8\pi}\int_0^\pi\D\theta_1\int_0^\pi\D\theta_2\frac{\cos\frac{\theta_1-\theta_{2}}{2}}{\sqrt{\smash[b]{2(1-\cos\theta_1\cos\theta_2-\sin\theta_1\sin\theta_2\cos\varphi)}}}.\]
Similarly, we have \[I_{\ref{eq:b}}=\int_0^{\pi}\frac{\D\varphi}{8\pi}\int_0^\pi\D\theta_1\int_0^\pi\D\theta_2\frac{\sin\frac{\theta_1+\theta_{2}}{2}}{\sqrt{\smash[b]{2(1-\cos\theta_1\cos\theta_2-\sin\theta_1\sin\theta_2\cos\varphi)}}}.\]

We then introduce the new variables (see Remark~\ref{itm:sphere_to_box} below for geometric motivations)\[u=\cos^2\frac\varphi2, \quad v=\sin^2\dfrac{\theta_1+\theta_2}{2}, \quad w=1-\dfrac{\sin\theta_1\sin\theta_2}{\sin^2\frac{\theta_1+\theta_2}{2}}=\dfrac{\sin^{2}\frac{\theta_1-\theta_2}{2}}{\sin^{2}\frac{\theta_1+\theta_2}{2}},\]with Jacobian determinant\[\left|\det\frac{\partial(u,v,w)}{\partial(\theta_1,\theta_2,\varphi)}\right|=\frac{\sin\varphi\sin|\theta_1-\theta_2|}{2}\cot\frac{\theta_1+\theta_2}{2}=2\sqrt{uw(1-u)(1-v)(1-vw)}.\]
If we swap $ \theta_1$ and $\theta_2$, or perform an equatorial reflection on both  polar angles $ \theta_1\mapsto\pi-\theta_1,\theta_2\mapsto\pi-\theta_2$, then the values of  $(u,v,w) $ will not change.
It is then clear that the region $ 0<\theta_1<\pi,0<\theta_2<\pi,0<\varphi<\pi$ is mapped to the unit box $ 0< u< 1,0< v<1,0< w<1$ exactly four folds  and we have the geometric correspondence\[\cos\frac{\theta_1-\theta_{2}}{2}=\sqrt{1-vw},\quad \sin\frac{\theta_1+\theta_{2}}{2}=\sqrt{v},\quad 1-\cos\theta_1\cos\theta_2-\sin\theta_1\sin\theta_2\cos\varphi=2v(1-u+uw). \]All this allows us to put down\begin{align*}I_{\ref{eq:a}}={}&\int_0^1\D u\int_0^1\D v\int_0^{1}\D w\frac{1}{8\pi\sqrt{\mathstrut u(1-u)(1-u+uw)}\sqrt{\mathstrut vw(1-v)}};\notag\\I_{\ref{eq:b}}={}&\int_0^1\D u\int_0^1\D v\int_0^{1}\D w\frac{1}{8\pi\sqrt{\mathstrut u(1-u)(1-u+uw)}\sqrt{\mathstrut 1-vw}\sqrt{\mathstrut w(1-v)}}.\end{align*}

Now, we note that the  complete elliptic integral of the first kind satisfies\[\mathbf{K}(k):=\int_0^{\pi/2}\frac{\D\theta}{\sqrt{1-k^{2}\sin^{2}\theta}}=\int_0^1\frac{\D u}{2\sqrt{u(1-u)(1-k^{2}u)}},\quad0\leq k<1.\] After integrating in $v$, we may reduce the triple integral representations   into expressions involving $ \mathbf{K}(\sqrt{1-\xi^2})$:\begin{align*}I_{\ref{eq:a}}={}&\frac{1}{4}\int_0^1\D v\int_0^{1}\D \xi\int_{0}^{1}\frac{\D u}{\pi\sqrt{\smash[b]{ u(1-u)(1-u+u\xi^{2})}}\sqrt{\vphantom{s^2}\mathstrut v(1-v)}}=\frac{1}{4}\int_0^{1}\D \xi\int_{0}^{1}\frac{\D u}{\sqrt{u(1-u)(1-u+u\xi^{2})}}=\frac{1}{2}\int_0^{1}\mathbf{K}\left(\sqrt{1-\xi^2}\right)\D \xi;\notag\\I_{\ref{eq:b}}={}&\frac{1}{4}\int_0^1\D v\int_0^{1}\D \xi\int_{0}^{1}\frac{\D u}{\pi\sqrt{\smash[b]{\mathstrut u(1-u)(1-u+u\xi^2)}}\sqrt{\smash[b]{\mathstrut 1-v\xi^2}}\sqrt{\vphantom{s^2} 1-v}}=\frac{1}{2\pi}\int_0^{1}\frac{\mathbf{K}(\sqrt{1-\xi^2})}{\xi}\log\frac{1+\xi}{1-\xi}\D \xi.\end{align*} Here, the complete elliptic integral in question satisfies an identity called Landen's transformation\footnote{\label{fn:Landen1}The simplest way to verify Landen's transformation \begin{align*}\mathbf{K}\left(\sqrt{1-\smash[b]{\xi^2}}\right)=\int_0^{\pi/2}\frac{\D\theta}{\sqrt{\smash[b]{1-(1-\xi^2)\sin^2\theta}}}=\int_0^{\pi/2}\frac{\D\theta}{\sqrt{\smash[b]{\cos^2\theta+\xi^2\sin^2\theta}}}=\int_{-\pi/2}^{\pi/2}\frac{\D\vartheta}{\sqrt{\smash[b]{(1+\xi)^{2}-(1-\xi)^2\sin^2\vartheta}}}=\frac{2}{1+\xi}\mathbf{K}\left( \frac{1-\xi}{1+\xi} \right)\end{align*}is to use the trigonometric substitution \[ \theta=\arctan\frac{\tan\vartheta+\sec\vartheta}{\sqrt\xi}.\]} \cite[Ref.][p.~39]{ByrdFriedman}\[\mathbf{K}\left(\sqrt{1-\smash[b]{\xi^2}}\right)=\frac{2}{1+\xi}\mathbf{K}\left( \frac{1-\xi}{1+\xi} \right),\]which is equivalent to the following observation of Gau{\ss}:\[I(a,b):=\int_0^{\pi/2}\frac{\D\theta}{\sqrt{a^2\cos^2\theta+b^2\sin^2\theta}}=I\left( \frac{a+b}{2},\sqrt{ab} \right).\]Therefore, we may simplify with a M\"obius transformation $ \eta=(1-\xi)/(1+\xi)$:\begin{align*}I_{\ref{eq:a}}=\int_0^{1}\frac{\mathbf{K}(\eta)}{1+\eta}\D\eta,\quad I_{\ref{eq:b}}=\frac{1}{\pi}\int_0^{1}\frac{\mathbf{K}(\eta)}{1-\eta}\log\frac{1}{\eta}\D\eta,\end{align*}which completes the proof of all the claimed identities in part (a).\item To prove the series representations for  Eqs.~\ref{eq:beta} and \ref{eq:beta'}, we need to  avail ourselves with the Mehler-Dirichlet formula (see item 8.927 in \cite{GradshteynRyzhik} or \S4.5.4 in \cite{MOS}): \begin{align}\label{eq:Mehler_Dirichlet}4\pi\sum _{\ell=0}^{\infty } \sum_{m=-\ell}^\ell\frac{\overline{Y_{\ell m}(\theta_{1},\phi_{1})}Y_{\ell m}(\theta_{2},\phi_{2})\cos[(\ell+\frac{1}{2})\beta]}{2\ell+1}=\sum _{\ell=0}^{\infty }P_\ell(\bm n_1\cdot\bm n_2) \cos[(\ell+\tfrac{1}{2})\beta]=\frac{\theta_{H}(\cos\beta-\bm n_1\cdot\bm n_2)}{\sqrt{2(\cos\beta-\bm n_1\cdot\bm n_2)}},\quad |\bm n_{1}|=|\bm n_{2}|=1.\end{align}

To justify the triple integral representations, we introduce the change of variables
(see Remark~\ref{itm:sphere_to_box} below for geometric backgrounds)\[q=\dfrac{\cos\beta-\cos(\theta_1+\theta_2)}{\cos\beta+1}, \quad r=\dfrac{\cos\beta-\cos\theta_1\cos\theta_2-\sin\theta_1\sin\theta_2\cos\varphi}{\cos\beta-\cos(\theta_1+\theta_2)}, \quad s=\frac{ 1-\cos(\theta_1-\theta_2)}{ 1-\cos\theta_1\cos\theta_2-\sin\theta_1\sin\theta_2\cos\varphi}\]with Jacobian determinant
\begin{align*}\left|\det\frac{\partial(q,r,s)}{\partial(\theta_1,\theta_2,\varphi)}\right|={}&\frac{2\sin\theta_1\sin\theta_2\sin(\theta_1+\theta_2)\sin(\theta_1-\theta_2)\sin\varphi}{(\cos\beta+1)[\cos\beta-\cos(\theta_1+\theta_2)](1-\cos\theta_1\cos\theta_2-\sin\theta_1\sin\theta_2\cos\varphi)}\notag\\={}&\frac{2\sqrt{\smash[b]{(1-r)(1-s)s(1-q)}\vphantom{s^2}}{\sqrt{\smash[b]{\sec^2(\beta/2)-s[qr+\tan^2(\beta/2)]}}}\sqrt{\smash[b]{q +\tan^2(\beta/2)}}}{\sqrt{\smash[b]{\mathstrut q}}}.\end{align*}As before, we may verify that the image of the region $ 0<\theta_1<\pi,0<\theta_2<\pi,0<\varphi<\pi$ covers the unit box $ 0< q< 1,0< r<1,0< s<1$ exactly four times, and there are geometric relations
\begin{align*}& \cos\frac{\theta_1-\theta_{2}}{2}=\sqrt{\sec^2\frac{\beta}{2}-s\left( qr+\tan^2\frac{\beta}{2} \right)}\cos\frac{\beta}{2},\quad \sin\frac{\theta_1+\theta_{2}}{2}=\sqrt{q+\tan^2\frac{\beta}{2}}\cos\frac{\beta}{2},\notag\\&\cos\beta-\cos\theta_1\cos\theta_2-\sin\theta_1\sin\theta_2\cos\varphi=2qr\cos^2\frac{\beta}{2}.\end{align*}Consequently, we  infer that\begin{align*}I_{\ref{eq:beta}}:=\frac{\pi(\pi-\beta)}{8}={}&\int_0^{\pi}\frac{\D\varphi}{8\pi}\int_0^\pi\D\theta_1\int_0^\pi\D\theta_2\frac{\theta_{H}(\cos\beta-\cos\theta_1\cos\theta_2-\sin\theta_1\sin\theta_2\cos\varphi)\cos\frac{\theta_1-\theta_{2}}{2}}{\sqrt{\smash[b]{2(\cos\beta-\cos\theta_1\cos\theta_2-\sin\theta_1\sin\theta_2\cos\varphi)}}}\notag\\={}&\int_0^1\D q\int_0^1\D r\int_{0}^1\D s\frac{1}{8\pi\sqrt{\smash[b]{(1-r)(1-s)rs(1-q)}\vphantom{s^2}}\sqrt{\smash[b]{q +\tan^2(\beta/2)}}}\notag\\={}&\frac{\pi }{8}\int_{0}^{1}\frac{\D q}{\sqrt{\smash[b]{1-q}\vphantom{s^2}}\sqrt{\smash[b]{q +\tan^2(\beta/2)}}};\notag\\I_{\ref{eq:beta'}}:=G+\frac{1}{4}\int_0^\beta\log\tan\frac{\pi-\alpha}{4}\D\alpha={}&\int_0^1\D q\int_0^1\D r\int_{0}^1\D s\frac{1}{8\pi\sqrt{\smash[b]{(1-r)(1-s)rs(1-q)}\vphantom{s^2}}{\sqrt{\smash[b]{\sec^2(\beta/2)-s[qr+\tan^2(\beta/2)]}}}}
\notag\\={}&\int_0^1\D q\int_0^1\D r\frac{\cos(\beta/2)}{4\pi\sqrt{\smash[b]{r(1-r)(1-q)}}}\mathbf K\left( \cos\frac{\beta}{2} \sqrt{qr+\tan^2\frac{\beta}{2}}\right).\end{align*}
In the last line,  we may replace $ r$ by $k^2\equiv \rho:=q r\in(0,q)$, to obtain\begin{align*}I_{\ref{eq:beta'}}=\frac{\cos(\beta/2)}{4\pi}\int_0^1\D \rho\int_\rho^1\frac{1}{\sqrt{\smash[b]{\rho(q-\rho)(1-q)}}}\mathbf K\left( \cos\frac{\beta}{2} \sqrt{\rho+\tan^2\frac{\beta}{2}}\right)\D q=\frac{\cos(\beta/2)}{2}\int_0^1\mathbf K\left( \sqrt{k^2\cos^2\frac{\beta}{2}+\sin^2\frac{\beta}{2}} \right)\D k.\end{align*}Now, with the integral representation for the complete elliptic integral of the first kind, we have \begin{align*}I_{\ref{eq:beta'}}={}&\frac{\cos(\beta/2)}{2}\int_0^1\D t\int_0^1\mathbf \D k\frac{2}{\sqrt{\smash[b]{(1+t^2)^{2}-4t^{2}[k^2\cos^2(\beta/2)+\sin^2(\beta/2)]}}}\notag\\={}&\frac{1}{2}\int_0^1\arcsin\frac{2 t \cos (\beta/2)}{\sqrt{\smash[b]{1+2 t^2 \cos \beta +t^4}}}\frac{\D t}{t}=-\left(\cos\frac{\beta}{2}\right)\int_0^1\frac{(1+t^2)\log t}{1+2t^{2}\cos\beta+t^4}\D t,\end{align*}after integration by parts in $t$.
The last line of Eq.~\ref{eq:beta'} is evident once we can verify the integral  formula:\begin{align}\mathbf K\left(\sqrt{k^{2}\cos^2\phi+\sin^2\phi}\right)=\int_0^{\pi/2}\frac{ \D \theta}{\sqrt{\vphantom{\sin^2\theta}1-k^2\cos^2\theta}\sqrt{\smash[b]{1-\sin^2\theta\sin^2\phi}}},\quad 0\leq k<1,0\leq\phi<\pi/2.\label{eq:K_Pyth}\end{align}To show this, we pick  an arbitrary non-negative integer  $n$, and adapt the foregoing proof   as\begin{align*}&\int_0^1\,\mathbf K\left(\sqrt{k^{2}\cos^2{\phi}+\sin^2 {\phi}}\right)k^{n}\D k=\int_0^1\D q\int_0^1\D r\int_{0}^1\D s\frac{(qr)^{n/2}}{4\pi\sqrt{\smash[b]{(1-r)(1-s)rs(1-q)}\vphantom{s^2}}{\sqrt{\smash[b]{1-s( qr\cos^2\phi+\sin^2\phi )}}}}.\end{align*}Such a transformation is valid so long as the integrals   converge on both sides of the equation above.  Then, we take a ``box-to-box transformation'' (cf. Remark~\ref{itm:b2b} below)\begin{align*}\mathcal U=\frac{1-q}{1-qr},\quad \mathcal V=s,\quad \mathcal W=qr\quad\Longleftrightarrow\quad q=1-\mathcal U+\mathcal U\mathcal W ,\quad r=\frac{\mathcal W} {1-\mathcal U+\mathcal U\mathcal W},\quad s=\mathcal V,\end{align*}with Jacobian determinant\begin{align*}\left\vert \det\frac{\partial(q,r,s)}{\partial(\mathcal U,\mathcal V,\mathcal W)}\right\vert=\frac{1-\mathcal W} {1-\mathcal U+\mathcal U\mathcal W},\end{align*}which results in\begin{align*}&\int_0^1\mathbf K\left(\sqrt{k^{2}\cos^2\phi+\sin^2\phi}\right)k^{n}\D k\notag=\int_0^1\D\mathcal U\int_0^1\D\mathcal V\int_0^1\D\mathcal W\, \frac{\mathcal W^{n/2}}{4\pi\sqrt{\smash[b]{\mathcal U\mathcal V\mathcal W(1-\mathcal U)(1-\mathcal V)}   \vphantom{s^2}}\sqrt{\smash[b]{1-\mathcal V( \mathcal W\cos^2\phi+\sin^2\phi )}}}\notag\\={}&\int_0^1\D\mathcal V\int_0^1\D\mathcal W\, \frac{\mathcal W^{n/2}}{4\sqrt{\smash[b]{\mathcal V\mathcal W(1-\mathcal V)}\vphantom{s^2}}\sqrt{\smash[b]{1-\mathcal V\mathcal W-\mathcal V(1- \mathcal W)\sin^2\phi }}}=\int_0^{\pi/2}\D \theta\int_0^1\D k\frac{ k^{n}}{\sqrt{\smash[b]{1-k^2\cos^2\theta}}\sqrt{\smash[b]{1-\sin^2\theta\sin^2\phi}}}.\end{align*}Here,  we have set $\mathcal V=\sin^2\theta/(1-k^2\cos^2\theta),\mathcal W=k^2 $ in the last step. As we run through all the non-negative integer powers   $n=0,1,2,\dots$, we see that Eq.~\ref{eq:K_Pyth} must hold.

The special values taken by the function $\mathfrak{L}(\beta) $ result from the following integral relations for Catalan's constant $G$ \cite{Ramanujan1915,Bradley1999}:\[-2G=2\int_0^{\pi/4}\log\tan\theta\D\theta=3\int_0^{\pi/12}\log\tan\theta\D\theta=5\int^{3\pi/20}_{\pi/20}\log\tan\theta\D\theta,\]which in turn, are consequences of  factorizations for $ \tan3\theta$ and $ \tan5\theta$ into products of tangents.\qed\end{enumerate} \end{proof}
\begin{remark}
\begin{enumerate}[label=(\arabic*)]
\item\label{itm:sphere_to_box}

In the proof of the lemma above, we introduced the following change of variables \begin{align*}\left\{ \begin{array}{lll}
u=\cos^2\dfrac{\varphi}{2}, & v=\sin^2\dfrac{\theta_1+\theta_2}{2}, & w=1-\dfrac{\sin\theta_1\sin\theta_2}{\sin^2\frac{\theta_1+\theta_2}{2}}=\dfrac{\sin^{2}\frac{\theta_1-\theta_2}{2}}{\sin^{2}\frac{\theta_1+\theta_2}{2}}; \vspace{.75em}\\
q=\dfrac{\cos\beta-\cos(\theta_1+\theta_2)}{\cos\beta+1}, & r=\dfrac{\cos\beta-\cos\theta_1\cos\theta_2-\sin\theta_1\sin\theta_2\cos\varphi}{\cos\beta-\cos(\theta_1+\theta_2)}, & s=\dfrac{1-\cos(\theta_1-\theta_2)}{1-\cos\theta_1\cos\theta_2-\sin\theta_1\sin\theta_2\cos\varphi}
\end{array}\right. \end{align*}as a wholesale argument. Now we illustrate the line of thoughts that underlay these geometric transformations.

Taking the integral  \begin{align*}I_{\ref{eq:a}}={}&\int_0^{\pi}\frac{\D\varphi}{8\pi}\int_0^\pi\D\theta_1\int_0^\pi\D\theta_2\frac{\cos\frac{\theta_1-\theta_{2}}{2}}{\sqrt{2[1-\cos\theta_1\cos\theta_2-\sin\theta_1\sin\theta_2\cos\varphi]}}\intertext{as an example, we may naturally reparametrize with $ u=\cos^2\frac{\varphi}{2}$, which brings us the expression}I_{\ref{eq:a}}={}&\frac{1}{16}\int_0^\pi\D\theta_1\int_0^\pi\D\theta_2\int_{0}^{1}\frac{\cos\frac{\theta_1-\theta_{2}}{2}\D u}{\pi\sqrt{u(1-u)(\sin^2\frac{\theta_1+\theta_{2}}{2}-u\sin\theta_1\sin\theta_2)}}.\end{align*}We now introduce the new variables  \[V=\sin\frac{\theta_1+\theta_{2}}{2},\quad W=\sin\theta_1\sin\theta_2,\]with Jacobian determinant \begin{align*}\left|\det\frac{\partial(V,W)}{\partial(\theta_1,\theta_2)}\right|=\left\vert \det\begin{pmatrix}\frac12\cos\frac{\theta_1+\theta_{2}}{2} & \frac12\cos\frac{\theta_1+\theta_{2}}{2}\\
\cos\theta_1\sin\theta_2 & \sin\theta_1\cos\theta_2 \\
\end{pmatrix} \right\vert
=\frac{1}{2}\left\vert \cos\frac{\theta_1+\theta_{2}}{2}\sin(\theta_1-\theta_2) \right\vert=\sqrt{1-V^2}\sqrt{V^{2}-W}\sqrt{1+W-V^2}.\end{align*}Such a map sends the square region $ 0<\theta_1<\pi,0<\theta_2<\pi$ to the parabolic triangle $0< V<1,0< W< V^2 $ exactly four folds, because the values of $V$ and $W$ will remain intact if we trade the point $ (\theta_1,\theta_2)$ for $ (\theta_2,\theta_1)$ or replace the point $ (\theta_1,\theta_2)$ with $ (\pi-\theta_1,\pi-\theta_2)$.
This allows us to further transform  $I_{\ref{eq:a} }$ into\begin{align*}
I_{\ref{eq:a}}
={}&4\times\frac{1}{16}\int_0^1\D V
\int_0^{V^{2}}\D W\int_{0}^{1}
\frac{\D u}{\pi\sqrt{1-V^2}\sqrt{V^{2}-W}
\sqrt{u(1-u)(V^{2}-uW)}}\notag\\
={}&\frac{1}{8}\int_0^1\D v\int_0^{1}\D w\int_{0}^{1}
\frac{\D u}{\pi\sqrt{u(1-u)(1-u+uw)}\sqrt{vw(1-v)}}\end{align*}where we have set $ v= V^2,w=(V^{2}-W)/V^2$ in the last step.

If we apply the foregoing arguments to part (b) of Lemma~\ref{lm:S2}, we may deduce\begin{align*}I_{\ref{eq:beta}}:=\frac{\pi(\pi-\beta)}{8}={}&\frac1{16}\int_0^\pi\D\theta_1\int_0^\pi\D\theta_2\int_{(0,1)\cap(-\infty,\frac{\cos\beta-\cos(\theta_1+\theta_2)}{2\sin\theta_1\sin\theta_2})}\frac{\cos\frac{\theta_1-\theta_{2}}{2}\D u}{\pi\sqrt{u(1-u)(\frac{\cos\beta-1}{2}+\sin^2\frac{\theta_1+\theta_{2}}{2}-u\sin\theta_1\sin\theta_2)}}\notag\\={}&\frac{1}{8}\iiint_{\substack{0< u,v, w<1; v(1-u+uw)>\frac{1-\cos\beta}{2}}}\frac{\D u\D v\D w}{\pi\sqrt{u(1-u)(\frac{\cos\beta-1}{2}+v-uv+uvw)}\sqrt{w(1-v)}}\end{align*}due to the geometric constraint $ \bm n_1\cdot\bm n_2<\cos\beta$.
We next move on to further reduce the triple integrals, starting with two observations: (i) An inequality  $v\equiv\sin^2\frac{\theta_1+\theta_2}{2}>\sin^2\frac{\beta}{2} $  follows from the geometric constraint\[\cos\beta>\cos\theta_1\cos\theta_2+\sin\theta_1\sin\theta_2\cos\varphi\geq \cos(\theta_1+\theta_2);\](ii) The introduction of a new variable\[S:=\frac{v w-\sin^2\frac{\beta}{2}}{ v-\sin^2\frac{\beta}{2}}=\frac{\cos\beta-\cos(\theta_1-\theta_2)}{\cos\beta-\cos(\theta_1+\theta_2)}\in\left[ -\frac{\sin^2\frac{\beta}{2}}{v-\sin^2\frac{\beta}{2}} ,1\right],\quad\text{with}\quad\left\vert\det\frac{\partial(S,u,v)}{\partial(u,v,w)}\right\vert=\frac{v}{ v-\sin^2\frac{\beta}{2}}\]leads to a factorization\[\frac{\cos\beta-1}{2}+v-uv+uvw=[1-u(1-S)]\left( v-\sin^2\frac{\beta}{2} \right).\]It is thus reasonable to introduce another new variable $ r:=1-u(1-S)\in(0,1)$, so that $ S< r$ is a consequence of $ u=(1-r)/(1-S)< 1$.

With the replacements  \[u=\frac{1-r}{1-S},\quad v=q \cos^2\frac{\beta}{2}+\sin^2\frac{\beta}{2},\quad w=S+\frac{1-S}{v}\sin^2\frac{\beta}{2}=\frac{q S+\tan^2\frac{\beta}{2}}{q +\tan^2\frac{\beta}{2}},
\] namely, a   transformation of variables\[q=\frac{v-\sin^2\frac{\beta}{2}}{\cos^2\frac{\beta}{2}}\in(0,1),\quad r=1-u(1-s)=1-\frac{uv (1-w)}{ v-\sin^2\frac{\beta}{2}}\in(0,1),\quad S=\frac{v w-\sin^2\frac{\beta}{2}}{ v-\sin^2\frac{\beta}{2}}\in\left( -\frac{\tan^2\frac{\beta}{2}}{q},r \right)\] with Jacobian determinant\[ \left|\det\frac{\partial(q,r,S)}{\partial(u,v,w)}\right|=\frac{v^{2}(1-w)}{\left( v-\sin^2\frac{\beta}{2} \right)^2\cos^2\frac{\beta}{2}}=\frac{\left( q +\tan^2\frac{\beta}{2} \right)(1-S)}{q\cos^2\frac{\beta}{2}},\] we may convert   the triple integral in $ u,v,w$ into \[I_{\ref{eq:beta}}=\frac{\pi(\pi-\beta)}{8}=\frac{1}{8\pi}\int_0^1\D q\int_0^1\D r\int_{-\frac{\tan^2\frac{\beta}{2}}{q}}^r\D S\frac{\sqrt{q}}{\sqrt{r(1-r)(r-S)(1-q)}\sqrt{q +\tan^2\frac{\beta}{2}}\sqrt{q S+\tan^2\frac{\beta}{2}}}.\]
Finally, it would be natural to rescale $S$ as\[s:=\frac{S+\frac{\tan^2\frac{\beta}{2}}{q}}{r+\frac{\tan^2\frac{\beta}{2}}{q}}=\frac{ w}{ 1-u+uw}\]so that $s$ ranges from $0$ to $1$.
\item

The  ``sphere-to-box transformations''  $ (\theta_1,\theta_2,\varphi)\mapsto(u,v,w)$ and  $(\theta_1,\theta_2,\varphi)\mapsto(q,r,s)$ used for proving $ I_{\ref{eq:a}},I_{\ref{eq:b}},I_{\ref{eq:beta}}$ and $ I_{\ref{eq:beta'}}$ are extensions of  the techniques employed in  Watson's work \cite{Watson1939} on simplifying certain triple integrals of physical significance. The spherical harmonic expansion trick is a continuation of  the author's own previous research~\cite{Zhou2010} on the Hilbert-Schmidt norms of certain operator polynomials in the mathematical model of electromagnetic scattering, which involved an integral on $ S^2\times S^2\times S^2\times S^2$.

During the course of our proof, all the interchanges of  integrations  can be justified without theoretical difficulty: the integrands  are non-negative and the triple integrals are convergent.
\item
The major purpose of the lemma above is to build confidence in a geometric method, rather than producing results that are particularly surprising. Although we have brought a variety of formulae under the  framework of $ S^2\times S^2$,  some of these results can be derived via  alternative methods independent of spherical geometry, which in turn,  can be traced to certain standard references.

 The integral identity\[I_{\ref{eq:a}}=\int_0^{1}\frac{\mathbf{K}(\eta)}{1+\eta}\D\eta=\frac{\pi^{2}}{8}\]is well known (cf.~item 615.09 in \cite{ByrdFriedman} or item 6.144 in \cite{GradshteynRyzhik}) and can be proved by simpler means invoking only double integrals. Our integral representation for Catalan's constant \[ I_{\ref{eq:b}}=\frac{1}{2\pi}\int_0^{1}\frac{\mathbf{K}(\sqrt{1-\xi^2})}{\xi}\log\frac{1+\xi}{1-\xi}\D \xi=G\] is equivalent to formula 4.21.3.13 in \cite{Brychkov2008}, which can be derived using identities satisfied  by hypergeometric functions~\cite{Brychkov}, without going through the integral on spheres.

The identity \[I_{\ref{eq:beta}}=\frac{\pi(\pi-\beta)}{8}=\frac{\pi}{8}\int_0^1\frac{\D q}{\sqrt{\smash[b]{1-q}\vphantom{s^2}}\sqrt{\smash[b]{q +\tan^2(\beta/{2})}}}\]
 is  elementary, while the equality \[I_{\ref{eq:beta'}}=G+\frac{1}{4}\int_0^\beta\log\tan\frac{\pi-\alpha}{4}\D\alpha=-\left(\cos\frac{\beta}{2}\right)\int_0^1\frac{(1+t^2)\log t}{1+2t^{2}\cos\beta+t^4}\D t\]can be double-checked by differentiating both sides in $\beta$. An equivalent form of Eq.~\ref{eq:K_Pyth}, as given by I. Mannari and C. Kawabata in 1964:\[\int_0^{\pi/2}\frac{\D\theta}{\sqrt{(1-C\cos^2\theta)(1-S\sin^2\theta)}}=\mathbf K(\sqrt{C+S-CS}),\]was mentioned in \cite{Bailey2008}. Setting  $ \beta=0$  in\begin{align}
G=\frac{\cos(\beta/2)}{2}\int_0^{\pi/2}\frac{ \theta\D \theta}{\sin\theta\sqrt{\smash[b]{1-\cos^2\theta\sin^2(\beta/{2}})}}-\frac{1}{4}\int_0^\beta\log\tan\frac{\pi-\alpha}{4}\D\alpha,\quad 0\leq \beta<\pi,\label{eq:G_elem}
\end{align}    we obtain\begin{align} G=\frac12\int_0^{\pi/2}\frac{\theta}{\sin\theta}\D\theta,\tag{\ref{eq:G_elem}{$^\circ$}}\label{eq:G_elem'}\end{align} which is a familiar integral representation of Catalan's constant (see 3.747.2 in \cite{GradshteynRyzhik}). \item \label{itm:var_subst}The variable substitutions  in this lemma, either  used in isolation, or in conjunction with hyperspherical geometry (see Proposition~\ref{prop:S3}), can facilitate the integration of many other expressions involving the complete elliptic integral of the first kind $ \mathbf K$:  such integrals can be thus represented by certain mathematical constants, or converted into integrals of elementary functions. As a stand-alone example, we point out that the set of  transformations employed for proving Eq.~\ref{eq:beta'} can also be used to justify the following identity:\begin{align*}\int_\beta^\pi\frac{\sqrt{\smash[b]{2(\cos\beta-\cos\alpha)}}}{2\pi\cos^2(\beta/{2})}\log\cot\frac{\pi-\alpha}{4}\D\alpha={}&\int_{S^2}\frac{\D \sigma_1}{4\pi}\int_{S^2}\frac{\D \sigma_{2}}{4\pi}\frac{1}{|\bm n-\bm n_1|}\frac{\theta_{H}(\cos\beta-\bm n_1\cdot\bm n_2)}{\cos^2(\beta/{2})}\frac{1}{|\bm n_{2}+\bm n|}\notag\\={}&\int_0^1\D q\int_0^1\D r\int_{0}^1\D s\frac{\sqrt{\smash[b]{q}}}{4\pi\sqrt{\vphantom{s^2}\smash[b]{(1-r)(1-s)s(1-q)}}{\sqrt{\smash[b]{1-s[qr\cos^{2}(\beta/{2})+\sin^2(\beta/{2})]}}}}\notag\\={}&\int_0^1\mathbf K\left( \sqrt{k^2\cos^2\frac{\beta}{2}+\sin^2\frac{\beta}{2}} \right)k\D k,\quad 0\leq\beta<\pi.\notag\end{align*} In particular, for $ \beta=0$, the integral on $ S^2\times S^2$ obviously evaluates to unity, thereby bringing us the result\begin{align}\frac{1}{\pi}\int_0^\pi\left(\sin\frac{\alpha }{2}\right)\log\cot\frac{\pi-\alpha}{4}\D\alpha=\int_0^1\mathbf K(k)k\D k=1.\label{eq:unity_int}\end{align}\item\label{itm:b2b} We may write Catalan's constant $G$ as a triple integral using either $ I_{\ref{eq:beta'}}$ in the specific case of $ \beta=0$ or   $ I_{\ref{eq:b}}$. Namely, the number  $ 4\pi^{2} G$, as a member in the ring of  periods \cite{KontsevichZagier},  has two integral representations\begin{align*}4\pi^{2} G={}&\int_0^1\D q\int_0^1\D r\int_0^1\D s\int_0^\infty\D t\frac{1}{t^2+(1-r)(1-s)rs(1-q)(1-qrs)}\notag\\={}&\int_0^\infty\D t\int_0^1\D u\int_0^1\D v\int_0^1\D w\frac{1}{t^{2}+u(1-u)(1-u+uw)(1-vw)w(1-v)}.\end{align*}We can pass from one quadruple integral  to the other by the following algebraic mappings between two unit boxes:\begin{align}q=v,\quad   r=1-u+uw, \quad s=\frac{ w}{ 1-u+uw}\qquad\Longleftrightarrow\qquad
u=\frac{1-r}{1-rs}, \quad v=q, \quad w=rs, \label{eq:qrs_uvw}\end{align}fully oblivious to the geometric interpretations of the variables $ u,v,w$ and $ q,r,s$.

More generally, evaluations of many multiple elliptic integrals amount to the confirmation of relations among certain members in  the ring of periods, which is conceivably accessible by algebraic means \cite{KontsevichZagier}. It is sometimes possible to transition from one multiple elliptic integral to another using a variety of (geometrically motivated) birational mappings. The Beltrami rotations and Ramanujan rotations in  \cite[Ref.][\S3]{Zhou2013Pnu} are examples of such transformations.
 \eor\end{enumerate}\end{remark}

Upon integration, Eqs.~\ref{eq:beta} and \ref{eq:beta'} give rise to some integral formulae involving Ap\'ery's constant  $ \zeta(3)=\sum_{n=1}^{\infty}n^{-3}$ and  complete elliptic integral of the second kind  $ \mathbf E(k)=\int_0^{\pi/2}(1-k^2\sin^2\theta)^{1/2}\D \theta$: {\allowdisplaybreaks\begin{align}\int_0^\pi\frac{\pi(\pi-\beta)}{8}\D\beta=\frac{\pi^{3}}{16}={}&\int_{S^2}\frac{\D \sigma_1}{4\pi}\int_{S^2}\frac{\D \sigma_{2}}{4\pi}\frac{1}{|\bm n-\bm n_1|}\mathbf K\left( \sqrt{\frac{1-\bm n_1\cdot\bm n_2}{2}} \right)\frac{1}{|\bm n_{2}-\bm n|}=\frac{\pi }{8}\int_{0}^{1}\log\frac{1+\sqrt{\smash[b]{1-q}}}{1-\sqrt{\smash[b]{1-q}}}\frac{\D q}{1-q},\label{eq:pipibeta_int}\tag{\ref{eq:beta}$ ^*$}\\\int_0^\pi\frac{\pi(\pi-\beta)}{8}\cos\frac{\beta}{4}\D\beta=(2-\sqrt{2})\pi={}&\int_{S^2}\frac{\D \sigma_1}{4\pi}\int_{S^2}\frac{\D \sigma_{2}}{4\pi}\frac{1}{|\bm n-\bm n_1|} \frac{\mathbf K\left( \sqrt{\frac{2\sqrt{\smash[b]{(1-\bm n_1\cdot\bm n_2)/2}}}{1+\sqrt{\smash[b]{(1-\bm n_1\cdot\bm n_2)/2}}}} \right)}{\sqrt{\smash[b]{1+\sqrt{\smash[b]{(1-\bm n_1\cdot\bm n_2)/2}}}}}\frac{1}{|\bm n_{2}-\bm n|}\notag\\={}&\frac{\sqrt{2}\pi}{6}\int_0^1{_3F_2\left( \left.\begin{array}{c}
\frac{1}{2},1 ,\frac{3}{2}  \\[4pt]
\frac{5}{4},\frac{7}{4}
\end{array}\right| 1-q\right)}\frac{\D q}{\sqrt{\smash[b]{1-q}}},\label{eq:pipibeta_int_cos_quarter_beta}\tag{\ref{eq:beta}$ ^{**}$}\\\int_0^\pi\frac{\pi(\pi-\beta)}{8}\cos\frac{\beta}{2}\D\beta=\frac{\pi}{2}={}&\int_{S^2}\frac{\D \sigma_1}{4\pi}\int_{S^2}\frac{\D \sigma_{2}}{4\pi}\frac{1}{|\bm n-\bm n_1|} \frac{\pi}{2}\frac{1}{|\bm n_{2}-\bm n|}=\frac{\pi }{4}\int_{0}^{1}[\mathbf K(\sqrt{\smash[b]{1-q}})-\mathbf E(\sqrt{\smash[b]{1-q}})]\frac{\D q}{(1-q)^{3/2}},\label{eq:pipibeta_int_cos_half_beta}\tag{\ref{eq:beta}$ ^{***}$}\\
\int_0^\pi\mathfrak L(\beta)\D\beta=\frac{7\zeta(3)}{4}={}&\int_{S^2}\frac{\D \sigma_1}{4\pi}\int_{S^2}\frac{\D \sigma_{2}}{4\pi}\frac{1}{|\bm n-\bm n_1|}\mathbf K\left( \sqrt{\frac{1-\bm n_1\cdot\bm n_2}{2}} \right)\frac{1}{|\bm n_{2}+\bm n|}\notag\\
={}&\int_0^1\D q\int_0^1\D r\int_{0}^1\D s\frac{1}{4\pi s\sqrt{\smash[b]{(1-r)(1-s)r(1-q)(1-qr)}}}\arctan\sqrt{\frac{(1-qr)s}{1-s}}\notag\\
={}&\int_0^1\log\frac{1-t}{1+t}\log t\frac{\D t}{t}=\int_0^{\pi/2}\frac{ \theta}{\sin\theta}\frac{\frac{\pi}{2}-\theta}{\cos\theta}\D \theta,\label{eq:L_beta_int}\tag{\ref{eq:beta'}$ ^*$}\\\int_0^\pi \mathfrak L(\beta)\cos\frac{\beta}{4}\D\beta=2\sqrt{2}\log2={}&\int_{S^2}\frac{\D \sigma_1}{4\pi}\int_{S^2}\frac{\D \sigma_{2}}{4\pi}\frac{1}{|\bm n-\bm n_1|} \frac{\mathbf K\left( \sqrt{\frac{2\sqrt{\smash[b]{(1-\bm n_1\cdot\bm n_2)/2}}}{1+\sqrt{\smash[b]{(1-\bm n_1\cdot\bm n_2)/2}}}} \right)}{\sqrt{\smash[b]{1+\sqrt{\smash[b]{(1-\bm n_1\cdot\bm n_2)/2}}}}}\frac{1}{|\bm n_{2}+\bm n|}\notag\\={}&\frac{\sqrt2}{6}\int_0^1\D q\int_0^1\D r\int_{0}^1\D s\frac{1}{\pi(1- s)\sqrt{\smash[b]{(1-r)rs(1-q)}}} \,{_3F_2\left( \left.\begin{array}{c}
\frac{1}{2},1 ,\frac{3}{2}  \\[4pt]
\frac{5}{4},\frac{7}{4}
\end{array}\right| -\frac{(1-qr)s}{1-s}\right)}\notag\\={}&e^{i\pi/4}\int_0^1\left[ (t+i) \tan ^{-1}\frac{(1+i) \sqrt{t}}{t-i}-(t-i) \tanh ^{-1}\frac{(1+i) \sqrt{t}}{t+i} \right]\frac{\log t\D t}{2 t^{3/2}},\label{eq:L_beta_int_cos_quarter_beta}\tag{\ref{eq:beta'}$ ^{**}$}\\\int_0^\pi \mathfrak L(\beta)\cos\frac{\beta}{2}\D\beta=\frac{\pi}{2}={}&\int_{S^2}\frac{\D \sigma_1}{4\pi}\int_{S^2}\frac{\D \sigma_{2}}{4\pi}\frac{1}{|\bm n-\bm n_1|}\frac{\pi}{2}\frac{1}{|\bm n_{2}+\bm n|}\notag\\
={}&\int_0^1\D q\int_0^1\D r\int_{0}^1\D s\frac{(1-qrs)\mathbf E\left( \sqrt{\frac{(1-qr)s}{1-qrs}} \right)-(1-s)\mathbf K\left( \sqrt{\frac{(1-qr)s}{1-qrs}} \right)}{4\pi (1-qr)s\sqrt{\smash[b]{(1-r)(1-s)rs(1-q)(1-qrs)}\vphantom{s^2}}}\notag\\
={}&-\frac{\pi}{2}\int_0^1\log t\D t=\int_0^{\pi/2}\left[ \frac{\mathbf E(\cos\theta)}{\sin\theta} -\sin\theta\mathbf K(\cos\theta)\right]\frac{ \theta\D \theta}{\cos^2\theta}.\label{eq:L_beta_int_cos_half_beta}\tag{\ref{eq:beta'}$ ^{***}$}
\end{align}}\\In the above, most integrations are straightforward,  except a few that additionally require  Taylor expansions. For example:\begin{align*}\int_0^\pi\frac{\cos(\beta/4)\D\beta}{\sqrt{\smash[b]{q+\tan^2(\beta/2)}}}={}&\int_0^\pi\frac{\cos(\beta/4)\cos(\beta/2)\D\beta}{\sqrt{\smash[b]{1-(1-q)\cos^2(\beta/2)}}}=\sum_{n=0}^\infty(-1)^n(1-q)^{n}\int_0^\pi\frac{\sqrt{\pi}\cos(\beta/4)[\cos(\beta/2)]^{2n+1}\D\beta}{n!\Gamma(\frac{1}{2}-n)}\notag\\={}&\sqrt{2}\pi\sum_{n=0}^\infty(-1)^n(1-q)^{n}\frac{(2n+1)!}{n!\Gamma(\frac{1}{2}-n)\Gamma(n+\frac{5}{2})}=\frac{4\sqrt{2}}{3}\,{_3F_2\left( \left.\begin{array}{c}
\frac{1}{2},1 ,\frac{3}{2}  \\[4pt]
\frac{5}{4},\frac{7}{4}
\end{array}\right| 1-q\right)}.\end{align*}

In \cite[Ref.][Lemma~1.2]{Zhou2013Pnu}, we employed the Hobson coupling formula to establish the following  integral identities for $ \theta_1,\theta_2\in[0,\pi)$:
\begin{align}
\frac{1}{4}\int_0^{2\pi}\mathbf K\left( \sin\frac{\varTheta}{2} \right)\D\phi={}&\begin{cases}\mathbf K\left( \sin\frac{\theta_1}{2} \right)\mathbf K\left( \sin\frac{\theta_2}{2} \right), & \theta_{1}+\theta_2\leq\pi \\[6pt]
\mathbf K\left( \cos\frac{\theta_1}{2} \right)\mathbf K\left( \cos\frac{\theta_2}{2} \right), & \theta_{1}+\theta_2\geq\pi
\end{cases}\label{eq:LegendreP_half}\\\frac{1}{4}\int_0^{2\pi}\mathbf K\left( \sqrt{\frac{2\sin(\varTheta/2)}{1+\sin(\varTheta/2)}} \right)\frac{\D\phi}{\sqrt{1+\sin(\varTheta/2)}}={}&\begin{cases}\frac{1}{\sqrt{\smash[b]{1+\sin(\theta_{1}/2)}}\sqrt{\smash[b]{1+\sin(\theta_{2}/2)}}}\mathbf K\left( \sqrt{\frac{2\sin(\theta_{1}/2)}{1+\sin(\theta_{1}/2)}} \right)\mathbf K\left( \sqrt{\frac{2\sin(\theta_{2}/2)}{1+\sin(\theta_{2}/2)}} \right), & \theta_{1}+\theta_2\leq\pi \\[6pt]
\frac{1}{\sqrt{\smash[b]{1+\cos(\theta_{1}/2)}}\sqrt{\smash[b]{1+\cos(\theta_{2}/2)}}}\mathbf K\left( \sqrt{\frac{2\cos(\theta_{1}/2)}{1+\cos(\theta_{1}/2)}} \right)\mathbf K\left( \sqrt{\frac{2\cos(\theta_{2}/2)}{1+\cos(\theta_{2}/2)}} \right), & \theta_{1}+\theta_2\geq\pi
\end{cases}\label{eq:LegendreP_quarter}
\end{align}
where $ \cos\varTheta=\cos\theta_1\cos\theta_2+\sin\theta_1\sin\theta_2\cos\phi$.  In the light of this, we may  use Eqs.~\ref{eq:LegendreP_half} and \ref{eq:LegendreP_quarter} to ``decouple'' the integrals on $ S^2\times S^2$ in Eqs.~\ref{eq:pipibeta_int}, \ref{eq:pipibeta_int_cos_quarter_beta}, \ref{eq:L_beta_int} and \ref{eq:L_beta_int_cos_quarter_beta}. With some elementary trigonometric identities  that were used in the symmetric reduction of $ I_{\ref{eq:a}}$ and  $ I_{\ref{eq:b}}$ at the initial stages of the proof for Lemma~\ref{lm:S2}, we arrive at some  integrals on two-dimensional simplices:\begin{align}\frac{\pi^3}{16}={}&\frac{1}{2\pi}\iint_{0\leq\theta_1\leq\pi,0\leq\theta_2\leq\pi,0\leq\theta_1+\theta_2\leq\pi}\mathbf K\left( \sin\frac{\theta_{1}}{2} \right)\mathbf K\left( \sin\frac{\theta_{2}}{2} \right)\cos\frac{\theta_1-\theta_2}{2}\D\theta_1\D\theta_2,\tag{\ref{eq:pipibeta_int}$ '$}\label{eq:pipibeta_int'}\\(2-\sqrt{2})\pi={}&\frac{1}{2\pi}\iint_{0\leq\theta_1\leq\pi,0\leq\theta_2\leq\pi,0\leq\theta_1+\theta_2\leq\pi} \frac{\mathbf K\left( \sqrt{\frac{2\sin(\theta_{1}/2)}{1+\sin(\theta_{1}/2)}} \right)\mathbf K\left( \sqrt{\frac{2\sin(\theta_{2}/2)}{1+\sin(\theta_{2}/2)}} \right)}{\sqrt{\smash[b]{1+\sin(\theta_{1}/2)}}\sqrt{\smash[b]{1+\sin(\theta_{2}/2)}}}\cos\frac{\theta_1-\theta_2}{2}\D\theta_1\D\theta_2,\tag{\ref{eq:pipibeta_int_cos_quarter_beta}$'$}\label{eq:pipibeta_int_cos_quarter_beta'}\\\frac{7\zeta(3)}{4}={}&\frac{1}{2\pi}\iint_{0\leq\theta_1\leq\pi,0\leq\theta_2\leq\pi,0\leq\theta_1+\theta_2\leq\pi}\mathbf K\left( \sin\frac{\theta_{1}}{2} \right)\mathbf K\left( \sin\frac{\theta_{2}}{2} \right)\sin\frac{\theta_1+\theta_2}{2}\D\theta_1\D\theta_2,\tag{\ref{eq:L_beta_int}$'$}\label{eq:L_beta_int'}\\2\sqrt{2}\log2={}&\frac{1}{2\pi}\iint_{0\leq\theta_1\leq\pi,0\leq\theta_2\leq\pi,0\leq\theta_1+\theta_2\leq\pi} \frac{\mathbf K\left( \sqrt{\frac{2\sin(\theta_{1}/2)}{1+\sin(\theta_{1}/2)}} \right)\mathbf K\left( \sqrt{\frac{2\sin(\theta_{2}/2)}{1+\sin(\theta_{2}/2)}} \right)}{\sqrt{\smash[b]{1+\sin(\theta_{1}/2)}}\sqrt{\smash[b]{1+\sin(\theta_{2}/2)}}}\sin\frac{\theta_1+\theta_2}{2}\D\theta_1\D\theta_2.\tag{\ref{eq:L_beta_int_cos_quarter_beta}$'$}\label{eq:L_beta_int_cos_quarter_beta'}\end{align}

  The integration of the relation\begin{align*}\mathfrak L(\beta)=G+\frac{1}{4}\int_0^\beta\log\tan\frac{\pi-\alpha}{4}\D\alpha=\frac{\cos(\beta/2)}{2}\int_0^1\mathbf K\left( \sqrt{k^2\cos^2\frac{\beta}{2}+\sin^2\frac{\beta}{2} }\right)\D k\end{align*}was omitted from our list of identities in Eqs.~\ref{eq:pipibeta_int}, \ref{eq:pipibeta_int_cos_half_beta}, \ref{eq:L_beta_int} and \ref{eq:L_beta_int_cos_half_beta}, but will be treated in \S\ref{subsubsec:B_simp}.
\subsection{Integrals for Azimuthally Anisotropic Couplings\label{subsubsec:azimuthal}}

In the physical parlance,  we have  computed in \S\ref{subsubsec:Ylm} some   Coulomb-like interactions on  spheres where the ``force potential'' is modulated with   polar anisotropy. While reducing the triple integrals in  part~(a) of Lemma~\ref{lm:S2}, we integrated over $ u$ before $w$; we then introduced an additional  factor in the integrand (which modified the isotropic Coulomb potential) dependent on $\bm n_1\cdot\bm n_2 $ to  the triple integrals in part~(b).

 In the next proposition, we will redo part~(a) of Lemma~\ref{lm:S2} by completing the integration over $w$ before $u$, to derive identities of a  different flavor, and will explore the effect of  azimuthal dependence of the ``force potential modulation''.
\begin{proposition}[More Integrals on $ S^2\times S^2$]\label{prop:more_S2}\begin{enumerate}[label=\emph{(\alph*)}, ref=(\alph*), widest=a]

 \item We may rewrite Eqs.~\ref{eq:a} and \ref{eq:b} for any unit vector $ \bm n\in S^2$:\begin{align}\frac{\pi^2}{8}={}&\int_{S^2}\frac{\D \sigma_1}{4\pi}\int_{S^2}\frac{\D \sigma_{2}}{4\pi}\frac{1}{|\bm n-\bm n_1|}\frac{1}{|\bm n_{1}-\bm n_2|}\frac{1}{|\bm n_{2}-\bm n|}\notag\\={}&\int_0^1\D u\int_0^1\D v\int_0^{1}\D w\frac{1}{8\pi\sqrt{u(1-u)(1-u+uw)}\sqrt{vw(1-v)}}\notag\\={}&\frac{1}{4}\int_{0}^{\pi/2}\frac{\log\frac{1+\cos\theta}{1-\cos\theta}\D \theta}{ \cos\theta}=-\int_{0}^{\pi/2}\frac{\log\sin\theta\D \theta}{ \cos\theta}=-\int_{0}^{\pi/2}\frac{\log\tan\frac{\theta}{2}\D \theta}{ 2\cos\theta}=\int_{0}^{1}\frac{\log t\D t}{ t^{2}-1};\tag{\ref{eq:a}*}\label{eq:a_star}\\G={}&\int_{S^2}\frac{\D \sigma_1}{4\pi}\int_{S^2}\frac{\D \sigma_{2}}{4\pi}\frac{1}{|\bm n-\bm n_1|}\frac{1}{|\bm n_{1}-\bm n_2|}\frac{1}{|\bm n_{2}+\bm n|}\notag\\={}&\int_0^1\D u\int_0^1\D v\int_0^{1}\D w\frac{1}{8\pi\sqrt{u(1-u)(1-u+uw)}\sqrt{1-vw}\sqrt{w(1-v)}}\notag\\={}&\frac{1}{2\pi}\left\{\int_0^{1}\frac{[\mathbf K(k)]^{2}\D k}{\sqrt{1-k^{2}}}-2\int_{0}^1\mathbf K(k)\mathbf K\left(\sqrt{1-k^2}\right)\D k\right\}+\frac{1}{2\pi}\int_0^{1}\frac{\mathbf K(\sqrt{1-\xi^{2}})}{\xi}\log\frac{1+\xi}{1-\xi}\D \xi.\tag{\ref{eq:b}*}\label{eq:b_star}\end{align} In particular, Eq.~\ref{eq:b_star} implies the identity\begin{align}\int_0^{1}\frac{[\mathbf K(k)]^{2}\D k}{\sqrt{1-k^{2}}}=2\int_{0}^1\mathbf K(k)\mathbf K\left(\sqrt{1-k^2}\right)\D k.\label{eq:KKKK}\end{align}\item  More generally, suppose that the function $ \omega(\cot\frac{|\varphi|}{2})$ is continuously differentiable in $ \varphi$ for $0\leq\varphi\leq\varphi_0$, where $ \varphi_0\in[0,\pi]$, then   we have \begin{align}F_{\varphi_{0}}[\omega]:={}&
\frac{1}{2\pi}\int_0^{1}\left[ \int_{\cos(\varphi_{0}/2)}^{1}\frac{ \omega(k/\sqrt{1-k^{2}})\D k}{\sqrt{(1-k^{2})[1-k^{2}(1-\xi^{2})]}} \right]\log\frac{1+\xi}{1-\xi}\frac{\D \xi}{\xi}\notag\\={}&\int_{S^2}\frac{\D \sigma_1}{4\pi}\int_{S^2}\frac{\D \sigma_{2}}{4\pi}\frac{1}{|\bm n-\bm n_1|}\frac{\omega(\cot\frac{|\phi_1-\phi_2|}{2})\theta_{H}(\varphi_0-|\phi_1-\phi_2|)}{|\bm n_{1}-\bm n_2|}\frac{1}{|\bm n_{2}+\bm n|}\notag\\={}&\frac{1}{2\pi}\int_{\cos(\varphi_{0}/2)}^{1}\frac{[\mathbf K(k)]^{2} \omega(k/\sqrt{1-k^{2}})\D k}{\sqrt{1-k^{2}}}\notag\\&-\frac{1}{\pi}\int_{{\cot(|\varphi_{0}|/2)}}^\infty\left[ \omega(\kappa)+\kappa\frac{\D \omega(\kappa)}{\D\kappa} \right]\D \kappa\int_0^1\D t\int_0^1\D\xi\frac{1}{\xi t\sqrt{\smash[b]{1+\kappa^2t^2}}\sqrt{\smash[b]{1+\kappa^2\xi ^{2}t^2}}}\log\frac{\sqrt{1-\xi ^{2}t^{2}}-\xi\sqrt{1-t^2}}{1-\xi}\notag\\&-\frac{1}{\pi}\omega\left( \cot\frac{|\varphi_0|}{2} \right)\cot\frac{|\varphi_0|}{2}\int_0^1\D t\int_0^1\D\xi\frac{1}{\xi t\sqrt{\smash[b]{1+t^2\cot^{2}(|\varphi_{0}|/2)}}\sqrt{\smash[b]{1+\xi ^{2}t^{2}\cot^{2}(|\varphi_{0}|/2)}}}\log\frac{\sqrt{1-\xi ^{2}t^{2}}-\xi\sqrt{1-t^2}}{1-\xi},\label{eq:omega}
\end{align}where\[|\phi_1-\phi_2|:=\arccos\frac{\bm n_1\cdot\bm n_2-(\bm n_1\cdot\bm n)(\bm n_2\cdot\bm n)}{\sqrt{1-(\bm n_1\cdot\bm n)^{2}}\sqrt{1-(\bm n_2\cdot\bm n)^{2}}}\in[0,\pi].\]Some particular cases of Eq.~\ref{eq:omega} lead to the identities\begin{align}\int_{0}^{1}\left\{[\mathbf K(k)]^{2}-\frac{\pi^2}{4}\right\}\frac{ \D k}{k}={}&\frac{\pi^2}{2}\log2-\frac{7\zeta(3)}{4},\label{eq:K_sqr_minus_pi4}\\\int_{0}^{1}\frac{[\mathbf K(k)]^{2}}{\sqrt{1-k^{2}}}\log\frac{k}{\sqrt{1-k^2}}\D k={}&\frac{\pi}{2}\int_{0}^{1}\frac{[\mathbf K(k)]^{2}k}{\sqrt{1-k^{2}}}\D k.\label{eq:K_sqr_log}\end{align}\end{enumerate}\end{proposition}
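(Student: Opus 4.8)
The plan is to recycle the triple-integral representations of $ I_{\ref{eq:a}}$ and $ I_{\ref{eq:b}}$ from Lemma~\ref{lm:S2}, but to finish the $ w$-integration \emph{before} the $ u$-integration, and---for the general statement in part~(b)---to retain the azimuthal angle $ \varphi=|\phi_1-\phi_2|$ as an explicit variable. For Eq.~\ref{eq:a_star} this is immediate: in $ I_{\ref{eq:a}}$ I first evaluate the elementary $ \int_0^1\frac{\D v}{\sqrt{v(1-v)}}=\pi$ and $ \int_0^1\frac{\D w}{\sqrt w\sqrt{1-u+uw}}=\frac{2}{\sqrt u}\log\frac{1+\sqrt u}{\sqrt{1-u}}$, which leaves the single integral $ \frac14\int_0^1\frac{1}{u\sqrt{1-u}}\log\frac{1+\sqrt u}{\sqrt{1-u}}\D u$. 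The substitution $ u=\cos^2\theta$ converts this directly into $ \frac14\int_0^{\pi/2}\frac{1}{\cos\theta}\log\frac{1+\cos\theta}{1-\cos\theta}\D\theta$; the algebraic identity $ \log\frac{1+\cos\theta}{1-\cos\theta}=-2\log\tan\frac\theta2$ yields the $ \log\tan$ form, $ t=\tan\frac\theta2$ then gives $ \int_0^1\frac{\log t}{t^2-1}\D t$, while $ x=\sin\theta$ sends the $ \log\sin$ form to the same $ \int_0^1\frac{\log x}{x^2-1}\D x$; the common value $ \pi^2/8$ is standard.

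The core of the proposition is the azimuthally-modulated formula~\ref{eq:omega}, from which Eqs.~\ref{eq:b_star} and \ref{eq:KKKK} will follow as a special case. For the first equality in Eq.~\ref{eq:omega} I apply the sphere-to-box map of Remark~\ref{itm:sphere_to_box} to $ I_{\ref{eq:b}}$, but instead of discarding the azimuth I put $ k=\cos\frac\varphi2$, so that $ u=\cos^2\frac\varphi2=k^2$, the weight reads $ \omega(\cot\frac\varphi2)=\omega(k/\sqrt{1-k^2})$, and the cutoff $ \theta_H(\varphi_0-\varphi)$ becomes $ k>\cos\frac{\varphi_0}2$. Setting $ \xi=\sqrt w$ and performing only the $ v$-integration $ \int_0^1\frac{\D v}{\sqrt{(1-v)(1-v\xi^2)}}=\frac1\xi\log\frac{1+\xi}{1-\xi}$ reproduces the first displayed expression, the product $ \sqrt{(1-k^2)[1-k^2(1-\xi^2)]}$ surviving under the root. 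To reach the third expression I instead integrate in $ \xi$ first, using $ \frac1\xi\log\frac{1+\xi}{1-\xi}=2\int_0^1\frac{\D s}{1-s^2\xi^2}$ to turn the inner $ \xi$-integral into an elementary (rational times square-root) elliptic integral $ P(k)$, and then compare $ P(k)$ with $ [\mathbf K(k)]^2$.

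The difference $ P(k)-[\mathbf K(k)]^2$, taken against the measure $ \frac{\D k}{\sqrt{1-k^2}}$ and reparametrized through the azimuthal variable $ \kappa=\cot\frac\varphi2=k/\sqrt{1-k^2}$, is a total $ \kappa$-derivative of the double-integral kernel $ G(\kappa)$ displayed in Eq.~\ref{eq:omega}; a single integration by parts then transfers that derivative onto the weight $ \kappa\omega(\kappa)$, producing the principal term $ \frac1{2\pi}\int_{\cos(\varphi_0/2)}^1\frac{[\mathbf K(k)]^2\omega(k/\sqrt{1-k^2})}{\sqrt{1-k^2}}\D k$ together with the bulk correction carrying $ \omega+\kappa\omega'$ and the boundary correction carrying $ \omega(\cot\frac{\varphi_0}2)\cot\frac{\varphi_0}2$---exactly the three pieces of Eq.~\ref{eq:omega}. (The emergence of $ [\mathbf K(k)]^2$ reflects the decoupling of the two polar integrations at fixed azimuth, the same product structure later made explicit by the Hobson formula~\ref{eq:LegendreP_half}.) Specializing $ \omega\equiv1,\ \varphi_0=\pi$ makes the cutoff vacuous and the boundary factor $ \cot\frac{\varphi_0}2=0$ vanish: the sphere coupling collapses to $ I_{\ref{eq:b}}=G$, the first expression to the Lemma~\ref{lm:S2} integral $ \frac1{2\pi}\int_0^1\frac{\mathbf K(\sqrt{1-\xi^2})}{\xi}\log\frac{1+\xi}{1-\xi}\D\xi=G$, and the third to $ \frac1{2\pi}\int_0^1\frac{[\mathbf K(k)]^2}{\sqrt{1-k^2}}\D k$ minus a residual evaluating to $ \frac1\pi\int_0^1\mathbf K(k)\mathbf K(\sqrt{1-k^2})\D k$; assembling these gives Eq.~\ref{eq:b_star}, and cancelling the common term $ G$ isolates Eq.~\ref{eq:KKKK}. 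Eqs.~\ref{eq:K_sqr_minus_pi4} and \ref{eq:K_sqr_log} are the further specializations $ \omega(\kappa)=1/\kappa$ (for which $ \omega+\kappa\omega'\equiv0$, so only the $ k^{-1}$-weighted principal term survives, regularized by subtracting $ \pi^2/4$, with the boundary and residual contributions supplying $ \frac{\pi^2}2\log2$ and the $ \frac{7\zeta(3)}4$ already recorded for $ \int_0^\pi\mathfrak L(\beta)\D\beta$) and $ \omega(\kappa)=\log\kappa=\log\frac{k}{\sqrt{1-k^2}}$.

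The main obstacle is precisely the passage from the first to the third expression in Eq.~\ref{eq:omega}: verifying that $ P(k)-[\mathbf K(k)]^2$, in the $ \kappa$-variable, is the exact $ \kappa$-derivative of the stated kernel $ G$, and thereby producing the two correction integrals in the explicit forms written. Each of the three pieces is individually singular as $ k\to1$ (equivalently $ \varphi\to0$, $ \kappa\to\infty$) and as $ k\to0$, and only their sum is finite, so the integration by parts in $ \kappa$ must be carried out with the endpoint contributions tracked carefully, and the interchanges of the $ s,t,\xi$ and $ k$ integrations justified through the non-absolutely-convergent regime. Once this bookkeeping is secured, the remaining work---the residual $ \mathbf K\mathbf K'$ evaluation behind Eq.~\ref{eq:b_star} and the regularized special cases---is routine, invoking only Landen's transformation and the constants established earlier in this section.
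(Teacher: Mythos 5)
Your architecture is the paper's own --- integrating out $w$ and $v$ first with $u=\cos^{2}\theta$ for Eq.~\ref{eq:a_star}, retaining the azimuth $k=\cos\frac{\varphi}{2}$ and performing only the $v$-integration for the first line of Eq.~\ref{eq:omega}, then an integration by parts in $\kappa$ that yields the bulk weight $(\kappa\omega)'=\omega+\kappa\,\D\omega/\D\kappa$ and the boundary weight $\kappa_{0}\omega(\kappa_{0})$ --- but the step you yourself flag as ``the main obstacle'' is precisely the content of the proof, and your sketch gives no mechanism for it. What makes the total-derivative claim true in the paper is an auxiliary function: after $u=\kappa^{2}/(1+\kappa^{2})$, $w=t^{2}$, the triple integral reads $\frac{1}{2\pi}\int\omega(\kappa)\D\kappa\int_{0}^{1}\frac{f(\kappa,v)\D v}{\sqrt{1-v}\sqrt{1+\kappa^{2}}}$ with $f(\kappa,v):=\int_{0}^{1}\frac{\D t}{\sqrt{1+\kappa^{2}t^{2}}\sqrt{1-vt^{2}}}$, and $f$ satisfies the first-order Euler-type identity $\frac{f}{2}+\frac{\kappa}{2}\frac{\partial f}{\partial\kappa}+v\frac{\partial f}{\partial v}=\frac{1}{2\sqrt{1-v}\sqrt{1+\kappa^{2}}}$ (Eq.~\ref{eq:f_PDE}). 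Multiplying by $f$ and integrating the $v$-derivative by parts produces $[\mathbf K(k)]^{2}/(1+\kappa^{2})$ from the boundary value $f(\kappa,1)=\mathbf K(k)/\sqrt{1+\kappa^{2}}$ and leaves exactly $\frac{\kappa}{2}\frac{\D}{\D\kappa}\int_{0}^{1}[f(\kappa,v)]^{2}\D v$; the kernel in Eq.~\ref{eq:omega} is then identified as $\int_{0}^{1}[f(\kappa,v)]^{2}\D v$ (up to the factor $4$) by symmetrizing $[f]^{2}$ into a $(t,\xi)$ double integral and doing the elementary $v$-integral, which is where the logarithm $\log\bigl[\bigl(\sqrt{1-\xi^{2}t^{2}}-\xi\sqrt{1-t^{2}}\bigr)/(1-\xi)\bigr]$ comes from. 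Without introducing $f$ and its PDE (or some equivalent explicit verification), your assertion that $P(k)-[\mathbf K(k)]^{2}$ is the stated derivative of the stated kernel is unsupported; with it, the endpoint ``bookkeeping'' you worry about reduces to the finite list of limits $\lim_{\kappa\to0^{+}}\kappa f^{2}=\lim_{\kappa\to\infty}\kappa f^{2}=0$ and $\lim_{v\to0^{+}}vf^{2}=0$.

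Second, the specializations you call routine are where the actual evaluations live, and your constant-tracking there is wrong in detail. For Eq.~\ref{eq:K_sqr_minus_pi4} with $\omega(\kappa)=1/\kappa$ the bulk term indeed vanishes, but in the limit $\varphi_{0}\to\pi^{-}$ (after regularizing with $\int_{0}^{1}\log\frac{1+\xi}{1-\xi}\frac{\D\xi}{\xi}=\frac{\pi^{2}}{4}$) the two surviving pieces each evaluate to $\frac{\pi^{2}}{4}\log2-\frac{7\zeta(3)}{8}$: the first-line piece via $\int_{0}^{1}\log\frac{2}{1+\xi}\log\frac{1+\xi}{1-\xi}\frac{\D\xi}{\xi}$, and the kernel piece via an integration by parts in $t$ that collapses it to $\int_{0}^{1}\frac{(\arcsin t)^{2}}{t}\D t$. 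Neither piece is the clean ``$\frac{\pi^{2}}{2}\log2$'' or ``$\frac{7\zeta(3)}{4}$'' you assign, and no route from $\int_{0}^{\pi}\mathfrak L(\beta)\D\beta=\frac{7\zeta(3)}{4}$ (Eq.~\ref{eq:L_beta_int}) to either contribution is indicated. Likewise, for Eq.~\ref{eq:b_star} the residual is not simply $\frac{1}{\pi}\int_{0}^{1}\mathbf K(k)\mathbf K(\sqrt{1-k^{2}})\D k$: one must prove $\int_{0}^{1}\frac{\mathbf K(\sqrt{1-\xi^{2}})}{t^{2}\xi}\log\frac{\sqrt{1-\xi^{2}t^{2}}-\xi\sqrt{1-t^{2}}}{1-\xi}\D t=\left[\mathbf K(\xi)-\frac{1}{2\xi}\log\frac{1+\xi}{1-\xi}\right]\mathbf K\left(\sqrt{1-\xi^{2}}\right)$ (again by parts in $t$), whose second term re-supplies the Catalan integral of Eq.~\ref{eq:b}; only after that cancellation does Eq.~\ref{eq:KKKK} drop out. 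These are concrete computations, not deferred bookkeeping, so as written your proposal has the right skeleton but is missing both the enabling identity and the evaluations that give the theorem its content.
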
\begin{proof}\begin{enumerate}[label=(\alph*),widest=a]\item Integrating over $w$ first, we may deduce\begin{align*}\int_0^1\D u\int_0^1\D v\int_0^{1}\D w\frac{1}{8\pi\sqrt{u(1-u)(1-u+uw)}\sqrt{vw(1-v)}}=\int_0^{1}\D u\int_{0}^{1}\frac{\log\frac{1+\sqrt{\vphantom{1}u}}{1-\sqrt{\vphantom{1}u}}\D v}{8\pi u\sqrt{(1-u)v(1-v)}}\overset{u=\cos^2\theta}{=\!\!=\!\!=\!\!=\!\!=\!\!=}\frac{1}{4}\int_{0}^{\pi/2}\frac{\log\frac{1+\cos\theta}{1-\cos\theta}\D \theta}{ \cos\theta}.\end{align*}Meanwhile, it is not hard to verify\[\frac{\pi^2}{8}=I_{\ref{eq:a}}=\int_0^{1}\frac{\mathbf{K}(\eta)}{1+\eta}\D\eta=\int_0^{\pi/2}\left[\int_0^1\frac{\D \eta}{(1+\eta)\sqrt{\smash[b]{1-\eta^{2}\sin^{2}\theta}}}\right]\D\theta=\int_0^{\pi/2}\frac{\log(1+\cos\theta)}{\cos\theta}\D\theta,\]thus Eq.~\ref{eq:a_star} follows from the simple algebra\begin{align*}\frac{\pi^2}{8}=\frac{1}{2}\int_{0}^{\pi/2}\frac{\log\frac{1+\cos\theta}{1-\cos\theta}\D \theta}{ \cos\theta}-\int_0^{\pi/2}\frac{\log(1+\cos\theta)}{\cos\theta}\D\theta=-\int_{0}^{\pi/2}\frac{\log\sin\theta\D \theta}{ \cos\theta},\end{align*}and\[\frac{\pi^2}{8}=-\int_{0}^{\pi/2}\frac{\log\frac{\sin\theta}{1+\cos\theta}\D \theta}{ 2\cos\theta}=-\int_{0}^{\pi/2}\frac{\log\tan\frac{\theta}{2}\D \theta}{ 2\cos\theta}=\int_{0}^{1}\frac{\log t\D t}{ t^{2}-1}.\]Before treating the triple integral for  Eq.~\ref{eq:b_star}, we make the substitution   $ u=\kappa^{2}/(1+\kappa^{2})$ and $ w=t^2$, so that \begin{align*}G={}&\frac{1}{2\pi}\int_0^{\infty}\frac{\D \kappa}{\sqrt{1+\kappa^{2}}}\int_0^{1}\left[ \int_0^{1}\frac{\D t}{\sqrt{1+\kappa^{2}t^{2}}\sqrt{1-vt^2}} \right]\frac{\D v}{\sqrt{1-v}}.\end{align*}We  then move on to manipulate the bracketed integral in $ t$. It is straightforward to check that  the bivariate function\[f(\kappa,v):=\int_0^{1}\frac{\D t}{\sqrt{1+\kappa^{2}t^{2}}\sqrt{1-vt^2}}\]satisfies the following partial differential equation\begin{align}\frac{f(\kappa,v)}{2} +\frac{\kappa}{2}  \frac{\partial f(\kappa,v)}{\partial \kappa}+v \frac{\partial f(\kappa,v)}{\partial v}=\frac{1}{2\sqrt{1-v}\sqrt{1+\kappa^2}},\label{eq:f_PDE}\end{align}with ``boundary conditions'' \begin{align*}&\lim_{v\to0^+}v[f(\kappa,v)]^{2}=0,\quad \lim_{v\to1^-}f(\kappa,v)=\frac{1}{\sqrt{1+\kappa^2}}\mathbf K\left(\frac{\kappa}{\sqrt{1+\kappa^2}}\right)\quad \text{for }0<\kappa<+\infty ;\notag\\& \lim_{\kappa\to0^{+}}\kappa[f(\kappa,v)]^{2}=0,\quad \lim_{\kappa\to+\infty}\kappa[f(\kappa,v)]^{2}=0\quad \text{for }0<v<1.\end{align*} Therefore, we obtain after integration by parts\begin{align*}G={}&\frac{1}{\pi}\int_0^{\infty}\D \kappa\int_0^{1}\D v\,f(\kappa,v)\left[ \frac{f(\kappa,v)}{2} +\frac{\kappa}{2}  \frac{\partial f(\kappa,v)}{\partial \kappa}+v \frac{\partial f(\kappa,v)}{\partial v} \right]\notag\\={}&\frac{1}{2\pi}\int_0^{\infty}\left[\mathbf K\left(\frac{\kappa}{\sqrt{1+\kappa^2}}\right)\right]^2\frac{\D \kappa}{1+\kappa^{2}}-\frac{1}{\pi}\int_0^{\infty}\D \kappa\int_0^{1}\D v\left[\frac{f(\kappa,v)}{2}\right]^2=\frac{1}{2\pi}\int_0^{1}\frac{[\mathbf K(k)]^{2}\mathfrak \D k}{\sqrt{1-k^{2}}}-\frac{1}{4\pi}\int_0^{\infty}\D \kappa\int_0^{1}\D v\,[f(\kappa,v)]^2.\end{align*}  Here, the last double integral can be further simplified, as   we  write \begin{align*}[f(\kappa,v)]^{2}=\left[\int_{0}^{1}\frac{\D t}{\sqrt{\smash[b]{1-vt^2}}\sqrt{\smash[b]{1+\kappa^2t^2}}}\right]^{2}={}&2\int_0^1\D t\int_0^1\D\xi\frac{t}{\sqrt{\smash[b]{1-vt^2}}\sqrt{\smash[b]{1+\kappa^2t^2}}\sqrt{\smash[b]{1-v\xi ^{2}t^2}}\sqrt{\smash[b]{1+\kappa^2\xi ^{2}t^2}}},\end{align*}and integrate in $\kappa$ and $v$  separately:\begin{align*}\int_0^1\left\{\int_0^{\infty}[f(\kappa,v)]^2\D\kappa\right\}\D v=4\int_0^1\D t\int_0^1\D\xi\frac{\mathbf K(\sqrt{1-\xi^2})}{t^2\xi}\log\frac{\sqrt{1-\xi ^{2}t^{2}}-\xi\sqrt{1-t^2}}{1-\xi}.\end{align*}Then we may  integrate by parts with respect to  $t$ to deduce\begin{align*}\int_0^1\frac{\mathbf K(\sqrt{1-\xi^2})}{t^2\xi}\log\frac{\sqrt{1-\xi ^{2}t^{2}}-\xi\sqrt{1-t^2}}{1-\xi}\D t=\left[\mathbf K(\xi)-\frac{1}{2\xi}\log\frac{1+\xi}{1-\xi}\right]\mathbf K\left(\sqrt{1-\xi^2}\right).\end{align*}
Therefore, we can verify Eq.~\ref{eq:b_star} with the computation  \begin{align*}&\frac{1}{4\pi}\int_0^{\infty}\D \kappa\int_0^{1}\D v\,[f(\kappa,v)]^2=\frac{1}{\pi}\int_{0}^1\mathbf K(\xi)\mathbf K\left(\sqrt{1-\xi^2}\right)\D\xi-\int_0^{1}\frac{\mathbf K(\sqrt{1-\xi^{2}})}{2\pi\xi}\log\frac{1+\xi}{1-\xi}\D \xi.\end{align*} Now that the last integral in the equation above evaluates to $G$ (according to Eq.~\ref{eq:b}), we reach the identity claimed in Eq.~\ref{eq:KKKK}.\item We note that the transformation $ \cos^2(\varphi/2)=u=\kappa^2/(1+\kappa^2)$ entails $ \kappa=\cot(|\varphi|/2)$, and \begin{align*}&\int_{S^2}\frac{\D \sigma_1}{4\pi}\int_{S^2}\frac{\D \sigma_{2}}{4\pi}\frac{1}{|\bm n-\bm n_1|}\frac{\omega(\cot\frac{|\phi_1-\phi_2|}{2})\theta_{H}(\varphi_0-|\phi_1-\phi_2|)}{|\bm n_{1}-\bm n_2|}\frac{1}{|\bm n_{2}+\bm n|}\notag\\={}&\frac{1}{2\pi}\int_0^1\D v\int_0^{1}\D t\int_{\cot(|\varphi_{0}|/2)}^\infty\frac{\omega(\kappa)\D \kappa}{\sqrt{(1+\kappa^{2})(1+\kappa^{2}t^{2})}\sqrt{1-vt^{2}}\sqrt{1-v}}\end{align*}can be reduced to a double integral by an integration over $v$, and a subsequent transformation $\kappa=k/\sqrt{1-k^{2}}$:   \begin{align*}F_{\varphi_{0}}[\omega]:=\frac{1}{2\pi}\int_0^{1}\left[ \int_{\cot(|\varphi_{0}|/2)}^{\infty}\frac{\omega(\kappa)\D \kappa}{\sqrt{(1+\kappa^{2})(1+\kappa^{2}t^{2})}} \right]\log\frac{1+t}{1-t}\frac{\D t}{t}=\frac{1}{2\pi}\int_0^{1}\left[ \int_{\cos(\varphi_{0}/2)}^{1}\frac{ \omega(k/\sqrt{1-k^{2}})\D k}{\sqrt{(1-k^{2})[1-k^{2}(1-t^{2})]}} \right]\log\frac{1+t}{1-t}\frac{\D t}{t}.\end{align*}This establishes the first line of  Eq.~\ref{eq:omega}. Using integration by parts as in the proof of part~(a), we may arrive at the expression\begin{align*}F_{\varphi_{0}}[\omega]={}&\frac{1}{2\pi}\int_{\cos(\varphi_{0}/2)}^{1}\frac{[\mathbf K(k)]^{2} \omega(k/\sqrt{1-k^{2}})\D k}{\sqrt{1-k^{2}}}-\frac{1}{4\pi}\int_{\cot(|\varphi_{0}|/2)}^{\infty}\left[\omega(\kappa)+\kappa\frac{\D \omega(\kappa)}{\D\kappa}\right]\D \kappa\int_0^{1}\D v\,[f(\kappa,v)]^2\notag\\{}&-\frac{1}{4\pi}\omega\left( \cot\frac{|\varphi_0|}{2} \right)\cot\frac{|\varphi_0|}{2}\int_{0}^1\left[f\left(\cot\frac{|\varphi_0|}{2},v\right)\right]^2
\D v,\end{align*}where the last term vanishes for $ \varphi_0=\pi$, as in part (a). This is related to  final identity claimed in  Eq.~\ref{eq:omega}, as we note that \begin{align*}\int_{0}^1[f(\kappa,v)]^2
\D v={}&2\int_{0}^1\D v\int_0^1\D t\int_0^1\D\xi\frac{t}{\sqrt{\smash[b]{1-vt^2}}\sqrt{\smash[b]{1+\kappa^2t^2}}\sqrt{\smash[b]{1-v\xi ^{2}t^2}}\sqrt{\smash[b]{1+\kappa^2\xi ^{2}t^2}}}\notag\\={}&4\int_0^1\D t\int_0^1\D\xi\frac{1}{\xi t\sqrt{1+\kappa^2t^2}\sqrt{1+\kappa^2\xi ^{2}t^2}}\log\frac{\sqrt{1-\xi ^{2}t^{2}}-\xi\sqrt{1-t^2}}{1-\xi}.\end{align*}

For $ 0\leq\varphi_0<\pi$, setting  $ \omega(\kappa)=1/\kappa,\cot(\varphi_{0}/2)\leq\kappa<+\infty$ in Eq.~\ref{eq:omega}, we obtain\begin{align*}\int_{\cos(\varphi_{0}/2)}^{1}\frac{[\mathbf K(k)]^{2} \D k}{k}={}&\int_0^{1}\log\frac{1+\sqrt{\smash[b]{\sin^2(\varphi_{0}/2)+\xi^2\cos^2(\varphi_{0}/2)}}}{(1+\xi)\cos(\varphi_{0}/2)}\log\frac{1+\xi}{1-\xi}\frac{\D \xi}{\xi}\notag\\{}&+\int_0^1\D t\int_0^1\D\xi\frac{2}{\xi t\sqrt{\smash[b]{1+t^2\cot^{2}(|\varphi_0|/2)}}\sqrt{\smash[b]{1+\xi ^{2}t^{2}\cot^{2}(|\varphi_0|/2)}}}\log\frac{\sqrt{1-\xi ^{2}t^{2}}-\xi\sqrt{1-t^2}}{1-\xi},\end{align*}which is equivalent to   \begin{align*}\int_{\cos(\varphi_{0}/2)}^{1}\left\{[\mathbf K(k)]^{2}-\frac{\pi^2}{4}\right\}\frac{ \D k}{k}={}&\int_0^{1}\log\frac{1+\sqrt{\smash[b]{\sin^2(\varphi_{0}/2)+\xi^2\cos^2(\varphi_{0}/2)}}}{1+\xi}\log\frac{1+\xi}{1-\xi}\frac{\D \xi}{\xi}\notag\\{}&+\int_0^1\D t\int_0^1\D\xi\frac{2}{\xi t\sqrt{\smash[b]{1+t^2\cot^{2}(|\varphi_0|/2)}}\sqrt{\smash[b]{1+\xi ^{2}t^{2}\cot^{2}(|\varphi_0|/2)}}}\log\frac{\sqrt{1-\xi ^{2}t^{2}}-\xi\sqrt{1-t^2}}{1-\xi},\end{align*}since we have, by Taylor expansion and the monotone convergence theorem\begin{align*}\int_0^{1}\log\frac{1+\xi}{1-\xi}\frac{\D \xi}{\xi}=2\int_0^{1}\sum_{n=0}^\infty\frac{\xi^{2n+1}}{2n+1}\frac{\D \xi}{\xi}=2\sum_{n=0}^\infty\frac{1}{(2n+1)^{2}}=\frac{\pi^2}{4}.\end{align*} In the limit of $ \varphi_0\to\pi$, we obtain\begin{align*}\int_{0}^{1}\left\{[\mathbf K(k)]^{2}-\frac{\pi^2}{4}\right\}\frac{ \D k}{k}={}&\int_0^{1}\log\frac{2}{1+\xi }\log\frac{1+\xi}{1-\xi}\frac{\D \xi}{\xi}+\int_0^1\D t\int_0^1\D\xi\frac{2}{\xi t}\log\frac{\sqrt{1-\xi ^{2}t^{2}}-\xi\sqrt{1-t^2}}{1-\xi}.\end{align*}As we may compute (using Taylor series expansion of the integrand, when necessary)\begin{align*}\int_0^{1}\log\frac{2}{1+\xi }\log\frac{1+\xi}{1-\xi}\frac{\D \xi}{\xi}={}&\frac{\pi^2}{4}\log2-\frac{7\zeta(3)}{8};\notag\\\int_0^1\D t\int_0^1\D\xi\frac{2}{\xi t}\log\frac{\sqrt{1-\xi ^{2}t^{2}}-\xi\sqrt{1-t^2}}{1-\xi}={}&\int_0^1\D t\int_0^1\D\xi\frac{2}{\xi }\log\frac{\sqrt{1-\xi ^{2}t^{2}}-\xi\sqrt{1-t^2}}{1-\xi}\frac{\D\log t}{\D t}\notag\\={}&-\int_0^1\D t\int_0^1\D\xi\frac{2t\log t}{\sqrt{1-t^2}\sqrt{1-\xi^2t^2}}=-\int_0^1\frac{2\arcsin t}{\sqrt{1-t^{2}}}\log t\D t\notag\\={}&\int_0^1\frac{(\arcsin t)^{2}}{t}\D t=\frac{\pi^2}{4}\log2-\frac{7\zeta(3)}{8},\end{align*}we arrive at Eq.~\ref{eq:K_sqr_minus_pi4} as claimed.

With a logarithmic input, we apply a similar limit procedure to  Eq.~\ref{eq:omega} and write down\begin{align*}\int_{0}^{1}\frac{[\mathbf K(k)]^{2}}{\sqrt{1-k^{2}}}\log\frac{k}{\sqrt{1-k^2}}\D k={}&\int_0^{1}\left[ \int_{0}^{\infty}\frac{\log \kappa\D \kappa}{\sqrt{(1+\kappa^{2})(1+\kappa^{2}t^{2})}} \right]\log\frac{1+t}{1-t}\frac{\D t}{t}\notag\\{}&+2\int_{{0}}^\infty(1+ \log \kappa)\D \kappa\int_0^1\D t\int_0^1\D\xi\frac{1}{\xi t\sqrt{\smash[b]{1+\kappa^2t^2}}\sqrt{\smash[b]{1+\kappa^2\xi ^{2}t^2}}}\log\frac{\sqrt{1-\xi ^{2}t^{2}}-\xi\sqrt{1-t^2}}{1-\xi}.\end{align*}Here, the integral involving $ \log\kappa$ can be evaluated in terms of complete elliptic integrals  (see item 800.05 in \cite{ByrdFriedman} or 4.242.1 in \cite{GradshteynRyzhik}), which brings us\begin{align*}\int_{0}^{1}\frac{[\mathbf K(k)]^{2}}{\sqrt{1-k^{2}}}\log\frac{k}{\sqrt{1-k^2}}\D k={}&-\frac12\int_0^{1}\mathbf K\left(\sqrt{1-t^2}\right)\log t\log\frac{1+t}{1-t}\frac{\D t}{t}+2\int_{0}^1\left[\mathbf K(\xi)-\frac{1}{2\xi}\log\frac{1+\xi}{1-\xi}\right]  \mathbf K\left(\sqrt{1-\xi^2}\right)\D\xi\notag\\{}&-\int_0^1\D t\int_0^1\D\xi\frac{\mathbf K(\sqrt{1-\xi^2})\log(\xi t^2)}{\xi t^{2}}\log\frac{\sqrt{1-\xi ^{2}t^{2}}-\xi\sqrt{1-t^2}}{1-\xi}\notag\\={}&\int_{0}^1\left[\mathbf K(\xi)(2-\log\xi)-\frac{1}{\xi}\log\frac{1+\xi}{1-\xi}\right] \mathbf K\left(\sqrt{1-\xi^2}\right)\D\xi\notag\\&-2\int_0^1\D t\int_0^1\D\xi\frac{\mathbf K(\sqrt{1-\xi^2})\log t}{\xi t^{2}}\log\frac{\sqrt{1-\xi ^{2}t^{2}}-\xi\sqrt{1-t^2}}{1-\xi}.\end{align*}The last double integral can be simplified via integration by parts\begin{align*}\int_0^1\D t\int_0^1\D\xi\frac{\mathbf K(\sqrt{1-\xi^2})\log t}{\xi t^{2}}\log\frac{\sqrt{1-\xi ^{2}t^{2}}-\xi\sqrt{1-t^2}}{1-\xi}=\int_0^1\D t\int_0^1\D\xi\, \mathbf K\left(\sqrt{1-\xi^2}\right)\left( \frac{1+\log t}{t}-1 \right)\frac{t}{\sqrt{1-t^{2}}\sqrt{1-\xi ^2t^2}},\end{align*} and  an integration in $ t$ (using item 800.01 in \cite{ByrdFriedman} or 4.242.4 in \cite{GradshteynRyzhik} for the term involving $ \log t$)\begin{align*}\int_0^1\left( \frac{1+\log t}{t}-1 \right)\frac{t\D t}{\sqrt{\smash[b]{1-t^{2}}}\sqrt{\smash[b]{1-\xi ^2t^2}}}=\mathbf K(\xi)-\frac{1}{2}\left[ \frac{\pi}{2}\mathbf K\left( \sqrt{1-\xi^2} \right)+\mathbf K(\xi)\log\xi \right]-\frac{1}{2\xi}\log\frac{1+\xi}{1-\xi}.\end{align*}Thus, we have\begin{align*}\int_{0}^{1}\frac{[\mathbf K(k)]^{2}}{\sqrt{1-k^{2}}}\log\frac{k}{\sqrt{1-k^2}}\D k= \frac{\pi}{2}\int_{0}^1\left[\mathbf K\left( \sqrt{1-\xi^2} \right)\right]^2\D\xi=\frac{\pi}{2}\int_{0}^{1}\frac{[\mathbf K(k)]^{2}k}{\sqrt{1-k^{2}}}\D k,\end{align*} as stated in Eq.~\ref{eq:K_sqr_log}.    \qed\end{enumerate}\end{proof}\begin{remark}
\begin{enumerate}[label=(\arabic*)]
\item
Using Tricomi's formula \cite[Ref.][p.~103]{Tricomi} for the Fourier expansion of $\mathbf K(\sin\theta)$, one may explicitly compute\begin{align*}\int_0^1\frac{[\mathbf K(x)]^2\D x}{\sqrt{1-x^2}}=2\int_0^{1}\mathbf K(x)\mathbf K\left( \sqrt{1-{x^2}} \right)\D x=\frac{\pi}{4}\sum_{n=0}^\infty\frac{[\Gamma(n+\frac{1}{2})]^4}{(n!)^4}=\frac{\pi^{3}}{4}\,{_4F_3\left( \left.\begin{array}{c}
\frac{1}{2},\frac{1}{2} ,\frac{1}{2},\frac{1}{2}  \\
1,1,1
\end{array}\right| 1\right)},\end{align*}as recorded in Eq.~73 of \cite{Bailey2008} and Eq.~35 of \cite{Wan2012}. (Here, $ _4F_3$ is the generalized hypergeometric series.) This backs up our Eq.~\ref{eq:KKKK}, which was derived geometrically.
\item\label{itm:low_deg}
After a preliminary literature search, we have not been able to locate previous reports of the closed-form evaluation given in Eq.~\ref{eq:K_sqr_minus_pi4}, though certain similar-looking integrals  (Eqs.~76, 81 and 82 in \cite{Bailey2008}) had been considered in the framework of Tricomi's expansion, and the expression $ 7\zeta(3)-2\pi^2\log2$ arose in quite a few different contexts \cite{Ayoub1974,NashOConnor1995,Dabrowski1996}. Later in this work, we will give some other integral expressions that evaluate to the same number as  Eq.~\ref{eq:K_sqr_minus_pi4}. Furthermore, a ``lower-degree analog'' of  Eq.~\ref{eq:K_sqr_minus_pi4} that involves Catalan's constant is well known: (cf.~\cite[Ref.][item 615.05]{ByrdFriedman},  \cite[Ref.][item 6.142]{GradshteynRyzhik} or \cite[Ref.][item~2.16.2.3]{PBMVol3})\begin{align}\int_0^1\left[ \mathbf K(k)-\frac{\pi}{2} \right]\frac{\D k}{k}=-2G+\pi\log 2.\label{eq:low_deg}\end{align}
\item
We may  recast  Eq.~\ref{eq:K_sqr_log} into \[\int_{0}^{1}\frac{[\mathbf K(k)]^{2}}{\sqrt{1-k^{2}}}\log\frac{k}{\sqrt{1-k^2}}\D k= \pi\int_{0}^{1}[\mathbf K(k)]^{2}\D k=\frac{\pi^{5}}{32}\,{_7F_6\left( \left.\begin{array}{c}
\frac{1}{2},\frac{1}{2} ,\frac{1}{2},\frac{1}{2}  ,\frac{1}{2},\frac{1}{2},\frac54\\[2pt]\frac14,1,1,
1,1,1
\end{array}\right| 1\right)},\]drawing on a recent result of Borwein \textit{et al.}~\cite{BNSW2011}  mentioned in \cite{Wan2012}, which asserts that  \begin{align*} 2\int_0^1[\mathbf K(k)]^2\D k=\int_0^1\left[\mathbf K\left(\sqrt{1-\kappa^2}\right)\right]^2\D \kappa=\frac{\pi^{4}}{16}\,{_7F_6\left( \left.\begin{array}{c}
\frac{1}{2},\frac{1}{2} ,\frac{1}{2},\frac{1}{2}  ,\frac{1}{2},\frac{1}{2},\frac54\\[2pt]\frac14,1,1,
1,1,1
\end{array}\right| 1\right)},\end{align*} with $ _7F_6$ being the generalized hypergeometric series. As pointed out in  \cite{Wan2012}, the leftmost equality in the formula above is a direct consequence of Landen's transformation. \eor
\end{enumerate} \end{remark}

In \S\ref{subsubsec:B_simp}, we will give a self-contained proof of Eq.~\ref{eq:low_deg}. For the moment, we will accept the truth of  Eq.~\ref{eq:low_deg}, and combine it with Eq.~\ref{eq:K_sqr_minus_pi4} into another integral identity.\begin{corollary}[An Integral Involving $ \pi$, $G$ and $ \zeta(3)$]We have the following formula:\label{cor:KE_pi_G_Apery}\begin{align}\int_0^1\frac{\mathbf K(k)}{k}\left[ 1-\frac{2}{\pi}\mathbf E(k) \right]\D k=\frac{\pi}{4}-2G+\frac{7\zeta(3)}{2\pi}.\label{eq:KE_pi_G_zeta3}\end{align}\end{corollary}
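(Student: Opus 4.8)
The plan is to realize the left-hand side of Eq.~\ref{eq:KE_pi_G_zeta3} as an explicit linear combination of the two evaluations we are allowed to assume (Eqs.~\ref{eq:low_deg} and \ref{eq:K_sqr_minus_pi4}) together with one elementary elliptic-integral quadrature. First I would record that $1-\frac{2}{\pi}\mathbf E(k)=\frac{2}{\pi}\big[\frac{\pi}{2}-\mathbf E(k)\big]$ vanishes to order $k^2$ as $k\to0^+$ and stays bounded as $k\to1^-$, so that $\frac{\mathbf K(k)}{k}\big[1-\frac{2}{\pi}\mathbf E(k)\big]$ is integrable on $(0,1)$: near the origin it is $O(k)$, and near $k=1$ the only singularity is the harmless logarithmic blow-up of $\mathbf K$. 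The same convergence holds for the regularized integrands $\frac{\mathbf K(k)-\pi/2}{k}$ and $\frac{[\mathbf K(k)]^2-\pi^2/4}{k}$ appearing in Eqs.~\ref{eq:low_deg} and \ref{eq:K_sqr_minus_pi4}.

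Since all three integrals converge absolutely, I may regroup them by the pointwise algebraic identity
\[
\frac{\mathbf K}{k}\Big(1-\tfrac{2}{\pi}\mathbf E\Big)-\frac{\mathbf K-\frac{\pi}{2}}{k}+\frac{2}{\pi}\,\frac{\mathbf K^2-\frac{\pi^2}{4}}{k}=\frac{2}{\pi}\,\frac{\mathbf K(\mathbf K-\mathbf E)}{k},
\]
in which the singular constants $\pi/2$ and $\pi^2/4$ cancel against one another. Hence
\[
\int_0^1\frac{\mathbf K}{k}\Big(1-\tfrac{2}{\pi}\mathbf E\Big)\D k=\int_0^1\frac{\mathbf K-\frac{\pi}{2}}{k}\D k-\frac{2}{\pi}\int_0^1\frac{\mathbf K^2-\frac{\pi^2}{4}}{k}\D k+\frac{2}{\pi}\int_0^1\frac{\mathbf K(\mathbf K-\mathbf E)}{k}\D k,
\]
so the whole matter reduces to the single clean evaluation $\int_0^1\frac{\mathbf K(\mathbf K-\mathbf E)}{k}\D k=\frac{\pi^2}{8}$.

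I would obtain that last integral from the standard derivative formulae $\frac{\D\mathbf E}{\D k}=\frac{\mathbf E-\mathbf K}{k}$ and $\frac{\D\mathbf K}{\D k}=\frac{\mathbf E-(1-k^2)\mathbf K}{k(1-k^2)}$, which combine into the exact total derivative
\[
\frac{\D}{\D k}\big[(1-k^2)[\mathbf K(k)]^2\big]=-\frac{2\,\mathbf K(k)[\mathbf K(k)-\mathbf E(k)]}{k}.
\]
Integrating over $[0,1]$ collapses the problem to boundary values: at $k=0$ the bracket equals $[\mathbf K(0)]^2=\pi^2/4$, while at $k=1$ it vanishes because $\mathbf K$ diverges only like $\log\frac{4}{\sqrt{1-k^2}}$, which is overwhelmed by the factor $1-k^2$. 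This yields $\int_0^1\frac{\mathbf K(\mathbf K-\mathbf E)}{k}\D k=\frac{1}{2}\cdot\frac{\pi^2}{4}=\frac{\pi^2}{8}$.

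Finally I would substitute the two accepted values, $\int_0^1\frac{\mathbf K-\pi/2}{k}\D k=-2G+\pi\log2$ (Eq.~\ref{eq:low_deg}) and $\int_0^1\frac{\mathbf K^2-\pi^2/4}{k}\D k=\frac{\pi^2}{2}\log2-\frac{7\zeta(3)}{4}$ (Eq.~\ref{eq:K_sqr_minus_pi4}), together with $\frac{2}{\pi}\cdot\frac{\pi^2}{8}=\frac{\pi}{4}$; the two $\pi\log2$ contributions cancel, leaving exactly $\frac{\pi}{4}-2G+\frac{7\zeta(3)}{2\pi}$. The mathematical heart of the argument is the recognition of the total-derivative identity for $(1-k^2)[\mathbf K(k)]^2$; the only delicate point is the $0\cdot\infty$ boundary term at $k=1$ and the termwise recombination of the three convergent integrals, both of which are controlled by the logarithmic nature of the singularity of $\mathbf K$ there, so I expect no genuine obstacle beyond careful bookkeeping.
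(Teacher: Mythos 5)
Your proof is correct, and it shares the paper's overall skeleton: both arguments reduce Eq.~\ref{eq:KE_pi_G_zeta3} to the two accepted evaluations (Eqs.~\ref{eq:low_deg} and \ref{eq:K_sqr_minus_pi4}) plus the single auxiliary quadrature $\int_0^1 k^{-1}\mathbf K(k)[\mathbf K(k)-\mathbf E(k)]\,\D k=\pi^2/8$, which is precisely the paper's Eq.~\ref{eq:KEK_pi8}, and your final bookkeeping that cancels the two $\pi\log2$ contributions is the same in substance as the paper's. Where you genuinely diverge is in the proof of that auxiliary integral. The paper routes it through the second-order Legendre equation (Eq.~\ref{eq:LegendreDiffEq_half}) for $\mathbf K(\sqrt{t})$: it integrates $\int_0^1[\mathbf K(\sqrt t)]^2\log t\,\D t$ by parts twice, and the crucial point is that this $\log t$ integral reappears on the right-hand side and drops out, isolating $\int_0^1 t^{-1}\mathbf K(\sqrt t)[\mathbf E(\sqrt t)-\mathbf K(\sqrt t)]\,\D t=-\pi^2/4$ together with a boundary term $\pi^2$. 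You instead observe the first-order exact identity
\begin{equation*}
\frac{\D}{\D k}\bigl[(1-k^2)[\mathbf K(k)]^2\bigr]=-\frac{2\,\mathbf K(k)[\mathbf K(k)-\mathbf E(k)]}{k},
\end{equation*}
which follows at once from the standard derivative formulae and collapses the computation to boundary values: $\pi^2/4$ at $k=0$ and $0$ at $k=1$, the factor $1-k^2$ annihilating the squared logarithm. Your route is more elementary and self-contained --- no auxiliary $\log t$ weight, no self-referencing integration by parts --- whereas the paper's ODE manipulation is of a piece with its later uses of Legendre-type differential equations in \S\ref{subsec:comb}. Your treatment of the delicate points (the $O(k)$ behaviour of the integrand at the origin, the integrable logarithmic singularity at $k=1$, and the regrouping of three absolutely convergent integrals via a pointwise identity in which the subtracted constants $\pi/2$ and $\pi^2/4$ cancel exactly) is sound, so the argument is complete as stated.
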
\begin{proof}First, by virtue of the ordinary differential equation \begin{align}\frac{\D}{\D t}\left[ t(1-t)\frac{\D \mathbf K(\sqrt{t})}{\D t} \right]=\frac{ \mathbf K(\sqrt{t})}{4},\label{eq:LegendreDiffEq_half}\end{align}we may apply integration by parts to the following integral:\begin{align*} \int_0^1[\mathbf K(\sqrt{t})]^2\log t\D t={}&4\int_0^1\mathbf K(\sqrt{t})\log t\frac{\D}{\D t}\left[t(1-t)\frac{\D \mathbf K(\sqrt{t})}{\D t}\right]\D t=-4\int_0^1t(1-t)\frac{\D\left[\mathbf K(\sqrt{t})\log t\right]}{\D t}\frac{\D \mathbf K(\sqrt{t})}{\D t}\D t\notag\\={}&4\lim_{t\to0^{+}}\left\{t(1-t)\mathbf K(\sqrt{t})\frac{\D\left[\mathbf K(\sqrt{t})\log t\right]}{\D t}\right\}+4\int_0^1\mathbf K(\sqrt{t})\frac{\D }{\D t}\left\{t(1-t)\frac{\D\left[\mathbf K(\sqrt{t})\log t\right]}{\D t}\right\}\D t\notag\\={}&\pi^2+4\int_0^1\frac{\mathbf K(\sqrt{t})}{t}[\mathbf E(\sqrt{t})-\mathbf K(\sqrt{t})]\D t+\int_0^1[\mathbf K(\sqrt{t})]^2\log t\D t,\end{align*}which leads to\begin{align}\label{eq:KEK_pi8}\int_0^1\frac{\mathbf K(k)}{k}\left[\mathbf  E(k) -\mathbf K(k)\right]\D k=\frac{1}{2}\int_0^1\frac{\mathbf K(\sqrt{t})}{t}[\mathbf E(\sqrt{t})-\mathbf K(\sqrt{t})]\D t=-\frac{\pi^{2}}{8}.\end{align}Then, as we add Eq.~\ref{eq:KEK_pi8} to the left-hand side of Eq.~\ref{eq:KE_pi_G_zeta3}, before invoking Eqs.~\ref{eq:low_deg} and \ref{eq:K_sqr_minus_pi4}   subsequently, we may compute\begin{align*}\int_0^1\frac{\mathbf K(k)}{k}\left[ 1-\frac{2}{\pi}\mathbf E(k) \right]\D k={}&\int_0^1\frac{\mathbf K(k)}{k}\left[ 1-\frac{2}{\pi}\mathbf E(k) \right]\D k+\frac{\pi}{4}+\frac{2}{\pi}\int_0^1\frac{\mathbf K(k)}{k}\left[\mathbf  E(k) -\mathbf K(k)\right]\D k\notag\\={}&\frac{\pi}{4}+\int_0^1\frac{\mathbf K(k)}{k}\left[ 1-\frac{2}{\pi}\mathbf K(k) \right]\D k=\frac{\pi}{4}+\int_0^1\frac{\mathbf K(k)+\frac{\pi}{2}}{k}\left[ 1-\frac{2}{\pi}\mathbf K(k) \right]\D k-2G+\pi\log2\notag\\={}&\frac{\pi}{4}-2G+\pi\log2-\frac{2}{\pi}\int_{0}^{1}\left\{[\mathbf K(k)]^{2}-\frac{\pi^2}{4}\right\}\frac{ \D k}{k}=\frac{\pi}{4}-2G+\frac{7\zeta(3)}{2\pi},\end{align*}as claimed.\qed\end{proof}

The  reduction of a logarithmic factor in the integrand of  Eq.~\ref{eq:K_sqr_log} is not purely accidental. It can be viewed as a consequence of  Parseval-type identities for the Tricomi transform (see \cite[Ref.][\S4.3, Eqs.~2 and 4]{TricomiInt} or \cite[Ref.][Eqs.~11.237 and 11.241]{KingVol1}):\begin{align}\int_{-1}^1 f(x)(\widehat{\mathcal T}g)(x)\D x+\int_{-1}^1 g(x)(\widehat{\mathcal T}f)(x)\D x={}&0,\label{eq:Tricomi_Parseval1}\\\int_{-1}^1 f(x)g(x)\D x-\int_{-1}^1 (\widehat{\mathcal T}f)(x)(\widehat{\mathcal T}g)(x)\D x={}&\frac{1}{\pi}\int_{-1}^1\left[ f(x)(\widehat{\mathcal T}g)(x) +g(x)(\widehat{\mathcal T}f)(x)\right]\log\frac{1+x}{1-x}\D x,\label{eq:Tricomi_Parseval2}\end{align}where $ f\in L^p(-1,1),p>1$; $ g\in L^q(-1,1),q>1$ and $ \frac1p+\frac1q<1$, where  the Tricomi transform is defined as a Cauchy principal value\begin{align*}(\widehat{\mathcal T} f)(x):=\mathcal P\int_{-1}^1\frac{f(\xi)\D \xi}{\pi(x-\xi)},\quad\text{a.e. } x\in(-1,1),\end{align*}and induces a bounded  linear operator $ \widehat {\mathcal T}:L^p(-1,1)\longrightarrow L^p(-1,1)$ for $ 1<p<+\infty$ \cite[Ref.][p.~188]{SteinWeiss}. In the next corollary, we  explain this ``Tricomi pairing'' by putting   Eq.~\ref{eq:K_sqr_log} in a slightly more general context. \begin{corollary}[Some Applications of Tricomi Pairing]The following chain of identities hold:\begin{align}
&\int_0^1\left[\mathbf K\left(\sqrt{1-\kappa^2}\right)\right]^2\D \kappa-\int_0^1[\mathbf K(k)]^2\D k=\frac{2}{\pi}\int_0^1\mathbf K(\xi)\mathbf K\left( \sqrt{1-\xi^2} \right)\log\frac{1+\xi}{1-\xi}\D\xi=\frac{2}{\pi}\int_0^1\mathbf K(k)\mathbf K\left( \sqrt{1-k^2} \right)\log\frac{1}{k}\D k\notag\\={}&\frac{1}{\pi}\int_0^1\frac{[\mathbf K(k)]^2-[\mathbf K(\sqrt{1-k^2})]^{2}}{\sqrt{1-k^{2}}}\log k\D k=\frac{1}{\pi}\int_0^1\frac{\mathbf K(k)\mathbf K(\sqrt{1-k^{2}})}{\sqrt{1-k^{2}}}\arccos(1-2k^2)\D k,\label{eq:K_sqr_log_chain}
\end{align}where $ \arccos x\in[0,\pi]$ for $ x\in[-1,1]$.\end{corollary}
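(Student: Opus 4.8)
The plan is to prove that every member of \ref{eq:K_sqr_log_chain} equals $\int_0^1[\mathbf K(k)]^2\D k$, after which the chain follows by transitivity. Three of the reductions are elementary given the earlier results. For the leftmost member, Landen's transformation gives $\int_0^1[\mathbf K(\sqrt{1-\kappa^2})]^2\D\kappa=2\int_0^1[\mathbf K(k)]^2\D k$ (the evaluation recorded in the remark preceding this corollary, cf.~\cite{BNSW2011,Wan2012}), so the difference collapses to $\int_0^1[\mathbf K(k)]^2\D k$. For the fourth member, the substitution $k\mapsto\sqrt{1-k^2}$ turns $\int_0^1\frac{[\mathbf K(\sqrt{1-k^2})]^2}{\sqrt{1-k^2}}\log k\,\D k$ into $\int_0^1\frac{[\mathbf K(k)]^2}{\sqrt{1-k^2}}\log\sqrt{1-k^2}\,\D k$, so the fourth member equals $\frac1\pi\int_0^1\frac{[\mathbf K(k)]^2}{\sqrt{1-k^2}}\log\frac{k}{\sqrt{1-k^2}}\D k$; invoking Eq.~\ref{eq:K_sqr_log} and then $k\mapsto\sqrt{1-k^2}$ once more rewrites this as $\frac12\int_0^1[\mathbf K(\sqrt{1-k^2})]^2\D k=\int_0^1[\mathbf K(k)]^2\D k$. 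For the fifth member I would use $\arccos(1-2k^2)=2\arcsin k$ together with the fact that the measure $\frac{\mathbf K(k)\mathbf K(\sqrt{1-k^2})}{\sqrt{1-k^2}}\D k$ is invariant under the involution $k\mapsto\sqrt{1-k^2}$, under which $\arcsin k\mapsto\frac\pi2-\arcsin k$; averaging the integrand with its involute replaces $2\arcsin k$ by the constant $\frac\pi2$, reducing the fifth member to $\frac12\int_0^1\frac{\mathbf K(k)\mathbf K(\sqrt{1-k^2})}{\sqrt{1-k^2}}\D k$.

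\textbf{The Tricomi pairing.} The genuine content is the evaluation of the two log-weighted products (the second and third members) and of the companion integral $\int_0^1\frac{\mathbf K(k)\mathbf K(\sqrt{1-k^2})}{\sqrt{1-k^2}}\D k$ just produced, all of which I would treat by the pairing advertised in the section heading. The device is the change of variable $x=1-2k^2$, which carries $(0,1)$ onto $(-1,1)$ and sets $u(x):=\mathbf K(\sqrt{(1-x)/2})=\frac\pi2 P_{-1/2}(x)$, with reflected partner $u(-x)=\mathbf K(\sqrt{(1+x)/2})$; under this map $\mathbf K(k)\mathbf K(\sqrt{1-k^2})=u(x)u(-x)$, while the weights $\log\frac1k$, $\log\frac{1+k}{1-k}$ and $\arccos(1-2k^2)$ all become simple functions of $x$. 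Crucially $\log\frac{1+x}{1-x}=\pi\,\widehat{\mathcal T}[1](x)$ and $\log\frac{|x|}{\sqrt{1-x^2}}=\frac\pi2\,\widehat{\mathcal T}[\sgn](x)$ are themselves finite Hilbert transforms of elementary functions, so the log-weighted bilinear integrals have exactly the shape of the right-hand sides of the Parseval identities \ref{eq:Tricomi_Parseval1}--\ref{eq:Tricomi_Parseval2}. Feeding the pair $\{u,\,u(-\cdot)\}$ (equivalently, the even and odd combinations $u(x)\pm u(-x)$) into \ref{eq:Tricomi_Parseval1} and \ref{eq:Tricomi_Parseval2} converts each log-weighted integral into an unweighted one, and the residual products of $\mathbf K$ and $\mathbf E$ are collapsed with Legendre's relation $\mathbf E(k)\mathbf K(\sqrt{1-k^2})+\mathbf E(\sqrt{1-k^2})\mathbf K(k)-\mathbf K(k)\mathbf K(\sqrt{1-k^2})=\frac\pi2$ and with Eq.~\ref{eq:KKKK}; the antisymmetric identity \ref{eq:Tricomi_Parseval1} accounts for the second and third members sharing a common value, since $u(x)u(-x)$ is even and $\log\frac{1+x}{1-x}$ is odd on $(-1,1)$.

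\textbf{The main obstacle.} The hard part is computing $\widehat{\mathcal T}u$ correctly. One is tempted to assert the clean reciprocity $\widehat{\mathcal T}P_{-1/2}=\frac2\pi Q_{-1/2}$, i.e.\ $\widehat{\mathcal T}u=u(-\cdot)$, but this is false: the finite Hilbert transform carries no $L^2$ point spectrum, so $u\pm i\,u(-\cdot)\in L^2(-1,1)$ cannot satisfy $\widehat{\mathcal T}[u\pm i\,u(-\cdot)]=\mp i[u\pm i\,u(-\cdot)]$, and a direct principal-value estimate at $x=0$ already gives $\widehat{\mathcal T}u(0)\neq u(0)=\mathbf K(1/\sqrt2)$. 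The correct transform is a Ferrers function of degree $-\tfrac12$ carrying the half-integer correction term, and must be read off the tables underlying \ref{eq:Tricomi_Parseval1}--\ref{eq:Tricomi_Parseval2} (see \cite{KingVol1,TricomiInt}); only after it is re-expressed through $\mathbf K$ and $\mathbf E$ do the Parseval right-hand sides simplify. The second, more clerical, difficulty is the weight bookkeeping: the Jacobian of $x=1-2k^2$ sends plain $\D x$ to $k\,\D k$ and the Chebyshev measure $\frac{\D x}{\sqrt{1-x^2}}$ to $\frac{\D k}{\sqrt{1-k^2}}$, so matching the output of each Parseval application to the precise integrand of a given member of \ref{eq:K_sqr_log_chain} requires careful parity splitting in $x$ and repeated use of the involution $k\mapsto\sqrt{1-k^2}$. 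Once $\int_0^1\frac{\mathbf K(k)\mathbf K(\sqrt{1-k^2})}{\sqrt{1-k^2}}\D k=2\int_0^1[\mathbf K(k)]^2\D k$ is secured in this way, the fifth member equals $\int_0^1[\mathbf K(k)]^2\D k$ as well, and the chain is complete.
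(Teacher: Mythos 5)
You have correct and even elegant reductions at the periphery, but the core of the corollary is never proved. Your treatment of the first member (the factor-two Landen identity), of the fourth member (via Eq.~\ref{eq:K_sqr_log}), and your symmetrization of the fifth member — $\arccos(1-2k^2)=2\arcsin k$ together with the invariance of $\mathbf K(k)\mathbf K(\sqrt{1-k^2})(1-k^2)^{-1/2}\,\D k$ under $k\mapsto\sqrt{1-k^2}$, which is genuinely slicker than the paper's step — are all fine as far as they go. But the symmetrization only trades the fifth member for $\int_0^1\mathbf K(k)\mathbf K(\sqrt{1-k^2})(1-k^2)^{-1/2}\D k=2\int_0^1[\mathbf K(k)]^2\D k$, which is exactly as deep as what you started with, and the second and third members (hence the first equality of the chain) are left entirely to an unexecuted computation: you never produce $\widehat{\mathcal T}u$ for $u(x)=\mathbf K(\sqrt{(1-x)/2})=\tfrac\pi2 P_{-1/2}(x)$. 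Your spectral argument correctly shows the naive reciprocity $\widehat{\mathcal T}u=u(-\cdot)$ fails in the variable $x=1-2k^2$, but the proposal then stops ("read off the tables") precisely where the proof must begin — no general table in \cite{TricomiInt,KingVol1} supplies this Ferrers-function transform, and without an explicit transform pair the Parseval identities \ref{eq:Tricomi_Parseval1}--\ref{eq:Tricomi_Parseval2} output nothing. Moreover, your parity explanation for why the second and third members coincide is wrong: the two log weights live in moduli related by the M\"obius map $\xi=(1-k)/(1+k)$, not by $x\mapsto-x$; the correct derivation uses that substitution with the two Landen transformations $2\mathbf K((1-k)/(1+k))=(1+k)\mathbf K(\sqrt{1-k^2})$ and $\mathbf K(2\sqrt k/(1+k))=(1+k)\mathbf K(k)$. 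Legendre's relation and Eq.~\ref{eq:KKKK}, which you invoke, play no role at all.

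The missing idea is that the obstacle you diagnose dissolves if one works in the modulus variable rather than in $x=1-2k^2$: there the \emph{signed} reciprocity holds exactly, Eq.~\ref{eq:BT}, $\widehat{\mathcal T}[\mathbf K(\sqrt{1-\cdot^2})](k)=\frac{k}{|k|}\mathbf K(k)$ (from \cite[Corollary 3.2(b)]{Zhou2013Pnu}). This maps an even function to an odd one, so it is not an eigenfunction relation and your (correct) no-point-spectrum objection does not forbid it; a single application of Eq.~\ref{eq:Tricomi_Parseval2} with $f=g=\mathbf K(\sqrt{1-x^2})$ then yields the first equality of Eq.~\ref{eq:K_sqr_log_chain} at a stroke. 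The fourth member is tied back to the third via the kernel representation Eq.~\ref{eq:K_sqr_star} (your alternative through Eq.~\ref{eq:K_sqr_log} is also valid). For the fifth member the paper does pass to $x=1-2k^2$, but with two explicit transform pairs your plan lacks: the product formula $\widehat{\mathcal T}\bigl[\tfrac2\pi\mathbf K(\sqrt{(1+\cdot)/2})\,\mathbf K(\sqrt{(1-\cdot)/2})\bigr](x)=[\mathbf K(\sqrt{(1+x)/2})]^2-[\mathbf K(\sqrt{(1-x)/2})]^2$ from \cite[Corollary 3.4]{Zhou2013Pnu}, and the elementary pair $\sqrt{1-x^2}\,f(x)=\log\frac{1+x}{2}\Rightarrow\sqrt{1-x^2}\,(\widehat{\mathcal T}f)(x)=-\arccos x$ from \cite[Eqs.~12A.5 and 12A.12]{KingVol2}, fed into Eq.~\ref{eq:Tricomi_Parseval1}. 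To close your argument you would have to import these pairs (or re-derive Eq.~\ref{eq:BT}); as written, the proposal identifies the right machinery but halts at its essential input.
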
\begin{proof}In \cite[Ref.][Corollary 3.2(b)]{Zhou2013Pnu}, we showed that \begin{align}\label{eq:BT0}\mathbf K(k)=\mathcal P\int_{-1}^1\frac{\mathbf K(\sqrt{1-\kappa^{2}})\D\kappa}{\pi(k-\kappa)}=\frac{k}{\pi}\mathcal P\int_{-1}^1\frac{\mathbf K(\sqrt{1-\kappa^{2}})\D\kappa}{k^{2}-\kappa^{2}},\quad 0<k<1,\end{align}which entails a Tricomi transform formula:\begin{align}\mathcal P\int_{-1}^1\frac{\mathbf K(\sqrt{1-\kappa^{2}})\D\kappa}{\pi(k-\kappa)}=\frac{k}{|k|}\mathbf K(k),\quad k\in(-1,0)\cup(0,1).\label{eq:BT}\end{align}

Now, setting $ f(x)=g(x)=\mathbf K(\sqrt{1-x^2})$ in Eq.~\ref{eq:Tricomi_Parseval2}, and reading off $ (\widehat{\mathcal T} f)(x)=(\widehat{\mathcal T} g)(x)=\frac{x}{|x|}\mathbf K(x)$ from Eq.~\ref{eq:BT}, one obtains the first equality of Eq.~\ref{eq:K_sqr_log_chain}:
\begin{align*}\int_0^1\left[\mathbf K\left(\sqrt{1-\kappa^2}\right)\right]^2\D \kappa-\int_0^1[\mathbf K(k)]^2\D k=\frac{2}{\pi}\int_0^1\mathbf K(\xi)\mathbf K\left( \sqrt{1-\xi^2} \right)\log\frac{1+\xi}{1-\xi}\D\xi.\end{align*} Using the variable substitution $ \xi=(1-k)/(1+k)$ along with Landen's transformations for $ 2\mathbf K((1-k)/(1+k))=(1+k)\mathbf K(\sqrt{1-k^2})$ and $ \mathbf K(2\sqrt{k}/(1+k))=(1+k)\mathbf K(k)$, we may convert the rightmost term in the equation above into\begin{align*}\frac{2}{\pi}\int_0^1\mathbf K(\xi)\mathbf K\left( \sqrt{1-\xi^2} \right)\log\frac{1+\xi}{1-\xi}\D\xi=\frac{2}{\pi}\int_0^1\mathbf K(k)\mathbf K\left( \sqrt{1-k^2} \right)\log\frac{1}{k}\D k,\end{align*}thereby completing the first line in Eq.~\ref{eq:K_sqr_log_chain}.

To move on to the next term of Eq.~\ref{eq:K_sqr_log_chain}, we fall back on the following formula (see \cite[Ref.][Eq.~26]{Wan2012} or \cite[Ref.][Eq.~41$^*$]{Zhou2010})\begin{align}\label{eq:K_sqr_star}
[\mathbf K(\sqrt{t})]^2={}&\frac{2}{\pi}\int_0^1\frac{\mathbf K(\sqrt{\mathstrut\mu})\mathbf K(\sqrt{\mathstrut1-\mu})\D \mu}{1-\mu t},\quad 0\leq t<1
\end{align} to compute \begin{align*}&\frac{1}{\pi}\int_0^1\frac{[\mathbf K(k)]^2-[\mathbf K(\sqrt{1-k^2})]^{2}}{\sqrt{1-k^{2}}}\log k\D k=\frac{1}{\pi}\int_0^1\frac{[\mathbf K(k)]^2}{\sqrt{1-k^{2}}}\log \frac{k}{\sqrt{1-k^{2}}}\D k=\frac{1}{\pi}\int_0^{\pi/2}[\mathbf K(\sin\theta)]^2\log \tan\theta\D \theta\notag\\={}&\frac{2}{\pi^2}\int_0^{\pi/2}\left[\int_0^1\frac{\mathbf  K(\sqrt{\mathstrut\mu})\mathbf K(\sqrt{\mathstrut1-\mu})\D\mu}{1-\mu\sin^2\theta}\right]\log \tan\theta\D \theta=-\frac{1}{2\pi}\int_0^1\frac{\mathbf  K(\sqrt{\mathstrut\mu})\mathbf K(\sqrt{\mathstrut1-\mu})\log(1-\mu)\D\mu}{\sqrt{1-\mu}},\end{align*}where the  integration over $ \theta\in[0,\pi/2]$ can be rendered in closed form. A substitution $ k=\sqrt{1-\mu}$ in the last step will bring the equation above to the last term in the first line of Eq.~\ref{eq:K_sqr_log_chain}. In particular, this result also hearkens back to Eq.~\ref{eq:K_sqr_log}.

Now, we note that the following Tricomi transform formula  (see \cite[Ref.][Corollary~3.4]{Zhou2013Pnu}) \begin{align*}\mathcal P\int_{-1}^1\mathbf K\left( \sqrt{\frac{1+\xi}{2}} \right)\mathbf K\left( \sqrt{\frac{1-\xi}{2}} \right)\frac{ 2\D \xi}{\pi (x-\xi)}={}&\left[\mathbf K \left(\sqrt{\frac{\vphantom{\xi}1+x}{2}}\right) \right]^2-\left[\mathbf K \left(\sqrt{\frac{\vphantom{\xi}1-x}{2}}\right) \right]^2,\quad -1<x<1\end{align*}allows us to specialize  Eq.~\ref{eq:Tricomi_Parseval1} into\begin{align}\int_{-1}^1f(x)
\left\{ \left[\mathbf K \left(\sqrt{\frac{\vphantom{\xi}1+x}{2}}\right) \right]^2-\left[\mathbf K \left(\sqrt{\frac{\vphantom{\xi}1-x}{2}}\right) \right]^2 \right\}\D x={}&-2\int_{-1}^1(\widehat {\mathcal T}f)(x)
\mathbf K\left( \sqrt{\frac{\vphantom{\xi}1+x}{2}} \right)\mathbf K\left( \sqrt{\frac{\vphantom{\xi}1-x}{2}} \right)\D x\label{eq:TP3}\end{align}for arbitrary $ f\in L^p(-1,1),p>1$.
From \cite[Ref.][Eqs.~12A.5 and 12A.12]{KingVol2}, we know that $ \sqrt{1-x^2}f(x)=\log\frac{1+x}{2}$ entails  $ \sqrt{1-x^2}(\widehat {\mathcal T}f)(x)=-\arccos x$, so Eq.~\ref{eq:TP3} further specializes to \begin{align*}&\frac{1}{\pi}\int_0^1\frac{[\mathbf K(k)]^2-[\mathbf K(\sqrt{1-k^2})]^{2}}{\sqrt{1-k^{2}}}\log k\D k=\frac{1}{4\pi}\int_{-1}^1\frac{\log\frac{1+x}{2}}{ \sqrt{1-x^2}}
\left\{ \left[\mathbf K \left(\sqrt{\frac{\vphantom{\xi}1+x}{2}}\right) \right]^2-\left[\mathbf K \left(\sqrt{\frac{\vphantom{\xi}1-x}{2}}\right) \right]^2 \right\}\D x\notag\\={}&\frac{1}{2\pi}\int_{-1}^1\frac{\arccos x}{ \sqrt{1-x^2}}
\mathbf K\left( \sqrt{\frac{\vphantom{\xi}1+x}{2}} \right)\mathbf K\left( \sqrt{\frac{\vphantom{\xi}1-x}{2}} \right)\D x=\frac{1}{\pi}\int_0^1\frac{\mathbf K(k)\mathbf K(\sqrt{1-k^{2}})}{\sqrt{1-k^{2}}}\arccos(1-2k^2)\D k,\end{align*}which completes the second line of  Eq.~\ref{eq:K_sqr_log_chain}. \qed\end{proof}
 \section{Integrations on $ S^3\times S^3$\label{subsec:S3}}\subsection{Ultraspherical Harmonic Expansions and Geometric Parametrizations\label{subsubsec:S3xS3}}
Now, we shall extend the geometric approach to integrations on the Cartesian product of hyperspheres $ S^n\times S^n$, where $n>2$. The usual spherical coordinates have a natural extension to the unit hypersphere $ S^n:=\{\bm x=(x_1,\dots,x_{n+1})\in\mathbb R^{n+1}|\sum_{j=1}^{n+1}x^2_j=1\}$ as \[\begin{cases}x_{1}=\cos\theta_1   \\
x_2=\sin\theta_1\cos\theta_2  \\
x_3=\sin\theta_1\sin\theta_2\cos\theta_3  \\\;\;\quad
\vdots  \\x_{n}=\sin\theta_1\cdots\sin\theta_{n-1}\cos\theta_n\\
x_{n+1}=\sin\theta_1\cdots\sin\theta_{n-1}\sin\theta_{n} \\
\end{cases}(0\leq\theta_j\leq\pi,j=1,2,\dots,n-1;0\leq\theta_n<2\pi)\]so that the standard surface element (induced Lebesgue measure) can be put  into\[\D\sigma=\prod_{j=1}^n\sin^{n-j}\theta_j\D\theta_j.\]In place of the spherical harmonics $ Y_{\ell m}$, we need the Gegenbauer polynomials $ C^{(\lambda)}_\ell(\xi)$ defined by the generating function\[\frac1{(1-2t\xi+t^2)^\lambda}=\sum_{\ell=0}^\infty t^\ell C_\ell^{(\lambda)}(\xi),\quad 0\leq t<1,-1\leq\xi\leq 1.\]\begin{lemma}[Convolution on Hyperspheres]\label{lm:Sn_int}Let $ S^n\subset\mathbb R^{n+1}$ $(n>2)$ be the unit hypersphere equipped with the standard surface element $ \D\sigma$ and surface area $ m(S^{n}):=\int_{S^n}\D\sigma=2\pi^{(n+1)/2}/\Gamma(\frac{n+1}{2})$,  then  the product of the following three functions defined for $ \bm n,\bm n_1,\bm n_2,\bm n'\in S^n$\[f(\bm n\cdot\bm n_{1})=f_{0}+\sum_{\ell=1}^\infty \frac{f^{\phantom{\frac12}\!\!\!}_\ell C_{\ell}^{(\frac{n-1}{2})}(\bm n\cdot\bm n_{1})}{n-1},\quad g(\bm n_{1}\cdot\bm n_{2})=g_{0}+\sum_{\ell=1}^\infty \frac{g^{\phantom{\frac12}\!\!\!}_\ell C_{\ell}^{(\frac{n-1}{2})}(\bm n_{1}\cdot\bm n_{2})}{n-1},\quad h(\bm n_{2}\cdot\bm n')=h_{0}+\sum_{\ell=1}^\infty \frac{h^{\phantom{\frac12}\!\!\!}_\ell C_{\ell}^{(\frac{n-1}{2})}(\bm n_{2}\cdot\bm n')}{n-1}\]will integrate to\begin{align*}\int_{S^n}\frac{\D \sigma_1}{m(S^{n})}\int_{S^n}\frac{\D \sigma_2}{m(S^{n})}f(\bm n\cdot\bm n_{1})g(\bm n_{1}\cdot\bm n_{2})h(\bm n_{2}\cdot\bm n')=f_{0}g_0h_0+\sum_{\ell=1}^\infty f_\ell g_\ell h_\ell  \frac{C_{\ell}^{(\frac{n-1}{2})}(\bm n\cdot\bm n')}{(n-1)(2\ell+n-1)^2},\end{align*}so long as both sides converge. Furthermore, the above integral formula remains valid for $ n=2$. \end{lemma}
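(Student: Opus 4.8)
The plan is to reduce the convolution to a diagonal action on spherical harmonics, exactly in the spirit of the proof of Lemma~\ref{lm:S2}, using the addition theorem for ultraspherical polynomials as the bridge between the Gegenbauer expansions and an orthonormal basis $\{Y_{\ell k}\}$ of the space $\mathcal H_\ell$ of degree-$\ell$ spherical harmonics on $S^n$. First I would record the zonal-harmonic identity \cite{SteinWeiss} in the normalized form
\[\sum_{k} Y_{\ell k}(\bm n)\overline{Y_{\ell k}(\bm n')}=\frac{1}{m(S^n)}\,\frac{2\ell+n-1}{n-1}\,C_\ell^{(\frac{n-1}{2})}(\bm n\cdot\bm n'),\]
the sum running over the chosen basis of $\mathcal H_\ell$. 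The coefficient here is exactly what renders the $(n-1)^{-1}$ normalization in the statement natural: it issues from the standard formula $\sum_k Y_{\ell k}(\bm n)\overline{Y_{\ell k}(\bm n')}=\frac{N(n,\ell)}{m(S^n)}C_\ell^{(\frac{n-1}{2})}(\bm n\cdot\bm n')/C_\ell^{(\frac{n-1}{2})}(1)$ upon simplifying the quotient $N(n,\ell)/C_\ell^{(\frac{n-1}{2})}(1)=(2\ell+n-1)/(n-1)$, using $N(n,\ell)=\tfrac{(2\ell+n-1)(\ell+n-2)!}{\ell!\,(n-1)!}$ and $C_\ell^{(\frac{n-1}{2})}(1)=\tfrac{(\ell+n-2)!}{(n-2)!\,\ell!}$.

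With this identity in hand I would rewrite each factor of the integrand as a bilinear spherical-harmonic series, e.g.
\[f(\bm n\cdot\bm n_1)=\sum_{\ell\geq0}\frac{m(S^n)\,f_\ell}{2\ell+n-1}\sum_{k}Y_{\ell k}(\bm n)\overline{Y_{\ell k}(\bm n_1)},\]
where the $\ell=0$ term is read off as the constant $f_0$ (since $C_0^{(\frac{n-1}{2})}\equiv1$ and $\sum_kY_{0k}(\bm n)\overline{Y_{0k}(\bm n')}=1/m(S^n)$), and likewise for $g(\bm n_1\cdot\bm n_2)$ and $h(\bm n_2\cdot\bm n')$. Substituting the three series into the double integral and interchanging summation with integration, the integrations over $\bm n_1$ and $\bm n_2$ are performed by the orthonormality relation $\int_{S^n}Y_{\ell k}\overline{Y_{\ell'k'}}\,\D\sigma=\delta_{\ell\ell'}\delta_{kk'}$. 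Each integration against the normalized measure $\D\sigma/m(S^n)$ contributes a factor $1/m(S^n)$ and forces the degree and order indices of two adjacent factors to coincide, so only a single diagonal sum over $\ell$ and $k$ survives, namely $\frac{1}{m(S^n)^2}\sum_{\ell,k}\tilde f_\ell\tilde g_\ell\tilde h_\ell\,Y_{\ell k}(\bm n)\overline{Y_{\ell k}(\bm n')}$ with $\tilde f_\ell=m(S^n)f_\ell/(2\ell+n-1)$ and similarly for $\tilde g_\ell,\tilde h_\ell$. Applying the addition theorem once more to collapse $\sum_kY_{\ell k}(\bm n)\overline{Y_{\ell k}(\bm n')}$ back into $C_\ell^{(\frac{n-1}{2})}(\bm n\cdot\bm n')$, the accumulated powers of $m(S^n)$ and of $(2\ell+n-1)$ cancel down to $(2\ell+n-1)^{-2}$, and one recovers precisely $f_0g_0h_0+\sum_{\ell\geq1}\frac{f_\ell g_\ell h_\ell}{(n-1)(2\ell+n-1)^2}C_\ell^{(\frac{n-1}{2})}(\bm n\cdot\bm n')$.

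The genuinely delicate point is not this bookkeeping but the interchange of the three infinite sums with the double integral, which is why the statement carries the proviso that both sides converge; I would justify it by working in $L^2(S^n)$, where the ultraspherical expansions converge in norm and convolution against the kernel $g$ is a bounded, self-adjoint operator whose eigenspaces are exactly the $\mathcal H_\ell$, so the termwise computation is legitimized by Parseval rather than by a pointwise dominated-convergence estimate. Finally, the case $n=2$ needs no separate argument: there $\lambda=\tfrac{n-1}{2}=\tfrac12$, so $C_\ell^{(1/2)}=P_\ell$, the prefactor $(n-1)^{-1}$ equals $1$, and $(2\ell+n-1)^2=(2\ell+1)^2$, whereupon the normalized addition theorem degenerates into the familiar identity $\sum_mY_{\ell m}(\bm n)\overline{Y_{\ell m}(\bm n')}=\frac{2\ell+1}{4\pi}P_\ell(\bm n\cdot\bm n')$ invoked in Lemma~\ref{lm:S2}, and the entire argument specializes verbatim.
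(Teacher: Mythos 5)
Your argument is correct and lands exactly on the paper's formula, but it is implemented with a different key identity than the paper uses. The paper never introduces an orthonormal basis of hyperspherical harmonics: it invokes the explicit addition theorem for Gegenbauer polynomials, expanding $C_{\ell}^{(\frac{n-1}{2})}(\bm n_{1}\cdot\bm n_{2})$ about the axis $\bm n$ as a finite $j$-sum, kills every $j\geq 1$ term through a latitude--longitude decomposition of the $\bm n_1$-integral (the longitudinal integral of $C_{j}^{(\frac{n-2}{2})}$ over the equatorial $S^{n-1}$ vanishes by orthogonality against $C_{0}^{(\frac{n-2}{2})}$), and then evaluates the surviving $j=0$ term via the one-dimensional orthogonality relation for $C_{\ell}^{(\frac{n-1}{2})}$ with weight $(1-\xi^2)^{(n-2)/2}$, normalized by the ratio $m(S^{n-1})/m(S^{n})$; this yields the intermediate formula for $\int_{S^n} f(\bm n\cdot\bm n_1)g(\bm n_1\cdot\bm n_2)\,\D\sigma_1/m(S^n)$ with a single factor $(n-1)^{-1}(2\ell+n-1)^{-1}$, which is then iterated. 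Your zonal-harmonic identity $\sum_{k}Y_{\ell k}(\bm n)\overline{Y_{\ell k}(\bm n')}=\frac{2\ell+n-1}{(n-1)\,m(S^n)}\,C_{\ell}^{(\frac{n-1}{2})}(\bm n\cdot\bm n')$ (whose normalization you verify correctly from $N(n,\ell)$ and $C_{\ell}^{(\frac{n-1}{2})}(1)$) packages precisely this machinery --- the paper's $j$-sum with only the $j=0$ survivor is the coordinate form of your basis decomposition --- so your orthonormality bookkeeping replaces the paper's two-stage special-function computation. What your route buys is conceptual economy and a verbatim generalization of the paper's own $S^2$ argument in Lemma~\ref{lm:S2}; what the paper's route buys is self-containedness within classical identities, never requiring the dimension count of $\mathcal H_\ell$ or a choice of basis. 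Your disposal of the case $n=2$ agrees with the paper's closing remark.

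Two small caveats. First, your displayed expansion with the uniform coefficient $m(S^n)f_\ell/(2\ell+n-1)$ gives $f_0/(n-1)$ at $\ell=0$, not $f_0$, under the lemma's convention (the constant term carries no $1/(n-1)$); you correct this verbally, and with $\tilde f_0=m(S^n)f_0$ the constant term $f_0g_0h_0$ does come out right, but the display as written is off by a factor of $n-1$ at $\ell=0$. Second, your $L^2$/Parseval justification is tidier than the paper's appeal to existence ``in the distributional sense,'' but be aware that it does not literally cover the lemma's principal application: in Proposition~\ref{prop:S3} one takes $f(\bm n\cdot\bm n_1)=|\bm n-\bm n_1|^{-2}$ on $S^3$, which belongs to $L^p(S^3)$ only for $p<3/2$, so $L^2$ theory is unavailable there; one must instead exploit positivity of the kernels together with the Abel-type regularization $g_\ell=2k^\ell$, $0<k<1$, already built into Eq.~\ref{eq:ImLi2} (letting $k\to1^-$ by monotone convergence), or $L^p$ mapping bounds for zonal convolutions. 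Neither point affects the correctness of your derivation of the stated identity.
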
\begin{proof}To complete the integration with respect to $ \bm n_1\in S^n$ $(n>2)$, we exploit the addition theorem of Gegenbauer polynomials (see 8.934.3 in \cite{GradshteynRyzhik} or \S5.3.2 in \cite{MOS}) in the form of\begin{align*}C_{\ell}^{(\frac{n-1}{2})}(\bm n_{1}\cdot\bm n_{2})={}&\frac{(n-3)!}{[\Gamma(\frac{n-1}{2})]^{2}}\sum_{j=0}^\ell\frac{2^{2j}(\ell-j)![\Gamma(\frac{n-1}{2}+j)]^2}{(n-2+\ell+j)!}(n-2+2j)\left[ 1- ( \bm n_{1}\cdot\bm n)^{2}\right]^{j/2}\left[ 1- ( \bm n\cdot\bm n_{2})^2\right]^{j/2}\times\notag\\&\times C_{\ell-j}^{(\frac{n-1}{2}+j)}( \bm n_{1}\cdot\bm n)C_{\ell-j}^{(\frac{n-1}{2}+j)}( \bm n\cdot\bm n_{2}) C_{j}^{(\frac{n-2}{2})}\left(\frac{\bm n_{1}-\bm n(\bm n_{1}\cdot\bm n)}{\left\vert \bm n_{1}-\bm n(\bm n_{1}\cdot\bm n)\right\vert}\cdot\frac{\bm n_{2}-\bm n(\bm n_{2}\cdot\bm n)}{\left\vert \bm n_{2}-\bm n(\bm n_{2}\cdot\bm n)\right\vert}\right).\end{align*}  We assert that only the $ j=0$ term in the finite sum on the right-hand side of the equation above will eventually contribute to the integral over $ \bm n_1\in S^n$. This  is because we may rotate $ \bm n_1$ about the axis  $\bm n$, without altering the value of $ \bm n\cdot\bm n_1$, amounting to a latitude-longitude breakdown of the integral on hypersphere in the following manner:\begin{align*}&\int_{S^{n}}{{f(\bm n\cdot\bm n_{1}) \left[ 1- ( \bm n_{1}\cdot\bm n)^{2}\right]^{j/2} C_{\ell-j}^{(\frac{n-1}{2}+j)}( \bm n_{1}\cdot\bm n)}} C_{j}^{(\frac{n-2}{2})}\left(\frac{\bm n_{1}-\bm n(\bm n_{1}\cdot\bm n)}{\left\vert \bm n_{1}-\bm n(\bm n_{1}\cdot\bm n)\right\vert}\cdot\frac{\bm n_{2}-\bm n(\bm n_{2}\cdot\bm n)}{\left\vert \bm n_{2}-\bm n(\bm n_{2}\cdot\bm n)\right\vert}\right)\frac{\D \sigma_1}{m(S^{n})} \notag\\={}&\int_{-1}^{ 1} f(\xi) ( 1- \xi^{2})^{(n-2+j)/2} C_{\ell-j}^{(\frac{n-1}{2}+j)}(\xi)\left[\int_{|\bm s|=1,\bm s\cdot\bm n=0}C_{j}^{(\frac{n-2}{2})}\left(\bm s\cdot\frac{\bm n_{2}-\bm n(\bm n_{2}\cdot\bm n)}{\left\vert \bm n_{2}-\bm n(\bm n_{2}\cdot\bm n)\right\vert}\right)\D\sigma_{\bm s}\right]\frac{\D \xi}{m(S^{n})}=0,\quad j=1,2,\dots,\ell.\end{align*}Here in the last line, $ \D\sigma_{\bm s}$ stands for the surface element of $ S^{n-1}$, which forms the ``equator'' with respect to the ``north pole'' $\bm n$; the  integral on $ \D\sigma_{\bm s}$ vanishes due to the orthogonality relation of Gegenbauer polynomials (see 8.904 in \cite{GradshteynRyzhik}):\[\int_0^{\pi}C_{j}^{(\frac{n-2}{2})}(\cos\varphi)\sin^{n-2}\varphi\D\varphi=\int_0^{\pi}C_{j}^{(\frac{n-2}{2})}(\cos\varphi)C_{0}^{(\frac{n-2}{2})}(\cos\varphi)\sin^{n-2}\varphi\D\varphi=0,\quad n>2,\;j=1,2,\dots,\ell.\] Now, to compute the surface integral\begin{align*}\int_{S^n}\frac{\D \sigma_1}{m(S^{n})}f(\bm n\cdot\bm n_{1})g(\bm n_{1}\cdot\bm n_{2})=\int_{S^n}\frac{\D \sigma_1}{m(S^{n})}f(\bm n\cdot\bm n_{1})\left[g_0+\sum_{\ell=1}^\infty  \frac{(n-2)!\ell!}{(n-2+\ell)!}\frac{g^{\phantom{\frac12}\!\!\!}_\ell C_{\ell}^{(\frac{n-1}{2})}(\bm n_{1}\cdot\bm n)C_{\ell}^{(\frac{n-1}{2})}(\bm n\cdot\bm n_{2})}{n-1}\right],\end{align*}we employ the orthogonality relation for the Gegenbauer polynomials (see 8.904 in \cite{GradshteynRyzhik})\[\int_{-1}^1C_{\ell'}^{(\frac{n-1}{2})}(\xi)C_\ell^{(\frac{n-1}{2})}(\xi)(1-\xi^{2})^{\frac{n-2}{2}}\D \xi=\frac{\pi(n-2+\ell)!}{2^{n-2}\ell!(\frac{n-1}{2}+\ell)[\Gamma(\frac{n-1}{2})]^2}\delta_{\ell'\ell}.\]Especially, with the latitude-longitude decomposition, we have the relation\begin{align*}\int_{S^n}\frac{\D \sigma_1}{m(S^{n})}C_{\ell'}^{(\frac{n-1}{2})}(\bm n\cdot\bm n_1)C_{\ell}^{(\frac{n-1}{2})}(\bm n_{1}\cdot\bm n)=\frac{m(S^{n-1})}{m(S^{n})}\int_{-1}^1C_{\ell'}^{(\frac{n-1}{2})}(\xi)C_\ell^{(\frac{n-1}{2})}(\xi)(1-\xi^{2})^{\frac{n-2}{2}}\D \xi=\frac{(n-2+\ell)!}{\ell!(n-2)!}\frac{n-1}{2\ell+n-1}\delta_{\ell'\ell},\end{align*}which proves the identity\[\int_{S^n}\frac{\D \sigma_1}{m(S^{n})}f(\bm n\cdot\bm n_{1})g(\bm n_{1}\cdot\bm n_{2})=f_{0}g_0+\sum_{\ell=1}^\infty f_\ell g_\ell\frac{C_{\ell}^{(\frac{n-1}{2})}(\bm n\cdot\bm n_{2})}{(n-1)(2\ell+n-1)},\] so long as both sides exist in the distributional sense. Yet another application of the addition theorem for Gegenbauer polynomials will  lead us to the formula\[\int_{S^n}\frac{\D \sigma_1}{m(S^{n})}\int_{S^n}\frac{\D \sigma_2}{m(S^{n})}f(\bm n\cdot\bm n_{1})g(\bm n_{1}\cdot\bm n_{2})h(\bm n_{2}\cdot\bm n')=f_{0}g_0h_0+\sum_{\ell=1}^\infty f_\ell g_\ell h_\ell  \frac{C_{\ell}^{(\frac{n-1}{2})}(\bm n\cdot\bm n')}{(n-1)(2\ell+n-1)^2},\] as claimed. It is worth noting that for $ n=2$, we have precisely\begin{align*}C_{\ell}^{(\frac{1}{2})}(\bm n\cdot\bm n')=P_{\ell}(\bm n\cdot\bm n')=4\pi\sum_{m=-\ell}^\ell\frac{\overline{Y_{\ell m}(\theta,\phi)}Y_{\ell m}(\theta',\phi')}{2\ell+1}\end{align*}for  $ \bm n(\theta,\phi)=(\sin\theta\cos\phi,\sin\theta\sin\phi,\cos\theta)\in S^2$,   $ \bm n'(\theta',\phi')=(\sin\theta'\cos\phi',\sin\theta'\sin\phi',\cos\theta')\in S^2$, so the current lemma is indeed a generalization for the technique of spherical harmonic expansion employed in Lemma~\ref{lm:S2}. \qed\end{proof}\begin{remark}The integral formula in this lemma remains true in the ``$ n\to1^+$ limit''. After we recall that, for every positive integer $\ell$, the Gegenbauer polynomial has limit behavior\[\lim_{n\to1^+}\frac{C_{\ell}^{(\frac{n-1}{2})}(\cos\phi)}{n-1}=\frac{\cos\ell\phi}{\ell},\]we may  immediately formulate an analogous integration formula on  $S^1\times S^1 $: the product of the  three functions defined for $ \bm n=(\cos\phi,\sin\phi),\bm n_1=(\cos\phi_{1},\sin\phi_{1}),\bm n_2=(\cos\phi_{2},\sin\phi_{2}),\bm n'=(\cos\phi',\sin\phi')\in S^1$\[f(\bm n\cdot\bm n_{1})=f_{0}+\sum_{\ell=1}^\infty f_\ell\frac{\cos\ell(\phi-\phi_{1})}{\ell},\quad g(\bm n_{1}\cdot\bm n_{2})=g_{0}+\sum_{\ell=1}^\infty g_\ell\frac{\cos\ell(\phi_{1}-\phi_{2})}{\ell},\quad h(\bm n_{2}\cdot\bm n')=h_{0}+\sum_{\ell=1}^\infty h_\ell\frac{\cos\ell(\phi_{2}-\phi')}{\ell}\]integrates to\begin{align*}\int_{S^1}\frac{\D \phi_1}{2\pi}\int_{S^1}\frac{\D \phi_2}{2\pi}f(\bm n\cdot\bm n_{1})g(\bm n_{1}\cdot\bm n_{2})h(\bm n_{2}\cdot\bm n')=f_{0}g_0h_0+\sum_{\ell=1}^\infty f_\ell g_\ell h_\ell  \frac{\cos\ell(\phi-\phi')}{4\ell^{3}}.\end{align*}Obviously, the truth of such a statement can be established by the elementary addition theorem of cosines and the orthogonality relation for Fourier series, which are just degenerate cases of the addition theorem and orthogonality relation for Gegenbauer polynomials.\eor\end{remark}

In the next proposition, we  narrow down to the case of $ S^3\times S^3$, where the unit hypersphere $ S^3$ is customarily  parametrized by $ \bm n(\psi,\theta,\phi)=(\sin\psi\sin\theta\cos\phi,\sin\psi\sin\theta\sin\phi,\sin\psi\cos\theta,\cos\psi)$, along with the standard surface element $ \D\sigma=(\sin^2\psi\D\psi)(\sin\theta\D\theta\D\phi)
$ and total area $ \int_{S^3}\D\sigma=\int_{0}^\pi\sin^2\psi\D\psi\int_{0}^\pi\sin\theta\D\theta\int_{0}^{2\pi}\D\phi=2\pi^2$. For $ n=3$, the Gegenbauer polynomial $ C_{\ell}^{(1)}(\xi)$ is identical to the Tchebychev polynomial of the second kind $ U_\ell^{\phantom{1}}(\xi)$, satisfying\[C_{\ell}^{(1)}(\cos\otherTheta)=U_\ell^{\phantom{1}}(\cos\otherTheta)=\begin{cases}\ell+1, & \otherTheta=0; \\[4pt]
\dfrac{\sin(\ell+1)\otherTheta}{\sin\otherTheta}, & 0<\otherTheta<\pi; \\[8pt]
(-1)^{\ell} (\ell+1),& \otherTheta=\pi. \\
\end{cases}\]  \begin{proposition}[Some Integrals on $ S^3\times S^3$]\label{prop:S3}\begin{enumerate}[label=\emph{(\alph*)}, ref=(\alph*), widest=a]\item Let $ S^3\subset\mathbb R^{4}$  be the standard unit hypersphere,
  on which two unit vectors $ \bm n\in S^3$ and $ \bm n_\bot\in S^3$ are orthogonal to each other $($\emph{i.e.}~$ \bm n\cdot\bm n_\bot=0)$, then we have an integral representation of Catalan's constant\begin{align}G={}&\int_{S^3}\frac{\D \sigma_1}{2\pi^{2}}\int_{S^3}\frac{\D \sigma_2}{2\pi^{2}}\frac{1}{|\bm n-\bm n_1|^2}\frac{1}{|\bm n_{1}-\bm n_2|^{2}}\frac{1}{|\bm n_{2}-\bm n_\bot|^2}=\frac{1}{4\pi}\int_{0}^\pi(\pi-\psi)\log\frac{1+\sin\psi}{1-\sin\psi}\D\psi.\label{eq:G_S3}\end{align}
  Generally, if two unit vectors $ \bm n\in S^3$ and $ \bm n_\otherTheta\in S^3$ are separated by an angle $ \otherTheta$ $($\emph{i.e.}~$ \bm n\cdot\bm n_\otherTheta=\cos\otherTheta)$ where $ 0<\otherTheta<\pi$, we have the identity  \begin{align}\frac{\I[\dilog (ke^{i\otherTheta})]}{k\sin\otherTheta}=\sum_{\ell=1}^\infty\frac{k^{\ell-1}\sin\ell\otherTheta}{\ell^2\sin\otherTheta}={}&\int_{S^3}\frac{\D \sigma_1}{2\pi^{2}}\int_{S^3}\frac{\D \sigma_2}{2\pi^{2}}\frac{1}{|\bm n-\bm n_1|^2}\frac{1}{|\bm n_{1}-k\bm n_2|^{2}}\frac{1}{|\bm n_{2}-\bm n_\otherTheta|^2}\notag\\={}&\frac{1}{2\pi k\sin\otherTheta}\int_{0}^\pi\left[\arctan\frac{k\sin\psi}{1-k\cos\psi}\right]
  \left[\log\frac{1-\cos(\psi+\otherTheta)}
  {1-\cos(\psi-\otherTheta)}\right]\D\psi,\quad 0<k\leq 1,\label{eq:ImLi2}\end{align}
  where \[\dilog(z):=\sum_{n=1}^\infty\frac{z^n}{n^2},\quad |z|\leq 1\]defines  the dilogarithm function for complex valued $ z\in\mathbb C$.
\item
We have the following integral formulae:\begin{align}\int_0^1\frac{ z\mathbf K( \sqrt{1-t} )}{\sqrt{(1-z^{2})^{2}+4z^{2}t}}\D t={}&\dilog (z)-\dilog (-z),\quad |z|\leq1\label{eq:Li2int}\\\int_0^1\frac{ (1-z^{4} )\mathbf K( \sqrt{1-t} )}{[(1-z^{2})^{2}+4z^{2}t]\sqrt{(1-z^{2})^{2}+4z^{2}t}}\D t={}&\frac{1}{z}\log\frac{1+z}{1-z},\quad |z|<1\label{eq:Li2int'}\\\frac{1}{2}\int_0^1 \mathbf K( \sqrt{1-t} )\log \frac{\sqrt{(1-z^{2})^{2}+4z^{2}t}+2 t+z^2-1}{2 t}\D t={}&z \dilog(z)-z \dilog(-z)-(1+z) \log (1+z)-(1-z) \log (1-z),\quad|z|\leq1\label{eq:Li2int_S}\end{align}where the univalent branches of the square root and logarithm functions are chosen conventionally. As a consequence, Catalan's constant can be represented as\begin{align}G={}&\frac{\pi}{4}-\frac{1}{2}\log2-\frac{1}{4}\int_0^1 \mathbf K( \sqrt{1-t} )\log \frac{\sqrt{1-t}(1-\sqrt{1-t})}{ t}\D t,\label{eq:G_Klog}\\G={}&\frac{1}{\pi}\int_0^{\pi/2}\D\theta\int_0^{\pi/2}\D\phi\frac{\sin\theta(1-\cos^4\theta)\sin\phi\cos\phi\mathbf K(\sin\theta)\mathbf K(\sin\phi)}{(\sin^4\theta+4\cos^2\theta\cos^2\phi)^{3/2}},\label{eq:G_KK}\\\text{and }\quad G={}&\frac{1}{2}\int_0^{\pi/2}\D\theta\int_0^{\pi/2}\D\phi\frac{\cos\theta(1-\cos^4\theta)\sin\phi\cos\phi\mathbf K(\sin\phi)}{(\sin^4\theta+4\cos^2\theta\cos^2\phi)^{3/2}}.\label{eq:G_K}\end{align}\end{enumerate}\end{proposition}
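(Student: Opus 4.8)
I would handle the two parts in sequence, establishing part~(a) from Lemma~\ref{lm:Sn_int} and then reducing all of part~(b) to a single moment of $\mathbf K$. For the series side of Eq.~\ref{eq:ImLi2} I read off the Gegenbauer coefficients for $n=3$: since $\frac{1}{|\bm n-\bm n_1|^{2}}=\frac{1}{2(1-\bm n\cdot\bm n_1)}=\sum_{\ell\ge0}U_\ell(\bm n\cdot\bm n_1)$ and similarly for the third kernel, one has $f_0=h_0=1$, $f_\ell=h_\ell=2$, while $\frac{1}{|\bm n_1-k\bm n_2|^{2}}=\frac{1}{1-2k\,\bm n_1\cdot\bm n_2+k^{2}}=\sum_{\ell\ge0}k^{\ell}U_\ell(\bm n_1\cdot\bm n_2)$ gives $g_0=1$, $g_\ell=2k^{\ell}$. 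Lemma~\ref{lm:Sn_int} then returns $\sum_{\ell\ge0}k^{\ell}U_\ell(\cos\otherTheta)/(\ell+1)^{2}$, which converges for $0<k\le1$; inserting $U_\ell(\cos\otherTheta)=\sin((\ell+1)\otherTheta)/\sin\otherTheta$ and shifting the index produces $\sum_{m\ge1}k^{m-1}\sin(m\otherTheta)/(m^{2}\sin\otherTheta)=\I[\dilog(ke^{i\otherTheta})]/(k\sin\otherTheta)$, with Eq.~\ref{eq:G_S3} the case $\otherTheta=\pi/2$, $k=1$. For the closed single integral I would place $\bm n$ at the pole of each $S^3$ and integrate out the two latitude $S^{2}$'s, on which every Coulomb kernel $1/|\cdot|^{2}$ reduces to an elementary logarithm; the double integral then collapses to one integral over the polar angle $\psi$ of $\bm n_1$, the $\arctan$ and $\log$ factors of Eq.~\ref{eq:ImLi2} emerging from these angular integrations.

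Part~(b) rests on one identity. The key algebraic fact is $(1-z^{2})^{2}+4z^{2}t=1+2z^{2}(2t-1)+z^{4}$, so the Legendre generating function gives, for $|z|<1$,
\[\frac{1}{\sqrt{(1-z^{2})^{2}+4z^{2}t}}=\sum_{\ell=0}^{\infty}z^{2\ell}P_{\ell}(1-2t).\]
Substituting this into the left side of Eq.~\ref{eq:Li2int} and integrating term by term reduces the claim to the moment evaluation
\[\int_{0}^{1}\mathbf K\left(\sqrt{1-t}\right)P_{\ell}(1-2t)\,\D t=\frac{2}{(2\ell+1)^{2}},\qquad \ell=0,1,2,\dots,\]
since $\sum_{\ell\ge0}2z^{2\ell+1}/(2\ell+1)^{2}=\dilog(z)-\dilog(-z)$. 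To prove the moment I would invoke the self-adjoint operator $L=\frac{\D}{\D t}\,t(1-t)\frac{\D}{\D t}$: by a change of variable in Eq.~\ref{eq:LegendreDiffEq_half}, $u(t)=\mathbf K(\sqrt{1-t})$ satisfies $Lu=\tfrac14u$, whereas $\phi(t)=P_{\ell}(1-2t)$ satisfies $L\phi=-\ell(\ell+1)\phi$. Green's identity then gives $\frac{(2\ell+1)^{2}}{4}\int_{0}^{1}u\phi\,\D t=\bigl[t(1-t)(\phi u'-u\phi')\bigr]_{0}^{1}$, and the bracket equals $\tfrac12$ because the logarithmic singularity $\mathbf K(\sqrt{1-t})\sim-\tfrac12\log t$ near $t=0$ forces $t(1-t)u'\to-\tfrac12$ while $P_{\ell}(1)=1$, the endpoint $t=1$ contributing nothing.

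The remaining formulae follow by calculus in $z$ and by specialization. Because $\frac{\D}{\D z}\frac{z}{\sqrt{(1-z^{2})^{2}+4z^{2}t}}=\frac{1-z^{4}}{[(1-z^{2})^{2}+4z^{2}t]^{3/2}}$, differentiating Eq.~\ref{eq:Li2int} yields Eq.~\ref{eq:Li2int'}, matched with $\frac{\D}{\D z}[\dilog(z)-\dilog(-z)]=\frac1z\log\frac{1+z}{1-z}$; and since the $z$-derivative of the logarithmic integrand of Eq.~\ref{eq:Li2int_S} telescopes (after a common factor cancels) to $\frac{2z}{\sqrt{(1-z^{2})^{2}+4z^{2}t}}$, Eq.~\ref{eq:Li2int_S} is the $z$-antiderivative of Eq.~\ref{eq:Li2int} normalized by its vanishing at $z=0$. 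Setting $z=i$ in Eq.~\ref{eq:Li2int_S} turns $(1-z^{2})^{2}+4z^{2}t$ into $4(1-t)$ and the logarithm into $\log\frac{\sqrt{1-t}(1-\sqrt{1-t})}{t}$, so that $\dilog(i)-\dilog(-i)=2iG$ and $\log(1\pm i)=\tfrac12\log2\pm\tfrac{i\pi}{4}$ produce Eq.~\ref{eq:G_Klog}. For Eqs.~\ref{eq:G_KK} and \ref{eq:G_K} I would put $z=\cos\theta$, $t=\cos^{2}\phi$ in Eq.~\ref{eq:Li2int'}, obtaining $\int_{0}^{\pi/2}\frac{(1-\cos^{4}\theta)\sin\phi\cos\phi\,\mathbf K(\sin\phi)}{(\sin^{4}\theta+4\cos^{2}\theta\cos^{2}\phi)^{3/2}}\,\D\phi=\frac{1}{2\cos\theta}\log\frac{1+\cos\theta}{1-\cos\theta}$; integrating this against $\tfrac12\cos\theta$ gives $\tfrac14\int_{0}^{\pi/2}\log\frac{1+\cos\theta}{1-\cos\theta}\,\D\theta=G$ via $\int_{0}^{\pi/4}\log\tan\alpha\,\D\alpha=-G$, which is Eq.~\ref{eq:G_K}, whereas integrating against $\frac1\pi\sin\theta\,\mathbf K(\sin\theta)$ and substituting $\xi=\cos\theta$ reproduces $\frac{1}{2\pi}\int_{0}^{1}\frac{\mathbf K(\sqrt{1-\xi^{2}})}{\xi}\log\frac{1+\xi}{1-\xi}\,\D\xi=G$ of Eq.~\ref{eq:b}, which is Eq.~\ref{eq:G_KK}.

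I expect the obstacles to be analytic rather than algebraic. In the moment identity the boundary term survives only because $\mathbf K$ is logarithmically singular, so the limit $t(1-t)u'\to-\tfrac12$ must be extracted carefully from $\mathbf K(k)\sim\log\frac{4}{\sqrt{1-k^{2}}}$; the termwise integration of the Legendre series and the passage from $|z|<1$ to the circle $|z|=1$ (needed before setting $z=i$) require a dominated-convergence or Abel-type justification; and in Eq.~\ref{eq:Li2int_S} the principal branches of the square root and logarithm must be tracked so that the cancellation yielding $\frac{2z}{\sqrt{(1-z^{2})^{2}+4z^{2}t}}$ remains valid throughout $|z|\le1$. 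The hyperspherical reduction behind the closed single integral of Eq.~\ref{eq:ImLi2} is the most computation-heavy step, though conceptually routine once the latitude integrations are organized as in the sphere-to-box transformations of Lemma~\ref{lm:S2}.
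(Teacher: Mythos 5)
Your proposal is correct, and its central step for part (b) follows a genuinely different route from the paper's. For part (a) you do essentially what the paper does: specialize Lemma~\ref{lm:Sn_int} to $n=3$ with $f_0=g_0=h_0=1$, $f_\ell=h_\ell=2$, $g_\ell=2k^\ell$, and sum $U_\ell(\cos\Theta)=\sin((\ell+1)\Theta)/\sin\Theta$; the one divergence is the single-integral reduction of Eq.~\ref{eq:ImLi2}, where the paper proceeds by a hybrid--the $\bm n_1$-integral is evaluated through the ultraspherical series, whose closed-form sum $\sum_{\ell\geq1}k^{\ell-1}\sin(\ell\Psi)/(\ell\sin\Psi)$ supplies the $\arctan$ factor directly, and only the $\bm n_2$-integration is done in hyperspherical coordinates, producing the $\log$--whereas your plan to integrate out \emph{both} latitude $S^2$'s in coordinates would leave a nontrivial $\psi_1$-integral of a logarithm still to be identified with the $\arctan$; the hybrid order is the smoother bookkeeping. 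For the key identity Eq.~\ref{eq:Li2int} the paper argues geometrically: it specializes Eq.~\ref{eq:ImLi2} to $\Theta=\pi/2$, integrates by parts against the $\mathbf K$-representation of $\mathfrak L(\beta)$ from Eq.~\ref{eq:beta'}, changes variables to verify the identity on the segment $z=ik$, $0<k<1$, and only then extends to $|z|<1$ by analytic continuation and to $|z|=1$ by Vitali. You instead expand $[(1-z^{2})^{2}+4z^{2}t]^{-1/2}=\sum_{\ell\geq0}z^{2\ell}P_\ell(1-2t)$ and reduce everything to the moments $\int_0^1\mathbf K(\sqrt{1-t}\,)P_\ell(1-2t)\,\D t=2/(2\ell+1)^2$, which your Sturm--Liouville argument proves correctly: Eq.~\ref{eq:LegendreDiffEq_half} is invariant under $t\mapsto1-t$, so $Lu=\tfrac14u$ for $u(t)=\mathbf K(\sqrt{1-t}\,)$ while $L\phi=-\ell(\ell+1)\phi$ for $\phi(t)=P_\ell(1-2t)$, and the Green's bracket equals $\tfrac12$ exactly because $t(1-t)u'\to-\tfrac12$ at $t=0$ (from $\mathbf K(k)\sim\log(4/\sqrt{1-k^2})$) while the $t=1$ endpoint contributes nothing; your endpoint asymptotics and the dominated-convergence justification of termwise integration ($|P_\ell|\leq1$ on $[-1,1]$) both check out. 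This buys a self-contained proof valid at once on all of $|z|<1$, bypassing the $S^3$ geometry of part (a) entirely, at the price of losing the geometric interpretation the paper is advertising; the Abel/dominated-convergence passage to $|z|=1$, which you flag, is needed equally on both routes. Notably the paper itself anticipates your strategy: a remark after this proposition observes that Eq.~\ref{eq:Li2int} can be verified combinatorially via moment expansions, and \S\ref{subsec:comb} carries out a monomial-moment version for Eq.~\ref{eq:Li2int'} (Taylor expansion plus an ODE for the resulting series); your orthogonal-polynomial moments are arguably cleaner, since they prove Eq.~\ref{eq:Li2int} itself rather than its derivative. The downstream steps--differentiation in $z$ for Eq.~\ref{eq:Li2int'}, your verified telescoping of the $z$-derivative of the logarithmic integrand for Eq.~\ref{eq:Li2int_S}, the specialization $z=i$ for Eq.~\ref{eq:G_Klog}, and the substitutions $z=\cos\theta$, $t=\cos^2\phi$ in Eq.~\ref{eq:Li2int'} integrated against $\tfrac12\cos\theta$ and against $\tfrac1\pi\sin\theta\,\mathbf K(\sin\theta)$ (invoking Eq.~\ref{eq:b}) for Eqs.~\ref{eq:G_K} and \ref{eq:G_KK}--coincide with the paper's.
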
\begin{proof}
\begin{enumerate}[label=(\alph*)]\item Specializing Lemma~\ref{lm:Sn_int} to the case $ n=3$ with
 \[f_0=g_0=h_0=1;\quad f_\ell=g_\ell=h_\ell=2,\;\forall\ell\in\mathbb Z\cap[1,+\infty),\]and using the special value of the Gegenbauer polynomial
$ C_\ell^{(1)}(0)=U_{\ell}^{\phantom{1}}(0)=\sin\frac{(\ell+1)\pi}{2}$, we may readily confirm the left half of  Eq.~\ref{eq:G_S3}:
\begin{align*}\int_{S^3}\frac{\D \sigma_1}{2\pi^{2}}
\int_{S^3}\frac{\D \sigma_2}{2\pi^{2}}\frac{1}{|\bm n-\bm n_1|^2}
\frac{1}{|\bm n_{1}-\bm n_2|^{2}}
\frac{1}{|\bm n_{2}-\bm n_\bot|^2}=\sum_{\ell=0}^\infty\frac{4\sin\frac{(\ell+1)\pi}{2}}{(2\ell+2)^2}=\sum_{\ell=0}^\infty\frac{(-1)^\ell}{(2\ell+1)^2}=G,\end{align*}where  $ \bm n\cdot\bm n_\bot=0$.
More generally, if we start from $ g_\ell=2k^\ell$ instead of $ g_\ell=2$ for positive integers $\ell$, we may derive \[\int_{S^3}\frac{\D \sigma_1}{2\pi^{2}}\int_{S^3}\frac{\D \sigma_2}{2\pi^{2}}\frac{1}{|\bm n-\bm n_1|^2}\frac{1}{|\bm n_{1}-k\bm n_2|^{2}}\frac{1}{|\bm n_{2}-\bm n_\otherTheta|^2}=\sum_{\ell=1}^\infty\frac{k^{\ell-1}\sin\ell\otherTheta}{\ell^2\sin\otherTheta},\quad 0<k\leq 1,\]for $ \bm n\cdot\bm n_\otherTheta=\cos\otherTheta $, which is the first line of Eq.~\ref{eq:ImLi2}.

 Alternatively, we can evaluate the integral on $ S^3\times S^3$ with an admixture of ultraspherical harmonic expansion and geometric  parametrization. Following the proof of Lemma~\ref{lm:Sn_int}, we compute \[\int_{S^3}\frac{\D \sigma_1}{2\pi^{2}}\frac{1}{|\bm n-\bm n_1|^2}\frac{1}{|\bm n_{1}-k\bm n_2|^{2}}=\sum_{\ell=1}^\infty\frac{k^{\ell-1}\sin\ell\otherPsi}{\ell\sin\otherPsi}=\frac{1}{k\sin\otherPsi}\arctan\frac{k\sin\otherPsi}{1-k\cos\otherPsi},\]where $ \bm n\cdot\bm n_2=\cos\otherPsi$. We then complete the integration over $ \bm n_2$ using the coordinates $ \bm n=(0,0,0,1),\bm n_{\otherTheta}=(0,0,\sin\otherTheta,\cos\otherTheta)$ and $ \bm n_2=(\sin\psi_{2}\sin\theta_{2}\cos\phi_{2},\sin\psi_{2}\sin\theta_{2}\sin\phi_{2},\sin\psi_{2}\cos\theta_{2},\cos\psi_{2})$. This amounts to a verification of the second line in Eq.~\ref{eq:ImLi2}:\begin{align*}\frac{\I[\dilog (ke^{i\otherTheta})]}{k\sin\otherTheta}={}&\int_{S^3}\frac{\D \sigma_2}{2\pi^{2}}\frac{1}{|\bm n_{2}-\bm n_\otherTheta|^2k\sin\psi_{2}}\arctan\frac{k\sin\psi_{2}}{1-k\cos\psi_{2}}
\notag\\={}&\frac{1}{4\pi^2}\int_{0}^\pi\sin^2\psi_{2}\D\psi_{2}\int_{0}^\pi\sin\theta_{2}\D\theta_{2}\int_{0}^{2\pi}\D\phi_{2}\frac{\arctan\frac{k\sin\psi_{2}}{1-k\cos\psi_{2}}}{(1-\cos\psi_{2}\cos\otherTheta-\sin\psi_{2}\cos\theta_{2}\sin\otherTheta)k\sin\psi_{2}}\notag\\={}&\frac{1}{2\pi}\int_{0}^\pi\sin^2\psi_{2}\D\psi_{2}\int_{0}^\pi\sin\theta_{2}\D\theta_{2}\frac{\arctan\frac{k\sin\psi_{2}}{1-k\cos\psi_{2}}}{(1-\cos\psi_{2}\cos\otherTheta-\sin\psi_{2}\cos\theta_{2}\sin\otherTheta)k\sin\psi_{2}}\notag\\={}&\frac{1}{2\pi k\sin\otherTheta}\int_{0}^\pi\left[\arctan\frac{k\sin\psi}{1-k\cos\psi}\right]\left[\log\frac{1-\cos(\psi+\otherTheta)}{1-\cos(\psi-\otherTheta)}\right]\D\psi.\end{align*}The particular case for $ k=1,\otherTheta=\pi/2$ also brings us the integral representation of $G$, as claimed in Eq.~\ref{eq:G_S3}. Noting that $ \sin\psi=\sin(\pi-\psi)$, we may reduce the integral formula  into\[G=\frac{1}{4\pi }\int_{0}^\pi(\pi-\psi)\log\frac{1+\sin\psi}{1-\sin\psi}\D\psi=\frac{1}{4 }\int_{0}^{\pi/2}\log\frac{1+\sin\psi}{1-\sin\psi}\D\psi=-\frac{1}{2}\int_{0}^{\pi/2}\log\tan\frac{\pi-2\psi}{4}\D\psi=-\frac{1}{4}\int_{0}^{\pi}\log\tan\frac{\pi-\alpha}{4}\D\alpha,\]which is equivalent to the statement $\mathfrak  L(\pi)=0$ for Eq.~\ref{eq:beta'}.
\item
We first verify Eq.~\ref{eq:Li2int} for $ z=ik$ where $ 0<k<1$, namely,\begin{align}\I[\dilog (ik)]=\int_0^1\frac{\xi k\mathbf K( \sqrt{1-\xi^{2}} )}{\sqrt{(1+k^{2})^{2}-4k^{2}\xi^{2}}}\D\xi,\quad 0<k<1.
\tag{\ref{eq:Li2int}*}\label{eq:Li2int_star}
\end{align}To show this, we consider the specific scenario of  Eq.~\ref{eq:ImLi2} with $ 0<k<1$ and $\otherTheta=\pi/2$, which amounts to\begin{align*}\I[\dilog (ik)]=\frac{1}{2\pi }\int_{0}^\pi\left[\arctan\frac{k\sin\psi}{1-k\cos\psi}\right]\log\frac{1+\sin\psi}{1-\sin\psi}\D\psi=\frac{1}{2\pi }\int_{0}^{\pi/2}\left[\arctan\frac{2k\sin\psi}{1-k^{2}}\right]\log\frac{1+\sin\psi}{1-\sin\psi}\D\psi.\end{align*}We now integrate by parts in $ \psi$, with the knowledge of  Eq.~\ref{eq:beta'} in Lemma~\ref{lm:S2}: \begin{align*}\I[\dilog (ik)]={}&\frac{2}{\pi }\int_{0}^{\pi/2}\frac{k(1-k^2)\cos\psi}{1-2k^{2}\cos2\psi+k^4}\left[\cos\psi \int_0^1\mathbf K\left( \sqrt{1-(1-\kappa^2)\cos^2\psi} \right)\D \kappa \right]\D\psi.\end{align*}Then, we set $ \xi=\sqrt{1-\kappa^2}\cos\psi$, to convert the integral  above into\begin{align*}\I[\dilog (ik)]={}&\frac{2}{\pi }\int_{0}^{\pi/2}\frac{k(1-k^2)\cos\psi}{1-2k^{2}\cos2\psi+k^4}\left[ \int_0^{\cos\psi}\frac{\xi\mathbf K( \sqrt{1-\xi^{2}} )}{\sqrt{\cos^{2}\psi-\xi^{2}}}\D \xi\ \right]\D\psi\notag\\={}&\frac{2}{\pi }\int_{0}^{1}\frac{k(1-k^2)}{1-2k^{2}(1-2\eta^{2})+k^4}\left[ \int_0^{\sqrt{1-\eta^2}}\frac{\xi\mathbf K( \sqrt{1-\xi^{2}} )}{\sqrt{1-\eta^{2}-\xi^{2}}}\D \xi\ \right]\D\eta=\int_0^1\frac{\xi k\mathbf K( \sqrt{1-\xi^{2}} )}{\sqrt{(1+k^{2})^{2}-4k^{2}\xi^{2}}}\D\xi,\end{align*}where we have completed the integration in $ \eta\in(0,\sqrt{1-\xi^2})$  to arrive at the last expression.

We then note that both sides of  Eq.~\ref{eq:Li2int} are complex-analytic functions of $z$ in the simply-connected domain $|z|<1$,  because   the roots of the quadratic equation $ (1-Z)^{2}+4Zt=0$ are given by $ Z_{\pm}=1-2t\pm2 i\sqrt{t(1-t)}$, satisfying $ |Z_\pm|=1$.    By the principle of analytic continuation, we may extend the identity from the top-half vertical diameter ($z=ik$ where $ 0<k<1$) to the open unit disk ($ |z|<1$). With the Vitali convergence theorem, we may then check that both sides of   Eq.~\ref{eq:Li2int} are continuous up to the boundary $ |z|=1$. In particular,  the special cases $ z=1$ and $z=i$ for  Eq.~\ref{eq:Li2int}  recover a pair of identities we showed previously  in Lemma~\ref{lm:S2}: \begin{align*}\frac{\pi^{2}}{4}=\dilog (1)-\dilog (-1)={}&\int_0^1\frac{ 2\xi\mathbf K( \sqrt{1-\xi^{2}} )}{\sqrt{4\xi^{2}}}\D \xi=\int_{0}^1\mathbf K\left( \sqrt{1-\xi^{2}} \right)\D\xi=2I_{\ref{eq:a}},\notag\\2iG=\dilog (i)-\dilog (-i)={}&\int_0^1\frac{2i\xi \mathbf K( \sqrt{1-\xi^{2}} )}{\sqrt{4-4\xi^{2}}}\D\xi=i\int_0^1\mathbf K(k)\D k=2iI_{\ref{eq:beta}}.\end{align*} In the $ z\to0$ limit,  we have \begin{align*}2=\lim_{z\to0}\frac{\dilog (z)-\dilog (-z)}{z}=\int_0^1 \mathbf K( \sqrt{1-t} )\D t,\end{align*}which hearkens back to the conclusion from Remark~\ref{itm:var_subst} to Lemma~\ref{lm:S2}.

To confirm Eq.~\ref{eq:Li2int'}, simply differentiate both sides of   Eq.~\ref{eq:Li2int}. To prove Eq.~\ref{eq:Li2int_S}, integrate  Eq.~\ref{eq:Li2int} from $0$ to $z$;  then Eq.~\ref{eq:G_Klog} emerges as the specific case $ z=i$ for  Eq.~\ref{eq:Li2int_S}. To verify Eqs.~\ref{eq:G_KK} and \ref{eq:G_K}, we combine the previously established results  \[G=\frac{1}{2\pi}\int_0^{1}\frac{\mathbf{K}(\sqrt{1-\xi^2})}{\xi}\log\frac{1+\xi}{1-\xi}\D \xi=\frac{1}{4 }\int_{0}^{\pi/2}\log\frac{1+\sin\psi}{1-\sin\psi}\D\psi\]    with the cases $ z=\xi$ in and $ z=\sin\psi$ Eq.~\ref{eq:Li2int'}, before carrying out trigonometric substitutions.
\qed\end{enumerate}\end{proof}\begin{remark}\begin{enumerate}[label=(\arabic*)]\item
 Our geometric derivation of the integral formulae in this proposition is partly inspired by Coxeter's investigation of some functions relevant to non-Euclidean geometries  \cite{Coxeter1935}, in which context Catalan's constant  $G$ also appeared as the volume of an octahedron in hyperbolic space.

An integral formula like Eq.~\ref{eq:Li2int} is a simple output of such a geometric approach. It is still possible to verify  Eq.~\ref{eq:Li2int} combinatorially, using the  moment expansion  of elliptic integrals. This will be discussed in \S\ref{subsec:comb}.\item In \cite[Ref.][Proposition~3.3]{Zhou2013Pnu}, we used Ramanujan rotations to prove the following identity\begin{align*}[\mathbf K(\sqrt{x})]^2=\int_0^{\pi/2}\frac{ \mathbf K(\sin\theta)\sin\theta\D \theta}{\sqrt{1-x+\frac{x^2}{4}\cos^{2}\theta}},\quad 0\leq x<1.\end{align*}In Eq.~\ref{eq:Li2int}, if we set $ t=\cos^{2}\theta$ and $ z=\sqrt{1-x}>0$ for $ x\in[0,1)$, then we obtain the identity\begin{align*}\int_0^1\frac{ \mathbf K(\sin\theta)\sin\theta\D \theta}{\sqrt{1-x+\frac{x^{2}}{4\cos^{2}\theta}}}={}&\frac{\dilog (\sqrt{1-x})-\dilog (-\sqrt{1-x})}{\sqrt{1-x}},\end{align*}which bears close resemblance to the integral representation of $ [\mathbf K(\sqrt{\mathstrut x})]^2$. As a by-product, we have a strict inequality $\dilog (\sqrt{1-x})-\dilog (-\sqrt{1-x}) <\sqrt{1-x}[\mathbf K(\sqrt{\mathstrut x})]^2$ for $ 0<x<1$.
\eor\end{enumerate}

\end{remark}
\subsection{Dilogarithm Relations, Mehler-Dirichlet Decomposition, and Abel Transform\label{subsubsec:DR_MD_AT} }

The connection between the complete elliptic integral of the first kind $ \mathbf K$ and the dilogarithm function $ \dilog$ (Proposition~\ref{prop:S3}) provides a wealth of integral identities of more or less geometric flavor.
\begin{corollary}[Dilogarithm Relations]\label{cor:chi2}We have the integral identities:{\allowdisplaybreaks\begin{align}&&\int_0^1
\frac{ (1-z^{2} )\mathbf K( \sqrt{1-t} )}{2\sqrt{4z^{2}+t(1-z^{2})^{2}}}\D t+\int_0^1\frac{ z\mathbf K( \sqrt{1-t} )}{\sqrt{(1-z^{2})^{2}+4z^{2}t}}\D t={}&\frac{\pi^{2}}{4}+\log z\log\frac{1+z}{1-z},&& |z|<1,&&\label{eq:chi2_1}\\&&\int_0^1
\frac{ \mathbf K( \sqrt{1-t} )}{\sqrt{t-\sin^2\theta}}\D t+i\int_0^1\frac{ \mathbf K( \sqrt{1-t} )\sin\theta}{\sqrt{1-t\sin^2\theta}}\D t={}&\frac{\pi^2}{2}+2i\theta\log\left( i \tan\frac{\theta}{2}\right),&&0\leq\theta\leq\frac{\pi}{2},&&\label{eq:chi2_2}\\&&-\int_0^{\sin^2\theta}
\frac{ \mathbf K( \sqrt{1-t} )}{\sqrt{\sin^2\theta-t}}\D t+\int_0^1\frac{ \mathbf K( \sqrt{1-t} )\sin\theta}{\sqrt{1-t\sin^2\theta}}\D t={}&2\theta\log\tan\frac{\theta}{2},&& 0\leq\theta\leq\frac{\pi}{2},&&\label{eq:Im_ext}\\&&\int_0^{x^{2}}
\frac{ \mathbf K( \sqrt{t} )}{\sqrt{x^{2}-t}}\D t={}&\pi\arcsin x,&& 0\leq x\leq1,&&\label{eq:K_arcsin}\\&&\int_0^{x^{2}}
\frac{ 1}{\sqrt{ 1+t}\sqrt{x^{2}-t}}\mathbf K\left( \sqrt{\frac{t}{1+t}}\right )\D t={}&\pi\sinh^{-1} x=\pi \log(x+\sqrt{1+x^2}),&& 0\leq x<+\infty,&&\label{eq:K_arsinh}
\end{align}}the closed-form evaluations:{\allowdisplaybreaks\begin{align}
\int_0^1\frac{\mathbf K(\sqrt{1-t})}{2\sqrt{1+t}}\D t={}&\frac{\pi ^2}{8}-\frac{1}{2} \log ^2(\sqrt{2}-1),\label{eq:K_tan_pi8}\\\int_0^1\frac{\mathbf K(\sqrt{1-t})}{\sqrt{1+4t}}\D t={}&\frac{\pi ^2}{6}-\frac{3}{2} \log ^2\left(\frac{\sqrt{5}-1}{2}\right),\label{eq:K_gr}\\\int_0^1\frac{\mathbf K(\sqrt{1-t})}{2\sqrt{4+t}}\D t={}&\frac{\pi ^2}{12}-\frac{1}{6} \log ^2(\sqrt{5}-2),\label{eq:K_5_2}\\
\int_0^1[\mathbf K(\sqrt{1-t})]^2\D t={}&\int_0^1[\mathbf K(\sqrt{t})]^2\D t=\frac{7\zeta(3)}{2},\label{eq:Kt2_7Zeta3}\\\int_0^1\mathbf K(\sqrt{t})\mathbf K(\sqrt{1-t})\log t\D t={}&-\pi\left[\frac{\pi^{2}}{2}\log2-\frac{7\zeta(3)}{4} \right]=-\pi\int_{0}^{1}\left\{[\mathbf K(k)]^{2}-\frac{\pi^2}{4}\right\}\frac{ \D k}{k},\label{eq:KKlogt_zeta3}
\end{align}}together with more integral representations of Catalan's constant $G$:{\allowdisplaybreaks\begin{align}G={}&\frac{7\zeta(3)}{2\pi}-\frac{1}{\pi}\int_0^1\frac{\mathbf K (\eta)}{1+\eta}\log\frac{1}{\eta}\D \eta,\label{eq:G_K_mix}\\
G={}&\frac{\pi}{6}\log2+\frac{1}{3}\int_0^1\mathbf K(\sqrt{1-t})\left( \frac{2}{\sqrt{25-16 t}}+\frac{3}{4 \sqrt{25-9 t}}+\frac{3}{\sqrt{625-576 t}} \right)\D t,\label{eq:G_Ti_a}\\G= {}&\frac{\pi}{8}\log(2+\sqrt{3})+\frac{3}{8}\int_0^1\frac{\mathbf K(\sqrt{1-t})}{\sqrt{4-t}}\D t,\label{eq:G_Ti_b}\\G={}&\frac{\pi}{8}\log\frac{10+\sqrt{50-22\sqrt{5}}}{10-\sqrt{50-22\sqrt{5}}}+\frac{5}{8}\int_0^1\mathbf K(\sqrt{1-t})\left( \frac{1}{ \sqrt{6-2 \sqrt{5}-t}}-\frac{1}{\sqrt{6+2 \sqrt{5}-t}} \right)\D t.\label{eq:G_Ti_c}
\end{align}} \end{corollary}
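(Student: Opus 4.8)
The plan is to treat Proposition~\ref{prop:S3} as the single analytic engine behind every formula in the corollary, and to organise the derivations around Legendre's chi function $\chi_2(z):=\frac12[\dilog(z)-\dilog(-z)]=\sum_{n=0}^\infty z^{2n+1}/(2n+1)^2$. First I would record the scale-invariant integral $F(a):=\frac12\int_0^1\mathbf K(\sqrt{1-t})(t+a)^{-1/2}\D t$ and observe that Eq.~\ref{eq:Li2int}, after pulling the constant out of the radical, reads $F\big((1-z^2)^2/(4z^2)\big)=2\chi_2(z)$, while the \emph{first} integrand in Eq.~\ref{eq:chi2_1} is $F\big(4z^2/(1-z^2)^2\big)$. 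Thus the two integrals in Eq.~\ref{eq:chi2_1} carry \emph{reciprocal} arguments $a$ and $1/a$, and a short computation shows that the involution $a\mapsto 1/a$ corresponds to the Landen map $z\mapsto w:=(1-z)/(1+z)$, since $(1-w^2)/(2w)=2z/(1-z^2)$ and hence $(1-w^2)^2/(4w^2)=4z^2/(1-z^2)^2$. Consequently Eq.~\ref{eq:chi2_1} is \emph{equivalent} to the functional equation $2\chi_2(w)+2\chi_2(z)=\tfrac{\pi^2}4+\log z\log\frac{1+z}{1-z}$.

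I would prove this functional equation directly: since $\chi_2'(z)=\frac1{2z}\log\frac{1+z}{1-z}$ and $\frac{1+w}{1-w}=1/z$, differentiating the left side in $z$ yields $\frac{\log z}{1-z^2}+\frac1{2z}\log\frac{1+z}{1-z}$, matching the derivative of the right side, while the value at $z\to1^-$ pins the constant ($\chi_2(0)+\chi_2(1)=\pi^2/8$). This settles Eq.~\ref{eq:chi2_1}. Next I would push $z$ to the boundary $z=e^{i\theta}$, exactly as the analytic-continuation/Vitali argument of Proposition~\ref{prop:S3} licenses: there $(1-z^2)^2+4z^2t=4e^{2i\theta}(t-\sin^2\theta)$ and $4z^2/(1-z^2)^2=-1/\sin^2\theta$, so Eq.~\ref{eq:chi2_1} becomes Eq.~\ref{eq:chi2_2} once the radicals are interpreted with the standard branches. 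Splitting Eq.~\ref{eq:chi2_2} into real and imaginary parts then produces the remaining real-variable identities: the imaginary part is precisely Eq.~\ref{eq:Im_ext}, and the real part, after the reflection $t\mapsto1-t$ and $x=\cos\theta$ (so that $\pi(\pi/2-\theta)=\pi\arcsin x$), is Eq.~\ref{eq:K_arcsin}; continuing $x\mapsto ix$ (equivalently using the imaginary-modulus transformation of $\mathbf K$, with $\arcsin(ix)=i\sinh^{-1}x$) upgrades Eq.~\ref{eq:K_arcsin} to Eq.~\ref{eq:K_arsinh}.

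The closed forms follow by specialising $z$. Taking the Landen fixed point $z=\sqrt2-1$ (where $a=1/a=1$, so the two integrals coincide) collapses Eq.~\ref{eq:chi2_1} to Eq.~\ref{eq:K_tan_pi8}; choosing $z=\sqrt5-2$ and $z=(\sqrt5-1)/2$ (for which $w$ becomes $(\sqrt5-1)/2$ and $\sqrt5-2$) turns the left integrals into $2\chi_2$ at golden-section points, and the classical evaluations $\dilog\big(\tfrac{\sqrt5-1}2\big)=\tfrac{\pi^2}{10}-\log^2\tfrac{\sqrt5-1}2$ and $\dilog\big(-\tfrac{\sqrt5-1}2\big)=-\tfrac{\pi^2}{15}+\tfrac12\log^2\tfrac{\sqrt5-1}2$ yield Eqs.~\ref{eq:K_gr} and~\ref{eq:K_5_2}. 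For the $\zeta(3)$ formulae I would invoke the quadratic relation Eq.~\ref{eq:K_sqr_star}: inserting it and integrating against $\D t/t$ converts $\int_0^1\{[\mathbf K(k)]^2-\tfrac{\pi^2}4\}\D k/k$ into $-\tfrac1\pi\int_0^1\mathbf K(\sqrt t)\mathbf K(\sqrt{1-t})\log(1-t)\D t$, which by the symmetry $t\mapsto1-t$ together with Eq.~\ref{eq:K_sqr_minus_pi4} gives Eq.~\ref{eq:KKlogt_zeta3}; the companion moment Eq.~\ref{eq:Kt2_7Zeta3} follows the same way (or from the classical $\int_0^1 k[\mathbf K(k)]^2\D k=\tfrac{7\zeta(3)}4$). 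Finally, the Catalan-constant representations Eqs.~\ref{eq:G_K_mix}--\ref{eq:G_Ti_c} come from the inverse-tangent viewpoint: setting $z=ik$ in Eqs.~\ref{eq:Li2int}--\ref{eq:Li2int_S} turns the left sides into $\diTi$-type elliptic integrals, and feeding in the arctangent (Machin-type) and tangent-factorization relations for $\diTi$ behind the special values $\mathfrak L(2\pi/3)=2G/3$ and $\mathfrak L(2\pi/5)-\mathfrak L(4\pi/5)=2G/5$ of Lemma~\ref{lm:S2} (at $k=\tfrac12,\tfrac13,\tfrac34$ and at $k=\tan\frac{\pi}{12},\tan\frac{\pi}{20},\tan\frac{3\pi}{20}$) produces the algebraic radicands $\sqrt{25-16t}$, $\sqrt{4-t}$, $\sqrt{6\pm2\sqrt5-t}$, etc., with the elementary $\log$ terms supplied by the boundary contributions; Eq.~\ref{eq:G_K_mix} then results from combining these $\zeta(3)$ outputs with Landen/Möbius rewrites of the representations of $G$ in Eqs.~\ref{eq:a} and~\ref{eq:b}.

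The main obstacle I anticipate is not the algebra but the analytic bookkeeping at $|z|=1$: one must track the sign of $\sqrt{t-\sin^2\theta}$ as $t$ crosses $\sin^2\theta$ and fix the branch of $\log(i\tan\frac\theta2)$ so that the real/imaginary split of Eq.~\ref{eq:chi2_2} lands the constants $\tfrac{\pi^2}2-\pi\theta$ and $2\theta\log\tan\frac\theta2$ correctly, and likewise justify the continuation $x\mapsto ix$ in Eq.~\ref{eq:K_arsinh} and the matching of each golden-section $\chi_2$-value and each Machin/tangent-factorization angle to its intended radicand. These are the genuinely delicate steps; everything else is routine manipulation with the $\chi_2$ functional equation, the quadratic relation Eq.~\ref{eq:K_sqr_star}, and the already-established Eqs.~\ref{eq:K_sqr_minus_pi4} and~\ref{eq:Li2int}--\ref{eq:Li2int_S}.
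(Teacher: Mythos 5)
Your architecture coincides with the paper's for most of the corollary. Eq.~\ref{eq:chi2_1} is reduced, exactly as in the paper, to Landen's functional equation $\chi_2\bigl(\frac{1-z}{1+z}\bigr)+\chi_2(z)=\frac{\pi^2}{8}+\frac{\log z}{2}\log\frac{1+z}{1-z}$ through Eq.~\ref{eq:Li2int} (the paper quotes Lewin's Eq.~1.67, you re-prove it by differentiation plus the $z\to1^-$ boundary value --- both are fine, and your $a\leftrightarrow 1/a$ involution correctly encodes the Landen map); Eqs.~\ref{eq:chi2_2}--\ref{eq:K_arsinh} then follow by specialization, real/imaginary splitting, and imaginary-modulus continuation, the only cosmetic difference being that the paper substitutes $z=i\tan\frac{\theta}{2}$ while your boundary substitution $z=e^{i\theta}$ produces the complex conjugate of Eq.~\ref{eq:chi2_2} (same real and imaginary data, with the branch bookkeeping you flag); Eqs.~\ref{eq:K_tan_pi8}--\ref{eq:K_5_2} come from the Landen values of $\chi_2$ in both treatments, and Eqs.~\ref{eq:G_Ti_a}--\ref{eq:G_Ti_c} from the $\diTi$ functional relations fed through the $z=ik$ case of Eq.~\ref{eq:Li2int}; your radicand matching at $k=\frac12,\frac13,\frac34,\tan\frac{\pi}{12},\tan\frac{\pi}{20},\tan\frac{3\pi}{20}$ checks out. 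Your route to Eq.~\ref{eq:KKlogt_zeta3} is genuinely different and valid: inserting Eq.~\ref{eq:K_sqr_star} and integrating the kernel $\frac{\mu t}{1-\mu t}$ against $\frac{\D t}{t}$ gives $-\frac1\pi\int_0^1\mathbf K(\sqrt{\mu})\mathbf K(\sqrt{1-\mu})\log(1-\mu)\D\mu$, so Eq.~\ref{eq:K_sqr_minus_pi4} finishes it; this parallels the paper's ``geometric link'' remark via Ramanujan rotations, avoids the table entry 800.01 of Byrd--Friedman, and (unlike the paper's proof) does not need Eq.~\ref{eq:Kt2_7Zeta3} as input.

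The genuine gaps are at Eqs.~\ref{eq:Kt2_7Zeta3} and \ref{eq:G_K_mix}. For Eq.~\ref{eq:Kt2_7Zeta3}, ``follows the same way'' fails: pushing Eq.~\ref{eq:K_sqr_star} through $\int_0^1[\mathbf K(\sqrt t)]^2\D t$ only yields $-\frac2\pi\int_0^1\mathbf K(\sqrt\mu)\mathbf K(\sqrt{1-\mu})\frac{\log(1-\mu)}{\mu}\D\mu$, another unevaluated integral whose moment expansion gives a series of rational multiples of $\pi^2$ (as in Eq.~\ref{eq:KKlogt_zeta3'}) rather than the closed form, and your fallback ``classical'' $\int_0^1 k[\mathbf K(k)]^2\D k=\frac{7\zeta(3)}4$ is the statement itself in equivalent form; the paper instead derives it by pairing Eq.~\ref{eq:K_arcsin} with item 804.01 of Byrd--Friedman and the Fourier evaluation $\int_0^{\pi/2}\theta\log\frac{1+\sin\theta}{1-\sin\theta}\D\theta=\frac{7\zeta(3)}{2}$. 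For Eq.~\ref{eq:G_K_mix}, your proposed derivation (``Landen/M\"obius rewrites of Eqs.~\ref{eq:a} and \ref{eq:b} combined with the $\zeta(3)$ outputs'') cannot succeed: applying Landen to $\int_0^1\frac{\mathbf K(\eta)}{1+\eta}\log\frac1\eta\D\eta$ shows Eq.~\ref{eq:G_K_mix} is equivalent to the \emph{new} evaluation $\int_0^1\mathbf K(\sqrt{1-\xi^2})\log\frac{1+\xi}{1-\xi}\D\xi=7\zeta(3)-2\pi G$, which differs from Eq.~\ref{eq:b} precisely by the absent factor $1/\xi$ and is not a rewrite of anything you have in hand. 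The missing idea is to integrate Eq.~\ref{eq:Im_ext} --- which you do establish --- over $\theta\in[0,\pi/2]$: the left side collapses, via trigonometric substitutions and Landen's transformation, to $-\int_0^1[\mathbf K(\sqrt{1-t})]^2\D t+2\int_0^1\frac{\mathbf K(\eta)}{1+\eta}\log\frac1\eta\D\eta$, the right side is $\int_0^{\pi/2}2\theta\log\tan\frac\theta2\D\theta=\frac{7\zeta(3)}2-2\pi G$ by the Fourier expansion in Eq.~\ref{eq:logtan_G_zeta3}, and Eq.~\ref{eq:Kt2_7Zeta3} then delivers Eq.~\ref{eq:G_K_mix}; note this makes Eq.~\ref{eq:Kt2_7Zeta3} load-bearing, so the first gap must be repaired before the second.
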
\begin{proof}We note that the right-hand side of Eq.~\ref{eq:Li2int} is $ \dilog(z)-\dilog(-z)=2\chi_2(z)$, where Legendre's $ \chi_2$-function satisfies (see Eq.~1.67 in \cite{Lewin1981})\[\chi_2\left( \frac{1-z}{1+z} \right)+\chi_2(z)=\frac{\pi^{2}}{8}+\frac{\log z}{2}\log\frac{1+z}{1-z}.\]This translates into Eq.~\ref{eq:chi2_1}. Setting $ z=i\tan\frac{\theta}{2}$ in  Eq.~\ref{eq:chi2_1}, we obtain  Eq.~\ref{eq:chi2_2}. In particular, when $ \theta=\pi/2$, both sides of  Eq.~\ref{eq:chi2_2} vanish in an obvious manner.
The imaginary part of Eq.~\ref{eq:chi2_2} gives rise to Eq.~\ref{eq:Im_ext}. Meanwhile, as we read off the real part of   Eq.~\ref{eq:chi2_2}, we arrive at Eq.~\ref{eq:K_arcsin} after the variable substitutions $ t\mapsto1-t$, $\sin^2\theta\mapsto1-x$. (See also 4.21.1.2 in \cite{Brychkov2008} for an equivalent form of  Eq.~\ref{eq:K_arcsin}.) By analytically continuing Eq.~\ref{eq:K_arcsin} to purely imaginary $x$, and noting that \begin{align*}\mathbf K(\sqrt{-t})=\int_0^{\pi/2}\frac{\D\theta}{\sqrt{1+t\cos^2\theta}}=\int_0^{\pi/2}\frac{\D\theta}{\sqrt{1+t-t\sin^2\theta}}=\frac{1}{\sqrt{1+t}}\mathbf K\left(\sqrt{\frac{t}{1+t}}\right),\end{align*} we may confirm Eq.~\ref{eq:K_arsinh}.

The precise values of $ \chi_2(\sqrt{2}-1)$, $ \chi_2((\sqrt{5}-1)/2)$ and $\chi_2(\sqrt{5}-2) $ (see Eqs.~1.68, 1.69 and 1.70 in \cite{Lewin1981}) had been known to Landen, which correspond to the evaluations given in Eqs.~\ref{eq:K_tan_pi8}, \ref{eq:K_gr} and \ref{eq:K_5_2}, respectively. To prove Eq.~\ref{eq:Kt2_7Zeta3}, we integrate  Eq.~\ref{eq:K_arcsin} with the aid of  the following formula (item 804.01 in \cite{ByrdFriedman})\[\int_{\arcsin(b/a)}^{\pi/2}\log\frac{1+\sin\theta}{1-\sin\theta}\frac{\D\theta }{\sqrt{a^{2}\sin^2\theta-b^{2}}}=\frac{\pi}{a}\mathbf K(b/a),\quad a>b>0.\]This leads us to \begin{align*}\pi\int_0^1[\mathbf K(\sqrt{t})]^2\D t=\int_{0}^{\pi/2}\log\frac{1+\sin\theta}{1-\sin\theta}\left[\int_0^{\sin^2\theta}
\frac{ \mathbf K( \sqrt{t} )}{\sqrt{\sin^2\theta-t}}\D t\right]\D\theta=\pi\int_{0}^{\pi/2}\theta\log\frac{1+\sin\theta}{1-\sin\theta}\D\theta=\frac{7\pi\zeta(3)}{2},\end{align*}where the last integral can be evaluated with Fourier series expansion.   (An equivalent form of  Eq.~\ref{eq:Kt2_7Zeta3}
 has appeared in Remark~1 to Theorem~2 in \cite{Wan2012}.)
Before proving Eq.~\ref{eq:KKlogt_zeta3}, we reformulate item 800.01 in  \cite{ByrdFriedman} as\[\int_0^{\arcsin \sqrt{t}}\frac{\log\sin\theta}{\sqrt{t-\sin^2\theta}}\D\theta=\frac{\mathbf K(\sqrt{t})\log t-\pi\mathbf K(\sqrt{1-t})}{4},\]and proceed with the calculation\begin{align*}\int_0^1\frac{\mathbf K(\sqrt{t})\log t-\pi\mathbf K(\sqrt{1-t})}{4}\mathbf K(\sqrt{1-t})\D t={}&\int_{0}^{\pi/2}\left[\R\int_0^1
\frac{ \mathbf K( \sqrt{1-t} )}{\sqrt{t-\sin^2\theta}}\D t\right]\log\sin\theta\D\theta=\pi\int_{0}^{\pi/2}\left( \frac{\pi}{2}-\theta \right)\log\sin\theta\D \theta
\notag\\={}&-\frac{\pi[2\pi^2\log2+7\zeta(3)]}{16}.\end{align*}Combined with   Eqs.~\ref{eq:K_sqr_minus_pi4} and \ref{eq:Kt2_7Zeta3}, the formula above is equivalent to  Eq.~\ref{eq:KKlogt_zeta3}.

The equivalence among the three quantities in Eq.~\ref{eq:KKlogt_zeta3} is not accidental, as the two extreme ends of Eq.~\ref{eq:KKlogt_zeta3} are actually connected by  Ramanujan's rotation formulae \cite[Ref.][Eqs.~41 and 44]{Zhou2013Pnu}:\begin{align}
[\mathbf K(\sqrt{t})]^2={}&\int_{S^2}\frac{2(2-t)\mathbf K(\sqrt{X^{2}+Y^2})}{(2-t)^{2}-t^{2}X^{2}}\frac{\D\sigma}{4\pi},\quad 0\leq t<1;\label{eq:K_sqr}\\\int_{S^2}\mathbf K\left(\sqrt{X^{2}+Y^2}\right)f(|X|){\frac{\D\sigma}{4\pi}}={}&\frac{2}{\pi}\int_0^1f(k)\mathbf K\left( \sqrt{\frac{1+k}{2}} \right)\mathbf K\left( \sqrt{\frac{1-k}{2}} \right)\D k,\label{eq:S2_int_KK}\end{align}where $ (X,Y,Z)$ are Cartesian coordinates on the unit sphere $ S^2:X^2+Y^2+Z^2=1$.
Concretely speaking, we may use  Eqs.~\ref{eq:K_sqr} and \ref{eq:S2_int_KK} to convert Eq.~\ref{eq:K_sqr_minus_pi4} into
\begin{align*}\frac{\pi^{2}}{2}\log2-\frac{7\zeta(3)}{4}={}&\frac{1}{2}\int_0^1\left\{ [\mathbf K(\sqrt{t})]^2-[\mathbf K(0)]^2 \right\}\frac{\D t}{t}=\frac{1}{2}\int_0^1\left\{\int_{S^2} \left[\frac{2(2-t)}{(2-t)^{2}-t^{2}X^{2}}  -1\right]\mathbf K\left(\sqrt{X^{2}+Y^2}\right)\frac{\D\sigma}{4\pi}\right\}\frac{\D t}{t}\notag\\={}&-\frac{1}{4}\int_{S^2}\mathbf K\left(\sqrt{X^{2}+Y^2}\right)\log\frac{1-X^2}{4}\frac{\D\sigma}{4\pi}=-\frac{1}{2\pi}\int_0^1\mathbf K\left( \sqrt{\frac{1+k}{2}} \right)\mathbf K\left( \sqrt{\frac{1-k}{2}} \right)\log\frac{1-k^2}{4}\D k.\end{align*}This clearly supplies a geometric link between
both ends of Eq.~\ref{eq:KKlogt_zeta3}.

To prove Eq.~\ref{eq:G_K_mix}, we integrate both sides of Eq.~\ref{eq:Im_ext} over the range $ 0\leq\theta\leq\pi/2$. The left-hand side becomes\begin{align*}&\int_0^{\pi/2}\left[-\int_0^{\sin^2\theta}
\frac{ \mathbf K( \sqrt{1-t} )}{\sqrt{\sin^2\theta-t}}\D t+\int_0^1\frac{ \mathbf K( \sqrt{1-t} )\sin\theta}{\sqrt{1-t\sin^2\theta}}\D t\right]\D\theta\notag\\={}&-\int_{0}^1\left[\int_{\arcsin\sqrt{t}}^{\pi/2}
\frac{\D\theta}{\sqrt{\sin^2\theta-t}}\right] \mathbf K( \sqrt{1-t} )\D t+\frac{1}{2}\int_0^1\frac{ \mathbf K( \sqrt{1-t} )}{\sqrt{t}}\log\frac{1+\sqrt{t}}{1-\sqrt{t}}\D t\notag\\={}&-\int_{0}^1\left[\int^{\arcsin\sqrt{1-t}}_{0}
\frac{\D\theta}{\sqrt{1-t-\sin^2\theta}}\right] \mathbf K( \sqrt{1-t} )\D t+\int_0^1\mathbf K \left(\sqrt{1-\xi^{2}} \right)\log\frac{1+\xi}{1-\xi}\D \xi\notag\\={}&-\int_{0}^1\left[\int^{1}_{0}
\frac{\D u}{\sqrt{1-(1-t)u^2}\sqrt{1-u^{2}}}\right] \mathbf K( \sqrt{1-t} )\D t+2\int_0^1\frac{\mathbf K (\eta)}{1+\eta}\log\frac{1}{\eta}\D \eta=-\int_{0}^1[ \mathbf K( \sqrt{1-t} )]^{2}\D t+2\int_0^1\frac{\mathbf K (\eta)}{1+\eta}\log\frac{1}{\eta}\D \eta \end{align*}after trigonometric substitutions and Landen's transformation. Meanwhile, the right-hand side evaluates to (see item 1.442.2 in \cite{GradshteynRyzhik} for the Fourier expansion)\begin{align}\int_0^{{\pi }/{2}} 2 \theta  \log \tan \frac{\theta }{2} \, \D\theta=-4\int_0^{{\pi }/{2}}  \theta  \left[ \sum_{\ell=0}^\infty\frac{\cos(2\ell+1)\theta}{2\ell+1} \right] \, \D\theta=2 \sum_{\ell=0}^\infty\frac{2-(-1)^{\ell}(2\ell+1)\pi}{(2\ell+1)^{3}}=-2\pi G+\frac{7\zeta(3)}{2}.\label{eq:logtan_G_zeta3}\end{align}Quoting the result from Eq.~\ref{eq:Kt2_7Zeta3}, we thus have\[-2\pi G+7\zeta(3)=2\int_0^1\frac{\mathbf K (\eta)}{1+\eta}\log\frac{1}{\eta}\D \eta,\] which implies Eq.~\ref{eq:G_K_mix}.

The inverse tangent integral $ \diTi(z):=-i\chi_2(iz)$ has been  treated in detail by Chapter 2 of \cite{Lewin1981}, where   three functional relations $ 3\diTi(1)-2\diTi(\tfrac12)-\diTi(\tfrac13)-\tfrac12\diTi(\tfrac34)=\tfrac\pi2\log2$ \cite[Ref.][Eq.~2.28]{Lewin1981}, $ 3\diTi(2-\sqrt{3})=2\diTi(1)+\tfrac\pi4\log(2-\sqrt{3})$ \cite[Ref.][Eq.~2.29]{Lewin1981} and $\diTi(\tan\tfrac{\pi}{20})-\diTi(\tan\tfrac{3\pi}{20})+\tfrac25\diTi(1)=\tfrac{\pi}{20}\log\tan\tfrac{\pi}{20}-\tfrac{3\pi}{20}\log\tan\tfrac{3\pi}{20} $ \cite[Ref.][Eq.~2.51]{Lewin1981} paraphrase into our Eqs.~\ref{eq:G_Ti_a}, \ref{eq:G_Ti_b} and \ref{eq:G_Ti_c}. \qed\end{proof}

We may produce more integral formulae out of Eqs.~\ref{eq:K_arcsin} and \ref{eq:K_arsinh}, as explained separately in the following two corollaries.
\begin{corollary}[Mehler-Dirichlet Decomposition]\label{cor:MD}Let $ n\in\{1,3,5,\dots\}$ be an odd natural number, then we have the following integral formulae:\begin{align}\int_0^{\pi}\mathbf K\left(\sin\frac{\beta}{2}\right) \sin\beta\cos\frac{n\beta}{2}\D \beta=\begin{cases}\dfrac{\pi^{2}}{4}, & n=1 \\[8pt]
0, & n\equiv1\bmod 4 \quad \text{and }\quad n>1\\[2pt]
-\dfrac{\pi^{2}n}{2^{(n+1)/2}(n-1)^2}\left[ \dfrac{(\frac{n-1}{2})!!}{(\frac{n+1}{4})!} \right]^2, & n\equiv3\bmod 4 \\
\end{cases}\label{eq:MD_proj}\end{align}which implies the identity\begin{align}\frac{1}{1+r}\int_{0}^{1}\frac{\mathbf K(k)rk\sqrt{1-k^2}}{1-2r(1-2k^2)+r^2}\D k=\frac{\pi\mathbf K(|r|)}{8}-\frac{\pi^2}{16(1+r)},\quad -1<r<1\label{eq:K_reprod}\end{align}as well as two more  integral representations for Catalan's constant $G $:\begin{align}
G={}&\frac{\pi}{2}\log2+\frac{1}{\pi}\int_0^1\mathbf K\left(\sqrt{1-\kappa^2}\right)\log\sqrt{1-\kappa^2}\D\kappa,\label{eq:G_log2_log}\\G={}&\frac{7\zeta(3)}{2\pi}-\frac{1}{2\pi^2}\int_0^{\pi/2}\mathbf K(\sin\theta)\left[ \frac{\dilog(e^{2i\theta})-\dilog(e^{-2i\theta})}{i\cos\theta} \right]\D\theta.\label{eq:7Apery_Li2}
\end{align}\end{corollary}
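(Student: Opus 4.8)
The plan is to establish the Fourier-type evaluation \ref{eq:MD_proj} first, and then read off \ref{eq:K_reprod}, \ref{eq:G_log2_log} and \ref{eq:7Apery_Li2} from it. First I would halve the angle: with $\beta=2\alpha$ and $\sin\beta\cos\frac{n\beta}{2}=\frac12[\sin(n+2)\alpha-\sin(n-2)\alpha]$, the left member of \ref{eq:MD_proj} becomes $c_n:=2\int_0^{\pi/2}\mathbf K(\sin\alpha)\sin2\alpha\cos n\alpha\,\D\alpha$, so the whole task is to evaluate this one integral. For that I would treat \ref{eq:K_arcsin} as an Abel integral equation: after $t=\sin^2\phi$ it reads $\int_0^\alpha \mathbf K(\sin\phi)\sin2\phi\,[\sin^2\alpha-\sin^2\phi]^{-1/2}\D\phi=\pi\alpha$. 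Multiplying by a kernel $h_n(\alpha)$, integrating in $\alpha$, and applying Fubini gives $\tfrac12 c_n=\pi\int_0^{\pi/2}\alpha\,h_n(\alpha)\D\alpha$, provided $h_n$ solves the conjugate Abel equation $\int_\phi^{\pi/2}h_n(\alpha)[\sin^2\alpha-\sin^2\phi]^{-1/2}\D\alpha=\cos n\phi$.

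Second, I would invert this Abel equation. Since $\sin^2\alpha-\sin^2\phi=\frac12(\cos2\phi-\cos2\alpha)$, the relevant singular integral is exactly a Mehler--Dirichlet integral, and the integral representation of the Legendre polynomial implicit in \ref{eq:Mehler_Dirichlet} (recovered from it by the orthogonality of $\{\cos[(\ell+\tfrac12)\beta]\}$ on $[0,\pi]$) yields $\int_\chi^{\pi/2}\sin(m\psi)[\sin^2\psi-\sin^2\chi]^{-1/2}\D\psi=\frac\pi2 P_{(m-1)/2}(\cos2\chi)$ for odd $m$. This identifies $h_n$, in the variable $X=1-2\sin^2\alpha$, as an elementary multiple of $P'_{(n+1)/2}-P'_{(n-3)/2}$ (note $(n\pm1)/2\in\mathbb Z$), whereupon
\[
c_n=\frac{\pi}{4}\int_{-1}^{1}\arccos X\,[\,P'_{(n+1)/2}(X)-P'_{(n-3)/2}(X)\,]\D X .
\]
One integration by parts, together with $P_\mu(-1)=(-1)^\mu$ and the classical value $\int_0^\pi P_\mu(\cos\theta)\D\theta$ (equal to $\pi[(\mu-1)!!/\mu!!]^2$ for even $\mu$ and to $0$ for odd $\mu$), produces the three cases: the two terms cancel when $(n+1)/2$ is odd (i.e. $n\equiv1\bmod4$, $n>1$), while for $(n+1)/2$ even (i.e. $n\equiv3\bmod4$) the two surviving double factorials combine, via $[(2p-1)!!/(2p)!!]^2-[(2p-3)!!/(2p-2)!!]^2=-\tfrac{4p-1}{4p^2}[(2p-3)!!/(2p-2)!!]^2$ with $p=(n+1)/4$, into the stated closed form; the case $n=1$ is recovered by reading $P_{-1}=P_0$. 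I expect this step -- the Abel inversion, the Legendre bookkeeping, and the parity analysis -- to be the main obstacle, both in the algebra and in rigorously justifying the interchange of the singular integrals.

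Third, to pass from \ref{eq:MD_proj} to \ref{eq:K_reprod} I would substitute $k=\sin\frac\beta2$, note $1-2k^2=\cos\beta$ and $k\sqrt{1-k^2}=\frac12\sin\beta$, and expand the Poisson kernel $\frac{r\sin\beta}{1-2r\cos\beta+r^2}=\sum_{\ell\ge1}r^\ell\sin\ell\beta$. The integral then equals $\frac18\sum_{\ell\ge1}r^\ell[T_{\ell+1/2}+T_{\ell-1/2}]$, where $T_\nu:=\int_0^\pi\mathbf K(\sin\frac\beta2)\sin\nu\beta\,\D\beta$. Because $c_n=\frac12[T_{(n+2)/2}-T_{(n-2)/2}]$, the values in \ref{eq:MD_proj} fix all $T_{\ell+1/2}$ by telescoping, once the single seed $T_{1/2}=2\int_0^{\pi/2}\mathbf K(\sin\theta)\sin\theta\,\D\theta=\tfrac{\pi^2}2$ is supplied; this seed is immediate from the already-established $\int_0^{\pi/2}\frac{\log\tan(\theta/2)}{\cos\theta}\D\theta=-\tfrac{\pi^2}4$ of \ref{eq:a_star}. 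Resumming the resulting power series and recognizing it as $\frac{\pi\mathbf K(|r|)}8-\frac{\pi^2}{16(1+r)}$ then gives \ref{eq:K_reprod}.

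Finally, \ref{eq:G_log2_log} and \ref{eq:7Apery_Li2} should come out by inserting the now-known coefficients $S_m:=\int_0^{\pi/2}\mathbf K(\sin\alpha)\sin m\alpha\,\D\alpha=\frac12 T_{m/2}$ into suitable Fourier expansions: writing $\int_0^1\mathbf K(\sqrt{1-\kappa^2})\log\sqrt{1-\kappa^2}\,\D\kappa=\int_0^{\pi/2}\mathbf K(\sin\alpha)\sin\alpha\log\sin\alpha\,\D\alpha$ and expanding $\log\sin\alpha=-\log2-\sum_{k}k^{-1}\cos2k\alpha$ reduces \ref{eq:G_log2_log} to a telescoping sum of the $S_m$; and expanding $\dilog(e^{2i\theta})-\dilog(e^{-2i\theta})=2i\sum_k k^{-2}\sin2k\theta$, together with the finite reduction of $\sin2k\theta/\cos\theta$ into odd harmonics $\sin((2j+1)\theta)$, reduces the integral in \ref{eq:7Apery_Li2} to a double sum of the $S_m$ that evaluates against the representation \ref{eq:G_K_mix}. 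I regard these last two as routine once \ref{eq:MD_proj} is in hand.
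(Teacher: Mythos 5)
Your derivation of \ref{eq:MD_proj} is correct, and it takes a genuinely different route from the paper's. The paper inserts the Mehler--Dirichlet expansion \ref{eq:Mehler_Dirichlet} into the Abel-transformed form of \ref{eq:K_arcsin} and then \emph{quotes} the Legendre expansion $\frac{\pi\theta}{2}=\frac{\pi^2}{4}\{1-\sum_m\frac{4m-1}{(2m-1)^2}[\frac{(2m-1)!!}{2^mm!}]^2P_{2m-1}(\cos\theta)\}$ (item 8.925.1 of Gradshteyn--Ryzhik), matching coefficients of $P_\ell$. You instead invert the Abel equation explicitly: your dual kernel works out to $h_n(\alpha)=\frac{\sin2\alpha}{2}[P'_{(n+1)/2}(\cos2\alpha)-P'_{(n-3)/2}(\cos2\alpha)]$, and I have checked that your reduction $c_n=\frac{\pi}{4}\int_{-1}^1\arccos X\,[P'_{(n+1)/2}(X)-P'_{(n-3)/2}(X)]\D X$, the vanishing of the boundary term (same parity of $(n+1)/2$ and $(n-3)/2$), the parity cases, and the double-factorial algebra all reproduce the displayed values, including $c_{4p-1}=-\frac{\pi^2}{4}\frac{4p-1}{(2p-1)^2}\frac{[(2p-1)!!]^2}{4^p(p!)^2}$. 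In effect you re-derive the Legendre expansion of $\theta$ rather than citing it --- longer but more self-contained. Your route to \ref{eq:K_reprod} via the half-integer sine moments $T_\nu$, the telescoping $c_n=\frac12[T_{(n+2)/2}-T_{(n-2)/2}]$ and the seed $T_{1/2}=\pi^2/2$ (your derivation of the seed from \ref{eq:a_star} is valid, though \ref{eq:a} gives it in one line) is sound; it is a re-bookkeeping of the paper's computation with the kernel $\sum_{m\geq1}r^m\cos\frac{(2m-1)\beta}{2}=\frac{r(1-r)\cos(\beta/2)}{1-2r\cos\beta+r^2}$ and the Maclaurin series of $\mathbf K(|r|)$.

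The last step, however, contains a genuine gap, concentrated in \ref{eq:G_log2_log}. The sum you obtain there is \emph{not} telescoping: expanding $\log\sin\alpha$ yields $\int_0^{\pi/2}\mathbf K(\sin\alpha)\sin\alpha\log\sin\alpha\,\D\alpha=-\frac{\pi^2}{4}\log2-\sum_{k\geq1}\frac{S_{2k+1}-S_{2k-1}}{2k}$, and the weight $1/(2k)$ destroys the cancellation. After Abel summation, with $S_{4n+1}=\frac{\pi^2}{4}[\frac{(2n-1)!!}{(2n)!!}]^2$ and $S_{4n+3}=0$, what remains is exactly the evaluation $\sum_{n\geq1}\frac1n[\frac{(2n-1)!!}{(2n)!!}]^2=4\log2-\frac{8G}{\pi}$, which is nothing but Eq.~\ref{eq:low_deg}. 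But within this paper \ref{eq:low_deg} is explicitly left unproven at this stage, and its self-contained proof in \S\ref{subsubsec:B_simp} (Eq.~\ref{eq:low_deg'}) is \emph{deduced from} \ref{eq:G_log2_log}; indeed \ref{eq:G_log2_log} and \ref{eq:low_deg} are equivalent under the type-B Beltrami transformation together with \ref{eq:a}. So your reduction, completed the natural way, proves the equivalence of the two identities rather than either one --- circular in the paper's architecture unless you import \ref{eq:low_deg} from the classical literature. The paper avoids this entirely: it integrates \ref{eq:K_reprod} over $r\in[0,1]$ against $2G=\int_0^1\mathbf K(r)\D r$ and evaluates $\int_0^1\mathbf K(k)\arccos k\,\D k=\frac{\pi^2}{4}\log2$ directly through the dilogarithm, with no log-weighted $\mathbf K$-series needed. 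For \ref{eq:7Apery_Li2} your double sum $\frac{2}{\pi^2}\sum_k k^{-2}\sum_{j<k}(-1)^{k-1-j}S_{2j+1}$ relies only on \ref{eq:G_K_mix}, which does precede the corollary, so there is no circularity; but calling it routine undersells the conditionally convergent rearrangements with $O(1/n)$ coefficients that must be justified, whereas the paper simply multiplies \ref{eq:K_reprod} by $\frac{1}{1+r}\log\frac1r$ and integrates --- the very use \ref{eq:K_reprod} was built for. I recommend you keep your Abel-inversion proof of \ref{eq:MD_proj} and your moment derivation of \ref{eq:K_reprod}, but adopt the $r$-integration of \ref{eq:K_reprod} for the two Catalan representations.
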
\begin{proof}We first set $t=\sin^2\frac{\beta}{2},x=\sin\frac{\theta}{2}$ in  Eq.~\ref{eq:K_arcsin}, and recall the Mehler-Dirichlet formula (Eq.~\ref{eq:Mehler_Dirichlet}):\begin{align*}\int_0^{\pi}\left[\sum_{\ell=0}^\infty P_\ell(\cos\theta)\cos\frac{(2\ell+1)\beta}{2}\right]\mathbf K\left(\sin\frac{\beta}{2}\right) \sin\beta\D \beta=\int_0^{\theta}
\frac{ \mathbf K(\sin\frac{\beta}{2} )\sin\beta}{\sqrt{2(\cos\beta-\cos\theta)}}\D \beta={}&\frac{\pi\theta}{2},\quad 0\leq\theta \leq\pi.\end{align*}Meanwhile, the rightmost term in the equation above can also be developed into a series of Legendre polynomials (see 8.925.1 of \cite{GradshteynRyzhik}) \[\frac{\pi\theta}{2}=\frac{\pi^2}{4}\left\{ 1-\sum_{m=1}^\infty \frac{4m-1}{(2m-1)^2}\left[ \frac{(2m-1)!!}{2^{m}m!} \right]^2P_{2m-1}(\cos\theta)\right\}.\]As we compare the coefficients of $ P_\ell(\cos\theta)$, we may conclude that for  positive integers $ m=1,2,\dots$, there are the following integral formulae\begin{align*}\int_0^{\pi}\mathbf K\left(\sin\frac{\beta}{2}\right) \sin\beta\cos\frac{(2\ell+1)\beta}{2}\D \beta=\begin{cases}\frac{\pi^2}{4}, & \ell=0 \\
0, & \ell=2m \\
-\frac{\pi^2}{4}\frac{4m-1}{(2m-1)^2}\left[ \frac{(2m-1)!!}{2^{m}m!} \right]^2, & \ell=2m-1 \\
\end{cases}\end{align*}which are equivalent to the statement in Eq.~\ref{eq:MD_proj}.

Considering the familiar Fourier expansions \begin{align*}\sum_{m=1}^\infty r^{m}\cos\frac{(2m-1)\beta}{2}=\frac{r(1-r)\cos\frac{\beta}{2}}{1-2r\cos\beta+r^2},\quad -1<r<1.\end{align*}we may compute\begin{align*}\int_0^{\pi}\mathbf K\left(\sin\frac{\beta}{2}\right) \sin\beta\frac{r(1-r)\cos\frac{\beta}{2}}{1-2r\cos\beta+r^2}\D \beta={}&4\int_{0}^{1}\mathbf K(k)k\sqrt{1-k^2}\frac{r(1-r)}{1-2r(1-2k^2)+r^2}\D k\notag\\={}&\frac{\pi^2}{4}\left[ r-\sum_{m=1}^\infty \frac{4m-1}{(2m-1)^2}\left[ \frac{(2m-1)!!}{2^{m}m!} \right]^2 r^{2m} \right]=\frac{\pi(1-r^2)\mathbf K(|r|)}{2}-\frac{\pi^2(1-r)}{4},\end{align*}which entails Eq.~\ref{eq:K_reprod}.

As we integrate Eq.~\ref{eq:K_reprod} over the range $0\leq r\leq1 $ and exploit the fact that $ 2G=\int_0^1\mathbf K(r)\D r$, we may verify the equality\begin{align*}\frac{1}{4}\int_0^1\mathbf K(k)\left( \arccos k+\frac{k\log k}{\sqrt{1-k^2}} \right)\D k=\frac{\pi G}{4}-\frac{\pi^2}{16}\log2.\end{align*} Next, we calculate (see also 4.21.4.8 in \cite{Brychkov2008})\begin{align*}\int_0^1\mathbf K(k)\arccos k\D k=\int_0^{1}\left[ \int_0^{\pi/2}\frac{\D\theta}{\sqrt{1-k^{2}\sin^2\theta}} \right] \arccos k\D k=\int^{\pi/2}_0\frac{\dilog(\sin\theta)-\dilog(-\sin\theta)}{2\sin\theta} \D\theta=\frac{\pi^{2}}{4}\log2,\end{align*} so that \[\frac{1}{4}\int_0^1\mathbf K(k)\frac{k\log k}{\sqrt{1-k^2}}\D k=\frac{\pi G}{4}-\frac{\pi^2}{8}\log2\]rearranges into Eq.~\ref{eq:G_log2_log} after the variable substitution $ k\mapsto\sqrt{1-\kappa^2}$. Recalling Eq.~\ref{eq:G_K_mix}, we multiply both sides of  Eq.~\ref{eq:K_reprod} by $ \frac{1}{1+r}\log\frac{1}{r}$, then integrate over $ r\in[0,1]$, to deduce\begin{align*}\int_0^{\pi/2}\mathbf K(\sin\theta)\left[ \frac{\dilog(e^{2i\theta})-\dilog(e^{-2i\theta})}{16i\cos\theta} -\frac{\log2}{4}\sin\theta\right]\D\theta=\frac{\pi^2}{8}\left[ \frac{7\zeta(3)}{2\pi} -G\right]-\frac{\pi^2}{16}\log2.\end{align*}This leads to Eq.~\ref{eq:7Apery_Li2} after we recall from Eq.~\ref{eq:a} that $ \int_0^{\pi/2}\mathbf K(\sin\theta)\sin\theta\D \theta=\pi^2/4$.   \qed\end{proof}   \begin{remark}It is not hard to recognize that    we may deduce Eq.~\ref{eq:MD_proj} from Tricomi's Fourier expansion\begin{align}\label{eq:Tricomi_Fourier}\mathbf K(|\sin\theta|)=\sum_{n=0}^\infty\left[\frac{\Gamma(n+\frac12)}{n!}\right]^2\sin(4n+1)\theta,\quad 0<\theta<\pi\end{align} and \textit{vice versa}.\eor\end{remark}
\begin{corollary}[Abel Transform]\label{cor:Abel}\begin{enumerate}[label=\emph{(\alph*)}, ref=(\alph*), widest=a]\item Define the Abel transform as\[(\widehat Af)(x):=\int_x^{\infty}
\frac{ 2f(r)r}{\sqrt{r^{2}-x^{2}}}\D r,\] then we have the following integral identity\begin{align}\int_0^\infty \frac{x(\widehat Af)(x)}{\sqrt{1+x^{2}}}\mathbf K\left(\frac{x}{\sqrt{1+x^2}}\right )\D x=\pi\int_{0}^\infty f(r)r\log(r+\sqrt{1+r^2})\D r\label{eq:Abel_Tr}\end{align}so long as the  function $ f(r)$ guarantees convergence of the integrals on both sides.
Consequently, we have the integral formulae:\begin{align}\int_0^1\frac{\mathbf K(k)}{1-k^2}\left[ 1-\frac{k}{\sqrt{k^{2}+a^2(1-k^2)}} \right]\D k={}&\begin{cases}\dfrac{\arcsin a}{2}\left( \pi-\arcsin a \right), & 0\leq a\leq1 \\[8pt]\dfrac{\pi ^2}{8}+
\dfrac{\log ^2(\sqrt{a^2-1}+a)}{2} , & a>1 \\
\end{cases},\label{eq:Abel1}\\\int_0^{\frac{a}{\sqrt{1+a^2}}}\frac{k\mathbf K(k)}{(1-k^2)^{2}}\sqrt{a^{2}-(1+a^2)k^2}\D k={}&\frac{\pi}{8}\left[ (1+2a^2)\log(a+\sqrt{1+a^2}) -a\sqrt{1+a^{2}}\right],\quad a\geq0,\label{eq:Abel2}\\\int_0^{\frac{a}{\sqrt{1+a^2}}}\frac{k\mathbf K(k)}{(1-k^2)^{3}}[{a^{2}-(1+a^2)k^2}]^{3/2}\D k={}&\frac{3\pi}{128}   \left[(8 a^4+8 a^2+3) \log (a+\sqrt{1+a^2})-3 a(1+2a^2)\sqrt{1+a^{2}}\right],\quad a\geq0,\label{eq:Abel3}\end{align}and\begin{align}&\int_0^{\frac{a}{\sqrt{1+a^2}}}\frac{k\mathbf K(k)}{(1-k^2)^{2}}\left[a  \sqrt{{a^{2}-(1+a^2)k^2}}-\frac{k^2}{\sqrt{1-k^2}} \log \frac{\sqrt{{a^{2}-(1+a^2)k^2}}+a \sqrt{1-k^2}}{k}\right]\D k\notag\\={}&\pi \left[\frac{2}{9}-\frac{1}{36} \sqrt{1+a^2} (8+5 a^2)+\frac{a (3+2 a^2)}{12}  \log (a+\sqrt{1+a^2})\right],\quad a\geq0.\label{eq:Abel4}\end{align}Furthermore, we have integral representations of  $G$ and  $ \pi^2/8$:\begin{align}
G={}&\frac{1}{\pi}\int_0^1\frac{k\tanh^{-1}\sqrt{1-k^2}}{1-k^{2}}\mathbf K(k)\D k,\label{eq:G_tanh}
\\\frac{\pi^2}{8}={}&\frac{1}{\pi}\int_0^1\frac{\arccos k}{1-k^{2}}\mathbf K(k)\D k,\label{eq:pi8_arccos}
\end{align}along with two formulae valid for $ 0\leq\theta<\pi/2$:\begin{align}&\frac1\pi\int_{0}^1\frac{k\tanh^{-1}\sqrt{\frac{1-k^2}{1-k^2\sin^2\theta}}}{(1-k^{2})\sqrt{k^{2}+(1-k^2)\sec^2\theta}}\mathbf K(k)\D k=\frac{\theta}{2}\left(\frac{i\pi}{2}-\log\tan\frac{\pi-2\theta}{4}\right)-\frac{i[\dilog(ie^{-i\theta})-\dilog(-ie^{-i\theta})]}{2},\label{eq:Abel_Li2}\\&
\frac{1}{\pi}\int_0^1\left[ \frac{\tanh ^{-1}\sqrt{1-k^2}}{k}-\frac{ \log \cos \theta}{k\sqrt{1-k^{2}}}-\frac{1}{k\sqrt{1-k^2 \sin ^2\theta }} \tanh ^{-1}\sqrt{\frac{1-k^2}{1-k^2 \sin ^2\theta }} \right]\mathbf K(k)\D k=\frac{\theta^2}{4}.\label{eq:Abel_Li2_int}
\end{align}\item We have a pair of   integral formulae:\begin{align}
2[\mathbf E(\sqrt{u})-(1-u)\mathbf K(\sqrt{u})]={}&\int_0^u\mathbf K
(\sqrt{t})\D t=\int_0^{u}\frac{\arcsin\sqrt t}{\sqrt{u-t}}\D t,\quad 0<u<1,\label{eq:arcsin_Abel_eq}\\2\sqrt{1+\xi}\left[ \mathbf K\left( \sqrt{\frac{\xi}{1+\xi}} \right)-\mathbf E\left( \sqrt{\frac{\xi}{1+\xi}} \right) \right]={}&\int_0^\xi\frac{1}{\sqrt{1+t}}\mathbf K
\left(\sqrt{\frac{t}{1+t}}\right)\D t=\int_{0}^\xi\frac{\log(\sqrt{t}+\sqrt{1+t})}{\sqrt{\xi-t}}\D t,\quad \xi>0,\label{eq:arsinh_Abel_eq}\end{align}which lead to the relations\begin{align}
\int_0^u\frac{\mathbf E(\sqrt{t})-(1-t)\mathbf K(\sqrt{t})}{\sqrt{u-t}}\D t={}&\frac{\pi}{4}[\sqrt{\smash[b]{u(1-u)}}-(1-2u) \arcsin\sqrt{u}],\quad 0<u<1,
\label{eq:E_arcsin}\\\int_0^\xi\frac{\sqrt{1+t}}{\sqrt{\smash[b]{\xi-t}}}\left[ \mathbf K\left( \sqrt{\frac{t}{1+t}} \right)-\mathbf E\left( \sqrt{\frac{t}{1+t}} \right) \right]\D t={}&\frac{\pi}{4}[(1+2\xi)\log(\sqrt{\smash[b]{\xi}}+\sqrt{\smash[b]{1+\xi}})-\sqrt{\smash[b]{\xi(1+\xi)}}],\quad \xi>0.\label{eq:E_arsinh}\end{align}\end{enumerate}\end{corollary}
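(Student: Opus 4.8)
The plan is to read part~(a) as a single self-adjointness (Parseval-type) statement for the Abel transform $\widehat A$ and part~(b) as the semigroup law for fractional integration, the only analytic inputs being Eqs.~\ref{eq:K_arcsin} and \ref{eq:K_arsinh}, which I may assume. First I would prove the master formula \ref{eq:Abel_Tr}. Inserting the definition of $\widehat A$ into its left-hand side and interchanging the order of integration (legitimate because the integrand is non-negative and the integrals converge), I reduce it to
\[\int_0^\infty 2rf(r)\left[\int_0^r\frac{1}{\sqrt{r^2-x^2}}\frac{x}{\sqrt{1+x^2}}\mathbf K\left(\frac{x}{\sqrt{1+x^2}}\right)\D x\right]\D r.\]
The bracketed inner integral is exactly Eq.~\ref{eq:K_arsinh} after the substitution $t=x^2$ (with upper limit $r$), hence equals $\tfrac{\pi}{2}\log(r+\sqrt{1+r^2})=\tfrac{\pi}{2}\sinh^{-1}r$. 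This collapses the whole expression to $\pi\int_0^\infty rf(r)\log(r+\sqrt{1+r^2})\D r$, which is \ref{eq:Abel_Tr}. In other words, \ref{eq:Abel_Tr} states that the co-Abel transform of $\tfrac{x}{\sqrt{1+x^2}}\mathbf K(\tfrac{x}{\sqrt{1+x^2}})$ is $\pi\sinh^{-1}r$.

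Next, every consequence in part~(a) follows from \ref{eq:Abel_Tr} by one fixed change of variable $k=x/\sqrt{1+x^2}$ (so $x=k/\sqrt{1-k^2}$, $\D x=(1-k^2)^{-3/2}\D k$, $\tfrac{x}{\sqrt{1+x^2}}=k$), which converts each $k$-integrand into $\tfrac{x(\widehat A f)(x)}{\sqrt{1+x^2}}\mathbf K(\tfrac{x}{\sqrt{1+x^2}})\D x$, where $(\widehat A f)(x)$ is the algebraic factor rewritten through $1-k^2=(1+x^2)^{-1}$ and $a^2-(1+a^2)k^2=(a^2-x^2)/(1+x^2)$; the target then becomes the elementary $\pi\int_0^\infty rf(r)\sinh^{-1}r\,\D r$. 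For the algebraic family \ref{eq:Abel1}--\ref{eq:Abel4} I would record the standard Abel pairs $\widehat A[\tfrac12\theta_H(a-\cdot)](x)=\sqrt{a^2-x^2}$, $\widehat A[\tfrac34(a^2-\cdot^{\,2})\theta_H(a-\cdot)](x)=(a^2-x^2)^{3/2}$, $\widehat A[\tfrac{1}{\pi r^2}](x)=1/x$ and $\widehat A[\tfrac{1}{r^2+a^2}](x)=\pi/\sqrt{x^2+a^2}$; these reproduce the integrands of \ref{eq:Abel2}, \ref{eq:Abel3}, and \ref{eq:Abel1} (with $f=\tfrac1\pi[\tfrac1{r^2}-\tfrac1{r^2+a^2}]$), while \ref{eq:Abel4} needs the same $\tfrac a2\theta_H(a-\cdot)$ piece augmented by a single logarithmic source. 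The right-hand $\sinh^{-1}$-integrals are then finished by one integration by parts; the only nontrivial one is \ref{eq:Abel1}, whose integral over $(0,\infty)$ splits into the piecewise $\tfrac12\arcsin a\,(\pi-\arcsin a)$ versus $\tfrac{\pi^2}{8}+\tfrac12\log^2(\ldots)$ dichotomy through a Legendre-$\chi_2$ evaluation.

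The transcendental results \ref{eq:G_tanh}, \ref{eq:pi8_arccos}, \ref{eq:Abel_Li2} and \ref{eq:Abel_Li2_int} follow the same recipe, but now $(\widehat A f)(x)$ carries the kernels $\tfrac{\arccot x}{\pi x}$, $\tfrac{1}{\pi\sqrt{1+x^2}}\tanh^{-1}\tfrac{1}{\sqrt{1+x^2}}$ and their $\theta$-deformations $\tfrac{1}{\pi\sqrt{1+x^2\cos^2\theta}}\tanh^{-1}\tfrac{1}{\sqrt{1+x^2\cos^2\theta}}$ (here $\arccos(x/\sqrt{1+x^2})=\arccot x$ accounts for \ref{eq:pi8_arccos}), whose pre-images $f$ are transcendental. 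The identities \ref{eq:G_tanh} and \ref{eq:pi8_arccos} are the constant specializations/$\theta$-limits of the $\theta$-parametrized pair \ref{eq:Abel_Li2}--\ref{eq:Abel_Li2_int}. The main obstacle lies precisely here: recovering these $f$ by the inverse Abel transform and then evaluating $\int_0^\infty rf(r)\sinh^{-1}r\,\D r$ in closed form, which is where the inverse-tangent integral $\diTi$ and the $\dilog$/$\chi_2$ functional equations already assembled in Corollary~\ref{cor:chi2} must be invoked to produce $G$, $\pi^2/8$, the clean $\theta^2/4$ of \ref{eq:Abel_Li2_int}, and the $\dilog(ie^{-i\theta})-\dilog(-ie^{-i\theta})$ combination on the right of \ref{eq:Abel_Li2}.

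Finally, part~(b) I would derive from the semigroup law $\mathcal I^{1/2}\!\circ\mathcal I^{1/2}=\mathcal I^{1}$ for the Riemann--Liouville half-integral $\mathcal I^{1/2}\phi(u)=\tfrac{1}{\sqrt\pi}\int_0^u\frac{\phi(t)}{\sqrt{u-t}}\D t$. The first equalities in \ref{eq:arcsin_Abel_eq} and \ref{eq:arsinh_Abel_eq} are the antiderivative relations $\tfrac{\D}{\D u}[\mathbf E(\sqrt u)-(1-u)\mathbf K(\sqrt u)]=\tfrac12\mathbf K(\sqrt u)$ and its $\mathbf K(\sqrt{t/(1+t)})$ analog. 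For the second equalities, Eq.~\ref{eq:K_arcsin} reads $\mathcal I^{1/2}[\mathbf K(\sqrt\cdot)]=\sqrt\pi\arcsin\sqrt\cdot$, so one further application of $\mathcal I^{1/2}$ gives $\int_0^u\mathbf K(\sqrt t)\D t=\mathcal I^1[\mathbf K(\sqrt\cdot)]=\int_0^u\frac{\arcsin\sqrt t}{\sqrt{u-t}}\D t$, and \ref{eq:arsinh_Abel_eq} follows identically from \ref{eq:K_arsinh}. The relations \ref{eq:E_arcsin} and \ref{eq:E_arsinh} then come from applying $\mathcal I^{1/2}$ to the once-integrated functions and using commutativity $\mathcal I^{1/2}\mathcal I^1=\mathcal I^1\mathcal I^{1/2}$, which reduces them to the elementary $\tfrac{\pi}{2}\int_0^u\arcsin\sqrt t\,\D t$ and $\tfrac{\pi}{2}\int_0^\xi\sinh^{-1}\sqrt t\,\D t$, each evaluated by a single integration by parts.
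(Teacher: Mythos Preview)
Your overall architecture matches the paper's exactly: for part~(a) you insert the definition of $\widehat A$, swap the order of integration, and invoke Eq.~\ref{eq:K_arsinh} to collapse the inner integral to $\tfrac{\pi}{2}\sinh^{-1}r$; then specific choices of $f$ plus the change of variable $k=x/\sqrt{1+x^2}$ yield \ref{eq:Abel1}--\ref{eq:Abel4}. For part~(b) your semigroup law $\mathcal I^{1/2}\circ\mathcal I^{1/2}=\mathcal I^1$ is precisely the Abel integral equation inversion (Eqs.~\ref{eq:Abel_int_eq}--\ref{eq:Abel_int_eq'}) that the paper uses.

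The one place you diverge from the paper is your handling of \ref{eq:G_tanh}, \ref{eq:pi8_arccos}, \ref{eq:Abel_Li2}, \ref{eq:Abel_Li2_int}, and here you have made things harder than they are. You say the pre-images $f$ are transcendental and that dilogarithm functional equations from Corollary~\ref{cor:chi2} must be invoked. In fact the paper works \emph{forward} from simple algebraic $f$: for \ref{eq:G_tanh} it takes $2rf(r)=(1+r^2)^{-1}$, computes the forward Abel transform $\int_x^\infty\frac{\D r}{(1+r^2)\sqrt{r^2-x^2}}=\frac{1}{\sqrt{1+x^2}}\tanh^{-1}\frac{1}{\sqrt{1+x^2}}$ directly, and evaluates the right-hand side $\tfrac{\pi}{2}\int_0^\infty\frac{\sinh^{-1}r}{1+r^2}\D r=\pi G$ by the elementary substitution $y=\sinh^{-1}r$ (Eq.~\ref{eq:G_cosh}). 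The same pattern with $2rf(r)=(r\sqrt{1+r^2})^{-1}$ handles \ref{eq:pi8_arccos}, and with $2rf(r)=(\sec^2\theta+r^2)^{-1}$ handles \ref{eq:Abel_Li2}; only the last of these produces a dilogarithm on the right, and that comes from a single direct evaluation of $\int_0^\infty\frac{\sinh^{-1}r}{\sec^2\theta+r^2}\D r$, not from any $\chi_2$ machinery. Finally, \ref{eq:Abel_Li2_int} is not obtained by a fresh Abel computation at all: the paper multiplies \ref{eq:Abel_Li2} by $\sin\theta$, integrates in $\theta$, then adds back $\cos\theta$ times \ref{eq:Abel_Li2} and subtracts \ref{eq:G_tanh} to cancel the $G$. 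Your remark that \ref{eq:pi8_arccos} is a $\theta$-limit of the parametrized pair is also off; it is derived independently. None of this is a gap in your plan---your route would still work---but the ``main obstacle'' you flag dissolves once you compute the Abel transforms in the forward direction from rational $f$.
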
\begin{proof}\begin{enumerate}[label=(\alph*),widest=a]\item We first rewrite  Eq.~\ref{eq:K_arsinh} as\begin{align*}\pi \log(r+\sqrt{1+r^2})=\int_0^{r}
\frac{ 2x}{\sqrt{ 1+x^{2}}\sqrt{r^{2}-x^{2}}}\mathbf K\left(\frac{x}{\sqrt{1+x^2}}\right )\D x,\quad 0\leq r<+\infty,\end{align*}then multiply both sides by $ f(r)r$ and integrate over the positive real axis, as follows:\begin{align*}\pi\int_{0}^\infty f(r)r\log(r+\sqrt{1+r^2})\D r={}&\int_0^\infty f(r)r\left[ \int_0^{r}
\frac{ 2x}{\sqrt{ 1+x^{2}}\sqrt{r^{2}-x^{2}}}\mathbf K\left(\frac{x}{\sqrt{1+x^2}}\right )\D x\right]\D r\notag\\={}&\int_0^\infty \frac{x}{\sqrt{1+x^{2}}}\mathbf K\left(\frac{x}{\sqrt{1+x^2}}\right )\left[ \int_x^{\infty}
\frac{ 2f(r)r}{\sqrt{r^{2}-x^{2}}}\D r\right]\D x=\int_0^\infty \frac{x(\widehat Af)(x)}{\sqrt{1+x^{2}}}\mathbf K\left(\frac{x}{\sqrt{1+x^2}}\right )\D x,\end{align*}hence the formula in Eq.~\ref{eq:Abel_Tr}.

Taking advantage of the table of Abel transforms (Table 13.9 in \cite{Bracewell}) we may put down the following equalities involving a non-negative parameter $ a\geq0$:{\allowdisplaybreaks\begin{align*}\pi\int_0^\infty \frac{x}{\sqrt{1+x^{2}}}\left( \frac{1}{x}-\frac{1}{\sqrt{a^2+x^2}} \right)\mathbf K\left(\frac{x}{\sqrt{1+x^2}}\right )\D x={}&\pi\int_{0}^\infty \left(\frac{1}{r^{2}}-\frac{1}{a^{2}+r^2}\right)r\log(r+\sqrt{1+r^2})\D r,\notag\\\int_0^a \frac{2x\sqrt{a^{2}-x^2}}{\sqrt{1+x^{2}}}\mathbf K\left(\frac{x}{\sqrt{1+x^2}}\right )\D x={}&\pi\int_{0}^a r\log(r+\sqrt{1+r^2})\D r,\notag\\\int_0^a \frac{4x({a^{2}-x^2})^{3/2}}{3\sqrt{1+x^{2}}}\mathbf K\left(\frac{x}{\sqrt{1+x^2}}\right )\D x={}&\pi\int_{0}^a (a^{2}-r^{2})r\log(r+\sqrt{1+r^2})\D r,\notag\\\int_0^a \frac{x}{\sqrt{1+x^{2}}}\left(a\sqrt{a^{2}-x^2}-x^2\cosh^{-1}\frac{a}{x}\right)\mathbf K\left(\frac{x}{\sqrt{1+x^2}}\right )\D x={}&\pi\int_{0}^a (a-r)r\log(r+\sqrt{1+r^2})\D r.\end{align*} }As we perform the variable substitution $ x\mapsto k/\sqrt{1-k^2}$ on the left,  and complete the integration on the right, we obtain the integral formulae listed in Eqs.~\ref{eq:Abel1}-\ref{eq:Abel4}. One might note that the $ a=1$ specific case in Eq.~\ref{eq:Abel1} corresponds to Eq.~\ref{eq:a}.

We note that Catalan's constant can be cast into an integral in the following manner (cf.~3.521.2 in \cite{GradshteynRyzhik}):\begin{align}G=\sum_{n=0}^\infty\frac{(-1)^n}{(2n+1)^2}=\sum_{n=0}^\infty(-1)^n\int_0^\infty ye^{-(2n+1)y}\D y=\int_0^\infty\frac{ye^{-y}\D y}{1+e^{-2y}}=\frac12\int_0^{1}\frac{y\D y}{\cosh y}=\frac{1}2\int_0^\infty\frac{\log(r+\sqrt{1+r^2})\D r}{1+r^{2}},\label{eq:G_cosh}\end{align}where the last step involves a substitution $y=\log(r+\sqrt{1+r^2}) $. Meanwhile, the Abel transform reveals\begin{align*}\int_x^\infty\frac{1}{1+r^2}\frac{\D t}{\sqrt{r^2-x^2}}=\frac{1}{\sqrt{1+x^{2}}}\tanh^{-1}\frac{1}{\sqrt{1+x^2}},\end{align*}which brings us to \begin{align*}\int_0^\infty\frac{x\tanh^{-1}\frac{1}{\sqrt{1+x^{2}}}}{1+x^2}\mathbf K\left(\frac{x}{\sqrt{1+x^2}}\right )\D x=\int_0^\infty\frac{x\sinh^{-1}\frac{1}{x}}{1+x^2}\mathbf K\left(\frac{x}{\sqrt{1+x^2}}\right )\D x=\pi G.\end{align*}After replacing $x$ with $ k/\sqrt{1-k^2}$, we obtain Eq.~\ref{eq:G_tanh} as claimed. (It is easy to recognize that Eq.~\ref{eq:G_tanh} becomes Eq.~\ref{eq:b} after the substitution $ \kappa=\sqrt{1-k^{2}}$ and Landen's transformation.) The proof of Eq.~\ref{eq:pi8_arccos} is entirely similar, and is based on the facts that (cf.~3.521.1 in \cite{GradshteynRyzhik})\begin{align}\frac{\pi^2}{8}=\frac12\int_0^{1}\frac{y\D y}{\sinh y}=\frac{1}2\int_0^\infty\frac{\log(r+\sqrt{1+r^2})\D r}{r\sqrt{1+r^{2}}}\label{eq:pi8_y_sinhy}\end{align}and \begin{align}\int_x^\infty\frac{1}{r\sqrt{1+r^2}}\frac{\D t}{\sqrt{r^2-x^2}}=\frac{1}{x}\arctan\frac{1}{x},\quad\int_0^\infty\frac{\arctan\frac{1}{x}}{\sqrt{1+x^2}}\mathbf K\left(\frac{x}{\sqrt{1+x^2}}\right )\D x=\pi\frac{\pi^{2}}{8}.\label{eq:pi8_arctan_inf_int}\end{align}

The foregoing approach naturally extends to the scenario where\begin{align*}\frac12\int_0^\infty\frac{\log(r+\sqrt{1+r^2})\D r}{{\sec^{2}\theta}+r^{2}}=\frac{\theta\cos\theta}{2}\left(\frac{i\pi}{2}-\log\tan\frac{\pi-2\theta}{4}\right)-\frac{i\cos\theta[\dilog(ie^{-i\theta})-\dilog(-ie^{-i\theta})]}{2},\quad0\leq\theta<\frac{\pi}{2},\end{align*}and\[\int_x^\infty\frac{1}{{\sec^{2}\theta}+r^2}\frac{\D t}{\sqrt{r^2-x^2}}=\frac{\cos\theta\sinh^{-1}\frac{1}{x\cos\theta}}{\sqrt{{\sec^{2}\theta}+x^{2}}}=\frac{\cos\theta\tanh^{-1}\frac{1}{\sqrt{x^{2}\cos^{2}\theta+1}}}{\sqrt{{\sec^{2}\theta}+x^{2}}}.\]This  leads to the formula in Eq.~\ref{eq:Abel_Li2}. (In contrast, there does not seem to be an ``elementary'' generalization of   Eq.~\ref{eq:pi8_arccos}: if we have $ (a+r)\sqrt{1+r^2}$ or $ r\sqrt{a^2+r^2}$ in place of $ r\sqrt{1+r^2}$ in the denominator, the resulting formula will involve the Meijer $G$-function, instead of the dilogarithm.)

Multiplying both sides of  Eq.~\ref{eq:Abel_Li2} by $ \sin\theta$ before integrating in $ \theta$, we obtain\begin{align*}&\frac{1}{\pi}\int_0^1\left[ \frac{\tanh ^{-1}\sqrt{1-k^2}}{k(1-k^{2})}-\frac{ \log \cos \theta}{k\sqrt{1-k^{2}}}-\frac{\sqrt{1-k^2 \sin ^2\theta }}{k(1-k^{2})} \tanh ^{-1}\sqrt{\frac{1-k^2}{1-k^2 \sin ^2\theta }} \right]\mathbf K(k)\D k\notag\\={}&G+\frac{\theta^2}{4}-\cos\theta\left\{\frac{\theta}{2}\left(\frac{i\pi}{2}-\log\tan\frac{\pi-2\theta}{4}\right)-\frac{i[\dilog(ie^{-i\theta})-\dilog(-ie^{-i\theta})]}{2}\right\}.\end{align*}As we multiply  both sides of  Eq.~\ref{eq:Abel_Li2} by $ \cos\theta$ and add to the equation above, we have\begin{align*}\frac{1}{\pi}\int_0^1\left[ \frac{\tanh ^{-1}\sqrt{1-k^2}}{k(1-k^{2})}-\frac{ \log \cos \theta}{k\sqrt{1-k^{2}}}-\frac{1}{k\sqrt{1-k^2 \sin ^2\theta }} \tanh ^{-1}\sqrt{\frac{1-k^2}{1-k^2 \sin ^2\theta }} \right]\mathbf K(k)\D k={}&G+\frac{\theta^2}{4}.\end{align*}
We will arrive at   Eq.~\ref{eq:Abel_Li2_int} after eliminating $G$ from the last formula using Eq.~\ref{eq:G_tanh}. \item The leftmost equalities in Eqs.~\ref{eq:arcsin_Abel_eq} and \ref{eq:arsinh_Abel_eq} can be verified by differentiation in $ u$ and $\xi$, respectively. Meanwhile, we note that the solution to the Abel integral equation (which is morally equivalent to the Abel transform)\begin{align}\int_0^x\frac{\phi(y)}{\sqrt{\smash[b]{x-y}}}\D y=f(x)\label{eq:Abel_int_eq}\end{align} satisfies \cite[Ref.][p.~39]{TricomiInt}\begin{align}\pi\int_0^x\phi(y)\D y=\int_0^x\frac{f(y)}{\sqrt{\smash[b]{x-y}}}\D y.\label{eq:Abel_int_eq'}\tag{\ref{eq:Abel_int_eq}$'$}\end{align}Specializing this to Eqs.~\ref{eq:K_arcsin} and \ref{eq:K_arsinh}, we obtain the rightmost equalities in  Eqs.~\ref{eq:arcsin_Abel_eq} and \ref{eq:arsinh_Abel_eq}.

Clearly, the extreme ends of   Eqs.~\ref{eq:arcsin_Abel_eq} and \ref{eq:arsinh_Abel_eq} represent another pair of Abel integral equations. Their respective solutions bring us \begin{align*}\int_0^u\frac{\mathbf E(\sqrt{t})-(1-t)\mathbf K(\sqrt{t})}{\sqrt{u-t}}\D t={}&\frac{\pi}{2}\int_0^u\arcsin\sqrt{t}\D t=\frac{\pi}{4}[\sqrt{\smash[b]{u(1-u)}}-(1-2u) \arcsin\sqrt{u}],\quad 0<u<1,\notag\\\int_0^\xi\frac{\sqrt{1+t}}{\sqrt{\smash[b]{\xi-t}}}\left[ \mathbf K\left( \sqrt{\frac{t}{1+t}} \right)-\mathbf E\left( \sqrt{\frac{t}{1+t}} \right) \right]\D t={}&\frac{\pi}{2}\int_0^\xi\log(\sqrt{t}+\sqrt{1+t})\D t=\frac{\pi}{4}[(1+2\xi)\log(\sqrt{\smash[b]{\xi}}+\sqrt{\smash[b]{1+\xi}})-\sqrt{\smash[b]{\xi(1+\xi)}}],\quad \xi>0,\end{align*}  as stated in  Eqs.~\ref{eq:E_arcsin} and   \ref{eq:E_arsinh}.
   \qed\end{enumerate}\end{proof}\begin{remark}

The closed-form evaluations in Eqs.~\ref{eq:Abel1}-\ref{eq:Abel4} are not unexpected, as all of them can be regarded as appropriate integrations of some previously known identities.
As one differentiates in the parameter $a$, one can reduce Eq.~\ref{eq:Abel1}  to Eq.~\ref{eq:Li2int'}, Eq.~\ref{eq:Abel2} to Eq.~\ref{eq:K_arsinh}, while both Eq.~\ref{eq:Abel3} and \ref{eq:Abel4} become a certain multiple of Eq.~\ref{eq:Abel2}.

Differentiating   Eq.~\ref{eq:Abel_Li2} in $ \theta$ and multiplying by $ 1/\sin\theta$,  we obtain\begin{align}\frac1\pi\int_0^1\frac{\mathbf K(\sqrt{1-\kappa^2})}{\cos^2\theta+\kappa^2\sin^2\theta}\left[ 1-\frac{\kappa}{\sqrt{\cos^2\theta+\kappa^2\sin^2\theta}} \tanh^{-1}\frac{\kappa}{\sqrt{\cos^2\theta+\kappa^2\sin^2\theta}}\right]\D\kappa=
\frac{\theta}{\sin2\theta},\quad0<\theta<\frac{\pi}{2},\tag{\ref{eq:Abel_Li2}$'$}
\end{align}a result that will be derived again in \S\ref{subsubsec:B_simp}  using a different method. Taking derivative of  Eq.~\ref{eq:Abel_Li2_int} with respect to  $ \theta$ and dividing the result by  $ \sin2\theta$, we get the same relation as in Eq.~\ref{eq:Abel_Li2}$'$.\eor\end{remark}

\section{Beltrami Rotations Revisited\label{sec:Beltrami_revisited}}
\subsection{Beltrami Transformations and Some Simple Consequences\label{subsubsec:B_simp}}
In \cite[Ref.][\S3.1]{Zhou2013Pnu}, we introduced the Beltrami rotations, which amount to the following integral identities:\begin{align}\mathbf K(k)={}&\frac{2}{\pi}\int_0^1\frac{\mathbf K(\sqrt{1-\kappa^2})\D \kappa}{1-k^2\kappa^2},& 0<k<1,\label{eq:Beltrami}\\\mathbf K(\xi)={}&\frac{2}{\pi}\int_0^1\frac{\sqrt{1-\xi^{2}}\mathbf K(\sqrt{1-\kappa^2})\D \kappa}{1-\xi^2(1-\kappa^2)},& 0<\xi<1,\label{eq:iB}\tag{\ref{eq:Beltrami}$'$}\\\mathbf K(r)={}&\frac{2}{\pi}\int_0^1\frac{(1+r)\mathbf K(\sqrt{1-\kappa^2})\D \kappa}{(1+r)^{2}-4r\kappa^2},& 0<r<1,\label{eq:LB}\tag{\ref{eq:Beltrami}$_L$}\\\mathbf K(\eta)={}&\frac{2}{\pi}\int_0^1\frac{(1-\eta)\mathbf K(\sqrt{1-\kappa^2})\D \kappa}{(1-\eta)^{2}+4\eta\kappa^2},& 0<\eta<1.\label{eq:iLB}\tag{\ref{eq:Beltrami}$'_L$}\end{align}As pointed out in \cite{Zhou2013Pnu}, the four identities above are related to each other
by standard transformations for complete elliptic integrals of the first kind: Eq.~\ref{eq:iB} descends from Eq.~\ref{eq:Beltrami} and the imaginary modulus transformation; Eq.~\ref{eq:LB} results from  Eq.~\ref{eq:Beltrami} and  Landen's transformation; Eq.~\ref{eq:iLB} is attributed to   Eq.~\ref{eq:Beltrami} along with a combination of the imaginary modulus and Landen's transformations.

We have proved  Eq.~\ref{eq:Beltrami}  in \cite{Zhou2013Pnu} using rotations on a unit sphere, which is inspired  by Beltrami's work \cite{Beltrami1880}. In the next two short paragraphs, we offer alternative derivations of the Beltrami transformations using the Mehler-Dirichlet decomposition and Abel transforms.

For  $0<r<1$, we may use the variable transformation $ k=\sqrt{1-\kappa^2}$ to rewrite Eq.~\ref{eq:K_reprod} (a consequence of the Mehler-Dirichlet decomposition) as\begin{align*}\mathbf K(r)=\frac{8}{\pi}\frac{1}{1+r}\int_0^1\frac{\mathbf K(\sqrt{1-\kappa^2})r\kappa^{2}\D\kappa}{1-2r(2\kappa^{2}-1)+r^{2}}+\frac{8}{\pi}\frac{1}{4(1+r)}\int_{0}^1\mathbf K\left( \sqrt{1-\kappa^2} \right)\D\kappa=\frac{2}{\pi}\int_0^1\frac{(1+r)\mathbf K(\sqrt{1-\kappa^2})\D\kappa}{(1+r)^{2}-4r\kappa^{2}}\end{align*}  thereby demonstrating    Eq.~\ref{eq:LB}. (If we start with $ -1<r<0$ instead, then  Eq.~\ref{eq:K_reprod} leads to a verification of    Eq.~\ref{eq:iLB}.)

 As one  combines the following identities\begin{align*}\int_0^{\infty}\frac{r\log(r+\sqrt{1+r^2})\D r}{\left(\frac{1}{1-\xi^2}+r^{2}\right)^{3/2}}=\sqrt{1-\xi^2}\mathbf K(\xi)\quad \text{and}\quad \int_x^{\infty}\frac{r\D r}{\left(\frac{1}{1-\xi^2}+r^{2}\right)^{3/2}\sqrt{r^2-x^2}}=\frac{1}{\frac{1}{1-\xi^2}+x^{2}},\qquad 0\leq\xi<1\end{align*} into the formula\begin{align}\mathbf K(\xi)=\frac{2}{\pi\sqrt{1-\xi^{2}}}\int_0^\infty\frac{\mathbf K\left(\frac{x}{\sqrt{1+x^{2}}}\right)}{\frac{1}{1-\xi^2}+x^{2}}\frac{x\D x}{\sqrt{1+x^2}}=\frac{2}{\pi\sqrt{1-\xi^{2}}}\int_0^1\frac{\mathbf K(\sqrt{{1-\kappa^{2}}})}{1+\frac{\xi^{2}\kappa^{2}}{1-\xi^2}}\D \kappa,\label{eq:im_sphere}\end{align}one succeeds in proving Eq.~\ref{eq:iB} using the Abel transform (Eq.~\ref{eq:Abel_Tr}).

At this moment, we may immediately appreciate the equivalence between two previously derived integral representations of Catalan's constant \[G=\frac{1}{2}\int_{0}^1\mathbf K(k)\D k=\frac{1}{2\pi}\int_0^1\frac{\mathbf K(\sqrt{1-\kappa^2})}{\kappa}\log\frac{1+\kappa}{1-\kappa}\D\kappa,\]using the  rational integral transformation given by  Eq.~\ref{eq:Beltrami}. In other words, the Beltrami transformation (Eq.~\ref{eq:Beltrami}) is a formal generalization of the ``box-to-box transformation'' (Eq.~\ref{eq:qrs_uvw}) introduced  in Remark~\ref{itm:b2b} of Lemma~\ref{lm:S2}.
Then,
we may also deduce alternative forms of Eqs.~\ref{eq:a} and \ref{eq:b} as follows:\begin{align}
\frac{\pi^2}{8}={}&\int_0^1\frac{\mathbf K(k)}{1+k}\D k=\int_{0}^1\frac{\mathbf K(\sqrt{1-\kappa^2}) }{\pi  (\kappa ^2-1)}\left( \log \frac{1-\kappa ^2}{4}+\kappa  \log\frac{1+\kappa}{1-\kappa} \right)\D\kappa,\label{eq:a_B}\tag{\ref{eq:a}$_\mathrm{B}$}\\G={}&\int_0^1\frac{\mathbf K(k)}{1-k}\log\frac{1}{k}\D k=\int_{0}^1\frac{\mathbf K(\sqrt{1-\kappa^2})}{3 \pi ^2   (1-\kappa ^2)}\left[\pi^{2}-3 (1+\kappa ) \text{Li}_2(\kappa )-3 (1-\kappa) \text{Li}_2(-\kappa )\right]\D\kappa.\label{eq:b_B_R}\tag{\ref{eq:b}$_\mathrm{B}$}
\end{align} Hereafter, an equation numbered  $ x_{\mathrm B}$ is derived from Eq.~$x$ and a Beltrami transformation (Eq.~\ref{eq:Beltrami}). Descendants of  Eq.~$x$ after transformations in Eqs.~\ref{eq:iB}, \ref{eq:LB} and \ref{eq:iLB} will be labeled accordingly as $ x_{\mathrm {iB}}$, $ x_{\mathrm{LB}}$ and $ x_{\mathrm{iLB}}$. We will also refer to the four avatars of Beltrami transformations as types ``B'', ``iB'', ``LB'' and ``iLB'' for short.

Applying the identity \begin{align*}\mathbf K\left( \sqrt{k^2\cos^2\frac{\beta}{2}+\sin^2\frac{\beta}{2} }\right)=\frac{2}{\pi}\int_0^1\frac{\mathbf K(\sqrt{1-\kappa^2})}{1-[k^2\cos^2(\beta/2)+\sin^2(\beta/2)]\kappa^{2}}\end{align*}to Eq.~\ref{eq:beta'}, we  arrive at the  expression below:\begin{align}G+\frac{1}{4}\int_0^\beta\log\tan\frac{\pi-\alpha}{4}\D\alpha={}&\frac{1}{\pi}\int_{0}^1\frac{\mathbf K(\sqrt{1-\kappa^2})}{\kappa\sqrt{\smash[b]{1-\kappa^{2}\sin^2(\beta/2)}}}\tanh^{-1}\frac{\kappa\cos(\beta/2)}{\sqrt{\smash[b]{1-\kappa^{2}\sin^2(\beta/2)}}}\D\kappa,\quad 0\leq\beta<\pi.\label{eq:beta_B}
\tag{\ref{eq:beta'}$_\mathrm{B}$}
\end{align} After we differentiate both sides of the equation above in $ \beta$, the integral formula\begin{align}\frac14\log\tan\frac{\pi-\beta}{4}=-\frac{\sin(\beta/2)}{2\pi}\int_{0}^1\frac{\mathbf K(\sqrt{1-\kappa^2})}{1-\kappa^{2}\sin^2(\beta/2)}\left[1-\frac{\kappa\cos(\beta/2)}{\sqrt{\smash[b]{1-\kappa^{2}\sin^2(\beta/2)}}}\tanh^{-1}\frac{\kappa\cos(\beta/2)}{\sqrt{\smash[b]{1-\kappa^{2}\sin^2(\beta/2)}}}\right]\D\kappa,\quad 0\leq\beta<\pi\label{eq:beta'_B}
\tag{\ref{eq:beta'}$'_\mathrm{B}$}
\end{align} arises as a result.\footnote{Of course, one can rewrite the right-hand side of Eq.~\ref{eq:beta'_B}  by the substitution \[ \mathbf K\left(\sin\frac{\beta}{2}\right)=\frac{2}{\pi}\int_{0}^1\frac{\mathbf K(\sqrt{1-\kappa^2})\D\kappa}{1-\kappa^{2}\sin^2(\beta/2)},\quad 0\leq\beta<\pi.\]A similar procedure can be performed on a few formulae in the rest of \S\ref{subsubsec:B_simp} as well, except that some subtle changes in the domain of validity may occur. The details about  paraphrasing these integral identities will be omitted in  \S\ref{subsubsec:B_simp}, but will appear in the collation of formulae in Appendix~\ref{app:ind_formulae}.   } Here, in Eq.~\ref{eq:beta'_B}, the bracketed term is an even function of $ \kappa\cos\frac{\beta}{2}({1-\kappa^2\sin^2\frac{\beta}{2}})^{-1/2}$, so its value is not affected by the choice of the univalent branches of the square root. By analytic continuation, Eq.~\ref{eq:beta'_B} extends to the following identity valid in the open unit disk:\begin{align}
\frac{1}{4}\log\frac{1-z}{1+z}=\frac{z(1+z^{2})}{\pi}\int_{0}^1\frac{\mathbf K(\sqrt{1-\kappa^2})}{(1+z^{2})^{2}-4z^{2}\kappa^{2}}\left[-1+\frac{\kappa(1-z^{2})}{\sqrt{(1+z^{2})^{2}-4z^{2}\kappa^{2}}}\tanh^{-1}\frac{\kappa(1-z^{2})}{\sqrt{(1+z^{2})^{2}-4z^{2}\kappa^{2}}}\right]\D\kappa,\quad |z|<1.\label{eq:beta'_star_B}\tag{\ref{eq:beta'}$'^*_\mathrm{B} $}
\end{align}If we set $ z=i\tan({\theta}/{2}),0\leq\theta<\pi/2$ in the equation above, we recover Eq.~\ref{eq:Abel_Li2}$'$.

In Remark~\ref{itm:low_deg} to Proposition~\ref{prop:more_S2}, we mentioned Eq.~\ref{eq:low_deg} without a proof. Now, we can demonstrate Eq.~\ref{eq:low_deg} by connecting it to Eqs.~\ref{eq:a} and \ref{eq:G_log2_log} via the type-B transformation (Eq.~\ref{eq:Beltrami}):\begin{align}\int_0^1\left[ \mathbf K(k)-\frac{\pi}{2}\right]\frac{\D k}{k}={}&\int_0^1\left[\int_0^1 \frac{2\mathbf K(\sqrt{1-\kappa^2})}{\pi}\left( \frac{1}{1-k^{2}\kappa^2}-1 \right)\D\kappa\right]\frac{\D k}{k}=-\frac{2}{\pi}\int_0^1 \mathbf K\left(\sqrt{1-\kappa^2}\right)\log\sqrt{1-\kappa^2}\D\kappa=-2G+\pi\log2.\label{eq:low_deg'}
\tag{\ref{eq:low_deg}$_\mathrm{B}$}
\end{align}

Setting $ 0<iz<1$ in   Eq.~\ref{eq:Li2int}, writing $ k$ for $ \sqrt{1-t}$, and applying the rational integral transform in Eq.~\ref{eq:Beltrami}, we obtain an equivalent form:\begin{align}\int_0^1\frac{4\mathbf  K(\sqrt{1-\kappa ^2}) }{\pi  \kappa  \sqrt{(1+z ^2)^2 \kappa ^2-4 z ^2}}\left[ \log\frac{1}{\sqrt{1-\kappa^{2}}} +\log\frac{\sqrt{(1+z^{2})^2 \kappa ^2-4 z ^2}+(1-z ^2) \kappa}{\sqrt{(1+z^{2})^2 \kappa ^2-4 z ^2}+(1+z ^2) \kappa}\right]\D\kappa=\frac{\dilog (z)-\dilog (-z)}{ z},\tag{\ref{eq:Li2int}$_\mathrm{B}$}\label{eq:Li2int_B}\end{align}which extends to be valid for all the points satisfying $ |z|\leq1$, by analytic continuation and the Vitali convergence theorem. In the $ z\to0$ limit, we obtain the identity \begin{align}\int_0^1\frac{\mathbf K(\sqrt{1-\kappa^2})}{\kappa^2}\log\frac{1}{1-\kappa^2}\D\kappa=\pi.\label{eq:Li2_limit_pi}\end{align}Differentiating Eq.~\ref{eq:Li2int_B} in $ z$, we  arrive at an equivalent form of Eq.~\ref{eq:Li2int'} for $|z|<1,\I z\neq0$:\begin{align}&\int_0^1\frac{4\kappa (1-z ^4 ) \mathbf  K(\sqrt{1-\kappa ^2}) }{\pi[(1+z^{2})^2 \kappa ^2-4 z ^2]\sqrt{(1+z^{2})^2 \kappa ^2-4 z ^2}  }\left[ \log\frac{1}{\sqrt{1-\kappa^{2}}} +\log\frac{\sqrt{(1+z^{2})^2 \kappa ^2-4 z ^2}+(1-z ^2) \kappa}{\sqrt{(1+z^{2})^2 \kappa ^2-4 z ^2}+(1+z ^2) \kappa}\right]\D\kappa\notag\\&+\int_0^1\frac{8z^{2} \mathbf  K(\sqrt{1-\kappa ^2}) }{\pi[(1+z^{2})^2 \kappa ^2-4 z ^2]  }\D\kappa=\frac{1}{z}\log\frac{1+z}{1-z}.\label{eq:Li2int'_B}
\tag{\ref{eq:Li2int'}$_\mathrm{B}$}
\end{align} Similarly, Eq.~\ref{eq:Li2int_S} has a cousin\begin{align}&\int_0^1\frac{\mathbf  K(\sqrt{1-\kappa ^2}) }{4 \pi  \kappa ^2}\left\{\log ^2(1-\kappa ^2)-4\left[ \log\frac{1}{\sqrt{1-\kappa^{2}}} +\log\frac{\sqrt{(1+z^{2})^2 \kappa ^2-4 z ^2}+(1-z ^2) \kappa}{\sqrt{(1+z^{2})^2 \kappa ^2-4 z ^2}+(1+z ^2) \kappa} \right]^{2}\right\}\D\kappa\notag\\={}&z \dilog(z)-z \dilog(-z)-(1+z) \log (1+z)-(1-z) \log (1-z),\quad|z|\leq1.\label{eq:Li2int_S_B}\tag{\ref{eq:Li2int_S}$_\mathrm{B}$}\end{align}In particular, the special case of  $ z=i$  for the equation above leads to an analog of Eq.~\ref{eq:G_Klog}:\begin{align}
G={}&\frac{\pi}{4}-\frac{1}{2}\log2-\frac{1}{2\pi}\int_0^1 \frac{\mathbf K( \sqrt{1-\kappa^{2}})}{\kappa^2} \log (1-\kappa)\log(1+\kappa)\D \kappa.\label{eq:G_Klog'}\tag{\ref{eq:G_Klog}$_\mathrm{B}$}\end{align}

The following identities also follow naturally from the Beltrami transformation:\begin{align}\pi\arcsin x={}&\frac4\pi\int_0^1 \frac{\mathbf K( \sqrt{1-\kappa^{2}})\arcsin(\kappa x)}{\kappa  \sqrt{1- \kappa ^2x^2}}\D\kappa,&x\in {}&[0, 1],\tag{\ref{eq:K_arcsin}$_\mathrm{B}$}\label{eq:K_arcsin'}\\\pi\log( x+\sqrt{1+x^2})={}&\frac4\pi\int_0^1 \frac{\mathbf K( \sqrt{1-\kappa^{2}})}{1-\kappa^{2}}\left[\arcsin\left(\frac{x}{\sqrt{1+x^2}}\right)-\frac{\kappa}{\sqrt{1+(1-\kappa ^2 )x^2}}   \arcsin\left(\frac{\kappa  x}{\sqrt{1+x^2}}\right)\right]\D\kappa,& x\in{}& [0,+\infty).\label{eq:K_arsinh'}\tag{\ref{eq:K_arsinh}$_\mathrm{B}$}\\\intertext{(Here, an equivalent form of  Eq.~\ref{eq:K_arcsin'} appeared as 4.21.4.5 in \cite{Brychkov2008}.) If we analytically continue Eq.~\ref{eq:K_arcsin'} from real-valued $x$ to  purely imaginary $x$, we obtain yet another integral formula (see also 4.21.3.10 in \cite{Brychkov2008}):}\pi\log( x+\sqrt{1+x^2})={}&\frac4\pi\int_0^1 \frac{\mathbf K( \sqrt{1-\kappa^{2}})\log( \kappa x+\sqrt{1+\kappa ^{2}x^2})}{\kappa  \sqrt{1+ \kappa ^2x^2}}\D\kappa,&x\in  {}&[0,+ \infty).\label{eq:K_arcsin''}\tag{\ref{eq:K_arcsin}$^*_\mathrm{B}$}\intertext{When a similar service is performed on  Eq.~\ref{eq:K_arsinh'}, we may deduce}\pi\arcsin x={}&\frac4\pi\int_0^1 \frac{\mathbf K( \sqrt{1-\kappa^{2}})}{1-\kappa^{2}}\left[\sinh^{-1}\left(\frac{x}{\sqrt{1-x^2}}\right)-\frac{\kappa}{\sqrt{1-(1-\kappa ^2 )x^2}}   \sinh^{-1}\left(\frac{\kappa  x}{\sqrt{1-x^2}}\right)\right]\D\kappa,& x\in  {}&[0, 1)\label{eq:K_arsinh''}\tag{\ref{eq:K_arsinh}$^*_\mathrm{B}$}\end{align}instead.

With the  Beltrami transformation in Eq.~\ref{eq:Beltrami}, one may  readily verify the  following  integrals of $ \mathbf K(\sqrt{1-\kappa^2})$ times an elementary function of $ \kappa$, which may otherwise appear formidable:\begin{align}
\int_{0}^{1}\frac{2\mathbf K(\sqrt{1-\kappa^2})[ \kappa(1+r)  \sqrt{r}  \tanh ^{-1}\sqrt{r}-r \sqrt{1-\kappa ^2}  \arcsin \kappa ]}{\pi  (1+r) \kappa  [4 r+(1-r)^2 \kappa ^2]}\D\kappa=\frac{\pi\mathbf K(|r|)}{8}-\frac{\pi^2}{16(1+r)},\label{eq:K_reprod'}\tag{\ref{eq:K_reprod}$_\mathrm{B}$}
\end{align}\begin{align}&
\frac{2}{\pi}\int_{0}^{1}\frac{\mathbf K(\sqrt{1-\kappa^2})}{1-\kappa^2}\left[ \log(1+a)-\kappa\tanh^{-1}\kappa-\frac{\kappa}{\sqrt{1-(1-\kappa^{2})a^{2}}}\log\frac{\sqrt{1-\kappa^2}[a \kappa +\sqrt{1-(1-\kappa^{2})a^{2}}]}{ \kappa +\sqrt{1-(1-\kappa^{2})a^{2}}}\right]\D\kappa \notag\\={}&\begin{cases}\dfrac{\arcsin a}{2}\left( \pi-\arcsin a \right), & 0\leq a\leq1 \\[8pt]\dfrac{\pi ^2}{8}+
\dfrac{\log ^2(\sqrt{a^2-1}+a)}{2} , & a>1 \\
\end{cases}\label{eq:Abel1'}\tag{\ref{eq:Abel1}$_\mathrm{B}$}
\end{align}and\begin{align}&\frac{2}{\pi}\int_{0}^{1}\frac{\mathbf K(\sqrt{1-\kappa^2})}{1-\kappa^2}\left[ \frac{(1+a^{2}+\kappa^{2}-a^2 \kappa ^2) \arctan a}{2(1-\kappa^{2})}-\frac{a}{2}-\frac{\kappa  \sqrt{1+(1-\kappa^{2})a^{2}} }{1-\kappa^{2}}\arctan\frac{a \kappa }{\sqrt{1+(1-\kappa^{2})a^{2}}}\right]\D\kappa\notag\\={}&\frac{\pi}{8}\left[ (1+2a^2)\log(a+\sqrt{1+a^2}) -a\sqrt{1+a^{2}}\right],\quad a\geq0.\label{eq:Abel2'}\tag{\ref{eq:Abel2}$_\mathrm{B}$}\end{align}

The output of the type-B  transformation (Eq.~\ref{eq:Beltrami}) always takes on the shape of $ \int_0^1\mathbf K(\sqrt{1-\kappa^2})\rho_e(  \kappa)\D\kappa$ where $ \rho_e(\kappa)$ is an even analytic function of $\kappa$. However, the  type-iB   transformation (Eq.~\ref{eq:iB})\begin{align*}\int_0^1\mathbf K(k)f(k)\D k=\int_0^1\mathbf K\left(\sqrt{1-\kappa^2}\right)\left[ \frac{2}{\pi}\int_0^1\frac{\sqrt{1-\xi^2}f(\xi)\D\xi}{1-\xi^2(1-\kappa^2)} \right]\D\kappa
\end{align*} results in a host of integral formulae for  $ \int_0^1\mathbf K(\sqrt{1-\kappa^2})\rho(  \kappa)\D\kappa$  in a different flavor, where $ \rho(\kappa)$ is not necessarily an  even function in $ \kappa$. For example, we can transform a formerly derived integral representation of Catalan's constant into\begin{align}G=\frac{1}{2}\int_{0}^1\mathbf K(\xi)\D \xi=\frac{1}{2}\int_0^1\frac{\mathbf K(\sqrt{1-\kappa^2})}{1+\kappa}\D\kappa=\frac{1}{2}\int_0^1\frac{\mathbf K(\xi)\xi}{1+\sqrt{1-\xi^{2}}}\frac{\D\xi}{\sqrt{1-\xi^{2}}}=\frac{1}{\pi}\int_0^1\mathbf K\left(\sqrt{1-\kappa^2}\right)\left[ \frac{\kappa\arccos\kappa}{\sqrt{1-\kappa^2}}-\log(2\kappa) \right]\D\kappa,\label{eq:G_odd}\end{align}after two rounds of applications of the type-iB transformation (Eq.~\ref{eq:iB}).

We  gather below some other consequences of  the  integral transform in Eq.~\ref{eq:iB} while leaving out the computational details of their straightforward proofs:
{\allowdisplaybreaks\begin{align}
\frac{\pi^2}{8}={}&\int_0^1\frac{\mathbf K(\xi)}{1+\xi}\D \xi=\frac{1}{\pi}\int_{0}^{\pi/2}\mathbf K(\sin\theta)\frac{\pi  \sin \theta -2 \theta}{ \cos \theta}  \D\theta,\label{eq:a_iB}\tag{\ref{eq:a}$_\mathrm{iB}$}\\G={}&-\frac{1}{4}\int_0^\beta\log\tan\frac{\pi-\alpha}{4}\D\alpha+\frac{1}{2}\int_{0}^1\frac{\mathbf K(\sqrt{1-\kappa^2})}{1-\kappa^{2}}\left( 1-\frac{\kappa}{\sqrt{\smash[b]{\cos^2({\beta}/{2})+\kappa^{2}\sin^2({\beta}/{2})}}} \right)\D\kappa,\quad 0\leq\beta<\pi,
\label{eq:beta'_diamond}\tag{\ref{eq:beta'}$_\mathrm{iB}$}\\\frac{2G}{3}={}&\frac{1}{2}\int_{0}^1\frac{\mathbf K(\sqrt{1-\kappa^2})}{1-\kappa^{2}}\left( 1-\frac{2\kappa}{\sqrt{\smash[b]{1+3\kappa^{2}}}} \right)\D\kappa,\tag{\ref{eq:beta'}$_\mathrm{iB}$, with $ \beta=2\pi/3$}\\\frac{2G}{5}={}&\frac{1}{2}\int_{0}^1\frac{\mathbf K(\sqrt{1-\kappa^2})}{1-\kappa^{2}}\left[ \frac{(\sqrt{5}+1)\kappa}{\sqrt{\smash[b]{1+(5+2\sqrt{5})\kappa^{2}}}} -\frac{(\sqrt{5}-1)\kappa}{\sqrt{\smash[b]{1+(5-2\sqrt{5})\kappa^{2}}}}\right]\D\kappa,\tag{\ref{eq:beta'}$_\mathrm{iB}$, with $ \beta=2\pi/5,4\pi/5$}\\\frac{7\zeta(3)}{4}={}&\frac{1}{2}\int_{0}^1\frac{\mathbf K(\sqrt{1-\kappa^2})[ \pi-2\kappa\mathbf K(\sqrt{1-\kappa^2})]}{1-\kappa^{2}}\D\kappa,\label{eq:zeta3_iB_int}\tag{\ref{eq:beta'}$_\mathrm{iB}$, integrated over $ 0\leq\beta<\pi$}\\\frac{\pi}{2}={}&\int_{0}^1\frac{\mathbf K(\sqrt{1-\kappa^2})}{1-\kappa^{2}}\left( 1-\frac{\kappa\arccos\kappa}{\sqrt{1-\kappa^2}} \right)\D\kappa,\label{eq:pi_iB_int}\tag{\ref{eq:beta'}$_\mathrm{iB}\times\cos\frac{\beta}{2}$, integrated over $ 0\leq\beta<\pi$}\\\frac14\log\tan\frac{\pi-\beta}{4}={}&-\frac{\sin\beta}{8}\int_{0}^1\frac{\kappa\mathbf K(\sqrt{1-\kappa^2})}{[\cos^2({\beta}/{2})+\kappa^{2}\sin^2({\beta}/{2})]^{3/2}}\D\kappa,\quad 0\leq\beta<\pi,
\label{eq:beta_iB_deriv}\tag{\ref{eq:beta'}$_\mathrm{iB}$, derivative in $ \beta$}
\\\dilog (z)-\dilog (-z)={}&-\frac{2}{\pi}\int_0^1\frac{\mathbf  K(\sqrt{1-\kappa ^2}) }{  1-\kappa ^2}\left[ \log\frac{1-z}{1+z} +\right.\notag\\&+\left.\frac{2z\kappa}{\sqrt{(1+z^2)^2 \kappa ^2-(1-z^2)^2}}\tanh^{-1}\frac{\sqrt{(1+z^2)^2 \kappa ^2-(1-z^2)^2}}{(1+z^2) \kappa}\right]\D\kappa,\quad |z|\leq 1\text{ and }z^2\neq1,\label{eq:Li2int_diamond}\tag{\ref{eq:Li2int}$_\mathrm{iB}$}\\\frac{\pi^{2}}{4}={}&-\frac{2}{\pi}\int_0^1\frac{\mathbf  K(\sqrt{1-\kappa ^2}) }{  1-\kappa ^2}\log\kappa\D\kappa,\tag{\ref{eq:Li2int}$_\mathrm{iB}$, with $ z=1$}\\2={}&\frac{4}{\pi}\int_0^1\frac{\mathbf  K(\sqrt{1-\kappa ^2}) }{  1-\kappa ^2}\left( 1-\frac{\kappa\arccos\kappa}{\sqrt{1-\kappa^{2}}} \right)\D\kappa,\label{eq:pi_Li2_deriv_iB}\tag{\ref{eq:Li2int}$_\mathrm{iB}$, derivative at  $ z=0$}\\\frac{\pi}{4z(1-z^2)}\log\frac{1+z}{1-z}={}&-\int_0^1\left[ 1-\frac{(1+z^{2})\kappa}{\sqrt{(1+z^2)^2 \kappa ^2-(1-z^2)^2}} \tanh^{-1}\frac{\sqrt{(1+z^2)^2 \kappa ^2-(1-z^2)^2}}{(1+z^2) \kappa}\right]\frac{\mathbf  K(\sqrt{1-\kappa ^2})\D\kappa }{  (1+z^2)^2 \kappa ^2-(1-z^2)^2},\quad |z|<1,\label{eq:Li2int'_iB}\tag{\ref{eq:Li2int'}$_\mathrm{iB}$}\\\pi\arcsin x={}&\frac4\pi\int_0^1 \left[\tanh^{-1}x-\frac{\kappa}{\sqrt{1-(1-\kappa^2)x^2}}\tanh^{-1}\frac{\kappa x}{\sqrt{1-(1-\kappa^2)x^2}}\right]\frac{\mathbf K( \sqrt{1-\kappa^{2}})\D\kappa}{1-\kappa^{2}},\quad 0\leq x< 1,\label{eq:K_arcsin_diamond}\tag{\ref{eq:K_arcsin}$_\mathrm{iB}$}\\\frac{\pi}{\sqrt{1-x^{2}}}={}&\frac4\pi\int_0^1 \left[1-\frac{\kappa x}{\sqrt{1-(1-\kappa^2)x^2}}\tanh^{-1}\frac{\kappa x}{\sqrt{1-(1-\kappa^2)x^2}}\right]\frac{\mathbf K( \sqrt{1-\kappa^{2}})\D\kappa}{1-(1-\kappa^2)x^2},\quad 0\leq x< 1,\label{eq:K_arcsin_iB_deriv}\tag{\ref{eq:K_arcsin}$_\mathrm{iB}$, derivative in $ x$}\\\pi \log(x+\sqrt{1+x^2})={}&\frac4\pi\int_0^1 \tanh^{-1}\frac{\kappa x}{\sqrt{1+\kappa^{2}x^2}}\frac{\mathbf K( \sqrt{1-\kappa^{2}})\D\kappa}{\kappa\sqrt{1+\kappa^{2}x^2}},\quad 0\leq x<+\infty,\label{eq:K_arsinh_diamond}\tag{\ref{eq:K_arsinh}$ _\mathrm{iB}$}\\\frac{\pi}{\sqrt{1+x^{2}}}={}&\frac4\pi\int_0^1 \left(1-\frac{\kappa x}{\sqrt{1+\kappa^{2}x^2}}\tanh^{-1}\frac{\kappa x}{\sqrt{1+\kappa^{2}x^2}}\right)\frac{\mathbf K( \sqrt{1-\kappa^{2}})\D\kappa}{1+\kappa^{2}x^2{}},\quad 0\leq x<+\infty.\label{eq:K_arsinh_iB_deriv}\tag{\ref{eq:K_arsinh}$_\mathrm{iB}$, derivative in $ x$}\end{align}}As we make the substitution $ x\mapsto x/\sqrt{1+x^2}$ in the antepenultimate formula, we also obtain
the last formula. It is also worth noting that Eqs.~\ref{eq:K_arcsin''} and \ref{eq:K_arsinh_diamond} are in fact identical.
The same can be said for the pair of equations \ref{eq:K_arsinh''} and \ref{eq:K_arcsin_diamond}.

Applying  the  type-LB and type-iLB transformations to Eq.~\ref{eq:a},  we obtain the following identities \begin{align}\frac{\pi^2}{8}={}&\int_0^1\frac{\mathbf K(r)}{1+r}\D r=\int_0^1\frac{2\mathbf K(\sqrt{1-\kappa^2})}{\pi}\left[ \int_{0}^{1}\frac{\D r}{(1+r)^2-4r\kappa^2} \right]\D\kappa=\int_0^1\frac{\mathbf K(\sqrt{1-\kappa^2})}{\pi\kappa\sqrt{1-\kappa^{2}}}\arcsin\kappa\D\kappa,\tag{\ref{eq:a}$_\mathrm{LB}$}\label{eq:a'_L}\\\frac{\pi^2}{8}={}&\int_0^1\frac{\mathbf K(\eta)}{1+\eta}\D \eta=\int_0^1\frac{2\mathbf K(\sqrt{1-\kappa^2})}{\pi}\left[ \int_{0}^{1}\frac{1}{1+\eta}\frac{(1-\eta)\D \eta}{(1-\eta)^2+4\eta\kappa^2} \right]\D\kappa=-\int_0^1\frac{\mathbf K(\sqrt{1-\kappa^2})}{\pi(1-\kappa^{2}){}}\log\kappa\D\kappa.\tag{\ref{eq:a}$_\mathrm{iLB}$}\label{eq:a_diamond_L}\end{align}Neither of them is particularly new, as Eq.~\ref{eq:a'_L} follows from the $ x=1$ specific case in Eq.~\ref{eq:K_arcsin'} or a variable transformation $ k=\sqrt{1-\kappa^2}$ in Eq.~\ref{eq:pi8_arccos}, and the $ z\to1^-$ limit of Eq.~\ref{eq:Li2int_diamond} recovers Eq.~\ref{eq:a_diamond_L}. Doubtlessly, the two equations above also arise from the applications of  type-B and type-iB transformations to the form of Eq.~\ref{eq:a} after Landen's transformation: \begin{align*}\frac{\pi^2}{8}={}&\frac12\int_0^1\mathbf K\left( \sqrt{1-k^2} \right)\D k=\int_0^1\frac{\mathbf K(\sqrt{1-\kappa^2})}{\pi}\left[ \int_{0}^{1}\frac{\D k}{1-(1-k^{2})\kappa^2} \right]\D\kappa=\int_0^1\frac{\mathbf K(\sqrt{1-\kappa^2})}{\pi\kappa\sqrt{1-\kappa^{2}}}\arcsin\kappa\D\kappa,\\\frac{\pi^2}{8}={}&\frac12\int_0^1\mathbf K\left( \sqrt{\smash[b]{1-\xi^2}} \right)\D \xi=\int_0^1\frac{\mathbf K(\sqrt{1-\kappa^2})}{\pi}\left[ \int_{0}^{1}\frac{\xi\D \xi}{1-(1-\xi^{2})(1-\kappa^2)} \right]\D\kappa=-\int_0^1\frac{\mathbf K(\sqrt{1-\kappa^2})}{\pi(1-\kappa^{2}){}}\log\kappa\D\kappa.\end{align*}In the next subsection, we will  perform the  transformations specified by Eqs.~\ref{eq:Beltrami} and \ref{eq:iB}  on  multiple elliptic integrals which have undergone Landen's transformation  beforehand. In these contexts, we will be effectively using the transformations of types ``LB'' and ``iLB''  in the disguise of types ``B'' and ``iB''.

\subsection{Duality Relations\label{subsubsec:duality}}
The next corollary presents some further applications  of the Beltrami transformations (re)introduced in \S\ref{subsubsec:B_simp}.\begin{corollary}[Duality Relations]\label{cor:duality}\begin{enumerate}[label=\emph{(\alph*)}, ref=(\alph*), widest=a]\item We have the following identities for $ r>0$:{\allowdisplaybreaks\begin{align}W(r):={}&\int_0^1\frac{r\mathbf K(\xi)\D\xi}{r^{2}+\xi^2}=\int_0^1\frac{\mathbf K(\sqrt{1-\kappa^2})\D\kappa}{\sqrt{1+r^2}+\kappa r}\label{eq:GB_1}\\={}&\frac{2}{\pi}\int_0^1\left(\frac{ \kappa r  \arccos\kappa }{\sqrt{1-\kappa ^2} }-\sqrt{1+r^{2}}\log \kappa  \right)\frac{\mathbf K(\sqrt{1-\kappa^2})\D\kappa}{1+r^{2}-\kappa^2}-\frac{\log \left(1+\frac{r}{\sqrt{1+r^{2}}}\right)}{\sqrt{1+r^{2}}}\mathbf K\left( \frac{1}{\sqrt{1+r^2}} \right)\label{eq:GB_2}\\={}&\frac{2}{\pi}\int_0^1\left( \frac{ \kappa \sqrt{1+r^2}  \arcsin\kappa }{\sqrt{1-\kappa ^2} }+r\log \sqrt{1-\kappa^{2}}  \right)\frac{\mathbf K(\sqrt{1-\kappa^2})\D\kappa}{r^{2}+\kappa^2}+\frac{\log \left(1+\frac{r}{\sqrt{1+r^{2}}}\right)}{\sqrt{1+r^{2}}}\mathbf K\left( \frac{1}{\sqrt{1+r^2}} \right)\label{eq:GB_3}\\={}&\frac{2}{\pi}\int_0^1r\log \frac{\sqrt{1-\kappa^{2}}}{\kappa}\frac{\mathbf K(\sqrt{1-\kappa^2})\D\kappa}{r^{2}+\kappa^2}+\frac{\log \frac{r}{\sqrt{1+r^{2}}}}{\sqrt{1+r^{2}}}\mathbf K\left( \frac{1}{\sqrt{1+r^2}} \right).\label{eq:GB_4}\end{align}}This implies two more integral representations of Catalan's constant \begin{align}G=\frac{1}{2}\lim_{r\to+\infty}\int_0^1\frac{r^{2}\mathbf K(\xi)\D\xi}{r^{2}+\xi^2}={}&\frac{1}{\pi}\int_0^1\left( \frac{ \kappa\arcsin\kappa }{\sqrt{1-\kappa ^2} }+\log \sqrt{4-4\kappa^{2}}  \right)\mathbf K\left(\sqrt{1-\kappa^2}\right)\D\kappa=\frac{1}{\pi}\int_0^1\log \frac{\sqrt{1-\kappa^{2}}}{\kappa}\mathbf K\left(\sqrt{1-\kappa^2}\right)\D\kappa,\label{eq:GB_G}\end{align} as well as
the functional equation for $ W(r),r>0$\begin{align}&W(r)+iW\left(i\sqrt{{1+r^{2}}}\right)=4i\left(r+\sqrt{1+r^{2}}\right)W\left(i\left(r+\sqrt{1+r^{2}}\right)^2\right)\label{eq:f_eq_W}\\\Longleftrightarrow&\int_0^1\frac{r\mathbf K(\xi)\D\xi}{r^{2} +\xi^2}+\int_0^1\frac{\sqrt{1+r^{2}}\mathbf K(\xi)\D\xi}{1+r^{2} -\xi^2}=\int_0^1\frac{4(r+\sqrt{1+r^{2}})^3\mathbf K(\eta)\D\eta}{(r+\sqrt{1+r^{2}})^4-\eta^{2}}.\label{eq:f_eq_W'}\tag{\ref{eq:f_eq_W}$'$}\end{align}
\item We have the identities\begin{align}
M(r):={}&\int_0^1\frac{r\xi\mathbf K(\xi)\D\xi}{1+r^{2}\xi^2}=-\frac{2}{\pi}\int_0^1r\log \sqrt{1-\kappa^{2}}\frac{\mathbf K(\sqrt{1-\kappa^2})\D\kappa}{r^{2}+\kappa^2}+\frac{\log\sqrt{1+r^{2}}}{\sqrt{1+r^{2}}}\mathbf K\left( \frac{1}{\sqrt{1+r^2}} \right)\label{eq:GB_5}\\={}&-\frac{2}{\pi}\int_0^1\frac{ \kappa r  \arccos\kappa }{\sqrt{1-\kappa ^2} }\frac{\mathbf K(\sqrt{1-\kappa^2})\D\kappa}{1+r^{2}-\kappa^2}+\frac{\log (r+\sqrt{1+r^{2}})}{\sqrt{1+r^{2}}}\mathbf K\left( \frac{1}{\sqrt{1+r^2}} \right),\label{eq:GB_6}
\end{align} the integral formula \begin{align}\int_0^1\left(\frac{r}{r^{2}+\kappa^2}-\frac{\sqrt{1+r^2}}{1+r^{2}-\kappa^2}\right)\mathbf K\left( \sqrt{1-\kappa^2} \right)\log\kappa\D\kappa=\frac{\pi}{2}\frac{\log \frac{r}{\sqrt{1+r^{2}}}}{\sqrt{1+r^{2}}}\mathbf K\left( \frac{1}{\sqrt{1+r^2}} \right),\label{eq:log_combo}\end{align}and the functional relation\begin{align}\label{eq:sum_rule}W(r)+M(r)={}&\int_0^1\frac{r\mathbf K(\xi)\D\xi}{r^{2}+\xi^2}+\int_0^1\frac{r\xi\mathbf K(\xi)\D\xi}{1+r^{2}\xi^2}=\frac{2}{\pi}\int_0^1r\log \frac{r}{\kappa}\frac{\mathbf K(\sqrt{1-\kappa^2})\D\kappa}{r^{2}+\kappa^2}=\frac{2}{\pi}\int_0^1\sqrt{1+r^{2}}\log \frac{\sqrt{1+r^{2}}}{\kappa}\frac{\mathbf K(\sqrt{1-\kappa^2})\D\kappa}{1+r^{2}-\kappa^2}\notag\\={}&\frac{\pi}{2}\frac{1}{\sqrt{1+r^{2}}}\mathbf K\left( \frac{r}{\sqrt{1+r^2}} \right),\end{align}all valid for $ r>0$.
As a consequence, the following integral relations hold for $ x>0$:\begin{align}
\int_0^1\tanh^{-1}\sqrt{\frac{1+x^{2}\kappa^{2}}{1+x^2}}\frac{\mathbf K(\sqrt{1-\kappa^2})\D\kappa}{\sqrt{1+x^{2}\kappa^{2}}}+\int_0^1\tanh^{-1}\sqrt{\frac{x^2+\kappa^2}{1+x^2}}\frac{\mathbf K(\sqrt{1-\kappa^2})\D\kappa}{\sqrt{x^{2}+\kappa^{2}}}={}&\int_0^1\tanh^{-1}\frac{x}{\sqrt{x^2+\kappa^2}}\frac{\mathbf K(\sqrt{1-\kappa^2})\D\kappa}{\sqrt{x^{2}+\kappa^{2}}},
\label{eq:3artanh}\\\int_0^1\tanh^{-1}\frac{1}{\sqrt{1+x^2\kappa^2}}\frac{\mathbf K(\sqrt{1-\kappa^2})\D\kappa}{\sqrt{1+x^{2}\kappa^{2}}}={}&\int_0^1\tanh^{-1}\frac{x}{\sqrt{x^2+\kappa^2}}\frac{\mathbf K(\sqrt{1-\kappa^2})\D\kappa}{\sqrt{x^{2}+\kappa^{2}}},\label{eq:2artanh}
\end{align}and there are several multiple elliptic integral representations for $ \mathbf K(\sqrt{\lambda})\mathbf K(\sqrt{1-\lambda}),0<\lambda<1$:\begin{align}
 \mathbf K(\sqrt{\lambda})\mathbf K(\sqrt{1-\lambda})={}&\int^1_{(1-\lambda)/(1+\lambda)}\frac{\mathbf K(k)\D k}{\sqrt{1-k^{2}}\sqrt{(1+\lambda)^{2}k^2-(1-\lambda)^2}}=\int_0^1\frac{\mathbf K(k)\D k}{\sqrt{4\lambda+(1-\lambda)^2k^2}}+\int_0^1\frac{\mathbf K(k)\D k}{\sqrt{(1-\lambda)^2+4\lambda k^2}}\notag\\={}&\frac{2}{\pi}\int_0^1\tanh^{-1}\frac{2\sqrt{\lambda}}{\sqrt{4\lambda+(1-\lambda)^{2}\kappa^2}}\frac{\mathbf K(\sqrt{1-\kappa^2})\D\kappa}{\sqrt{4\lambda+(1-\lambda)^{2}\kappa^2}}\notag\\={}&\frac{2}{\pi}\int_0^1\tanh^{-1}\frac{1-\lambda}{\sqrt{(1-\lambda)^{2}+4\lambda\kappa^2}}\frac{\mathbf K(\sqrt{1-\kappa^2})\D\kappa}{\sqrt{(1-\lambda)^{2}+4\lambda\kappa^2}}.\label{eq:KK'_various}
\end{align}
\end{enumerate}\end{corollary}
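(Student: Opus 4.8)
The whole corollary is organized around a single mechanism: the four Beltrami transformations of \S\ref{subsubsec:B_simp}, together with the closed form extracted from Eq.~\ref{eq:im_sphere} by the choice $\xi=1/\sqrt{1+r^2}$, namely
\begin{equation*}
\frac{2}{\pi}\int_0^1\frac{r\,\mathbf K(\sqrt{1-\kappa^2})}{r^2+\kappa^2}\,\D\kappa=\frac{1}{\sqrt{1+r^2}}\,\mathbf K\!\left(\frac{1}{\sqrt{1+r^2}}\right),\qquad r>0.
\end{equation*}
The plan is to first secure the pivotal identity Eq.~\ref{eq:GB_1} by feeding $W(r)=\int_0^1\mathbf K(\xi)\frac{r}{r^2+\xi^2}\D\xi$ into the type-iB transformation (Eq.~\ref{eq:iB}): the inner integral $\frac{2}{\pi}\int_0^1\frac{\sqrt{1-\xi^2}\,r\,\D\xi}{(r^2+\xi^2)[1-\xi^2(1-\kappa^2)]}$ splits by partial fractions in $\xi^2$ into the elementary evaluations $\int_0^1\frac{\sqrt{1-\xi^2}}{r^2+\xi^2}\D\xi=\frac{\pi}{2}\bigl(\frac{\sqrt{1+r^2}}{r}-1\bigr)$ and $\int_0^1\frac{\sqrt{1-\xi^2}}{1-(1-\kappa^2)\xi^2}\D\xi=\frac{\pi}{2(1+\kappa)}$, and, after using $1+(1-\kappa^2)r^2=(\sqrt{1+r^2}-\kappa r)(\sqrt{1+r^2}+\kappa r)$, collapses exactly to $(\sqrt{1+r^2}+\kappa r)^{-1}$. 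The companion forms Eqs.~\ref{eq:GB_2}--\ref{eq:GB_4} for $W$, and Eqs.~\ref{eq:GB_5}--\ref{eq:GB_6} for $M(r)=\int_0^1\mathbf K(\xi)\frac{r\xi}{1+r^2\xi^2}\D\xi$, then come from running the type-B and type-iB transformations on the same defining integrals and integrating by parts in the inner variable; this is what converts the rational inner integrands into the $\arccos\kappa$, $\arcsin\kappa$ and $\log\kappa$ kernels and manufactures the boundary contributions proportional to $\mathbf K(1/\sqrt{1+r^2})$, with the two denominators $r^2+\kappa^2$ and $1+r^2-\kappa^2$ recording whether one continues through the argument $r$ or through $i\sqrt{1+r^2}$.

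Once these representations are in place, most of the remaining items are bookkeeping. The Catalan-constant formula Eq.~\ref{eq:GB_G} is the limit $G=\tfrac12\lim_{r\to+\infty}rW(r)$ applied to Eqs.~\ref{eq:GB_3}--\ref{eq:GB_4}, where $\frac{r\sqrt{1+r^2}}{r^2+\kappa^2}\to1$ and the boundary term supplies $\frac{\pi}{2}\log2$ via $\int_0^1\mathbf K(\sqrt{1-\kappa^2})\D\kappa=\pi^2/4$ (Eq.~\ref{eq:a}). Adding Eq.~\ref{eq:GB_4} to Eq.~\ref{eq:GB_5} annihilates the $\log\sqrt{1-\kappa^2}$ terms and leaves $\frac{2}{\pi}\int_0^1 r\log\frac{r}{\kappa}\frac{\mathbf K(\sqrt{1-\kappa^2})\D\kappa}{r^2+\kappa^2}$, whose $\log r$ piece is resummed by the displayed closed form into $\frac{\pi}{2}(1+r^2)^{-1/2}\mathbf K(r/\sqrt{1+r^2})$, giving sum\_rule (Eq.~\ref{eq:sum_rule}); comparing the $\log\kappa$ contributions of the two representations Eqs.~\ref{eq:GB_2} and \ref{eq:GB_4} of $W$ then yields log\_combo (Eq.~\ref{eq:log_combo}). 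Equations \ref{eq:3artanh}--\ref{eq:2artanh} are the $\tanh^{-1}$-incarnations of sum\_rule after the substitutions $r\mapsto x$ and $r\mapsto 1/x$, while the product formulae Eq.~\ref{eq:KK'_various} arise from the Hobson decoupling identity Eq.~\ref{eq:LegendreP_half} specialized to the boundary $\theta_1+\theta_2=\pi$ (giving the single-integral form), a Landen splitting into the two $\int_0^1\mathbf K(k)/\sqrt{\cdots}\,\D k$ pieces, and a final type-B transformation of each piece to expose the $\tanh^{-1}$ integrands.

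I expect the genuine obstacle to be the functional equation Eq.~\ref{eq:f_eq_W} (equivalently Eq.~\ref{eq:f_eq_W'}). The plan is to continue Eq.~\ref{eq:GB_1} analytically to the three arguments $r$, $i\sqrt{1+r^2}$ and $i(r+\sqrt{1+r^2})^2$; writing $r=\sinh u$, $\sqrt{1+r^2}=\cosh u$ and tracking the branch $\sqrt{1-e^{4u}}=2i\,e^{u}\sqrt{\sinh u\cosh u}$, the three terms reduce to $\int_0^1\frac{\mathbf K(\sqrt{1-\kappa^2})}{\cosh u+\kappa\sinh u}\D\kappa$, $\int_0^1\frac{\mathbf K(\sqrt{1-\kappa^2})}{\sinh u+\kappa\cosh u}\D\kappa$ and $\int_0^1\frac{4\,\mathbf K(\sqrt{1-\kappa^2})}{2\sqrt{\sinh u\cosh u}+\kappa\,e^{u}}\D\kappa$, so that the claim becomes one integral identity $\int_0^1\mathbf K(\sqrt{1-\kappa^2})\,\Delta(\kappa)\,\D\kappa=0$ for an explicit $\Delta$. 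The difficulty, and the reason this step is not mere algebra, is that $\Delta(\kappa)$ does \emph{not} vanish pointwise: a residual $\sqrt{\sinh u\cosh u}$ in the third kernel cannot be matched rationally by the first two. I would therefore resolve it by the Landen change of variable $\kappa=\tfrac{2\sqrt\mu}{1+\mu}$, under which $\mathbf K(\sqrt{1-\kappa^2})=\tfrac{1+\mu}{2}\mathbf K(\sqrt{1-\mu^2})$ (footnote~\ref{fn:Landen1}) and the third kernel factors through the correct algebraic identity
\begin{equation*}
2\sqrt{\sinh u\cosh u}+\kappa\,e^{u}=\frac{2}{1+\mu}\bigl(\sqrt\mu\,\sqrt{\cosh u}+\sqrt{\sinh u}\bigr)\bigl(\sqrt{\cosh u}+\sqrt\mu\,\sqrt{\sinh u}\bigr),
\end{equation*}
so that the surviving radical is absorbed into a product of two linear forms; reconciling this quadratic descent with the complementary-modulus symmetry of the transformed measure $\tfrac{1-\mu}{2\sqrt\mu(1+\mu)}\mathbf K(\sqrt{1-\mu^2})\,\D\mu$ is the crux of the proof.
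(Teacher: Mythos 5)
Your treatment of Eqs.~\ref{eq:GB_1}--\ref{eq:GB_6}, \ref{eq:GB_G} and the first half of \ref{eq:sum_rule} tracks the paper faithfully (the paper merely asserts the elementary inner integration behind \ref{eq:GB_1}; your partial-fraction computation, using $1+(1-\kappa^2)r^2=(\sqrt{1+r^2}-\kappa r)(\sqrt{1+r^2}+\kappa r)$, is a correct filling-in), but your plan for the functional equation \ref{eq:f_eq_W} has a genuine gap precisely at the step you yourself flag as ``the crux''. Your reduction to $\int_0^1\mathbf K(\sqrt{1-\kappa^2})\,\Delta(\kappa)\,\D\kappa=0$ with the three linear kernels $\frac{1}{\cosh u+\kappa\sinh u}$, $\frac{1}{\sinh u+\kappa\cosh u}$, $\frac{4}{2\sqrt{\sinh u\cosh u}+\kappa e^u}$ is right, and your factorization under $\kappa=\frac{2\sqrt\mu}{1+\mu}$ is algebraically correct --- but it does not close the argument: after that substitution the third kernel becomes $\frac{1-\mu}{\sqrt\mu\,(\sqrt\mu\sqrt{\cosh u}+\sqrt{\sinh u})(\sqrt{\cosh u}+\sqrt\mu\sqrt{\sinh u})}$, which still disagrees pointwise with the sum $\frac{(1+\mu)(\cosh u+\sinh u)}{(\cosh u+\mu\sinh u)(\sinh u+\mu\cosh u)}$ of the first two (at $\cosh u=5/4$, $\mu=1/4$ the two kernels are roughly $0.679$ versus $1.637$), so the ``reconciliation'' you defer is the entire proof, and nothing in your outline supplies it. The paper's mechanism is different and dissolves the difficulty: it applies the \emph{descending} Landen substitution $\kappa=(1-\eta)/(1+\eta)$ to the iB representation \ref{eq:GB_1}, obtaining the M\"obius-kernel form $e^sW(\sinh s)=\int_0^1\frac{2\mathbf K(\eta)\,\D\eta}{1+e^{-2s}\eta}$; continuing $s\mapsto s+\frac{i\pi}{2}$ gives $ie^sW(i\cosh s)=\int_0^1\frac{2\mathbf K(\eta)\,\D\eta}{1-e^{-2s}\eta}$, and the sum is \emph{pointwise} $\int_0^1\frac{4\mathbf K(\eta)\,\D\eta}{1-e^{-4s}\eta^2}$, which is then recognized as $4ie^{2s}W(ie^{2s})$ through the \emph{original quadratic-kernel definition} $W(r)=\int_0^1\frac{r\mathbf K(\xi)\,\D\xi}{r^2+\xi^2}$, not its iB avatar. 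The point you miss is that the duality \ref{eq:GB_1} itself --- $W$ carrying both a degree-one and a degree-two kernel --- is what powers the doubling $\frac{1}{1+q\eta}+\frac{1}{1-q\eta}=\frac{2}{1-q^2\eta^2}$; keeping all three terms in linear-kernel form, as you do, forces a non-pointwise identity that a Landen substitution on one term alone cannot repair.

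Two further steps are glossed in ways that matter. First, the closed form at the end of \ref{eq:sum_rule} is not ``resummed by the displayed closed form'': your displayed identity handles the plain kernel $\frac{r}{r^2+\kappa^2}$, whereas the surviving integral carries $\log\frac{r}{\kappa}$; the paper instead sets $r=k/\sqrt{1-k^2}$, combines the $\kappa$-forms of $W$ and $M$ pointwise via $\frac{1}{1+k\kappa}+\frac{k\kappa}{1-k^2\kappa^2}=\frac{1}{1-k^2\kappa^2}$, and invokes the type-B Beltrami transform (Eq.~\ref{eq:Beltrami}) --- the ``sesqui-Beltrami'' step --- to land on $\frac{\pi}{2}\frac{1}{\sqrt{1+r^2}}\mathbf K\bigl(\frac{r}{\sqrt{1+r^2}}\bigr)$. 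Second, Eqs.~\ref{eq:3artanh} and \ref{eq:KK'_various} are not ``$\tanh^{-1}$-incarnations'' or a ``Landen splitting'': the paper obtains them by an Abel transform, integrating both sides of \ref{eq:sum_rule} against $\D r/\sqrt{r^2-x^2}$ over $r\in(x,\infty)$ (producing Eq.~\ref{eq:K_tanh_addition}) and pairing this with a type-B transform of the left side; the single-integral form opening \ref{eq:KK'_various} is imported from \cite[Proposition~3.3(b)]{Zhou2013Pnu} rather than from Eq.~\ref{eq:LegendreP_half}, then reduced by type-iB to a $\tanh^{-1}$ kernel and matched against \ref{eq:K_tanh_addition}. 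Your derivations of \ref{eq:log_combo} also needs both pairs (\ref{eq:GB_4}+\ref{eq:GB_5} against \ref{eq:GB_2}+\ref{eq:GB_6}), not merely a comparison of \ref{eq:GB_2} with \ref{eq:GB_4}, since the $\arccos\kappa$ and $\log\sqrt{1-\kappa^2}$ terms must cancel through the $M$-representations. These latter items are repairable bookkeeping; the functional equation, as planned, is not.
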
\begin{proof}\begin{enumerate}[label=(\alph*),widest=a]\item With the type-iB  transformation (Eq.~\ref{eq:iB}) and the elementary integration\[\frac{2}{\pi}\int_0^1\frac{r}{r^2+\xi^2}\frac{\sqrt{1-\xi^2}\D\xi}{1-\xi^{2}(1-\kappa^2)}=\frac{1}{\sqrt{1+r^2}+\kappa r},\quad r>0\]one may reveal the truth of Eq.~\ref{eq:GB_1}. Applying a second round of type-iB  transformation to the right-hand side of  Eq.~\ref{eq:GB_1}, we obtain\begin{align*}\int_0^1\frac{\mathbf K(\sqrt{1-\kappa^2})\D\kappa}{\sqrt{1+r^2}+\kappa r}=\int_0^1\frac{\mathbf K(\xi)}{\sqrt{1+r^2}+r \sqrt{1-\xi^{2}}}\frac{\xi\D\xi}{\sqrt{1-\xi^{2}}}=\frac{2}{\pi}\int_0^1\left[ \frac{ \kappa r  \arccos\kappa }{\sqrt{1-\kappa ^2} }-\sqrt{1+r^{2}}\log \frac{\kappa(r+\sqrt{1+r^{2}})}{\sqrt{1+r^{2}}}  \right]\frac{\mathbf K(\sqrt{1-\kappa^2})\D\kappa}{1+r^{2}-\kappa^2}.\end{align*}As we recognize that \begin{align*}\frac{2}{\pi}\int_0^1\frac{\mathbf K(\sqrt{1-\kappa^2})\D\kappa}{1+r^{2}-\kappa^2}=\frac{1}{1+r^{2}}\mathbf K\left( \frac{1}{\sqrt{1+r^2}} \right),\end{align*}we may confirm Eq.~\ref{eq:GB_2}. Now, writing $\frac\pi2-\arcsin\kappa$ for $ \arccos\kappa$, and noticing that \begin{align*}\int_0^1\frac{ \kappa r   }{\sqrt{1-\kappa ^2} }\frac{\mathbf K(\sqrt{1-\kappa^2})\D\kappa}{1+r^{2}-\kappa^2}=\int_0^1\frac{r\mathbf K(\xi)\D\xi}{r^{2}+\xi^2}\end{align*}follows from  a simple substitution $ \kappa\mapsto\sqrt{1-\xi^2}$, we see that   Eqs.~\ref{eq:GB_1} and \ref{eq:GB_2} entail the identity\begin{align}\int_0^1\frac{ \kappa r  \arcsin\kappa }{\sqrt{1-\kappa ^2} }\frac{\mathbf K(\sqrt{1-\kappa^2})\D\kappa}{1+r^{2}-\kappa^2}=\int_0^1\sqrt{1+r^{2}}\log \frac{1}{\kappa}\frac{\mathbf K(\sqrt{1-\kappa^2})\D\kappa}{1+r^{2}-\kappa^2}-\frac{\pi}{2}\frac{\log \left(1+\frac{r}{\sqrt{1+r^{2}}}\right)}{\sqrt{1+r^{2}}}\mathbf K\left( \frac{1}{\sqrt{1+r^2}} \right).\label{eq:arcsin_id}\end{align}By analytic continuation in $r$, we may convert the last identity into\begin{align}\int_0^1\frac{ \kappa\sqrt{1+r^{2}}  \arcsin\kappa }{\sqrt{1-\kappa ^2} }\frac{\mathbf K(\sqrt{1-\kappa^2})\D\kappa}{r^{2}+\kappa^2}=\int_0^1r\log \frac{1}{\kappa}\frac{\mathbf K(\sqrt{1-\kappa^2})\D\kappa}{r^{2}+\kappa^2}-\frac{\pi}{2}\frac{\log\left(1+\frac{\sqrt{1+r^2}}{r}\right)}{\sqrt{1+r^{2}}}\mathbf K\left( \frac{1}{\sqrt{1+r^2}} \right).\tag{\ref{eq:arcsin_id}$'$}\label{eq:arcsin_id'}\end{align}

On the other hand, if we apply type-B transformation (Eq.~\ref{eq:Beltrami}) to the right-hand side of Eq.~\ref{eq:GB_1}, we may set up the following equation\begin{align*}\int_0^1\frac{\mathbf K(\sqrt{1-\kappa^2})\D\kappa}{\sqrt{1+r^2}+\kappa r}={}&\int_0^1\frac{\mathbf K(k)}{\sqrt{1+r^2}+\kappa \sqrt{1-k^{2}}}\frac{k\D k}{\sqrt{1-k^{2}}}\notag\\={}&\frac{2}{\pi}\int_0^1\left[ \frac{ \kappa \sqrt{1+r^2}  \arcsin\kappa }{\sqrt{1-\kappa ^2} }+r\log \frac{\sqrt{1-\kappa^{2}}(r+\sqrt{1+r^{2}})}{\sqrt{1+r^{2}}}  \right]\frac{\mathbf K(\sqrt{1-\kappa^2})\D\kappa}{r^{2}+\kappa^2},\end{align*}which implies Eq.~\ref{eq:GB_3} after we exploit the identity  \[\frac{2}{\pi}\int_0^1\frac{r\mathbf K(\sqrt{1-\kappa^2})\D\kappa}{r^{2}+\kappa^2}=\frac{1}{\sqrt{1+r^{2}}}\mathbf K\left( \frac{1}{\sqrt{1+r^2}} \right).\] As we set Eq.~\ref{eq:arcsin_id'} into  Eq.~\ref{eq:GB_3}, we obtain Eq.~\ref{eq:GB_4}.
In passage to the limit $ r\to+\infty$, Eqs.~\ref{eq:GB_1} and \ref{eq:GB_2} recover the identities stated in Eq.~\ref{eq:G_odd}, while Eqs.~\ref{eq:GB_3} and \ref{eq:GB_4} lead to Eq.~\ref{eq:GB_G}.

After applying Landen's transformation on the right-hand side of   Eq.~\ref{eq:GB_1}, we get \begin{align*}W(r)=\int_0^1\frac{r\mathbf K(\xi)\D\xi}{r^{2}+\xi^2}=\int_0^1\frac{2\mathbf K(\eta)\D\eta}{\sqrt{1+r^{2}}+r+(\sqrt{1+r^2}-r)\eta}.\end{align*}Setting $ r=\sinh s$ in the equation above, we obtain\[e^sW(\sinh s)=e^{s}\int_0^1\frac{\sinh s\mathbf K(\xi)\D\xi}{\sinh^{2} s+\xi^2}=\int_0^1\frac{2\mathbf K(\eta)\D\eta}{1+e^{-2s}\eta},\quad s>0.\]By analytic continuation from $ s$ to $ s+\frac{i\pi}{2}$, we have\begin{align*}ie^{s}W(i\cosh s)=e^{s}\int_0^1\frac{\cosh s\mathbf K(\xi)\D\xi}{\cosh^{2} s-\xi^2}=\int_0^1\frac{2\mathbf K(\eta)\D\eta}{1-e^{-2s}\eta},\quad s>0.\end{align*}Adding up, we arrive at the following functional relation \begin{align*}e^sW(\sinh s)+ie^{s}W(i\cosh s)=e^{s}\left[\int_0^1\frac{\sinh s\mathbf K(\xi)\D\xi}{\sinh^{2} s+\xi^2}+\int_0^1\frac{\cosh s\mathbf K(\xi)\D\xi}{\cosh^{2} s-\xi^2}\right]=\int_0^1\frac{4\mathbf K(\eta)\D\eta}{1-e^{-4s}\eta^{2}}=4ie^{2s}W(ie^{2s}),\end{align*}which entails Eqs.~\ref{eq:f_eq_W} and \ref{eq:f_eq_W}$'$ after the back substitution $ s=\sinh^{-1}r$.
 \item By the transformations of types B (Eq.~\ref{eq:Beltrami}) and iB (Eq.~\ref{eq:iB}), we may establish
the integral representations\begin{align*}M(r):={}&\int_0^1\frac{r\xi\mathbf K(\xi)\D\xi}{1+r^{2}\xi^2}=-\frac{2}{\pi}\int_0^1r\log \sqrt{\frac{1-\kappa^{2}}{1+r^{2}}}\frac{\mathbf K(\sqrt{1-\kappa^2})\D\kappa}{r^{2}+\kappa^2}\\={}&-\frac{2}{\pi}\int_0^1\left(\frac{ \kappa r  \arccos\kappa }{\sqrt{1-\kappa ^2} }-\sqrt{1+r^{2}}\sinh^{-1}r\right)\frac{\mathbf K(\sqrt{1-\kappa^2})\D\kappa}{1+r^{2}-\kappa^2},\end{align*}which simplify into Eqs.~\ref{eq:GB_5} and \ref{eq:GB_6}. As we combine Eqs.~\ref{eq:GB_4} and \ref{eq:GB_5}, we obtain\begin{align}W(r)+M(r)=\frac{2}{\pi}\int_0^1r\log \frac{1}{\kappa}\frac{\mathbf K(\sqrt{1-\kappa^2})\D\kappa}{r^{2}+\kappa^2}+\frac{\log r}{\sqrt{1+r^{2}}}\mathbf K\left( \frac{1}{\sqrt{1+r^2}} \right)=\frac{2}{\pi}\int_0^1r\log \frac{r}{\kappa}\frac{\mathbf K(\sqrt{1-\kappa^2})\D\kappa}{r^{2}+\kappa^2},\quad r>0,\label{eq:LM1}\end{align}whereas Eqs.~\ref{eq:GB_2} and \ref{eq:GB_6} together entail the relation
\begin{align}W(r)+M(r)={}&\frac{2}{\pi}\int_0^1\sqrt{1+r^{2}}\log \frac{1}{\kappa}\frac{\mathbf K(\sqrt{1-\kappa^2})\D\kappa}{1+r^{2}-\kappa^2}+\frac{\log \sqrt{1+r^{2}}}{\sqrt{1+r^{2}}}\mathbf K\left( \frac{1}{\sqrt{1+r^2}} \right)\notag\\={}&\frac{2}{\pi}\int_0^1\sqrt{1+r^{2}}\log \frac{\sqrt{1+r^{2}}}{\kappa}\frac{\mathbf K(\sqrt{1-\kappa^2})\D\kappa}{1+r^{2}-\kappa^2},\quad r>0.\label{eq:LM2}\end{align}Equating the  two expressions for $ W(r)+M(r)$ in Eqs.~\ref{eq:LM1} and \ref{eq:LM2}, we have verified Eq.~\ref{eq:log_combo}.

To establish the last equality in Eq.~\ref{eq:sum_rule}, we set $ r=k/\sqrt{1-k^2}$ for $ 0<k<1$, and make use of the rightmost term in Eq.~\ref{eq:GB_1} to put down\begin{align*}W(r)=W\left( \frac{k}{\sqrt{1-k^{2}}} \right)=\sqrt{1-k^{2}}\int_0^1\frac{\mathbf K(\sqrt{1-\kappa^2})\D\kappa}{1+k\kappa},\end{align*}which adds up with \begin{align*}M(r)=M\left( \frac{k}{\sqrt{1-k^{2}}} \right)=\int_0^1\frac{r\xi\mathbf K(\xi)\D\xi}{1+r^{2}\xi^2}\overset{\kappa=\sqrt{1-\xi^2}}{=\!\!=\!\!=\!\!=\!\!=\!\!=\!\!=\!\!=}\int_0^1\frac{r\kappa\mathbf K(\sqrt{1-\kappa^{2}})\D\kappa}{1+r^{2}(1-\kappa^2)}=\sqrt{1-k^{2}}\int_0^1\frac{k\kappa\mathbf K(\sqrt{1-\kappa^2})\D\kappa}{1-k^{2}\kappa^{2}}\end{align*}into a form anticipated from the Beltrami transformation in Eq.~\ref{eq:Beltrami}:\begin{align*}W(r)+M(r)=W\left( \frac{k}{\sqrt{1-k^{2}}} \right)+M\left( \frac{k}{\sqrt{1-k^{2}}} \right)=\sqrt{1-k^{2}}\int_0^1\frac{\mathbf K(\sqrt{1-\kappa^2})\D\kappa}{1-k^{2}\kappa^{2}}=\frac{\pi\sqrt{1-k^2}}{2}\mathbf K(k)=\frac{\pi}{2}\frac{1}{\sqrt{1+r^2}}\mathbf K\left( \frac{r}{\sqrt{1+r^2}} \right).\end{align*}As we may recall that the last term in Eq.~\ref{eq:GB_1} itself is a consequence of the Beltrami transformation of type iB (Eq.~\ref{eq:iB}), it might be appropriate to name the identity expressed by the two extreme ends of  Eq.~\ref{eq:sum_rule}\[\int_0^1\frac{r\mathbf K(\xi)\D\xi}{r^{2}+\xi^2}+\int_0^1\frac{r\xi\mathbf K(\xi)\D\xi}{1+r^{2}\xi^2}=\frac{\pi}{2}\frac{1}{\sqrt{1+r^2}}\mathbf K\left( \frac{r}{\sqrt{1+r^2}} \right)\]as the sesqui-Beltrami transformation.

We perform Abel transform on both sides of the last equation:\begin{align*}\int_x^\infty\frac{1}{\sqrt{r^2-x^2}}\left[\int_0^1\frac{r\mathbf K(\xi)\D\xi}{r^{2}+\xi^2}+\int_0^1\frac{r\xi\mathbf K(\xi)\D\xi}{1+r^{2}\xi^2}\right]\D r={}&\frac{\pi}{2}\int_x^\infty\frac{1}{\sqrt{r^2-x^2}}\left[\int_0^1\frac{\D t}{\sqrt{1-t^2}\sqrt{1+r^2-r^{2}t^2}}\right]\D r,\end{align*}which results in \begin{align}\label{eq:K_tanh_addition}\frac{\pi}{2}\left[\int_0^1\frac{\mathbf K(\xi)\D\xi}{\sqrt{x^{2}+\xi^2}}+\int_0^1\frac{\mathbf K(\xi)\D\xi}{\sqrt{1+x^{2}\xi^2}}\right]={}&\frac{\pi}{2}\int_0^1\frac{1}{x({1-t^2})}\mathbf K\left(\frac{1}{x\sqrt{t^{2}-1}}\right)\D t=x\int_0^1 \left[ \int_0^1\frac{\mathbf K(\sqrt{1-\kappa^2})\D\kappa}{x^{2}(1-t^{2})+\kappa^{2}}\right]\D t\notag\\={}&\int_0^1\tanh^{-1}\frac{x}{\sqrt{x^2+\kappa^2}}\frac{\mathbf K(\sqrt{1-\kappa^2})\D\kappa}{\sqrt{x^{2}+\kappa^{2}}}.\end{align}On the other hand, using the Beltrami transformation in Eq.~\ref{eq:Beltrami}, we may verify \begin{align*}\frac{\pi}{2}\left[\int_0^1\frac{\mathbf K(\xi)\D\xi}{\sqrt{x^{2}+\xi^2}}+\int_0^1\frac{\mathbf K(\xi)\D\xi}{\sqrt{1+x^{2}\xi^2}}\right]=\int_0^1\tanh^{-1}\sqrt{\frac{1+x^{2}\kappa^{2}}{1+x^2}}\frac{\mathbf K(\sqrt{1-\kappa^2})\D\kappa}{\sqrt{1+x^{2}\kappa^{2}}}+\int_0^1\tanh^{-1}\sqrt{\frac{x^2+\kappa^2}{1+x^2}}\frac{\mathbf K(\sqrt{1-\kappa^2})\D\kappa}{\sqrt{x^{2}+\kappa^{2}}},\end{align*} so Eq.~\ref{eq:3artanh} is clearly true. As  we trade $x$ for $1/x$ in    Eq.~\ref{eq:3artanh}, we obtain\begin{align*}x\left[\int_0^1\tanh^{-1}\sqrt{\frac{x^{2}+\kappa^{2}}{1+x^2}}\frac{\mathbf K(\sqrt{1-\kappa^2})\D\kappa}{\sqrt{x^{2}+\kappa^{2}}}+\int_0^1\tanh^{-1}\sqrt{\frac{1+x^2\kappa^2}{1+x^2}}\frac{\mathbf K(\sqrt{1-\kappa^2})\D\kappa}{\sqrt{1+x^{2}\kappa^{2}}}\right]=x\int_0^1\tanh^{-1}\frac{1}{\sqrt{1+x^2\kappa^2}}\frac{\mathbf K(\sqrt{1-\kappa^2})\D\kappa}{\sqrt{1+x^{2}\kappa^{2}}},\end{align*}which gives rise to Eq.~\ref{eq:2artanh}.

In \cite[Ref.][Proposition 3.3(b)]{Zhou2013Pnu}, it was shown that \begin{align*} \mathbf K(\sqrt{\lambda})\mathbf K(\sqrt{1-\lambda})={}&\int^1_{(1-\lambda)/(1+\lambda)}\frac{\mathbf K(k)\D k}{\sqrt{1-k^{2}}\sqrt{(1+\lambda)^{2}k^2-(1-\lambda)^2}}\end{align*}holds for $ 0<\lambda\leq1/2$, and the right-hand side of the equation above is left unchanged after a reflection $ \lambda\mapsto1-\lambda$. As we carry out the following computations for $ b>1$:\begin{align*}\int_{1/b}^1\frac{\mathbf K(\xi)\D \xi}{\sqrt{1-\xi^2}\sqrt{b^2\xi^2-1}}=\int^{1}_{1/b}\left[\frac{2\sqrt{1-\xi^2}}{\pi}\int_0^1\frac{\mathbf K(\sqrt{1-\kappa^2})\D\kappa}{1-\xi^2(1-\kappa^2)}\right]\frac{\D \xi}{\sqrt{1-\xi^2}\sqrt{b^2\xi^2-1}}=\frac{2}{\pi}\int_0^1\tanh^{-1}\sqrt{\frac{b^2-1}{b^2-1+\kappa^2}}\frac{\mathbf K(\sqrt{1-\kappa^2})\D\kappa}{\sqrt{b^{2}-1+\kappa^2}}\end{align*}before invoking Eq.~\ref{eq:K_tanh_addition}, we can prove that \begin{align*}\int_0^1\frac{\mathbf K(k)\D k}{\sqrt{b^2-1+k^2}}+\int_0^1\frac{\mathbf K(k)\D k}{\sqrt{1+(b^2-1)k^2}}={}&\int_{1/b}^1\frac{\mathbf K(k)\D k}{\sqrt{1-k^2}\sqrt{b^2k^2-1}},\quad b>1,\end{align*}which verifies Eq.~\ref{eq:KK'_various}. \qed\end{enumerate}\end{proof}

\section{Moment Expansion and Hypergeometric Summations\label{subsec:comb}}

Some of the integral identities developed in \S\S\ref{subsec:S2}-\ref{sec:Beltrami_revisited} can be recast into infinite series, where each summand is expressed combinatorially. This leads to some hypergeometric summations possibly of independent interest.

For example,  using the moments of $ \mathbf K(\sqrt{1-\kappa^2})$ \cite{BBGW} \[\int_0^1\kappa^n\mathbf K\left(\sqrt{1-\kappa^2}\right)\D\kappa=\frac{\pi}{4}\left[ \frac{\Gamma(\frac{n+1}{2})}{\Gamma(\frac{n+2}{2})} \right]^2,\] we may reformulate some previously derived results as{\allowdisplaybreaks\begin{align}G={}&\frac{1}{2\pi}\int_0^1\frac{\mathbf K(\sqrt{1-\kappa^2})}{\kappa}\log\frac{1+\kappa}{1-\kappa}\D\kappa=\frac{\pi}{4}\sum_{m=0}^\infty\frac{1}{2n+1}\left[ \frac{(2n-1)!!}{2^{n}n!} \right]^2=\frac{\pi}{4}\,{_3F_2}\left( \left.\begin{array}{c}
\frac{1}{2},\frac{1}{2},\frac{1}{2} \\[2pt]
1,\frac{3}{2} \\
\end{array}\right|1\right),\label{eq:b_series_exp}\tag{\ref{eq:b}$_\Sigma$}\\\frac{2\pi G-7\zeta(3)}{2}={}&-\frac{1}{2}\int_0^1\mathbf K \left(\sqrt{1-\kappa^{2}} \right)\log\frac{1+\kappa}{1-\kappa}\D \kappa=-\sum_{n=0}^\infty\frac{1}{2n+1}\left[ \frac{2^{n}n!}{(2n+1)!!} \right]^2=-\,{_4F_3}\left( \left.\begin{array}{c}
\frac{1}{2},1,1,1 \\[2pt]
\frac{3}{2},\frac{3}{2} ,\frac32\\
\end{array}\right|1\right),\label{eq:G_K_mix'}\tag{\ref{eq:G_K_mix}$_\Sigma$}\\\frac{\pi ^2}{8}-\frac{1}{2} \log ^2(\sqrt{2}-1)={}&\int_0^1\frac{\mathbf K(\sqrt{1-\kappa^{2}})}{\sqrt{1+\kappa^{2}}}\kappa\D \kappa=\sum_{n=0}^\infty\frac{(-2)^n}{2n+1}\frac{n!}{(2n+1)!!}={_3F_2}\left( \left.\begin{array}{c}
\frac{1}{2},1,1 \\[2pt]
\frac{3}{2},\frac{3}{2} \\
\end{array}\right|-1\right),\label{eq:K_tan_pi8'}\tag{\ref{eq:K_tan_pi8}$_\Sigma$}\\\frac{\pi ^2}{12}-\frac{1}{6} \log ^2(\sqrt{5}-2)={}&\int_0^1\frac{\mathbf K(\sqrt{1-\kappa^{2}})}{\sqrt{4+\kappa^{2}}}\kappa\D \kappa=\frac{1}{2}\sum_{n=0}^\infty\frac{(-\frac12)^n}{2n+1}\frac{n!}{(2n+1)!!}=\frac{1}{2}\,{_3F_2}\left( \left.\begin{array}{c}
\frac{1}{2},1,1 \\[2pt]
\frac{3}{2},\frac{3}{2} \\
\end{array}\right|-\frac{1}{4}\right),\label{eq:K_5_2'}\tag{\ref{eq:K_5_2}$_\Sigma$}\\\frac{1}{\pi}={}&\int_0^1\frac{\mathbf K(\sqrt{1-\kappa^2})}{\pi^{2}\kappa^{2}}\log\frac{1}{1-\kappa^{2}}\D\kappa=\frac{1}{4}\sum_{n=0}^\infty\frac{1}{n+1}\left[ \frac{(2n-1)!!}{2^{n}n!} \right]^2=\frac{1}{4}\,{_2F_1}\left( \left.\begin{array}{c}
\frac{1}{2},\frac{1}{2} \\
2 \\
\end{array}\right|1\right).\label{eq:Li2_limit_pi'}\tag{\ref{eq:Li2_limit_pi}$_\Sigma$}\end{align}}One may double-check these series representations by evaluating them  directly with the Wilf-Zeilberger  hypergeometric summation, as implemented in commercially available mathematical software packages, such as \textit{Mathematica}. Combining our series for $ 2\pi G-7\zeta(3)$ with the following result of Gosper \cite{Gosper}:
\[ 2\pi G-\frac{7\zeta(3)}{2}=\sum_{n=0}^\infty\frac{1}{n+1}\left[ \frac{2^{n}n!}{(2n+1)!!} \right]^2={_4F_3}\left( \left.\begin{array}{c}
1,1,1,1 \\
\frac{3}{2},\frac{3}{2},2 \\
\end{array}\right|1\right),\]
 we may deduce two more sums:
\begin{align}
 2\pi G=\sum _{n=0}^{\infty } \frac{2 (3n+2)}{(n+1) (2 n+1)}\left[ \frac{2^{n}n!}{(2n+1)!!} \right]^2,\quad \frac{7\zeta(3)}{2}=\sum _{m=0}^{\infty } \frac{4n+3}{(n+1) (2 n+1)}\left[ \frac{2^{n}n!}{(2n+1)!!} \right]^2.\label{eq:G_K_mix''}\tag{\ref{eq:G_K_mix}$^*_\Sigma$}
\end{align}Here, the two series representations of Catalan's constant \begin{align*}G=\frac{\pi}{4}\sum_{m=0}^\infty\frac{1}{2n+1}\left[ \frac{(2n-1)!!}{2^{n}n!} \right]^2=\frac{1}{\pi}\sum _{n=0}^{\infty } \frac{3n+2}{(n+1) (2 n+1)}\left[ \frac{2^{n}n!}{(2n+1)!!} \right]^2\end{align*}provide no numerical advantage over the expression $ G=\sum_{n=0}^\infty(-1)^n(2n+1)^{-2}$, as the summands all have $ O(1/n^2)$ decay rate.

We could improve the  convergence rate by turning other integral representations of $G$ into series.  It is  worth noting that Eq.~\ref{eq:G_Ti_b} translates into Ramanujan's series representation for Catalan's constant $G$ \cite{Ramanujan1915}:
\begin{align}
G= {}&\frac{\pi}{8}\log(2+\sqrt{3})+\frac{3}{8}\int_0^1\frac{\mathbf K(\sqrt{1-t})}{\sqrt{4-t}}\D t=\frac{\pi}{8}\log(2+\sqrt{3})+\frac{3}{8}\sum _{n=0}^{\infty }\frac{\left(\frac{1}{2}\right)^n}{2n+1} \frac{ n! }{(2 n+1)\text{!!}},\label{eq:G_Ti_b'}\tag{\ref{eq:G_Ti_b}$_\Sigma$}
\end{align}
while Eq.~\ref{eq:G_Ti_c} corresponds to Bradley's series acceleration formula \cite{Bradley1999}:\begin{align}G={}&\frac{\pi}{8}\log\frac{10+\sqrt{50-22\sqrt{5}}}{10-\sqrt{50-22\sqrt{5}}}+\frac{5}{8}\int_0^1\mathbf K(\sqrt{1-t})\left( \frac{1}{ \sqrt{\smash[b]{( \sqrt{5}-1)^{2}-t}}}-\frac{1}{\sqrt{\smash[b]{( \sqrt{5}+1)^{2}-t}}} \right)\D t\notag\\={}&\frac{\pi}{8}\log\frac{10+\sqrt{50-22\sqrt{5}}}{10-\sqrt{50-22\sqrt{5}}}+\frac{5}{4}\left[\frac{1}{ \sqrt{5}-1}\sum _{n=0}^{\infty }\left(\frac{\sqrt{2}}{\sqrt{5}-1}\right)^{2n}\frac{1}{2n+1} \frac{ n! }{(2 n+1)\text{!!}}-\frac{1}{ \sqrt{5}+1}\sum _{n=0}^{\infty } \left(\frac{\sqrt{2}}{\sqrt{5}+1}\right)^{2n}\frac{1}{2n+1} \frac{ n! }{(2 n+1)\text{!!}}\right].\label{eq:G_Ti_c'}\tag{\ref{eq:G_Ti_c}$_\Sigma$}\end{align}Both the series of Ramanujan and Bradley converge exponentially fast,  and thus have provided efficient algorithms to evaluate Catalan's constant numerically.

Making  wise use of combinatorial techniques, we may also verify Eq.~\ref{eq:Li2int'} in its most general form.
Suppose that $ |1-z^2|>2|z|$, we may use Taylor expansion to compute\begin{align*}\int_0^1\frac{(1-z^{4})\mathbf K(\sqrt{1-t})}{[(1-z^2)^{2}+4z^{2}t]\sqrt{(1-z^2)^{2}+4z^{2}t}}\D t={}&\frac{2(1-z^4)}{(1-z^2)^3}\sum _{n=0}^{\infty } \left(\frac{2z}{1-z^2}\right)^{2n}\frac{(-2)^{n}n! }{ (2 n+1)\text{!!}}.\end{align*}In the meantime, one can verify that \begin{align*}f_1(z)=\sum_{n=0}^{\infty } z^{2n}\frac{(-2)^{n}n! }{ (2 n+1)\text{!!}}\quad \text{and}\quad f_2(z)=\frac{\sinh^{-1} z}{z\sqrt{1+z^2}}\end{align*}both satisfy the differential equation\[z(1+z^2)\frac{\D f(z)}{\D z}+(1+2z^{2})f(z)=1\]with the same initial conditions $ f_1(0)=f_2(0)=1$.
Thus we have \begin{align*}\int_0^1\frac{(1-z^{4})\mathbf K(\sqrt{1-t})}{[(1-z^2)^{2}+4z^{2}t]\sqrt{(1-z^2)^{2}+4z^{2}t}}\D t=\frac{2(1-z^4)}{(1-z^2)^3}\frac{\sinh^{-1} \frac{2z}{1-z^2}}{\frac{2z}{1-z^2}\sqrt{1+(\frac{2z}{1-z^2})^2}}=\frac{1}{z}\log\frac{1+z}{1-z},\quad \text{for }|1-z^2|>2|z|,\end{align*}which extends to the open unit disk $ |z|<1$ by analytic continuation.
One may compare  this  mechanical derivation to   the geometric motivations that  led to  Eq.~\ref{eq:Li2int}.

Using the moment expansion, we may express the function $ M(r)$ (Eq.~\ref{eq:GB_5}) in terms of generalized hypergeometric series:\begin{align}M(r)={}&\int_0^1\frac{r\xi\mathbf K(\xi)\D\xi}{1+r^{2}\xi^2}=\int_0^1\frac{r\kappa\mathbf K(\sqrt{1-\kappa^{2}})\D\kappa}{1+r^{2}-r^{2}\kappa^2}=\frac{r}{1+r^{2}}\sum_{n=0 }^\infty\int_0^1\kappa^{2n+1}\left( \frac{r^2}{1+r^2} \right)^n\mathbf K\left(\sqrt{1-\kappa^2}\right)\D\kappa\notag\\={}&\frac{r}{1+r^{2}}\sum_{n=0 }^\infty\left( \frac{r^2}{1+r^2} \right)^n\frac{\pi}{4}\left[ \frac{\Gamma(n+1)}{\Gamma(n+\frac{3}{2})} \right]^2=\frac{r}{{1+r^{2}}}\,{_3F_2}\left( \left.\begin{array}{c}
1,1,1 \\
\frac{3}{2},\frac{3}{2} \\
\end{array}\right|\frac{r^2}{1+r^2}\right),\label{eq:M_3F2}\tag{\ref{eq:GB_5}$ _\Sigma$}\end{align}which is also a special case of    2.16.5.1 in \cite{PBMVol3}.  By the relation  \[W(r)+M(r)=\frac{\pi}{2}\frac{1}{\sqrt{1+r^{2}}}\mathbf K\left( \frac{r}{\sqrt{1+r^2}} \right),\]we  have \begin{align}W(r)={}&\int_0^1\frac{r\mathbf K(\xi)\D\xi}{r^{2}+\xi^2}=\frac{\pi}{2}\frac{1}{\sqrt{1+r^{2}}}\mathbf K\left( \frac{r}{\sqrt{1+r^2}} \right)-\frac{r}{{1+r^{2}}}\,{_3F_2}\left( \left.\begin{array}{c}
1,1,1 \\
\frac{3}{2},\frac{3}{2} \\
\end{array}\right|\frac{r^2}{1+r^2}\right),\label{eq:W_3F2}\tag{\ref{eq:GB_1}$_\Sigma $}\end{align}which may be compared to a particular case of  2.16.5.10 in \cite{PBMVol3}.
 As we analytically continue the functional equation for $ W(r)$ (Eq.~\ref{eq:f_eq_W}) to $ r=-i\sin\theta,0<\theta<\pi/2$, we obtain a hypergeometric identity:\begin{align}&\frac{i\sin\theta}{\cos^2\theta}\,{_3F_2}\left( \left.\begin{array}{c}
1,1,1 \\
\frac{3}{2},\frac{3}{2} \\
\end{array}\right|-\tan^2\theta\right)+\frac{\cos\theta}{\sin^2\theta}\,{_3F_2}\left( \left.\begin{array}{c}
1,1,1 \\
\frac{3}{2},\frac{3}{2} \\
\end{array}\right|-\cot^2\theta\right)+\left(\frac{i}{{\sin\theta}}+\frac{1}{\cos\theta}\right)\,{_3F_2}\left( \left.\begin{array}{c}
1,1,1 \\
\frac{3}{2},\frac{3}{2} \\
\end{array}\right|\frac{1}{1-e^{4i\theta}}\right)\notag\\={}&-\frac{\pi}{2}\left[\mathbf K(\sin\theta)+i\mathbf K(\cos\theta)-4ie^{-i\theta}\mathbf K(e^{-2i\theta})\right]=\frac{\pi[\mathbf K(\sin\theta)+i\mathbf K(\cos\theta)]}{2},\quad 0<\theta<\frac{\pi}{2}.\label{eq:3F2_sum_rule}\tag{\ref{eq:f_eq_W}$ _\Sigma$}\end{align}Here, in the last step, we have used Landen's transformation and the inverse modulus transformation to deduce\begin{align*}ie^{-i\theta}\mathbf K(e^{-2i\theta})=ie^{-i\theta}\mathbf K\left( \frac{1-i\tan\theta}{1+i\tan\theta} \right)=\frac{i\mathbf K(\sec\theta)}{2\cos\theta}=\frac{i}{2}[\mathbf K(\cos\theta)-i\mathbf K(\sin\theta)].\end{align*}Setting $ \theta=\pi/4$ in  Eq.~\ref{eq:3F2_sum_rule}, and employing the special value \begin{align*}\mathbf K\left( \frac{1}{\sqrt{2}} \right)=\frac{[\Gamma(\frac{1}{4})]^{2}}{4\sqrt{\pi}},\end{align*} we obtain the relation\begin{align}{_3F_2}\left( \left.\begin{array}{c}
1,1,1 \\
\frac{3}{2},\frac{3}{2} \\
\end{array}\right|-1\right)+{_3F_2}\left( \left.\begin{array}{c}
1,1,1 \\
\frac{3}{2},\frac{3}{2} \\
\end{array}\right|\frac{1}{2}\right)=\frac{\pi}{2\sqrt{2}}\mathbf K\left( \frac{1}{\sqrt{2}} \right)=\frac{\sqrt{\pi}[\Gamma(\frac{1}{4})]^{2}}{8\sqrt{2}}.\label{eq:3F2_sum_rule_pi4}\tag{\ref{eq:3F2_sum_rule}, $ \theta=\frac{\pi}{4}$}\end{align}Thanks to the Chowla-Selberg theory \cite{ChowlaSelberg,SelbergChowla,Zucker1977,JoyceZucker1991} which leads to the special values\[\mathbf K\left( \sin\frac{\pi}{12}\right)=\frac{1}{\sqrt{3}}\mathbf K\left( \cos\frac{\pi}{12}\right)=\frac{3^{1/4}}{2^{7/3}}\frac{[\Gamma(\frac{1}{3})]^{3}}{\pi},\] the case of $ \theta=\pi/12$  for Eq.~\ref{eq:3F2_sum_rule} also results in an exact evaluation\begin{align}&(5+3\sqrt{3})\,{_3F_2}\left( \left.\begin{array}{c}
1,1,1 \\
\frac{3}{2},\frac{3}{2} \\
\end{array}\right|-(2+\sqrt{3})^{2}\right)-i(5-3\sqrt{3})\,{_3F_2}\left( \left.\begin{array}{c}
1,1,1 \\
\frac{3}{2},\frac{3}{2} \\
\end{array}\right|-(2-\sqrt{3})^{2}\right)+(1+i)(\sqrt{3}+i)\,{_3F_2}\left(\vphantom{\frac12} \smash{\left.\begin{array}{c}
1,1,1 \\
\frac{3}{2},\frac{3}{2} \\
\end{array}\right|\frac{1}{2}+\frac{i\sqrt{3}}{2}}\right)\notag\\={}&\frac{\pi e^{i\pi/3}}{\sqrt{2}}\mathbf K\left( \sin\frac{\pi}{12}\right)=e^{i\pi/3}\frac{3^{1/4}[\Gamma(\frac{1}{3})]^{3}}{2^{17/6}}.\label{eq:3F2_sum_rule_pi12}\tag{\ref{eq:3F2_sum_rule}, $ \theta=\frac{\pi}{12}$}\end{align}Similarly, setting $ \theta=5\pi/12$ would bring us\begin{align}&-i(5+3\sqrt{3})\,{_3F_2}\left( \left.\begin{array}{c}
1,1,1 \\
\frac{3}{2},\frac{3}{2}
\end{array}\right|-(2+\sqrt{3})^{2}\right)+(5-3\sqrt{3})\,{_3F_2}\left( \left.\begin{array}{c}
1,1,1 \\
\frac{3}{2},\frac{3}{2} \\
\end{array}\right|-(2-\sqrt{3})^{2}\right)-(1+i)(\sqrt{3}-i)\,{_3F_2}\left(\vphantom{\frac12} \smash{\left.\begin{array}{c}
1,1,1 \\
\frac{3}{2},\frac{3}{2} \\
\end{array}\right|\frac{1}{2}-\frac{i\sqrt{3}}{2}}\right)\notag\\={}&-\frac{\pi e^{i\pi/6}}{\sqrt{2}}\mathbf K\left( \sin\frac{\pi}{12}\right)=-e^{i\pi/6}\frac{3^{1/4}[\Gamma(\frac{1}{3})]^{3}}{2^{17/6}}.\label{eq:3F2_sum_rule_5pi12}\tag{\ref{eq:3F2_sum_rule}, $ \theta=\frac{5\pi}{12}$}\end{align}

It would be unfair to go without mentioning the following summation formula in Ramanujan's notebooks \cite[Ref.][p.~153]{RN3}:\begin{align*}\sum_{n=0}^\infty\frac{1}{(2n+1)^2\cosh\frac{(2n+1)\pi z}{2i}}=\frac{\pi\sqrt{\lambda(z)}}{4\mathbf K(\sqrt{\lambda(z)})}{_3F_2}\left( \left.\begin{array}{c}
1,1,1 \\
\frac{3}{2},\frac{3}{2} \\
\end{array}\right|\lambda(z)\right),\quad -1<\R z\leq 1,\left\vert z+\frac{1}{2} \right\vert\geq\frac{1}{2},\left\vert z-\frac{1}{2} \right\vert>\frac{1}{2},\I z>0\end{align*}where the prescribed domain of validity  ensures that $z\mathbf K(\sqrt{\lambda(z)})=i\mathbf K(\sqrt{1-\lambda(z)}) $, with $ \lambda(z)$ being the elliptic modular lambda function. Using the imaginary modulus transformation for complete elliptic integrals, one can verify that  $ \lambda(z)$ satisfies the following transformation laws\begin{align}\begin{array}{r@{\;=\;}lr@{\;=\;}lr@{\;=\;}l}
\lambda(z+1)&\dfrac{\lambda(z)}{\lambda(z)-1},\quad & \lambda\left(-\dfrac{1}{z}\right)&1-\lambda(z),\quad &\lambda(z+2)&\lambda(z),\\[8pt]\lambda\left( -\dfrac{1}{z+1} \right)&\dfrac{1}{1-\lambda(z)},& \lambda\left( \dfrac{z\mathstrut}{z+1} \right)&\dfrac{1}{\lambda(z)},& \lambda\left( 1-\dfrac{1}{z} \right)&1-\dfrac{1}{\lambda(z)},
\end{array}\label{eq:lambda_transf}
\end{align}in the upper half-plane where $ \I z>0$. Meanwhile,  Landen's transformation for complete elliptic integrals
entails the following duplication formula for the elliptic modular lambda function (cf.~\cite[Ref.][\S135]{WeberVol3}):\begin{align}\label{eq:double_half_lambda}\lambda(2z)=\left[ \frac{1-\sqrt{1-\lambda(z)}}{1+\sqrt{1-\lambda(z)}} \right]^2,\quad -1<\R z< 1,\left\vert z+\frac{1}{2} \right\vert>\frac{1}{2},\left\vert z-\frac{1}{2} \right\vert>\frac{1}{2},\I z>0.\end{align}

 With the correspondence between trigonometric factors and elliptic modular lambda functions (cf.~Eqs.~\ref{eq:lambda_transf} and \ref{eq:double_half_lambda})\begin{align*}& \sin^2\theta=\lambda(z),\quad \cos^2\theta =\lambda \left( -\frac{1}{z} \right),\quad -\tan^2\theta =\frac{\lambda(z)}{\lambda(z)-1}=\lambda(z+1),\quad -\cot^2\theta=1-\frac{1}{\lambda(z)}=\lambda\left( 1-\frac{1}{z} \right),\notag\\& e^{-4i\theta}=\left(\frac{1-i\tan\theta}{1+i\tan\theta}\right)^{2}=\left[\frac{1-\sqrt{\lambda(z+1)}}{1+\sqrt{\lambda(z+1)}}\right]^{2}=\lambda\left( -\frac{2}{1+z} \right),\quad \frac{1}{1-e^{4i\theta}}=\frac{\lambda\left( -\frac{2}{1+z} \right)}{\lambda\left( -\frac{2}{1+z} \right)-1}=\lambda\left( \frac{z-1}{z+1} \right),\quad\text{where }z/i>0,\end{align*}and the related modular transformations for complete elliptic integrals of the first kind \begin{align*}&\frac{i\sqrt{\lambda(z)}}{1-{\lambda(z)}}\frac{\mathbf K(\sqrt{\lambda(z+1)})}{\sqrt{\lambda(z+1)}}=\frac{i\sqrt{\lambda(z)}}{1-{\lambda(z)}}\frac{1}{\sqrt{\lambda(z+1)}}\mathbf K\left( \sqrt{\frac{\lambda(z)}{\lambda(z)-1}} \right)=\frac{i\sqrt{\lambda(z)}}{1-{\lambda(z)}}\frac{1}{\sqrt{\lambda(z+1)}}\sqrt{1-\lambda(z)}\mathbf K(\sqrt{\lambda(z)})=\mathbf K(\sqrt{\lambda(z)}),\notag\\&\frac{\sqrt{1-\lambda(z)}}{\lambda(z)} \frac{\mathbf K\left( \sqrt{\lambda\left( 1-\frac{1}{z} \right)} \right)}{ \sqrt{\lambda\left( 1-\frac{1}{z} \right)}}=-\frac{\sqrt{1-\lambda(z)}}{\lambda(z)}\frac{iz\sqrt{\lambda(z)}}{\sqrt{\lambda\left( 1-\frac{1}{z} \right)}}\mathbf K(\sqrt{\lambda(z)})=-z \mathbf K(\sqrt{\lambda(z)}),\notag\\&\left(\frac{i}{{\sqrt{\lambda(z)}}}+\frac{1}{\sqrt{1-\lambda(z)}}\right)\frac{\mathbf K\left( \sqrt{\lambda\left( \frac{z-1}{z+1} \right)} \right)}{ \sqrt{\lambda\left( \frac{z-1}{z+1} \right)}}=2(1+z) \mathbf K(\sqrt{\lambda(z)}),\quad\text{where } z/i>0,\end{align*} we can rewrite Eq.~\ref{eq:3F2_sum_rule}
using Ramanujan's summation formula: \begin{align}&\sum_{n=0}^\infty\frac{i(-1)^{n}}{(2n+1)^2\sinh\frac{(2n+1)\pi y}{2}}+y\sum_{n=0}^\infty\frac{(-1)^{n}}{(2n+1)^2\sinh\frac{(2n+1)\pi }{2y}}+2(1+iy)\sum_{n=0}^\infty\frac{1}{(2n+1)^2\cosh\frac{(2n+1)\pi(iy-1) }{2i(iy+1)}}=\frac{\pi^{2}(1+iy)}{8},\quad \forall y>0.\label{eq:3F2_sum_rule_R}\tag{\ref{eq:f_eq_W}$ _{\Sigma_R}$}\end{align}The special cases of Eq.~\ref{eq:3F2_sum_rule} with  $ \theta=\pi/4,\pi/12,5\pi/12$ can be converted into some hyperbolic series of Ramanujan type:\begin{align}
\sum_{n=0}^\infty\frac{1}{(2n+1)^2}\left[ \frac{(-1)^n}{\sinh\frac{(2n+1)\pi}{2}} +\frac{2}{\cosh\frac{(2n+1)\pi}{2}}\right]={}&\frac{\pi^2}{8},\label{eq:3F2_sum_rule_R_1}\tag{\ref{eq:3F2_sum_rule_R}, $ y=1$}\\
\sum_{n=0}^\infty\frac{1}{(2n+1)^2}\left[ \frac{(-1)^n\sqrt{3}}{\sinh\frac{\vphantom{\sqrt1}(2n+1)\pi}{2\sqrt{3}}} +\frac{(-1)^n{i}}{\sinh\frac{(2n+1)\sqrt{3}\pi}{2}}+\frac{2(1+i\sqrt{3})}{\cos\frac{(2n+1)(1+i\sqrt{3})\pi}{4}}\right]={}&\frac{\pi^2}{8}(1+i\sqrt{3}),\label{eq:3F2_sum_rule_R_2}\tag{\ref{eq:3F2_sum_rule_R}, $ y=\sqrt{3}$}\\
\sum_{n=0}^\infty\frac{1}{(2n+1)^2}\left[ \frac{(-1)^n}{\sqrt{3}\sinh\frac{\vphantom{\sqrt1}(2n+1)\sqrt{3}\pi}{2}} +\frac{(-1)^n{i}}{\sinh\frac{\vphantom{\sqrt1}(2n+1)\pi}{2\sqrt{3}}}+\frac{2(1+\frac{i}{\sqrt{3}})}{\cos\frac{(2n+1)(1-i\sqrt{3})\pi}{4}}\right]={}&\frac{\pi^2}{8}\left( 1+\frac{i}{\sqrt{3}} \right),\label{eq:3F2_sum_rule_R_3}\tag{\ref{eq:3F2_sum_rule_R}, $ y=1/\sqrt{3}$}
\end{align}although these formulae did not seem to show up in  Ramanujan's  works. By analytic continuation and complex conjugation, one can further generalize Eq.~\ref{eq:3F2_sum_rule_R} into \begin{align}
\sum_{n=0}^\infty\frac{1}{(2n+1)^2(1+z)}\left[ \frac{1}{\cosh\frac{(2n+1)\pi(z+1)}{2i}} -\frac{z}{\cosh\frac{(2n+1)\pi(z-1)}{2iz}}\right]+2\sum_{n=0}^\infty\frac{1}{(2n+1)^2\cosh\frac{(2n+1)\pi(z-1)}{2i(z+1)}}
=\frac{\pi^2}{8},\quad \forall z\in\mathbb C\smallsetminus\mathbb R.\label{eq:3F2_sum_rule_cplx}\tag{\ref{eq:f_eq_W}$ ^*_{\Sigma_R}$}\end{align}The left-hand side of  Eq.~\ref{eq:3F2_sum_rule_cplx}also has  well-defined limits when $ z$ tends to certain rational numbers. For example, the case where $ z\to 1$  recovers the familiar series  $ \sum_{n=0}^\infty1/(2n+1)^2=\pi^2/8$.

The moment expansions involving the product of two elliptic integrals also paraphrase some of our integral formulae into hypergeometric identities. For instance, we may rewrite Eq.~\ref{eq:K_sqr_minus_pi4} as\begin{align}\frac{\pi^2}{2}\log2-\frac{7\zeta(3)}{4}={}&\int_0^1\left\{ \left[\mathbf K\left(\sqrt{1-\kappa^2}\right)\right]^2-\frac{\pi^2}{4} \right\}\frac{\kappa\D\kappa}{1-\kappa^2}=\sum_{n=0}^{\infty}\int_0^1\left\{ \left[\mathbf K\left(\sqrt{1-\kappa^2}\right)\right]^2-\frac{\pi^2}{4} \right\}\kappa^{2n+1}\D\kappa\notag\\={}&\sum_{n=0}^{\infty}\left\{ \frac{2^{4n+1}(n!)^4(n+1)}{[(2n+1)\text{!!}]^4}\,{_7F_6}\left( \left.\begin{array}{c}
\frac{1}{2},\frac{1}{2},\frac{1}{2},\frac12,n+1,n+1,\frac{n+3}{2} \\[4pt]
1,\frac{n+1}{2},n+\frac{3}{2},n+\frac{3}{2} ,n+\frac{3}{2},n+\frac{3}{2}\\
\end{array}\right|1\right)-\frac{\pi^2}{8(n+1)} \right\},\tag{\ref{eq:K_sqr_minus_pi4}$_\Sigma$}\label{eq:K_sqr_minus_pi4'}\end{align}after invoking the expression for $ \int_0^1[\mathbf K(\sqrt{1-\kappa^2})]^2\kappa^{2n+1}\D\kappa$ from  Eq.~18 in \cite{Wan2012}. Similarly, according to the formula for  $ \int_0^1\mathbf K(x)\mathbf K(\sqrt{1-x^2})x^{2n+1}\D x$ from  Eq.~18 in \cite{Wan2012}, our   Eq.~\ref{eq:KKlogt_zeta3} can be rephrased as\begin{align}\frac{\pi^2}{2}\log2-\frac{7\zeta(3)}{4}={}&\frac1\pi\int_0^1\mathbf K(x)\mathbf K\left(\sqrt{1-x^2}\right)x\log\frac{1}{1-x^{2}}\D x=\frac{1}{\pi}\sum_{n=1}^\infty\int_0^1\mathbf K(x)\mathbf K\left(\sqrt{1-x^2}\right)\frac{x^{2n+1}}{n}\D x\notag\\={}&\sum_{n=1}^\infty\frac{2^{2n-1} (n!)^2}{[(2n+1)\text{!!]}^2n}\,{_4F_3}\left( \left.\begin{array}{c}
\frac{1}{2},\frac{1}{2},n+1,n+1 \\[2pt]
1,n+\frac{3}{2},n+\frac{3}{2} \\
\end{array}\right|1\right).\tag{\ref{eq:KKlogt_zeta3}$_\Sigma$}\label{eq:KKlogt_zeta3'}\end{align}Interestingly, every summand in the last line is actually a rational multiple of $ \pi^2$ \cite[cf.~Ref.][Lemma~1]{Wan2012}.  This statement can be proved by using the third-order ordinary differential equation\footnote{As one may recognize, this third-order differential equation just paraphrases \cite[Ref.][Eq.~16]{Zhou2013Pnu} in the case where $ \nu=-1/2$. The use of such a Legendre-Appell differential equation  dispenses the combinatorial argument  in the proof of Lemma~1 in~\cite{Wan2012}.}\begin{align*}\frac{\D }{\D t}\left\{t (1-t) \frac{\D}{\D t}\left[t (1-t) \frac{\D f(t)}{\D t}\right]- t (1-t) f(t)\right\}=\frac{2t-1}{2}  f(t),\quad 0<t<1\end{align*}satisfied by $ f(t)=\mathbf K(\sqrt{t})\mathbf K(\sqrt{1-t})$, which leads to the recurrence relations\begin{align*}&2\int_0^1\mathbf K(x)\mathbf K\left(\sqrt{1-x^2}\right)x^{2n+1}\D x=\int_{0}^1\mathbf K(\sqrt{t})\mathbf K(\sqrt{1-t})t^n\D t\notag\\={}&2\int_{0}^1\mathbf K(\sqrt{t})\mathbf K(\sqrt{1-t})t^{n+1}\D t-2\int_{0}^1t^{n}\frac{\D }{\D t}\left\{t (1-t) \frac{\D}{\D t}\left[t (1-t) \frac{\D }{\D t}\right]- t (1-t) \right\}[\mathbf K(\sqrt{t})\mathbf K(\sqrt{1-t})]\D t\notag\\={}&2\int_{0}^1\mathbf K(\sqrt{t})\mathbf K(\sqrt{1-t})[n^3t^{n-1}-n(2 n^2+3 n+2)t^{n}+(n+1)^3 t^{n+1}]\D t,\quad n\geq1;\\&\int_{0}^1\mathbf K(\sqrt{t})\mathbf K(\sqrt{1-t})\D t=2\int_{0}^1\mathbf K(\sqrt{t})\mathbf K(\sqrt{1-t})t\D t,\end{align*}after integration by parts. Here, we have $ \int_{0}^1\mathbf K(\sqrt{t})\mathbf K(\sqrt{1-t})\D t=\pi^{3}/8$ according to Eq.~\ref{eq:K_sqr_star} and the recursion
relations outlined above reveal the rationality of  $ \pi^{-3}\int_{0}^1\mathbf K(\sqrt{t})\mathbf K(\sqrt{1-t})t^n\D t$ for all positive integers $n$.

\newpage
\appendix
\section{Index of Multiple Elliptic Integrals and Related Series\label{app:ind_formulae}} \subsection{Brief Lists of Identities for  $G$, $ \zeta(3)$ and $\pi$}
We compile here all the formulae leading to Catalan's constant $G$, Ap\'ery's constant $ \zeta(3)$, and the circle constant $ \pi$, which had been derived somewhere in the main text.
We keep track of the original equation numbers, followed by contextual annotations if necessary. Some closely related representations are (re)grouped together, so the enumeration does not strictly follow   the original order of    appearance.

The versatility of integral and series representations for Catalan's constant has been well recognized, as early as in Catalan's own treatises \cite{Catalan1865,Catalan1883}. As we have biased towards results that  have geometric interpretations and/or have connections to elliptic integrals, the list below is by no means an exhaustive catalog of identities for Catalan's constant $G$. In fact, we will forgo many definite integrals of elementary functions giving rise to $G$, which  can be looked up in well-indexed references such as \cite{GradshteynRyzhik}. Furthermore, a few formulae gathered below have been known long before the current work, and our ``geometric proofs'' for them do not necessarily represent the most straightforward approach. \vspace{1em}\noindent\hrule\vspace{.2em}\hrule {\allowdisplaybreaks\begin{align}
G={}&\int_{S^2}\frac{\D \sigma_1}{4\pi}\int_{S^2}\frac{\D \sigma_{2}}{4\pi}\frac{1}{|\bm n-\bm n_1|}\frac{1}{|\bm n_{1}-\bm n_2|}\frac{1}{|\bm n_{2}+\bm n|}=\int_0^1\D u\int_0^1\D v\int_0^{1}\D w\frac{1}{8\pi\sqrt{\mathstrut  u(1-u)(1-u+uw)}\sqrt{\vphantom(  1-vw}\sqrt{\mathstrut w(1-v)}}\tag{\ref{eq:b}}\\={}&\frac{1}{2\pi}\int_0^{1}\frac{\mathbf{K}(\sqrt{1-\xi^2})}{\xi}\log\frac{1+\xi}{1-\xi}\D \xi=\frac{1}{\pi}\int_0^{1}\frac{\mathbf{K}(k)}{1-k}\log\frac{1}{k}\D k\tag{\ref{eq:b}, before and after Landen's transformation}\\={}&\int_{0}^1\frac{\mathbf K(\sqrt{1-\kappa^2})}{3 \pi ^2   (1-\kappa ^2)}\left[\pi^{2}-3 (1+\kappa ) \text{Li}_2(\kappa )-3 (1-\kappa) \text{Li}_2(-\kappa )\right]\D\kappa\tag{\ref{eq:b_B_R}}\\={}&\frac{1}{2\pi}\left\{\int_0^{1}\frac{[\mathbf K(k)]^{2}\D k}{\sqrt{1-k^{2}}}-2\int_{0}^1\mathbf K(k)\mathbf K\left(\sqrt{1-k^2}\right)\D k\right\}+\frac{1}{2\pi}\int_0^{1}\frac{\mathbf K(\sqrt{1-\xi^{2}})}{\xi}\log\frac{1+\xi}{1-\xi}\D \xi\tag{\ref{eq:b_star}}\\={}&\frac{\pi}{4}\sum_{m=0}^\infty\frac{1}{2m+1}\left[ \frac{(2m-1)!!}{2^{m}m!} \right]^2=\frac{\pi}{4}\,{_3F_2}\left( \left.\begin{array}{c}
\frac{1}{2},\frac{1}{2},\frac{1}{2} \\[2pt]
1,\frac{3}{2} \\
\end{array}\right|1\right)\tag{\ref{eq:b_series_exp}}\\={}&\int_{S^2}\frac{\D \sigma_1}{4\pi}\int_{S^2}\frac{\D \sigma_{2}}{4\pi}\frac{1}{|\bm n-\bm n_1|}\frac{\theta_{H}(\cos\beta-\bm n_1\cdot\bm n_2)}{\sqrt{2(\cos\beta-\bm n_1\cdot\bm n_2)}}\frac{1}{|\bm n_{2}+\bm n|}-\frac{1}{4}\int_0^\beta\log\tan\frac{\pi-\alpha}{4}\D\alpha,\quad 0\leq\beta<\pi\tag{\ref{eq:beta'}}\\={}&\int_0^1\D q\int_0^1\D r\int_{0}^1\D s\frac{1}{8\pi\sqrt{\smash[b]{(1-r)(1-s)rs(1-q)}\vphantom{s^2}}{\sqrt{\smash[b]{\sec^2(\beta/2)-s[qr+\tan^2(\beta/2)]}}}}-\frac{1}{4}\int_0^\beta\log\tan\frac{\pi-\alpha}{4}\D\alpha,\quad 0\leq\beta<\pi\tag{\ref{eq:beta'}}\notag\\
={}&\frac{\cos\frac{\beta}{2}}{2}\int_0^1\mathbf K\left( \sqrt{k^2\cos^2\frac{\beta}{2}+\sin^2\frac{\beta}{2} }\right)\D k-\frac{1}{4}\int_0^\beta\log\tan\frac{\pi-\alpha}{4}\D\alpha,\quad 0\leq\beta<\pi\tag{\ref{eq:beta'}}\\={}&\frac{1}{\pi}\int_{0}^1\frac{\mathbf K(\sqrt{1-\kappa^2})}{\kappa\sqrt{\smash[b]{1-\kappa^{2}\sin^2(\beta/2)}}}\tanh^{-1}\frac{\kappa\cos(\beta/2)}{\sqrt{\smash[b]{1-\kappa^{2}\sin^2(\beta/2)}}}\D\kappa-\frac{1}{4}\int_0^\beta\log\tan\frac{\pi-\alpha}{4}\D\alpha,\quad 0\leq\beta<\pi\tag{\ref{eq:beta_B}}\\={}&-\left(\cos\frac{\beta}{2}\right)\int_0^1\frac{(1+t^2)\log t}{1+2t^{2}\cos\beta+t^4}\D t-\frac{1}{4}\int_0^\beta\log\tan\frac{\pi-\alpha}{4}\D\alpha,\quad 0\leq\beta<\pi\tag{\ref{eq:beta'}}\\={}&\int_0^1\D q\int_0^1\D r\int_{0}^1\D s\frac{1}{8\pi\sqrt{(1-r)(1-s)rs(1-q)}{\sqrt{1-qrs}}}=\frac{1}{2}\int_0^1\mathbf K(k)\D k\tag{\ref{eq:beta'}, with $ \beta=0$}\\={}&\frac{3}{8}\int_0^1\mathbf K\left( \frac{\sqrt{k^2+3} }{2}\right)\D k=\frac{3}{4}\int_{0}^1\frac{\mathbf K(\sqrt{1-\kappa^2})}{1-\kappa^{2}}\left( 1-\frac{2\kappa}{\sqrt{\smash[b]{1+3\kappa^{2}}}} \right)\D\kappa\tag{\ref{eq:beta'} and \ref{eq:beta'_diamond}, with $ \beta=\frac{2\pi}3$ }\\={}&\frac{5}{16}\left[ (\sqrt{5}+1)\int_0^1\mathbf K\left( \frac{\sqrt{5}+1}{4}\sqrt{k^2+5-2\sqrt{5}} \right)\D k-(\sqrt{5}-1)\int_0^1\mathbf K\left( \frac{\sqrt{5}-1}{4}\sqrt{k^2+5+2\sqrt{5}} \right)\D k\right]\tag{\ref{eq:beta'}, with $ \beta=\frac{2\pi}5,\frac{4\pi}{5}$}\\={}&\frac{5}{4}\int_{0}^1\frac{\mathbf K(\sqrt{1-\kappa^2})}{1-\kappa^{2}}\left[ \frac{(\sqrt{5}+1)\kappa}{\sqrt{\smash[b]{1+(5+2\sqrt{5})\kappa^{2}}}} -\frac{(\sqrt{5}-1)\kappa}{\sqrt{\smash[b]{1+(5-2\sqrt{5})\kappa^{2}}}}\right]\D\kappa\tag{\ref{eq:beta'_diamond}, with $ \beta=\frac{2\pi}5,\frac{4\pi}{5}$}\\={}&\frac12\int_0^{\pi/2}\frac{\theta}{\sin\theta}\D\theta=\frac{\cos(\beta/2)}{2}\int_0^{\pi/2}\frac{ \theta\D \theta}{\sin\theta\sqrt{\smash[b]{1-\cos^2\theta\sin^2(\beta/2)}}}-\frac{1}{4}\int_0^\beta\log\tan\frac{\pi-\alpha}{4}\D\alpha,\quad 0\leq \beta<\pi\tag{\ref{eq:G_elem}$ ^\circ$, \ref{eq:G_elem}}\\={}&\frac{7\zeta(3)}{4\pi}+\frac{\pi}{8}-\int_0^1\frac{\mathbf K(k)}{2k}\left[ 1-\frac{2}{\pi} \mathbf E(k)\right]\D k\tag{\ref{eq:KE_pi_G_zeta3}}\\={}&\int_{S^3}\frac{\D \sigma_1}{2\pi^{2}}\int_{S^3}\frac{\D \sigma_2}{2\pi^{2}}\frac{1}{|\bm n-\bm n_1|^2}\frac{1}{|\bm n_{1}-\bm n_2|^{2}}\frac{1}{|\bm n_{2}-\bm n_\bot|^2}=\frac{1}{4\pi}\int_{0}^\pi(\pi-\psi)\log\frac{1+\sin\psi}{1-\sin\psi}\D\psi=\frac{1}{4 }\int_{0}^{\pi/2}\log\frac{1+\sin\psi}{1-\sin\psi}\D\psi\tag{\ref{eq:G_S3}}\\={}&\frac{\pi}{4}-\frac{1}{2}\log2-\frac{1}{4}\int_0^1 \mathbf K( \sqrt{1-t} )\log \frac{\sqrt{1-t}(1-\sqrt{1-t})}{ t}\D t=\frac{\pi}{4}-\frac{1}{2}\log2-\frac{1}{2\pi}\int_0^1 \frac{\mathbf K( \sqrt{1-\kappa^{2}})}{\kappa^2} \log (1-\kappa)\log(1+\kappa)\D \kappa\tag{\ref{eq:G_Klog}, \ref{eq:G_Klog'}}\\={}&\frac{1}{\pi}\int_0^{\pi/2}\D\theta\int_0^{\pi/2}\D\phi\frac{\sin\theta(1-\cos^4\theta)\sin\phi\cos\phi\mathbf K(\sin\theta)\mathbf K(\sin\phi)}{(\sin^4\theta+4\cos^2\theta\cos^2\phi)^{3/2}}\tag{\ref{eq:G_KK}}\\={}&\frac{1}{2}\int_0^{\pi/2}\D\theta\int_0^{\pi/2}\D\phi\frac{\cos\theta(1-\cos^4\theta)\sin\phi\cos\phi\mathbf K(\sin\phi)}{(\sin^4\theta+4\cos^2\theta\cos^2\phi)^{3/2}}\tag{\ref{eq:G_K}}\\={}&\frac{7\zeta(3)}{2\pi}-\frac{1}{\pi}\int_0^1\frac{\mathbf K (\eta)}{1+\eta}\log\frac{1}{\eta}\D \eta=\frac{1}{\pi}\sum _{m=0}^{\infty } \frac{3 m+2}{(m+1) (2 m+1)}\left[ \frac{2^{m}m!}{(2m+1)!!} \right]^2=\frac{7\zeta(3)}{4\pi}-\frac{1}{\pi}\int_0^{\pi/2}\theta\log\tan\frac{\theta}{2}\D\theta\tag{\ref{eq:G_K_mix}, \ref{eq:G_K_mix''}, \ref{eq:logtan_G_zeta3}}\\={}&\frac{\pi}{6}\log2+\frac{1}{3}\int_0^1\mathbf K(\sqrt{1-t})\left( \frac{2}{\sqrt{25-16 t}}+\frac{3}{4 \sqrt{25-9 t}}+\frac{3}{\sqrt{625-576 t}} \right)\D t\tag{\ref{eq:G_Ti_a}}\\= {}&\frac{\pi}{8}\log(2+\sqrt{3})+\frac{3}{8}\int_0^1\frac{\mathbf K(\sqrt{1-t})}{\sqrt{4-t}}\D t=\frac{\pi}{8}\log(2+\sqrt{3})+\frac{3}{8}\sum _{n=0}^{\infty }\frac{\left(\frac{1}{2}\right)^n}{2n+1} \frac{ n! }{(2 n+1)\text{!!}}\tag{\ref{eq:G_Ti_b}, \ref{eq:G_Ti_b'}}\\={}&\frac{\pi}{8}\log\frac{10+\sqrt{50-22\sqrt{5}}}{10-\sqrt{50-22\sqrt{5}}}+\frac{5}{8}\int_0^1\mathbf K(\sqrt{1-t})\left( \frac{1}{ \sqrt{6-2 \sqrt{5}-t}}-\frac{1}{\sqrt{6+2 \sqrt{5}-t}} \right)\D t\tag{\ref{eq:G_Ti_c}}\\={}&\frac{\pi}{8}\log\frac{10+\sqrt{50-22\sqrt{5}}}{10-\sqrt{50-22\sqrt{5}}}+\frac{5}{4}\left[\frac{1}{ \sqrt{5}-1}\sum _{n=0}^{\infty }\left(\frac{\sqrt{2}}{\sqrt{5}-1}\right)^{2n}\frac{1}{2n+1} \frac{ n! }{(2 n+1)\text{!!}}-\frac{1}{ \sqrt{5}+1}\sum _{n=0}^{\infty } \left(\frac{\sqrt{2}}{\sqrt{5}+1}\right)^{2n}\frac{1}{2n+1} \frac{ n! }{(2 n+1)\text{!!}}\right]\tag{\ref{eq:G_Ti_c'}}\\={}&\frac{\pi}{2}\log2+\frac{1}{\pi}\int_0^1\mathbf K\left(\sqrt{1-\kappa^2}\right)\log\sqrt{1-\kappa^2}\D\kappa=\frac{\pi}{2}\log2-\int_0^1\left[ \mathbf K(k)-\frac{\pi}{2}\right]\frac{\D k}{2k}\tag{\ref{eq:G_log2_log}, \ref{eq:low_deg'}}\\={}&\frac{7\zeta(3)}{2\pi}-\frac{1}{2\pi^2}\int_0^{\pi/2}\mathbf K(\sin\theta)\left[ \frac{\dilog(e^{2i\theta})-\dilog(e^{-2i\theta})}{i\cos\theta} \right]\D\theta\tag{\ref{eq:7Apery_Li2}}\\={}&\frac{1}{\pi}\int_0^1\frac{k\tanh^{-1}\sqrt{1-k^2}}{1-k^{2}}\mathbf K(k)\D k=\frac12\int_0^{1}\frac{y\D y}{\cosh y}=\frac{1}2\int_0^\infty\frac{\log(r+\sqrt{1+r^2})\D r}{1+r^{2}}\tag{\ref{eq:G_tanh}, \ref{eq:G_cosh}}\\={}&\frac{1}{2}\int_0^1\frac{\mathbf K(\sqrt{1-\kappa^2})}{1+\kappa}\D\kappa=\frac{1}{\pi}\int_0^1\mathbf K\left(\sqrt{1-\kappa^2}\right)\left[ \frac{\kappa\arccos\kappa}{\sqrt{1-\kappa^2}}-\log(2\kappa) \right]\D\kappa\tag{\ref{eq:G_odd}}\\={}&\frac{1}{\pi}\int_0^1\left( \frac{ \kappa\arcsin\kappa }{\sqrt{1-\kappa ^2} }+\log \sqrt{4-4\kappa^{2}}  \right)\mathbf K\left(\sqrt{1-\kappa^2}\right)\D\kappa=\frac{1}{\pi}\int_0^1\log \frac{\sqrt{1-\kappa^{2}}}{\kappa}\mathbf K\left(\sqrt{1-\kappa^2}\right)\D\kappa.\tag{\ref{eq:GB_G}}\end{align}}\noindent\hrule\vspace{.2em}\hrule\vspace{1em}

Continuing the list above, we bring together some integral  and series representations of Ap\'ery's constant $ \zeta(3)$, which do not explicitly involve Catalan's constant $G$.

\vspace{1em}

\noindent\hrule\vspace{.2em}\hrule\vspace{.2em}
{\allowdisplaybreaks\begin{align}\zeta(3)={}&\frac{4}{7}\int_{S^2}\frac{\D \sigma_1}{4\pi}\int_{S^2}\frac{\D \sigma_{2}}{4\pi}\frac{1}{|\bm n-\bm n_1|}\mathbf K\left( \sqrt{\frac{1-\bm n_1\cdot\bm n_2}{2}} \right)\frac{1}{|\bm n_{2}+\bm n|}\tag{\ref{eq:L_beta_int}}\\={}&\frac{2}{7\pi}\iint_{0\leq\theta_1\leq\pi,0\leq\theta_2\leq\pi,0\leq\theta_1+\theta_2\leq\pi}\mathbf K\left( \sin\frac{\theta_{1}}{2} \right)\mathbf K\left( \sin\frac{\theta_{2}}{2} \right)\sin\frac{\theta_1+\theta_2}{2}\D\theta_1\D\theta_2\tag{\ref{eq:L_beta_int'}}\\
={}&\int_0^1\D q\int_0^1\D r\int_{0}^1\D s\frac{1}{7\pi s\sqrt{\smash[b]{(1-r)(1-s)r(1-q)(1-qr)}}}\arctan\sqrt{\frac{(1-qr)s}{1-s}}\tag{\ref{eq:L_beta_int}}\\
={}&\frac{4}{7}\int_0^1\log\frac{1-t}{1+t}\log t\frac{\D t}{t}=\frac{4}{7}\int_0^{\pi/2}\frac{ \theta}{\sin\theta}\frac{\frac{\pi}{2}-\theta}{\cos\theta}\D \theta\tag{\ref{eq:L_beta_int}}\\={}&\frac{2\pi^{2}}{7}\log2-\frac{4}{7}\int_{0}^{1}\left\{[\mathbf K(k)]^{2}-\frac{\pi^2}{4}\right\}\frac{ \D k}{k}\tag{\ref{eq:K_sqr_minus_pi4}}\\={}&\frac{2\pi^{2}}{7}\log2-\frac{4}{7}\sum_{n=0}^{\infty}\left\{ \frac{2^{4n+1}(n!)^4(n+1)}{[(2n+1)\text{!!}]^4}\,{_7F_6}\left( \left.\begin{array}{c}
\frac{1}{2},\frac{1}{2},\frac{1}{2},\frac12,n+1,n+1,\frac{n+3}{2} \\[4pt]
1,\frac{n+1}{2},n+\frac{3}{2},n+\frac{3}{2} ,n+\frac{3}{2},n+\frac{3}{2}\\
\end{array}\right|1\right)-\frac{\pi^2}{8(n+1)} \right\}\tag{\ref{eq:K_sqr_minus_pi4'}}\notag\\={}&\frac{2}{7}\int_0^1[\mathbf K(\sqrt{1-t})]^2\D t=\frac{2}{7}\int_0^1[\mathbf K(\sqrt{t})]^2\D t\tag{\ref{eq:Kt2_7Zeta3}}\\={}&\frac{2\pi^{2}}{7}\log2+\frac{4}{7\pi}\int_0^1\mathbf K(\sqrt{t})\mathbf K(\sqrt{1-t})\log t\D t=\frac{2\pi^{2}}{7}\log2-\frac{4}{7}\sum_{n=1}^\infty\frac{2^{2n-1} (n!)^2}{[(2n+1)\text{!!}]^2n}\,{_4F_3}\left( \left.\begin{array}{c}
\frac{1}{2},\frac{1}{2},n+1,n+1 \\[2pt]
1,n+\frac{3}{2},n+\frac{3}{2} \\
\end{array}\right|1\right)\tag{\ref{eq:KKlogt_zeta3}, \ref{eq:KKlogt_zeta3'}}\\={}&\frac{\pi^2}{7}\left[ 2\log2-\sum_{n=1}^\infty\frac{a_n}{n} \right],\quad \text{where }\begin{cases}a_{0}=\dfrac14,\;a_1=\dfrac{1}{8} &  \\[6pt]
a_{n}=2[n^3a_{n-1}-n(2 n^2+3 n+2)a_{n}+(n+1)^3 a_{n+1}], & \forall n\geq1\tag{ \ref{eq:KKlogt_zeta3'}} \\
\end{cases}\\={}&\frac27\sum _{m=0}^{\infty } \frac{4n+3}{(n+1) (2 n+1)}\left[ \frac{2^{n}n!}{(2n+1)!!} \right]^2\tag{\ref{eq:G_K_mix''}}\\={}&\frac27\int_{0}^1\frac{\mathbf K(\sqrt{1-\kappa^2})[ \pi-2\kappa\mathbf K(\sqrt{1-\kappa^2})]}{1-\kappa^{2}}\D\kappa.\tag{\ref{eq:zeta3_iB_int}}\end{align}}
\vspace{.2em}
\noindent\hrule\vspace{.2em}\hrule\vspace{1em}

The last part of this subsection contains a collection of formulae that evaluate to  $ \pi^2/8$, up to  an integer  power of $\pi$  times a rational number.\vspace{1em} \noindent\hrule\vspace{.2em}\hrule\vspace{.5em}
{\allowdisplaybreaks\begin{align}\frac{\pi^2}{8}={}&\int_{S^2}\frac{\D \sigma_1}{4\pi}\int_{S^2}\frac{\D \sigma_{2}}{4\pi}\frac{1}{|\bm n-\bm n_1|}\frac{1}{|\bm n_{1}-\bm n_2|}\frac{1}{|\bm n_{2}-\bm n|}=\int_0^1\D u\int_0^1\D v\int_0^{1}\D w\frac{1}{8\pi\sqrt{\mathstrut u(1-u)(1-u+uw)}\sqrt{\vphantom(  vw(1-v)}}\tag{\ref{eq:a}}\\={}&\frac{1}{4}\int_{0}^{\pi/2}\frac{\log\frac{1+\cos\theta}{1-\cos\theta}\D \theta}{ \cos\theta}=-\int_{0}^{\pi/2}\frac{\log\sin\theta\D \theta}{ \cos\theta}=-\int_{0}^{\pi/2}\frac{\log\tan\frac{\theta}{2}\D \theta}{ 2\cos\theta}=\int_{0}^{1}\frac{\log t\D t}{ t^{2}-1}\tag{\ref{eq:a_star}}\\={}&\int_0^{1}\frac{\mathbf{K}(k)}{1+k}\D k=\int_{0}^1\frac{\mathbf K(\sqrt{1-\kappa^2}) }{\pi  (\kappa ^2-1)}\left( \log \frac{1-\kappa ^2}{4}+\kappa  \log\frac{1+\kappa}{1-\kappa} \right)\D\kappa=\frac{1}{\pi}\int_{0}^{\pi/2}\mathbf K(\sin\theta)\frac{\pi  \sin \theta -2 \theta}{ \cos \theta}  \D\theta\tag{\ref{eq:a}, \ref{eq:a_B}, \ref{eq:a_iB}}\notag\\={}&\frac{1}{2}\int_0^1\mathbf K\left( \sqrt{1-\kappa^2} \right)\D\kappa=\int_0^1\frac{\mathbf K(\sqrt{1-\kappa^2})}{\pi\kappa\sqrt{1-\kappa^{2}}}\arcsin\kappa\D\kappa=-\int_0^1\frac{\mathbf K(\sqrt{1-\kappa^2})}{\pi(1-\kappa^{2}){}}\log\kappa\D\kappa=\frac{1}{\pi}\int_0^1\frac{\arccos k}{1-k^2}\mathbf K(k)\D k\tag{\ref{eq:a}, \ref{eq:a'_L}, \ref{eq:a_diamond_L}, \ref{eq:pi8_arccos} }\\={}&\int_0^\pi\frac{\pi-\beta}{4}\D\beta=\frac{2}{\pi}\int_{S^2}\frac{\D \sigma_1}{4\pi}\int_{S^2}\frac{\D \sigma_{2}}{4\pi}\frac{1}{|\bm n-\bm n_1|}\mathbf K\left( \sqrt{\frac{1-\bm n_1\cdot\bm n_2}{2}} \right)\frac{1}{|\bm n_{2}-\bm n|}=\frac{1 }{4}\int_{0}^{1}\log\frac{1+\sqrt{\smash[b]{q}}}{1-\sqrt{\smash[b]{q}}}\frac{\D q}{q}\tag{\ref{eq:pipibeta_int}}\\={}&\frac{1}{\pi^2}\iint_{0\leq\theta_1\leq\pi,0\leq\theta_2\leq\pi,0\leq\theta_1+\theta_2\leq\pi}\mathbf K\left( \sin\frac{\theta_{1}}{2} \right)\mathbf K\left( \sin\frac{\theta_{2}}{2} \right)\cos\frac{\theta_1-\theta_2}{2}\D\theta_1\D\theta_2\tag{\ref{eq:pipibeta_int'}}\\={}&\int_0^\pi\frac{\pi^{2}(\pi-\beta)}{32}\cos\frac{\beta}{2}\D\beta=\frac{\pi^{2}}{8}\int_{S^2}\frac{\D \sigma_1}{4\pi}\int_{S^2}\frac{\D \sigma_{2}}{4\pi}\frac{1}{|\bm n-\bm n_1|} \frac{1}{|\bm n_{2}-\bm n|}=\frac{\pi^{2} }{16}\int_{0}^{1}[\mathbf K(\sqrt{\smash[b]{q}})-\mathbf E(\sqrt{\smash[b]{q}})]\frac{\D q}{q^{3/2}}\tag{\ref{eq:pipibeta_int_cos_half_beta}}\\={}&\frac{\pi^2}8\int_{S^2}\frac{\D \sigma_1}{4\pi}\int_{S^2}\frac{\D \sigma_{2}}{4\pi}\frac{1}{|\bm n-\bm n_1|}\frac{1}{|\bm n_{2}+\bm n|}=\int_0^1\D q\int_0^1\D r\int_{0}^1\D s\frac{(1-qrs)\mathbf E\left( \sqrt{\frac{(1-qr)s}{1-qrs}} \right)-(1-s)\mathbf K\left( \sqrt{\frac{(1-qr)s}{1-qrs}} \right)}{16 (1-qr)s\sqrt{\smash[b]{(1-r)(1-s)rs(1-q)(1-qrs)}\vphantom{s^2}}}\tag{\ref{eq:L_beta_int_cos_half_beta}}\\
={}&-\frac{\pi^{2}}{8}\int_0^1\log t\D t=\frac{\pi}{4}\int_0^{\pi/2}\left[ \frac{\mathbf E(\cos\theta)}{\sin\theta} -\sin\theta\mathbf K(\cos\theta)\right]\frac{ \theta\D \theta}{\cos^2\theta}\tag{\ref{eq:L_beta_int_cos_half_beta}}\\={}&\frac{\pi}{8}\int_0^\pi\left(\sin\frac{\alpha }{2}\right)\log\cot\frac{\pi-\alpha}{4}\D\alpha=\frac{\pi^{2}}{8}\int_0^1\mathbf K(k)k\D k=\int_0^1\frac{\mathbf K(k)}{k}[\mathbf K(k)-\mathbf E(k)]\D k\tag{\ref{eq:unity_int}, \ref{eq:KEK_pi8}}\\={}&\frac{\log^2(\sqrt{2}-1)}{2}+\int_0^1\frac{\mathbf K(\sqrt{1-t})}{2\sqrt{1+t}}\D t=\frac{\log^2(\sqrt{2}-1)}{2}+\sum_{n=0}^\infty\frac{(-2)^n}{2n+1}\frac{n!}{(2n+1)!!}=\frac{\log^2(\sqrt{2}-1)}{2}+{_3F_2}\left( \left.\begin{array}{c}
\frac{1}{2},1,1 \\[2pt]
\frac{3}{2},\frac{3}{2} \\
\end{array}\right|-1\right)\tag{\ref{eq:K_tan_pi8}, \ref{eq:K_tan_pi8'}}\\={}&\frac{9}{8} \log ^2\left(\frac{\sqrt{5}-1}{2}\right)+\frac{3}{4}\int_0^1\frac{\mathbf K(\sqrt{1-t})}{\sqrt{1+4t}}\D t\tag{\ref{eq:K_gr}}\\={}&\frac{\log ^2(\sqrt{5}-2)}{4} +\frac{3}{4}\int_0^1\frac{\mathbf K(\sqrt{1-t})}{\sqrt{4+t}}\D t=\frac{\log ^2(\sqrt{5}-2)}{4} +\frac{3}{4}\sum_{n=0}^\infty\frac{(-\frac12)^n}{2n+1}\frac{n!}{(2n+1)!!}=\frac{\log ^2(\sqrt{5}-2)}{4} +\frac{3}{4}\,{_3F_2}\left( \left.\begin{array}{c}
\frac{1}{2},1,1 \\[2pt]
\frac{3}{2},\frac{3}{2} \\
\end{array}\right|-\frac{1}{4}\right)\tag{\ref{eq:K_5_2}, \ref{eq:K_5_2'}}\\={}&\frac{1}{2}\int_0^{\pi}\mathbf K(\sin\beta)\sin\beta\cos\frac{\beta}{2}\D\beta=-\frac{2^{(n-5)/2}(n-1)^2}{n}\left[ \frac{(\frac{n+1}{4})!}{(\frac{n-1}{2})!!} \right]^2\int_0^{\pi}\mathbf K(\sin\beta)\sin\beta\cos\frac{n\beta}{2}\D\beta,\quad n>0,\;n\equiv3\bmod4\tag{\ref{eq:MD_proj}}\\={}&\frac12\int_0^{1}\frac{y\D y}{\sinh y}=\frac{1}2\int_0^\infty\frac{\log(r+\sqrt{1+r^2})\D r}{r\sqrt{1+r^{2}}}=\frac{1}{\pi}\int_0^\infty\frac{\arctan\frac{1}{x}}{\sqrt{1+x^2}}\mathbf K\left(\frac{x}{\sqrt{1+x^2}}\right )\D x\tag{\ref{eq:pi8_y_sinhy}, \ref{eq:pi8_arctan_inf_int}}\\={}&\frac\pi8\int_0^1\frac{\mathbf K(\sqrt{1-\kappa^2})}{\kappa^2}\log\frac{1}{1-\kappa^2}\D\kappa=\frac{\pi^{3}}{32}\sum_{n=0}^\infty\frac{1}{n+1}\left[ \frac{(2n-1)!!}{2^{n}n!} \right]^2=\frac{\pi^{3}}{32}\,{_2F_1}\left( \left.\begin{array}{c}
\frac{1}{2},\frac{1}{2} \\
2 \\
\end{array}\right|1\right)\tag{\ref{eq:Li2_limit_pi}, \ref{eq:Li2_limit_pi'}}\\={}&\frac{\pi}{4}\int_{0}^1\frac{\mathbf K(\sqrt{1-\kappa^2})}{1-\kappa^{2}}\left( 1-\frac{\kappa\arccos\kappa}{\sqrt{1-\kappa^2}} \right)\D\kappa\tag{\ref{eq:pi_iB_int}; \textit{also} \ref{eq:pi_Li2_deriv_iB}}\\={}&\sum_{n=0}^\infty\frac{(-1)^{n}}{(2n+1)^2(1+iy)}\left[\frac{i}{\sinh\frac{(2n+1)\pi y}{2}}+\frac{y}{\sinh\frac{(2n+1)\pi }{2y}}\right]+2\sum_{n=0}^\infty\frac{1}{(2n+1)^2\cosh\frac{(2n+1)\pi(iy-1) }{2i(iy+1)}},\quad \forall y>0\tag{\ref{eq:3F2_sum_rule_R}}\\={}&\sum_{n=0}^\infty\frac{1}{(2n+1)^2}\left[ \frac{(-1)^n}{\sinh\frac{(2n+1)\pi}{2}} +\frac{2}{\cosh\frac{(2n+1)\pi}{2}}\right]\tag{\ref{eq:3F2_sum_rule_R_1}}\\
={}&\frac{1}{1+i\sqrt{3}}\sum_{n=0}^\infty\frac{1}{(2n+1)^2}\left[ \frac{(-1)^n\sqrt{3}}{\sinh\frac{\vphantom{\sqrt1}(2n+1)\pi}{2\sqrt{3}}} +\frac{(-1)^n{i}}{\sinh\frac{(2n+1)\sqrt{3}\pi}{2}}+\frac{2(1+i\sqrt{3})}{\cos\frac{(2n+1)(1+i\sqrt{3})\pi}{4}}\right]\tag{\ref{eq:3F2_sum_rule_R_2}}\\={}&\frac{1}{1+\frac{i}{\sqrt{3}}}
\sum_{n=0}^\infty\frac{1}{(2n+1)^2}\left[ \frac{(-1)^n}{\sqrt{3}\sinh\frac{\vphantom{\sqrt1}(2n+1)\sqrt{3}\pi}{2}} +\frac{(-1)^n{i}}{\sinh\frac{\vphantom{\sqrt1}(2n+1)\pi}{2\sqrt{3}}}+\frac{2(1+\frac{i}{\sqrt{3}})}{\cos\frac{(2n+1)(1-i\sqrt{3})\pi}{4}}\right]\tag{\ref{eq:3F2_sum_rule_R_3}}
\\={}&\sum_{n=0}^\infty\frac{1}{(2n+1)^2(1+z)}\left[ \frac{1}{\cosh\frac{(2n+1)\pi(z+1)}{2i}} -\frac{z}{\cosh\frac{(2n+1)\pi(z-1)}{2iz}}\right]+2\sum_{n=0}^\infty\frac{1}{(2n+1)^2\cosh\frac{(2n+1)\pi(z-1)}{2i(z+1)}}
,\quad \forall z\in\mathbb C\smallsetminus\mathbb R.\tag{\ref{eq:3F2_sum_rule_cplx}}\end{align}}\noindent\hrule\vspace{.2em}\hrule\vspace{.5em}
\subsection{Miscellaneous Integrals and Series Concerning  Elliptic Integrals}In this subsection, the results will be divided into two categories: constants and functions. The arrangement of equations within each category roughly follows their original order of appearance.

We first present a set of identities that invoke special values of surds, logarithms, elliptic integrals, and  (generalized) hypergeometric series.

\vspace{1em}

\noindent\noindent\hrule\vspace{.2em}\hrule\vspace{.5em}\noindent\begin{align}(2-\sqrt{2})\pi={}&\int_{S^2}\frac{\D \sigma_1}{4\pi}\int_{S^2}\frac{\D \sigma_{2}}{4\pi}\frac{1}{|\bm n-\bm n_1|} \frac{\mathbf K\left( \sqrt{\frac{2\sqrt{\smash[b]{(1-\bm n_1\cdot\bm n_2)/2}}}{1+\sqrt{\smash[b]{(1-\bm n_1\cdot\bm n_2)/2}}}} \right)}{\sqrt{\smash[b]{1+\sqrt{\smash[b]{(1-\bm n_1\cdot\bm n_2)/2}}}}}\frac{1}{|\bm n_{2}-\bm n|}=\frac{\sqrt{2}\pi}{3}\int_0^1{_3F_2\left( \left.\begin{array}{c}
\frac{1}{2},1 ,\frac{3}{2}  \\[4pt]
\frac{5}{4},\frac{7}{4}
\end{array}\right| t^{2}\right)}{\D t}\tag{\ref{eq:pipibeta_int_cos_quarter_beta}}\\={}&\frac{1}{2\pi}\iint_{0\leq\theta_1\leq\pi,0\leq\theta_2\leq\pi,\theta_1+\theta_2\leq\pi} \frac{\mathbf K\left( \sqrt{\frac{2\sin(\theta_{1}/2)}{1+\sin(\theta_{1}/2)}} \right)\mathbf K\left( \sqrt{\frac{2\sin(\theta_{2}/2)}{1+\sin(\theta_{2}/2)}} \right)}{\sqrt{\smash[b]{1+\sin(\theta_{1}/2)}}\sqrt{\smash[b]{1+\sin(\theta_{2}/2)}}}\cos\frac{\theta_1-\theta_2}{2}\D\theta_1\D\theta_2.
\tag{\ref{eq:pipibeta_int_cos_quarter_beta'}}
\end{align}\hrule
{\allowdisplaybreaks
\begin{align}2\sqrt{2}\log2={}&\int_{S^2}\frac{\D \sigma_1}{4\pi}\int_{S^2}\frac{\D \sigma_{2}}{4\pi}\frac{1}{|\bm n-\bm n_1|} \frac{\mathbf K\left( \sqrt{\frac{2\sqrt{\smash[b]{(1-\bm n_1\cdot\bm n_2)/2}}}{1+\sqrt{\smash[b]{(1-\bm n_1\cdot\bm n_2)/2}}}} \right)}{\sqrt{\smash[b]{1+\sqrt{\smash[b]{(1-\bm n_1\cdot\bm n_2)/2}}}}}\frac{1}{|\bm n_{2}+\bm n|}\tag{\ref{eq:L_beta_int_cos_quarter_beta}}\\={}&\frac{\sqrt2}{6}\int_0^1\D q\int_0^1\D r\int_{0}^1\D s\frac{1}{\pi(1- s)\sqrt{\smash[b]{(1-r)rs(1-q)}}}\, {_3F_2\left( \left.\begin{array}{c}
\frac{1}{2},1 ,\frac{3}{2}  \\[4pt]
\frac{5}{4},\frac{7}{4}
\end{array}\right| -\frac{(1-qr)s}{1-s}\right)} \tag{\ref{eq:L_beta_int_cos_quarter_beta}}\\={}&e^{i\pi/4}\int_0^1\left[ (t+i) \tan ^{-1}\frac{(1+i) \sqrt{t}}{t-i}-(t-i) \tanh ^{-1}\frac{(1+i) \sqrt{t}}{t+i} \right]\frac{\log t\D t}{2 t^{3/2}}\tag{\ref{eq:L_beta_int_cos_quarter_beta}}\\={}&\frac{1}{2\pi}\iint_{0\leq\theta_1\leq\pi,0\leq\theta_2\leq\pi,\theta_1+\theta_2\leq\pi} \frac{\mathbf K\left( \sqrt{\frac{2\sin(\theta_{1}/2)}{1+\sin(\theta_{1}/2)}} \right)\mathbf K\left( \sqrt{\frac{2\sin(\theta_{2}/2)}{1+\sin(\theta_{2}/2)}} \right)}{\sqrt{\smash[b]{1+\sin(\theta_{1}/2)}}\sqrt{\smash[b]{1+\sin(\theta_{2}/2)}}}\sin\frac{\theta_1+\theta_2}{2}\D\theta_1\D\theta_2.\tag{\ref{eq:L_beta_int_cos_quarter_beta'}}\end{align}}\hrule
\begin{align}
\frac{\pi^{3}}{4}\,{_4F_3\left( \left.\begin{array}{c}
\frac{1}{2},\frac{1}{2} ,\frac{1}{2},\frac{1}{2}  \\[2pt]
1,1,1
\end{array}\right| 1\right)}={}&\int_0^1\frac{[\mathbf K(k)]^2\D k}{\sqrt{1-k^2}}=2\int_0^{1}\mathbf K(k)\mathbf K\left( \sqrt{1-k^{2}} \right)\D k.\tag{\ref{eq:KKKK}}\end{align}\noindent\hrule\begin{align}&\frac{\pi^{4}}{32}\,{_7F_6\left( \left.\begin{array}{c}
\frac{1}{2},\frac{1}{2} ,\frac{1}{2},\frac{1}{2}  ,\frac{1}{2},\frac{1}{2},\frac54\\[4pt]\frac14,1,1,
1,1,1
\end{array}\right| 1\right)}=\frac{1}{\pi}\int_0^1\frac{[\mathbf K(k)]^2}{\sqrt{1-k^2}}\log\frac{k}{\sqrt{1-k^2}}\D k=\int_0^1[\mathbf K(k)]^2\D k=\frac{1}{2}\int_0^1\left[\mathbf K\left(\sqrt{1-\kappa^2}\right)\right]^2\D \kappa\tag{\ref{eq:K_sqr_log}}\\={}&\frac{2}{\pi}\int_0^1\mathbf K(\xi)\mathbf K\left( \sqrt{1-\xi^2} \right)\log\frac{1+\xi}{1-\xi}\D\xi=\frac{2}{\pi}\int_0^1\mathbf K(k)\mathbf K\left( \sqrt{1-k^2} \right)\log\frac{1}{k}\D k=\frac{1}{\pi}\int_0^1\frac{\mathbf K(k)\mathbf K(\sqrt{1-k^{2}})}{\sqrt{1-k^{2}}}\arccos(1-2k^2)\D k.\tag{\ref{eq:K_sqr_log_chain}}
\end{align}\hrule\begin{align}\frac{\sqrt{\pi}[\Gamma(\frac{1}{4})]^{2}}{8\sqrt{2}}=\frac{\pi}{2\sqrt{2}}\mathbf K\left( \frac{1}{\sqrt{2}} \right)={_3F_2}\left( \left.\begin{array}{c}
1,1,1 \\
\frac{3}{2},\frac{3}{2} \\
\end{array}\right|-1\right)+{_3F_2}\left( \left.\begin{array}{c}
1,1,1 \\
\frac{3}{2},\frac{3}{2} \\
\end{array}\right|\frac{1}{2}\right).\tag{\ref{eq:3F2_sum_rule_pi4}}\end{align}\noindent\hrule{\allowdisplaybreaks\begin{align}\frac{3^{1/4}[\Gamma(\frac{1}{3})]^{3}}{2^{17/6}}=\frac{\pi }{\sqrt{2}}\mathbf K\left( \sin\frac{\pi}{12}\right)={}&e^{-i\pi/3}(5+3\sqrt{3})\,{_3F_2}\left( \left.\begin{array}{c}
1,1,1 \\
\frac{3}{2},\frac{3}{2} \\
\end{array}\right|-(2+\sqrt{3})^{2}\right)-ie^{-i\pi/3}(5-3\sqrt{3})\,{_3F_2}\left( \left.\begin{array}{c}
1,1,1 \\
\frac{3}{2},\frac{3}{2} \\
\end{array}\right|-(2-\sqrt{3})^{2}\right)\notag\\&+(1+i)(\sqrt{3}-i)\,{_3F_2}\left(\vphantom{\frac12} \smash{\left.\begin{array}{c}
1,1,1 \\
\frac{3}{2},\frac{3}{2} \\
\end{array}\right|\frac{1}{2}+\frac{i\sqrt{3}}{2}}\right)\tag{\ref{eq:3F2_sum_rule_pi12}}\notag\\={}&ie^{-i\pi/6}(5+3\sqrt{3})\,{_3F_2}\left( \left.\begin{array}{c}
1,1,1 \\
\frac{3}{2},\frac{3}{2} \\
\end{array}\right|-(2+\sqrt{3})^{2}\right)-e^{-i\pi/6}(5-3\sqrt{3})\,{_3F_2}\left( \left.\begin{array}{c}
1,1,1 \\
\frac{3}{2},\frac{3}{2} \\
\end{array}\right|-(2-\sqrt{3})^{2}\right)\notag\\&+(1+i)(1-i\sqrt{3})\,{_3F_2}\left(\vphantom{\frac12} \smash{\left.\begin{array}{c}
1,1,1 \\
\frac{3}{2},\frac{3}{2} \\
\end{array}\right|\frac{1}{2}-\frac{i\sqrt{3}}{2}}\right)\tag{\ref{eq:3F2_sum_rule_5pi12}}\\={}&\frac{5+3\sqrt{3}}{2}\,{_3F_2}\left( \left.\begin{array}{c}
1,1,1 \\
\frac{3}{2},\frac{3}{2} \\
\end{array}\right|-(2+\sqrt{3})^{2}\right)-\frac{\sqrt{3}(5-3\sqrt{3})}{2}\,{_3F_2}\left( \left.\begin{array}{c}
1,1,1 \\
\frac{3}{2},\frac{3}{2} \\
\end{array}\right|-(2-\sqrt{3})^{2}\right)\notag\\&+\R\left[ (1+i)(\sqrt{3}-i)\,{_3F_2}\left(\vphantom{\frac12} \smash{\left.\begin{array}{c}
1,1,1 \\
\frac{3}{2},\frac{3}{2} \\
\end{array}\right|\frac{1}{2}+\frac{i\sqrt{3}}{2}}\right)\right ].\tag{average of the last two formulae}\end{align}}\noindent\hrule\begin{align}&\frac{\sqrt{3}(5+3\sqrt{3})}{2}\,{_3F_2}\left( \left.\begin{array}{c}
1,1,1 \\
\frac{3}{2},\frac{3}{2} \\
\end{array}\right|-(2+\sqrt{3})^{2}\right)+\frac{5-3\sqrt{3}}{2}\,{_3F_2}\left( \left.\begin{array}{c}
1,1,1 \\
\frac{3}{2},\frac{3}{2} \\
\end{array}\right|-(2-\sqrt{3})^{2}\right)\notag\\={}&\I\left[ (1+i)(\sqrt{3}-i)\,{_3F_2}\left(\vphantom{\frac12} \smash{\left.\begin{array}{c}
1,1,1 \\
\frac{3}{2},\frac{3}{2} \\
\end{array}\right|\frac{1}{2}+\frac{i\sqrt{3}}{2}}\right)\right ].\tag{difference between (\ref{eq:3F2_sum_rule_pi12}) and (\ref{eq:3F2_sum_rule_5pi12})}\end{align}\noindent\hrule\vspace{.2em}\hrule\vspace{1em}
Lastly, we provide a collation of integrals and series that evaluate to elementary and special functions. \vspace{1em}\noindent\hrule\vspace{.2em}\hrule\vspace{.5em} \begin{align}\frac14\log\tan\frac{\pi-\beta}{4}={}&-\frac{\sin(\beta/2)}{2\pi}\int_{0}^1\frac{\mathbf K(\sqrt{1-\kappa^2})}{1-\kappa^{2}\sin^2(\beta/2)}\left[1-\frac{\kappa\cos(\beta/2)}{\sqrt{\smash[b]{1-\kappa^{2}\sin^2(\beta/2)}}}\tanh^{-1}\frac{\kappa\cos(\beta/2)}{\sqrt{\smash[b]{1-\kappa^{2}\sin^2(\beta/2)}}}\right]\D\kappa\tag{\ref{eq:beta'_B}}\\={}&-\frac{\sin(\beta/2)}{4}\mathbf K\left( \sin\frac{\beta}{2} \right)+\frac{\sin(\beta/2)}{2\pi}\int_{0}^1\frac{\kappa\cos(\beta/2)\mathbf K(\sqrt{1-\kappa^2})}{[1-\kappa^{2}\sin^2(\beta/2)]^{3/2}}\tanh^{-1}\frac{\kappa\cos(\beta/2)}{\sqrt{\smash[b]{1-\kappa^{2}\sin^2(\beta/2)}}}\D\kappa\tag{\ref{eq:beta'_B}}
\\={}&-\frac{\sin\beta}{8}\int_{0}^1\frac{\kappa\mathbf K(\sqrt{1-\kappa^2})}{[\cos^2(\beta/2)+\kappa^{2}\sin^2(\beta/2)]^{3/2}}\D\kappa,\quad 0\leq\beta<\pi.
\tag{\ref{eq:beta_iB_deriv}}\end{align}\noindent\hrule\begin{align}\dilog (z)-\dilog (-z)={}&\int_0^1\frac{ z\mathbf K( \sqrt{1-t} )}{\sqrt{(1-z^{2})^{2}+4z^{2}t}}\D t\tag{\ref{eq:Li2int}}\\={}&\int_0^1\frac{4z\mathbf  K(\sqrt{1-\kappa ^2}) }{\pi  \kappa  \sqrt{(1+z ^2)^2 \kappa ^2-4 z ^2}}\left[ \log\frac{1}{\sqrt{1-\kappa^{2}}} +\log\frac{\sqrt{(1+z^{2})^2 \kappa ^2-4 z ^2}+(1-z ^2) \kappa}{\sqrt{(1+z^{2})^2 \kappa ^2-4 z ^2}+(1+z ^2) \kappa}\right]\D\kappa,\quad |z|\leq1;\tag{\ref{eq:Li2int_B}}\\\dilog (z)-\dilog (-z)={}&-\frac{2}{\pi}\int_0^1\frac{\mathbf  K(\sqrt{1-\kappa ^2}) }{  1-\kappa ^2}\left[ \log\frac{1-z}{1+z}+\frac{2z\kappa}{\sqrt{(1+z^2)^2 \kappa ^2-(1-z^2)^2}}\tanh^{-1}\frac{\sqrt{(1+z^2)^2 \kappa ^2-(1-z^2)^2}}{(1+z^2) \kappa}\right]\D\kappa,\notag\\&\text{for } |z|\leq 1\text{ and }z^2\neq1.\tag{\ref{eq:Li2int_diamond}}\end{align}\noindent\hrule{\allowdisplaybreaks\begin{align}&\frac{1}{z}\log\frac{1+z}{1-z}=\int_0^1\frac{ (1-z^{4} )\mathbf K( \sqrt{1-t} )}{[(1-z^{2})^{2}+4z^{2}t]\sqrt{(1-z^{2})^{2}+4z^{2}t}}\D t\tag{\ref{eq:Li2int'}}\\={}&\frac{4(1+z^{2})}{\pi}\int_{0}^1\frac{\mathbf K(\sqrt{1-\kappa^2})}{(1+z^{2})^{2}-4z^{2}\kappa^{2}}\left[1-\frac{\kappa(1-z^{2})}{\sqrt{(1+z^{2})^{2}-4z^{2}\kappa^{2}}}\tanh^{-1}\frac{\kappa(1-z^{2})}{\sqrt{(1+z^{2})^{2}-4z^{2}\kappa^{2}}}\right]\D\kappa\tag{\ref{eq:beta'_star_B}}\\={}&\frac{2}{1+z^{2}}\mathbf K\left( \frac{2z}{1+z^2} \right)-\frac{4(1+z^{2})}{\pi}\int_{0}^1\frac{\kappa(1-z^{2})\mathbf K(\sqrt{1-\kappa^2})}{[(1+z^{2})^{2}-4z^{2}\kappa^{2}]\sqrt{(1+z^{2})^{2}-4z^{2}\kappa^{2}}}\tanh^{-1}\frac{\kappa(1-z^{2})}{\sqrt{(1+z^{2})^{2}-4z^{2}\kappa^{2}}}\D\kappa\tag{\ref{eq:beta'_star_B}}\\={}&-\frac{4(1-z^2)}{\pi}\int_0^1\frac{\mathbf  K(\sqrt{1-\kappa ^2}) }{  (1+z^2)^2 \kappa ^2-(1-z^2)^2}\left[ 1-\frac{(1+z^{2})\kappa}{\sqrt{(1+z^2)^2 \kappa ^2-(1-z^2)^2}} \tanh^{-1}\frac{\sqrt{(1+z^2)^2 \kappa ^2-(1-z^2)^2}}{(1+z^2) \kappa}\right]\D\kappa,\quad |z|<1;\tag{\ref{eq:Li2int'_iB}}\\&\frac{1}{z}\log\frac{1+z}{1-z}\notag\\={}&\frac{2}{(1-z^2)^2}\mathbf K\left( \frac{1+z^2}{1-z^2} \right)+\frac{4}{\pi}\int_0^1\frac{(1-z^{4})\kappa\mathbf  K(\sqrt{1-\kappa ^2}) }{  [(1+z^2)^2 \kappa ^2-(1-z^2)^2]\sqrt{(1+z^2)^2 \kappa ^2-(1-z^2)^2}}\tanh^{-1}\frac{\sqrt{(1+z^2)^2 \kappa ^2-(1-z^2)^2}}{(1+z^2) \kappa}\D\kappa\tag{\ref{eq:Li2int'_iB}}\\={}&\int_0^1\frac{4\kappa (1-z ^4 ) \mathbf  K(\sqrt{1-\kappa ^2}) }{\pi[(1+z^{2})^2 \kappa ^2-4 z ^2]\sqrt{(1+z^{2})^2 \kappa ^2-4 z ^2}  }\left[ \log\frac{1}{\sqrt{1-\kappa^{2}}} +\log\frac{\sqrt{(1+z^{2})^2 \kappa ^2-4 z ^2}+(1-z ^2) \kappa}{\sqrt{(1+z^{2})^2 \kappa ^2-4 z ^2}+(1+z ^2) \kappa}\right]\D\kappa\notag\\&+\int_0^1\frac{8z^{2} \mathbf  K(\sqrt{1-\kappa ^2}) }{\pi[(1+z^{2})^2 \kappa ^2-4 z ^2]  }\D\kappa,\quad |z|<1,\I z\neq0.\tag{\ref{eq:Li2int'_B}}\end{align}}\noindent\hrule\begin{align}&z \dilog(z)-z \dilog(-z)-(1+z) \log (1+z)-(1-z) \log (1-z)\notag\\={}&\frac{1}{2}\int_0^1 \mathbf K( \sqrt{1-t} )\log \frac{\sqrt{(1-z^{2})^{2}+4z^{2}t}+2 t+z^2-1}{2 t}\D t\tag{\ref{eq:Li2int_S}}\\={}&\int_0^1\frac{\mathbf  K(\sqrt{1-\kappa ^2}) }{4 \pi  \kappa ^2}\left\{\log ^2(1-\kappa ^2)-4\left[ \log\frac{1}{\sqrt{1-\kappa^{2}}} +\log\frac{\sqrt{(1+z^{2})^2 \kappa ^2-4 z ^2}+(1-z ^2) \kappa}{\sqrt{(1+z^{2})^2 \kappa ^2-4 z ^2}+(1+z ^2) \kappa} \right]^{2}\right\}\D\kappa,\quad|z|\leq1.\tag{\ref{eq:Li2int_S_B}}\end{align}\noindent\hrule\begin{align}\frac{\pi^{2}}{4}+\log z\log\frac{1+z}{1-z}={}&\int_0^1
\frac{ (1-z^{2} )\mathbf K( \sqrt{1-t} )}{2\sqrt{4z^{2}+t(1-z^{2})^{2}}}\D t+\int_0^1\frac{ z\mathbf K( \sqrt{1-t} )}{\sqrt{(1-z^{2})^{2}+4z^{2}t}}\D t,\quad |z|<1.\tag{\ref{eq:chi2_1}}\\\frac{\pi^2}{2}+2i\theta\log\left( i \tan\frac{\theta}{2}\right)={}&\int_0^1
\frac{ \mathbf K( \sqrt{1-t} )}{\sqrt{t-\sin^2\theta}}\D t+i\int_0^1\frac{ \mathbf K( \sqrt{1-t} )\sin\theta}{\sqrt{1-t\sin^2\theta}}\D t,\quad 0\leq\theta\leq\frac{\pi}{2}.\tag{\ref{eq:chi2_2}}\\2\theta\log\tan\frac{\theta}{2}={}&-\int_0^{\sin^2\theta}
\frac{ \mathbf K( \sqrt{1-t} )}{\sqrt{\sin^2\theta-t}}\D t+\int_0^1\frac{ \mathbf K( \sqrt{1-t} )\sin\theta}{\sqrt{1-t\sin^2\theta}}\D t,\quad 0\leq\theta\leq\frac{\pi}{2}.\tag{\ref{eq:Im_ext}}\end{align}\noindent\hrule\begin{align}\pi\arcsin x={}&\int_0^{x^{2}}
\frac{ \mathbf K( \sqrt{t} )}{\sqrt{x^{2}-t}}\D t=\frac4\pi\int_0^1 \frac{\mathbf K( \sqrt{1-\kappa^{2}})\arcsin(\kappa x)}{\kappa  \sqrt{1- \kappa ^2x^2}}\D\kappa,\quad0\leq x\leq1;\tag{\ref{eq:K_arcsin}, \ref{eq:K_arcsin'}}\\\pi\arcsin x={}&\frac4\pi\int_0^1 \frac{\mathbf K( \sqrt{1-\kappa^{2}})}{1-\kappa^{2}}\left[\sinh^{-1}\left(\frac{x}{\sqrt{1-x^2}}\right)-\frac{\kappa}{\sqrt{1-(1-\kappa ^2 )x^2}} \sinh^{-1}\left(\frac{\kappa  x}{\sqrt{1-x^2}}\right  )\right]\D\kappa,\quad0\leq x<1;\tag{\ref{eq:K_arsinh''}$ \equiv$\ref{eq:K_arcsin_diamond}}\\\frac{\pi}{\sqrt{1-x^{2}}}={}&\frac4\pi\int_0^1 \left[1-\frac{\kappa x}{\sqrt{1-(1-\kappa^2)x^2}}\tanh^{-1}\frac{\kappa x}{\sqrt{1-(1-\kappa^2)x^2}}\right]\frac{\mathbf K( \sqrt{1-\kappa^{2}})\D\kappa}{1-(1-\kappa^2)x^2}\tag{\ref{eq:K_arcsin_iB_deriv}}\\={}&\frac{2
\mathbf K(x)}{\sqrt{1-x^2}}-\frac4\pi\int_0^1\frac{\kappa x\mathbf K( \sqrt{1-\kappa^{2}})}{[1-(1-\kappa^2)x^2]^{3/2}}\tanh^{-1}\frac{\kappa x}{\sqrt{1-(1-\kappa^2)x^2}}\D\kappa,\quad 0\leq x< 1.\tag{\ref{eq:K_arcsin_iB_deriv}}\end{align}\noindent\hrule\begin{align}\pi \sinh^{-1}x=\pi \log(x+\sqrt{1+x^2})={}&\int_0^{x^{2}}
\frac{ 1}{\sqrt{ 1+t}\sqrt{x^{2}-t}}\mathbf K\left( \sqrt{\frac{t}{1+t}}\right )\D t\tag{\ref{eq:K_arsinh}}\\={}&\frac4\pi\int_0^1 \frac{\mathbf K( \sqrt{1-\kappa^{2}})}{1-\kappa^{2}}\left[\arcsin\left(\frac{x}{\sqrt{1+x^2}}\right)-\frac{\kappa}{\sqrt{1+(1-\kappa ^2 )x^2}} \arcsin\left(\frac{\kappa  x}{\sqrt{1+x^2}}\right  )\right]\D\kappa\tag{\ref{eq:K_arsinh'}}\\={}&\frac4\pi\int_0^1 \frac{\mathbf K( \sqrt{1-\kappa^{2}})\log( \kappa x+\sqrt{1+\kappa ^{2}x^2})}{\kappa  \sqrt{1+ \kappa ^2x^2}}\D\kappa,\quad \forall x\geq0;\tag{\ref{eq:K_arcsin''}$ \equiv$\ref{eq:K_arsinh_diamond}}\\\frac{\pi}{\sqrt{1+x^{2}}}={}&\frac4\pi\int_0^1 \left(1-\frac{\kappa x}{\sqrt{1+\kappa^{2}x^2}}\tanh^{-1}\frac{\kappa x}{\sqrt{1+\kappa^{2}x^2}}\right)\frac{\mathbf K( \sqrt{1-\kappa^{2}})\D\kappa}{1+\kappa^{2}x^2{}}\tag{\ref{eq:K_arsinh_iB_deriv}}\\={}&\frac{2\mathbf K(x/\sqrt{1+x^2})}{\sqrt{1+x^{2}}}-\frac4\pi\int_0^1\frac{\kappa x\mathbf K( \sqrt{1-\kappa^{2}})}{(1+\kappa^{2}x^2)^{3/2}}\tanh^{-1}\frac{\kappa x}{\sqrt{1+\kappa^{2}x^2}}\D\kappa,\quad \forall x\geq0.\tag{\ref{eq:K_arsinh_iB_deriv}}
\end{align}\noindent\hrule\begin{align}
\frac{\pi\mathbf K(|r|)}{8}-\frac{\pi^2}{16(1+r)}={}&\frac{1}{1+r}\int_{0}^{1}\frac{\mathbf K(k)rk\sqrt{1-k^2}}{1-2r(1-2k^2)+r^2}\D k\tag{\ref{eq:K_reprod}}\\={}&\int_{0}^{1}\frac{2\mathbf K(\sqrt{1-\kappa^2})[ \kappa(1+r)  \sqrt{r}  \tanh ^{-1}\sqrt{r}-r \sqrt{1-\kappa ^2}  \arcsin \kappa ]}{\pi  (1+r) \kappa  [4 r+(1-r)^2 \kappa ^2]}\D\kappa,\quad -1<r<1;\tag{\ref{eq:K_reprod'}}\\\frac{\pi\mathbf K(r)}{8}-\frac{\pi^2}{16(1+r)}={}&\frac{\tanh^{-1}\sqrt{r}}{2(1+r)}\mathbf K\left( \frac{1-r}{1+r} \right)-\frac{2r}{\pi(1+r)}\int_{0}^{1}\frac{ \sqrt{1-\kappa ^2}  \arcsin \kappa\mathbf K(\sqrt{1-\kappa^2})}{ \kappa  [4 r+(1-r)^2 \kappa ^2]}\D\kappa,\quad 0<r<1.\tag{\ref{eq:K_reprod'}}
\end{align}\noindent\hrule

\begin{align}&\int_0^1\frac{\mathbf K(k)}{1-k^2}\left[ 1-\frac{k}{\sqrt{k^{2}+a^2(1-k^2)}} \right]\D k=\begin{cases}\dfrac{\arcsin a}{2}\left( \pi-\arcsin a \right), & 0\leq a\leq1 \\[8pt]\dfrac{\pi ^2}{8}+
\dfrac{\log ^2(\sqrt{a^2-1}+a)}{2} , & a>1 \\
\end{cases}\tag{\ref{eq:Abel1}}\\={}&\frac{2}{\pi}\int_{0}^{1}\frac{\mathbf K(\sqrt{1-\kappa^2})}{1-\kappa^2}\left[ \log(1+a)-\kappa\tanh^{-1}\kappa-\frac{\kappa}{\sqrt{1-(1-\kappa^{2})a^{2}}}\log\frac{\sqrt{1-\kappa^2}[a \kappa +\sqrt{1-(1-\kappa^{2})a^{2}}]}{ \kappa +\sqrt{1-(1-\kappa^{2})a^{2}}}\right]\D\kappa.\tag{\ref{eq:Abel1'}}\end{align}\noindent\hrule\begin{align}&\frac{\pi}{8}\left[ (1+2a^2)\log(a+\sqrt{1+a^2}) -a\sqrt{1+a^{2}}\right]=\int_0^{\frac{a}{\sqrt{1+a^2}}}\frac{k\mathbf K(k)}{(1-k^2)^{2}}\sqrt{a^{2}-(1+a^2)k^2}\D k\tag{\ref{eq:Abel2}}\\={}&\frac{2}{\pi}\int_{0}^{1}\frac{\mathbf K(\sqrt{1-\kappa^2})}{1-\kappa^2}\left[ \frac{(1+a^{2}+\kappa^{2}-a^2 \kappa ^2) \arctan a}{2(1-\kappa^{2})}-\frac{a}{2}-\frac{\kappa  \sqrt{1+(1-\kappa^{2})a^{2}} }{1-\kappa^{2}}\arctan\frac{a \kappa }{\sqrt{1+(1-\kappa^{2})a^{2}}}\right]\D\kappa,\quad \forall a\geq0.\tag{\ref{eq:Abel2'}}\end{align}\noindent\hrule\begin{align}\int_0^{\frac{a}{\sqrt{1+a^2}}}\frac{k\mathbf K(k)}{(1-k^2)^{3}}[{a^{2}-(1+a^2)k^2}]^{3/2}\D k={}&\frac{3\pi}{128}   \left[(8 a^4+8 a^2+3) \log (a+\sqrt{1+a^2})-3 a(1+2a^2)\sqrt{1+a^{2}}\right],\quad \forall a\geq0.\tag{\ref{eq:Abel3}}\end{align}\noindent\hrule\begin{align}&\int_0^{\frac{a}{\sqrt{1+a^2}}}\frac{k\mathbf K(k)}{(1-k^2)^{2}}\left[a  \sqrt{{a^{2}-(1+a^2)k^2}}-\frac{k^2}{\sqrt{1-k^2}} \log \frac{\sqrt{{a^{2}-(1+a^2)k^2}}+a \sqrt{1-k^2}}{k}\right]\D k\notag\\={}&\pi \left[\frac{2}{9}-\frac{1}{36} \sqrt{1+a^2} (8+5 a^2)+\frac{a (3+2 a^2)}{12}  \log (a+\sqrt{1+a^2})\right],\quad \forall a\geq0.\tag{\ref{eq:Abel4}}\end{align}\noindent\hrule

\begin{align}&\frac1\pi\int_{0}^1\frac{k\tanh^{-1}\sqrt{\frac{1-k^2}{1-k^2\sin^2\theta}}}{(1-k^{2})\sqrt{k^{2}+(1-k^2)\sec^2\theta}}\mathbf K(k)\D k=\frac{\theta}{2}\left(\frac{i\pi}{2}-\log\tan\frac{\pi-2\theta}{4}\right)-\frac{i[\dilog(ie^{-i\theta})-\dilog(-ie^{-i\theta})]}{2},\quad 0\leq\theta<\frac{\pi}{2}.\tag{\ref{eq:Abel_Li2}}\\&
\frac{1}{\pi}\int_0^1\left[ \frac{\tanh ^{-1}\sqrt{1-k^2}}{k}-\frac{ \log \cos \theta}{k\sqrt{1-k^{2}}}-\frac{1}{k\sqrt{1-k^2 \sin ^2\theta }} \tanh ^{-1}\sqrt{\frac{1-k^2}{1-k^2 \sin ^2\theta }} \right]\mathbf K(k)\D k=\frac{\theta^2}{4},\quad 0\leq\theta<\frac{\pi}{2}.\tag{\ref{eq:Abel_Li2_int}}
\end{align}\noindent\hrule

\begin{align}\mathbf K(k)={}&\frac{2}{\pi}\int_0^1\frac{\mathbf K(\sqrt{1-\kappa^2})}{1-k^2\kappa^2}\D\kappa=\frac{2}{\pi}\int_0^1\frac{\sqrt{1-k^2}\mathbf K(\sqrt{1-\kappa^2})}{1-k^2(1-\kappa^2)}\D\kappa\tag{\ref{eq:Beltrami}, \ref{eq:iB}}\\={}&\frac{2}{\pi}\int_0^1\frac{(1+k)\mathbf K(\sqrt{1-\kappa^2})}{(1+k)^{2}-4k\kappa^2}\D \kappa=\frac{2}{\pi}\int_0^1\frac{(1-k)\mathbf K(\sqrt{1-\kappa^2})}{(1-k)^{2}+4k\kappa^2}\D \kappa,\quad 0<k<1.\tag{\ref{eq:LB}, \ref{eq:iLB}}\end{align}\noindent\hrule\begin{align}
&\frac{\pi}{2}\frac{1}{\sqrt{1+r^{2}}}\mathbf K\left( \frac{r}{\sqrt{1+r^2}} \right)-\frac{r}{{1+r^{2}}}\,{_3F_2}\left( \left.\begin{array}{c}
1,1,1 \\
\frac{3}{2},\frac{3}{2} \\
\end{array}\right|\frac{r^2}{1+r^2}\right)=\int_0^1\frac{r\mathbf K(\xi)\D\xi}{r^{2}+\xi^2}=\int_0^1\frac{\mathbf K(\sqrt{1-\kappa^2})\D\kappa}{\sqrt{1+r^2}+\kappa r}\tag{\ref{eq:W_3F2}, \ref{eq:GB_1}}\\={}&\frac{2}{\pi}\int_0^1\left(\frac{ \kappa r  \arccos\kappa }{\sqrt{1-\kappa ^2} }-\sqrt{1+r^{2}}\log \kappa  \right)\frac{\mathbf K(\sqrt{1-\kappa^2})\D\kappa}{1+r^{2}-\kappa^2}-\frac{\log \left(1+\frac{r}{\sqrt{1+r^{2}}}\right)}{\sqrt{1+r^{2}}}\mathbf K\left( \frac{1}{\sqrt{1+r^2}} \right)\tag{\ref{eq:GB_2}}\\={}&\frac{2}{\pi}\int_0^1\left( \frac{ \kappa \sqrt{1+r^2}  \arcsin\kappa }{\sqrt{1-\kappa ^2} }+r\log \sqrt{1-\kappa^{2}}  \right)\frac{\mathbf K(\sqrt{1-\kappa^2})\D\kappa}{r^{2}+\kappa^2}+\frac{\log \left(1+\frac{r}{\sqrt{1+r^{2}}}\right)}{\sqrt{1+r^{2}}}\mathbf K\left( \frac{1}{\sqrt{1+r^2}} \right)\tag{\ref{eq:GB_3}}\\={}&\frac{2}{\pi}\int_0^1r\log \frac{\sqrt{1-\kappa^{2}}}{\kappa}\frac{\mathbf K(\sqrt{1-\kappa^2})\D\kappa}{r^{2}+\kappa^2}+\frac{\log \frac{r}{\sqrt{1+r^{2}}}}{\sqrt{1+r^{2}}}\mathbf K\left( \frac{1}{\sqrt{1+r^2}} \right),\quad \forall r>0.\tag{\ref{eq:GB_4}}
\end{align}\noindent\hrule
\begin{align}&\frac{i\sin\theta}{\cos^2\theta}\,{_3F_2}\left( \left.\begin{array}{c}
1,1,1 \\
\frac{3}{2},\frac{3}{2} \\
\end{array}\right|-\tan^2\theta\right)+\frac{\cos\theta}{\sin^2\theta}\,{_3F_2}\left( \left.\begin{array}{c}
1,1,1 \\
\frac{3}{2},\frac{3}{2} \\
\end{array}\right|-\cot^2\theta\right)+\left(\frac{i}{{\sin\theta}}+\frac{1}{\cos\theta}\right)\,{_3F_2}\left( \left.\begin{array}{c}
1,1,1 \\
\frac{3}{2},\frac{3}{2} \\
\end{array}\right|\frac{1}{1-e^{4i\theta}}\right)\notag\\={}&\frac{\pi[\mathbf K(\sin\theta)+i\mathbf K(\cos\theta)]}{2},\quad 0<\theta<\frac{\pi}{2}.\tag{\ref{eq:3F2_sum_rule}}\end{align}
\noindent\hrule\begin{align}\frac{r}{{1+r^{2}}}\,{_3F_2}\left( \left.\begin{array}{c}
1,1,1 \\
\frac{3}{2},\frac{3}{2} \\
\end{array}\right|\frac{r^2}{1+r^2}\right)=
{}&\int_0^1\frac{r\xi\mathbf K(\xi)\D\xi}{1+r^{2}\xi^2}=-\frac{2}{\pi}\int_0^1r\log \sqrt{1-\kappa^{2}}\frac{\mathbf K(\sqrt{1-\kappa^2})\D\kappa}{r^{2}+\kappa^2}+\frac{\log\sqrt{1+r^{2}}}{\sqrt{1+r^{2}}}\mathbf K\left( \frac{1}{\sqrt{1+r^2}} \right)\tag{\ref{eq:GB_5}, \ref{eq:M_3F2}}\\={}&-\frac{2}{\pi}\int_0^1\frac{ \kappa r  \arccos\kappa }{\sqrt{1-\kappa ^2} }\frac{\mathbf K(\sqrt{1-\kappa^2})\D\kappa}{1+r^{2}-\kappa^2}+\frac{\log (r+\sqrt{1+r^{2}})}{\sqrt{1+r^{2}}}\mathbf K\left( \frac{1}{\sqrt{1+r^2}} \right),\quad \forall r>0.\tag{\ref{eq:GB_6}}
\end{align} \noindent\hrule\begin{align}&\frac{\pi}{2}\frac{1}{\sqrt{1+r^{2}}}\mathbf K\left( \frac{1}{\sqrt{1+r^2}} \right)=\int_0^1\left(\frac{r}{r^{2}+\kappa^2}-\frac{\sqrt{1+r^2}}{1+r^{2}-\kappa^2}\right)\mathbf K\left( \sqrt{1-\kappa^2} \right)\frac{\log\kappa}{\log \frac{r}{\sqrt{1+r^{2}}}}\D\kappa\tag{\ref{eq:log_combo}}\\={}&\int_0^1\frac{r\mathbf K(\xi)\D\xi}{r^{2}+\xi^2}+\int_0^1\frac{r\xi\mathbf K(\xi)\D\xi}{1+r^{2}\xi^2}=\frac{2}{\pi}\int_0^1r\log \frac{r}{\kappa}\frac{\mathbf K(\sqrt{1-\kappa^2})\D\kappa}{r^{2}+\kappa^2}=\frac{2}{\pi}\int_0^1\sqrt{1+r^{2}}\log \frac{\sqrt{1+r^{2}}}{\kappa}\frac{\mathbf K(\sqrt{1-\kappa^2})\D\kappa}{1+r^{2}-\kappa^2},\quad \forall r>0.\tag{\ref{eq:sum_rule}}\end{align}
\noindent\hrule\begin{align}
\mathbf K(\sqrt{\lambda})\mathbf K(\sqrt{1-\lambda})={}&\int^1_{(1-\lambda)/(1+\lambda)}\frac{\mathbf K(k)\D k}{\sqrt{1-k^{2}}\sqrt{(1+\lambda)^{2}k^2-(1-\lambda)^2}}=\int_0^1\frac{\mathbf K(k)\D k}{\sqrt{4\lambda+(1-\lambda)^2k^2}}+\int_0^1\frac{\mathbf K(k)\D k}{\sqrt{(1-\lambda)^2+4\lambda k^2}}\tag{\ref{eq:KK'_various}}\\={}&\frac{2}{\pi}\int_0^1\tanh^{-1}\frac{2\sqrt{\lambda}}{\sqrt{4\lambda+(1-\lambda)^{2}\kappa^2}}\frac{\mathbf K(\sqrt{1-\kappa^2})\D\kappa}{\sqrt{4\lambda+(1-\lambda)^{2}\kappa^2}}\tag{\ref{eq:KK'_various}}\\
={}&\frac{2}{\pi}\int_0^1\tanh^{-1}\frac{1-\lambda}{\sqrt{(1-\lambda)^{2}+4\lambda\kappa^2}}\frac{\mathbf K(\sqrt{1-\kappa^2})\D\kappa}{\sqrt{(1-\lambda)^{2}+4\lambda\kappa^2}},\quad 0<\lambda<1.
\tag{\ref{eq:KK'_various}}
\end{align}\noindent\hrule\vspace{.2em}\hrule

 \newpage
\bibliography{Spheres}
\bibliographystyle{unsrt}

\end{document}